\long\def\@makecaption#1#2{%
  \vskip3pt
  \sbox\@tempboxa{\small#1. #2}%
    \ifdim \wd\@tempboxa >\hsize
    \small#1. #2\par
  \else
    \global \@minipagefalse
    \hb@xt@\hsize{\hfil\box\@tempboxa\hfil}%
  \fi
  \vskip0pt}
\def\figone{%
\begin{picture}(50,110)
\multiput(10,123)(7,0){10}{-}
\put(78,123){0}
\multiput(10,80)(7,0){10}{-}
\put(78,80){1}
\multiput(10,40)(7,0){10}{-}
\put(78,40){2}
\multiput(10,0)(7,0){10}{-}
\put(78,0){3}
\put(35,123){\circle*{3}}
\put(35,123){\line(-2,-3){27}}
\put(35,123){\line(2,-5){33}}
\put(35,123){\line(0,-1){40}}
\put(9,83){\circle*{3}}
\put(9,83){\line(0,-1){40}}
\put(9,83){\line(1,-2){40}}
\put(35,83){\circle*{3}}
\put(51,83){\circle*{3}}
\put(51,83){\line(0,-1){40}}
\put(51,83){\line(-1,-6){7}}
\put(9,43){\circle*{3}}
\put(29,43){\circle*{3}}
\put(29,43){\line(0,-1){40}}
\put(29,43){\line(-1,-2){20}}
\put(44,43){\circle*{3}}
\put(51,43){\circle*{3}}
\put(67,43){\circle*{3}}
\put(67,43){\line(0,-1){40}}
\put(67,43){\line(-1,-6){7}}
\put(9,3){\circle*{3}}
\put(29,3){\circle*{3}}
\put(49,3){\circle*{3}}
\put(60,3){\circle*{3}}
\put(67,3){\circle*{3}}
\end{picture}}
\newtheorem{thm}{Theorem}[section] 
\newtheorem{proc}{Proposition}[section] 
\newtheorem{cor}{Corollary}[section]   
\newtheorem{lem}{Lemma}[section]
\newtheorem{defi}{Definition}[section]
\newtheorem{rmk}{Remark}[section]
\newtheorem{ex} {Example}[section]
\renewcommand{\a}{\alpha}
\def\b{\beta}
\def\e{\varepsilon} 
\def\i{\iota}
\def\ls{{\rm [\![}}
\def\rs{{\rm ]\!]}}
 \DeclareMathOperator{\id}{id} 
\DeclareMathOperator{\Div}{div} \DeclareMathOperator{\Ker}{Ker}
 \DeclareMathOperator{\const}{const}
\DeclareMathOperator{\Hom}{Hom} \DeclareMathOperator{\defin}{def} 
\DeclareMathOperator{\End}{End}  
 \DeclareMathOperator{\Ann}{Ann}
\newcommand{\tr}{\mathrm{tr}}
\newcommand{\ad}{\mathrm{ad}}
\newcommand{\df}{\stackrel{\mathrm{def}}{=}}
\newcommand{\V}{\mathcal V}
\newcommand{\R}{\mathbb R}
\newcommand{\N}{\mathbb N}
\newcommand{\Z}{\mathbb Z}
\newcommand{\C}{\mathbb C}
\newcommand{\Q}{\mathbb Q}
\newcommand{\dF}{\mathbb F}
\newcommand{\gk}{\boldsymbol{k}}
\newcommand{\bg}{\boldsymbol{\gamma}}
\newcommand{\gG}{\boldsymbol{\mathfrak g}}
\newcommand{\gH}{\boldsymbol{\mathfrak h}}
\newcommand{\gS}{\boldsymbol{\mathfrak S}}
\newcommand{\gs}{\boldsymbol{\mathfrak s}}
\newcommand{\gA}{\boldsymbol{\mathfrak a}}
\newcommand{\gR}{\boldsymbol{\mathfrak r}}
\newcommand{\gl}{\boldsymbol{\mathfrak l}}
\newcommand{\go}{\boldsymbol{\mathfrak o}}
\newcommand{\gu}{\boldsymbol{\mathfrak u}}
\newcommand{\gp}{\boldsymbol{\mathfrak p}}
\newcommand{\DEF}{\overset\defin =}
\DeclareFontFamily{OT1}{wncyi}{} \DeclareFontShape{OT1}{wncyi}{m}{it}{
   <5> <6> <7> <8> <9> gen * wncyi
   <10> <10.95> <12> <14.4> <17.28> <20.74> <24.88> wncyi10
  }{}
\DeclareSymbolFont{cyrletters}{OT1}{wncyi}{m}{it} 
\DeclareSymbolFontAlphabet{\cyrmath}{cyrletters} 
\DeclareMathSymbol{\rE}{\cyrmath}{cyrletters}{003} 
\DeclareMathSymbol{\rD}{\cyrmath}{cyrletters}{068} 
\DeclareMathSymbol{\rG}{\cyrmath}{cyrletters}{017} 
\DeclareMathSymbol{\rI}{\cyrmath}{cyrletters}{073} 
\DeclareMathSymbol{\rL}{\cyrmath}{cyrletters}{076} 
\DeclareMathSymbol{\rZ}{\cyrmath}{cyrletters}{090}
\newdimen\theight
\def \refright#1{%
             \vadjust{\setbox0=\hbox{\quad\vtop{\hsize5cm\bf\noindent #1}}%
             \theight=\ht0
             \advance\theight by \dp0    \advance\theight by \lineskip
             \kern -\theight \vbox to \theight{\rightline{\rlap{\box0}}%
             \vss}%
             }}%
\begin{document}

\newcommand{\PsP}{Poisson structure $P$}
\def\ldb{{\rm [\![}}
\def\rdb{{\rm ]\!]}}
\def\a{\alpha}
\def\o{\omega}
\def\r{\rho}
\def\th{\theta}
\def\s{\sigma}
\def\t{\tau}
\def\be{\begin{equation}}
\def\ee{\end{equation}}
\def\bea{\begin{eqnarray}}
\def\eea{\end{eqnarray}}
\def\nn{\nonumber}


\begin{center}
{\LARGE \bf  PARTICLE-LIKE STRUCTURE OF LIE \newline 

ALGEBRAS} 

\end{center}
\vspace{.5cm}
\begin{center}

{\Large A.M. Vinogradov}\footnote{Address: via Roma 27, 40042 Lizzano 
in Belvedere, Italia;  e-mail : vinogradov.unisa@gmail.com.}
\end{center}

\bigskip



\noindent {\bf Abstract.}
If a Lie algebra structure $\gG$ on a vector space is the sum of a family of
mutually compatible Lie algebra structures $\gG_i$'s, we say that $\gG$ is simply assembled
from the $\gG_i$'s. Repeating this procedure with a number of Lie algebras, themselves simply assembled from the $\gG_i$'s, one obtains a Lie algebra assembled in two steps from the 
$\gG_i$'s, and so on. We describe the process of modular
disassembling of a Lie algebra into a unimodular and a non-unimodular part. We then study two inverse questions: which Lie algebras
can be assembled from a given family of Lie algebras, and from which Lie algebras can
a given Lie algebra be assembled? We develop some
basic assembling and disassembling techniques that constitute the elements of a
new approach to the general theory of Lie algebras.
The main result of our theory is that any finite-dimensional Lie algebra over an algebraically closed field of
characteristic zero or over $\R$ can be assembled in a finite number of steps from two elementary constituents, which we call dyons and triadons. Up to an abelian summand, a dyon is a Lie algebra structure isomorphic to the non-abelian 2-dimensional Lie algebra, while a triadon is isomorphic to the 3-dimensional Heisenberg Lie algebra. As an example, we describe constructions of classical Lie algebras from triadons.


\vspace{5.0cm}

\pagebreak

\tableofcontents

\section{Introduction}
According to the modern view, matter is of a compound structure. The constituents, elementary 
particles,  are characterised by their symmetry properties. These properties are formalised 
in terms of Lie algebras, and one may hypothesise that this compound nature of matter should be somehow mirrored in the structure of the symmetry algebras. This, and other similar considerations, suggests that Lie algebras have, in some sense, a compound structure. Our study, some results of which are presented in this paper, was motivated by this question. Our main result is that the finite-dimensional Lie algebras 
over $\R$ or over an algebraically closed field of characteristic  zero are made from two elementary constituents/``particles", which we call \emph{dyons} and \emph{triadons}. We also use the unifying term \emph{lieons} (from Lie-on) for both types.

The exact meaning of ``made from" in the context of an ``elementary particle theory" 
for the ``Lie matter" 
should be duly formalised, a first, nontrivial question one
has to answer. Surprisingly, a suggestion comes from Poisson geometry. In fact, 
on the one hand, a Lie algebra structure on a vector space is naturally interpreted as a linear Poisson structure 
on its dual. On the other hand, it is natural to think of a Poisson structure defined by 
a Poisson bi-vector $P$ as ``made from" Poisson structures defined by bi-vectors $P_1$ 
and $P_2$ if 
$P=P_1+P_2$. In such a case,  $P_1$ and $P_2$ are called \emph{compatible}. Translating this idea into the language of Lie algebras, we obtain the following definition: 
Lie algebra structures $[\cdot,\cdot]_1$ and $[\cdot,\cdot]_2$ on a vector space $V$ 
are \emph{compatible} if $[\cdot,\cdot]_1+[\cdot,\cdot]_2$ is  a Lie algebra structure 
as well. If a Lie algebra structure $[\cdot,\cdot]$ is presented in the form
$$
[\cdot,\cdot]=[\cdot,\cdot]_1+\dots+[\cdot,\cdot]_m
$$
with mutually compatible structures $[\cdot,\cdot]_i$'s, we speak of a \emph{disassembling} 
of $[\cdot,\cdot]$, i.e., we consider that $[\cdot,\cdot]$ is ``made from" the $[\cdot,\cdot]_i$'s. 
Since compatibility is not a transitive notion, it makes sense to go ahead and 
try to disassemble all \emph{compounds} $[\cdot,\cdot]_i$, and so on.  This disassembling
process can be repeated so as to finally come to a number of dyons and triadons,
assuming that the ground field is $\R$ or an algebraically closed field of characteristic zero.

We denote by $\between$ (resp., $\pitchfork$) the non-abelian $2$-dimensional Lie 
algebra (resp., the $3$-dimensional Heisenberg Lie algebra), and by $\gamma_m$ the
$m$-dimensional abelian Lie algebra. An $n$-dimensional dyon ($n$-dyon), denoted 
by $\between_n$, is a Lie algebra structure isomorphic to $\between\oplus\gamma_{n-2}$. 
An $n$-dimensional triadon ($n$-triadon), denoted by $\pitchfork_n$, is a Lie algebra structure
isomorphic to $\pitchfork\oplus\gamma_{n-3}$. Lieons, i.e., dyons and triadons, are the
simplest non-abelian Lie algebra structures.

It is remarkable that the symmetry algebra $\gu(2)=\gs\go(3)$ of a nucleon 
can be assembled in one step from three triadons. Speculatively, one might think 
that this structure of the symmetry reflects the fact that a nucleon is made from  
three ``quarks".
Of course, the physical validity of this and similar speculations requires a deeper 
analysis, which will be the subject of a separate work. 
In this paper we concentrate on the mathematical aspects of the theory.

The concept of compatible Poisson structures originates in  F.\,Magri's work \cite{Mag} 
on bi-Hamiltonian systems. It was subsequently developed and exploited  by many authors 
in the context of integrable systems and Poisson geometry. Informally, the compatibility 
may be viewed as the coexistence of two or more dynamical structures. Similarly, if the symmetry algebra of a physical system is a Lie 
algebra made up of several pieces of 
the same nature, it is natural to think that, in turn, this algebra is made 
up from the symmetry algebras of its compounds. Roughly, this is the reason for
formalising the vague idea of ``made from" as the notion of ``compatibility".  This point is delicate and,  
in our opinion, it  merits to be stressed.
As far as we know,  Lie algebras were not viewed and studied from this point of view until now.

The translation of the techniques and constructions of 
Poisson geometry, in particular, the Schouten-Nijenhuis machinery, into the context 
of Lie algebras is very useful, and we exploit it fully. 
Consequently, a few pages in this paper are dedicated to the necessary elements 
of Poisson geometry.

The paper is structured as follows. The necessary ``initial data" concerning 
differential forms, multi-vector fields and the Schouten bracket are 
collected in Section\,\ref{preliminaries}. The notion of compatibility of Poisson structures 
and the notion of compatibility of Lie algebra 
structures, which are central for this 
paper, are discussed there.
   
The modularity properties of Poisson and Lie algebra structures are discussed in 
Section\,\ref{mdlr}, in which general compatibility conditions are specialised to unimodular Poisson structures. The central point here is an almost canonical 
disassembling of a Poisson structure $P$ into a \emph{unimodular} part and a \emph{completely 
non-unimodular} part which we call the \emph{modular disassembling}. 
The Poisson bi-vector corresponding to the completely non-unimodular part of $P$ is of the
form $P_{\nu}\wedge\Xi=\ls P,\nu\Xi\rs$ where $\Xi$ is the modular vector field 
of $P$ and $\nu$ is a smooth function on the manifold such that $\Xi(\nu)=1$. It is completely characterised 
by the property that its unimodular part is trivial. 
The second part of  Section\,\ref{mdlr} is dedicated to the \emph{matching problem}: 
what are the different compatible configurations of two given Poisson structures?  In full 
generality, this problem seems to be very difficult. For this reason, we restrict 
our study to the particular case of two completely non-unimodular structures.  It turns 
out that, even in this case, the equivalence classes of matchings are labeled by functional parameters (Proposition\,\ref{mathc invariants}).

These results on the modular disassembling of general Poisson structures are then
adapted to Lie algebras in Section\,\ref{Lie-modlarity}. In particular, we call \emph{modular} those Lie algebras whose Poisson bi-vector is completely non-unimodular 
and we show how to subtract from a Lie algebra a suitable modular algebra in order
to get a unimodular algebra. The structure of modular Lie algebras is very simple. 
So, in this sense, the study of general Lie algebras is reduced to the unimodular case. 
To make this point more precise, we discuss compatibility  conditions of modular and unimodular Lie algebras. One result in this direction is that semi-simplicity and 
modularity of Lie algebras are incompatible. In the second part of this section 
the matching problem for modular Lie algebras is solved. The result is basically
that matchings of modular Lie algebras are labeled by the representations of the 2-dimensional 
Lie algebras (Theorem\,\ref{LieMathing}).

The main result of this paper is proven in Section\,\ref{dis-probl}: finite-dimensional Lie algebras over an algebraically closed field of characteristic zero or over $\R$, can be assembled from lieons (Theorems\,\ref{C-dis} 
and \ref{R-dis}).  The proof  splits naturally  into a ``solvable" part and a ``semi-simple" 
part. Accordingly, we first show that any solvable Lie algebra over an arbitrary ground 
field of characteristic zero can be assembled from lieons (Proposition\,\ref{dis-solv}). 
This part of the proof is very simple. The ``semi-simple" part is more delicate and as a preliminary step we reduce it to the disassembling problem for
abelian extensions of simple algebras.
This last problem is essentially a question about representations of simple Lie algebras.
The idea of using results from this rich and well elaborated theory seems most attractive.  However, a deeper analysis shows that such an approach, even if possible, would be 
too cumbersome and hardly instructive (if not ``amoral").  Also,
the fact that Lie algebras are compounds is, in our opinion, fundamental and as such 
should be established independently of the classification of simple Lie algebras and 
their representations. 
These are reasons for following an independent approach, which is based on the 
\emph{stripping procedure} (see Subsection\,\ref{striptease}) and reduces the 
problem to the representations of \emph{hyper-simple} Lie algebras, which are simple 
algebras without proper non-abelian subalgebras. Hyper-simple algebras do not exist over algebraically closed fields of characteristic zero, while the only hyper-simple Lie algebra 
over $\R$ is $\gs\go(3)$. This is the reason why the assembling theorem is proven in these two cases.

Besides the proof of the main theorem,  various assembling-disassembling techniques 
for Lie algebras over arbitrary ground fields are developed throughout Section\,
\ref{dis-probl}. They not only indicate possible approaches to the  assembling problem over arbitrary fields, but also they are  useful in the analysis of the compound structure 
of real and complex Lie algebras.

Section\,\ref{1st-level} is dedicated to constructions in first-level Lie algebras. 
These are Lie algebras that can be assembled from lieons in one step. To this end, we analyse the compatibility conditions for two lieons and we show that they depend only on the relative positions of the
subspaces carrying the centers and derived algebras of the lieons under consideration. One of the consequences of this fact is that the design of 
first-level Lie algebras does not depend on the ground field. We give various examples of first-level Lie algebras.

In Section\,\ref{classical} we describe constructions of
classical Lie algebras from triadons which are, in a sense, canonical. An interesting fact is that this can be done 
in no more than four steps. For instance, two steps are sufficient for the orthogonal Lie
algebras $\gs\go(g)$. This section may also be viewed as a demonstration of the 
general assembling-disassembling techniques at work.
More precisely, all simple 3-dimensional Lie algebras 
can be simply assembled from three triadons. Simple Lie algebras in higher dimensions
require more than one step.

Many natural questions arise in connection with the topics of this paper, and a selection of related problems is briefly discussed in the concluding Section\,\ref{problems}.

\section{Preliminaries}\label{preliminaries}
In this section we collect the necessary data and facts on the calculus of multi-vectors and differential forms, Poisson geometry, and compatibility of Poisson and Lie algebra structures, that will be used throughout the paper, and we fix the notation. The reader will find more details about the material in this section in \cite{CabVin, V90}.  In this paper, all manifolds and maps are assumed to be smooth.

\subsection{Multi-vectors and differential forms} We use $M$ for an $n$--dimensional manifold and

\begin{enumerate}
\item $D_{*}(M)=\bigoplus_{k\geq 0}D_k(M)$ for the exterior algebra of multi-vectors on $M$, 
$D(M)=D_1(M)$ for the $C^{\infty}(M)$--module of vector fields on $M$, and $``\wedge"$ for the wedge product of multi-vectors;
\item $\ldb\cdot,\cdot\rdb$ for the Schouten bracket in $D_{*}(M)$;
\item $\Lambda^{*}(M)=\bigoplus_{k\geq 0}\Lambda^{k}(M)$ for the exterior algebra of 
differential forms on $M$ and $``\wedge"$ for the wedge product of differential forms.
\end{enumerate}

If $S$ is a $\Z$-graded object, say, a multi-vector, then we use $(-1)^{\dots S\dots}$ 
and $(-1)^{\dots \bar{S}\dots}$ for $(-1)^{\dots \mathrm{deg}S\dots}$ and $(-1)^{\dots (\mathrm{deg}S-1)\dots}$, respectively.
For instance, if $P\in D_k(M), \,Q\in D_l(M)$, then in this notation
$(-1)^{P\bar{Q}}=(-1)^{k(l-1)},  \,(-1)^{P+\bar{Q}}=(-1)^{k+l-1}$, and the graded anti-commutativity and the Jacobi 
identity for the Schouten bracket are as follows:
\begin{equation}\label{Schouten-skew}
\ldb P,Q\rdb=-(-1)^{\bar{P}\bar{Q}}\ldb Q,P \rdb
\end{equation}
\begin{equation}\label{Schouten-Jacobi}
(-1)^{\bar{P}\bar{R}}\ldb P, \ldb Q,R \rdb \rdb+(-1)^{\bar{R}\bar{Q}}\ldb R, \ldb P,Q \rdb \rdb+
(-1)^{\bar{Q}\bar{P}}\ldb Q, \ldb R,P \rdb \rdb=0
\end{equation}

We denote by $\mathrm{Hgr}\;\Lambda^*(M)$ the totality of graded $\R$--linear operators acting on the graded space $\Lambda^*(M)$, and by $[\cdot,\cdot]^{gr}$, the graded commutator of such operators. An operator $\Delta\in \mathrm{Hgr}\;\Lambda^*(M)$ is 
a \emph{(graded) differential operator} of degree $k$ over $\Lambda^*(M)$ if
$$
[\omega_0,[\omega_1,\dots,[\omega_k,\Delta]^{gr},\dots,]^{gr}]^{gr}=0, \quad 
\forall \;\omega_o,\omega_1,\dots,\omega_k\in \Lambda^*(M),
$$
where the $\omega_i$'s are understood to be left multiplication operators.

Insertion of a multi-vector $Q\in D_k(M)$ into a differential form $\omega\in\Lambda^l$ is denoted by $Q\rfloor\omega\in\Lambda^{l-k}$, and $i_Q : \Lambda^{*}(M)\rightarrow\Lambda^{*}(M)$ refers to the operator
$\omega\mapsto Q\rfloor\omega$. Obviously, 
$$
i_P\circ i_Q=i_{P\wedge Q} \quad \mbox{and} \quad [i_P,i_Q]^{gr}=0,  \;P,Q\in D_{*}(M).
$$

The correspondence  $Q\leftrightarrow i_Q$ identifies the algebra $D_{*}(M)$ with the algebra of $C^{\infty}(M)$--linear differential operators  acting on $\Lambda^{*}(M)$, in such a way that $k$--vectors  correspond to operators of order $k$. Using this identification the Schouten bracket may be defined by the formula
\begin{equation}\label{Schouten}
i_{\ldb P,Q\rdb}=[[i_P,d]^{gr},i_Q]^{gr}=-(-1)^Q[i_P,[i_Q,d]^{gr}]^{gr}, \quad P,Q\in D_{*}(M).
\end{equation}

The \emph{Lie derivative operator} along a multivector $Q$ is defined as
\begin{equation}\label{Lie derivative}
L_Q=[i_Q,d]^{gr}:\Lambda^*(M)\rightarrow\Lambda^*(M),
\end{equation}
and Definition (\ref{Schouten}) may be rewritten in the form
\begin{equation}\label{Schouten-Lie}
i_{\ldb P,Q\rdb}=[L_P,i_Q]^{gr}=-(-1)^Q[i_P,L_Q]^{gr}.
\end{equation}

The following very useful formula  relates the insertion and the Lie derivative operators:
\begin{equation}\label{[L_X,i_P]}
[i_Q, L_X]^{gr}=i_{L_X(Q)},\quad X\in D(M), \;Q\in D_{*}(M),
\end{equation}
where $L_X(Q)=\ldb Q,X\rdb$.

The \emph{liesation} operator $L:Q\mapsto L_Q$ is a (right graded) derivation of 
the algebra $D_{*}(M)$:
\begin{equation}\label{L as derivation}
L_{P\wedge Q}=i_P\circ L_Q+(-1)^Q L_P\circ i_Q.
\end{equation}
Another useful interpretation of the Schouten bracket is easily derived from (\ref{Schouten-Lie}) and (\ref{L as derivation}):
\begin{equation}\label{Schouten via L}
i_{\ldb P,Q\rdb}=(-1)^Q L_{P\wedge Q}-(-1)^Q i_P\circ L_Q-
(-1)^{\bar{P}Q} i_Q\circ L_P.
\end{equation}

A convenient coordinate-wise description of the above operations is as follows. Let $x=(x_1,\dots,x_n)$ be a local chart
on $M$. Instead of the standard local expression 
$$
Q=\sum_{1\leq i_1<\dots<i_k\leq n}a_{ i_1,\dots,i_k}(x)\frac{\partial}{\partial x_{i_1}}\wedge\dots\wedge\frac{\partial}{\partial x_{i_n}}, \quad Q\in D_k(M),
$$
we shall use
\begin{equation}\label{antipolinomials}
Q=\sum_{1\leq i_1<\dots<i_k\leq n}a_{ i_1,\dots,i_k}(x)\xi_{i_1}\dots\xi_{i_n}
\end{equation}
assuming that the variables $\xi_i$'s anti-commute, i.e., $\xi_i\xi_j=-\xi_j\xi_i$. This allows 
one to introduce ``partial derivatives" $\frac{\partial}{\partial x_i}$ and $\frac{\partial}{\partial \xi_i}$ 
acting on the skew-commutative polynomials  in the $\xi_i$'s (\ref{antipolinomials}). In fact,
the first acts on the coefficients $a_{ i_1,\dots,i_k}(x)$, while the second is $C^{\infty}(M)$-linear and obeys the  ``graded" commutation relation
$\frac{\partial}{\partial \xi_i}\circ \xi_j+\xi_j\circ\frac{\partial}{\partial \xi_i}=\delta_{ij}$
in which $\xi_j$ refers to the operator of multiplication by $\xi_j$. In these terms
the Schouten bracket may be written
\begin{equation}\label{Schouten in coordinates}
\ldb P,Q \rdb=-\sum_{i}\left(\frac{\partial P}{\partial x_i}\frac{\partial Q}{\partial \xi_i}+
(-1)^P \frac{\partial P}{\partial \xi_i}\frac{\partial Q}{\partial x_i}\right).
\end{equation}
In particular, introducing the operator $X_P:D_{*}(M)\rightarrow D_{*}(M), \;X_P(Q)=\ldb P,Q \rdb$,
we obtain
\begin{equation}\label{Schouten-Hamilton field}
X_P=-\sum_{i}\left(\frac{\partial P}{\partial x_i}\frac{\partial}{\partial \xi_i}+
(-1)^P\frac{\partial P}{\partial \xi_i}\frac{\partial }{\partial x_i}\right).
\end{equation}

\subsection{Poisson manifolds}

 Recall that a \emph{Poisson structure} on a manifold $M$ is a Lie algebra structure on the $\R$-- vector space $C^{\infty}(M)$
 $$
 (f,g)\mapsto \{f,g\}\in C^{\infty}(M), \quad  f,g\in C^{\infty}(M),
 $$
which is  additionally a \emph{bi-derivation}, i.e.,
$$
\{fg,h\}=f\{g,h\}+g\{f,h\} \quad \mbox{and}  \quad\{f,gh\}=g\{f,h\}+h\{f,g\}.
$$
$P\in D_2(M)$ is a \emph{Poisson bi-vector} if $\ldb P,P\rdb=0$.  The formula
$$
\{f,g\}=P(df,dg), \quad f,g\in C^{\infty}(M).
$$
establishes a one-to-one correspondence
between Poisson  bi-vectors and Poisson structures on $M$. The Poisson bracket associated in this 
way with the Poisson bi-vector $P$ will be denoted by $\{\cdot,\cdot\}_P$.

A Poisson manifold is \emph{non-degenerate} if the corresponding Poisson bi-vector is 
non-degenerate, i.e., if the correspondence
$$
\gamma_P:\Lambda^1(M)\rightarrow D(M), \quad\omega\mapsto P(\omega,\cdot),
$$
is an isomorphism of $C^{\infty}(M)$--modules.  The homomorphism $\gamma_P$ naturally extends to a homomorphism of exterior algebras, which will  also be denoted by
$$
\gamma_P:\Lambda^{*}(M)\rightarrow D_{*}(M).
$$
This extension is an isomorphism if and only if $P$ is non-degenerate. In this case 
$\gamma_P^{-1}(P)\in \Lambda^2(M)$ is a symplectic form on $M$. In this way non-degenerate Poisson manifolds are identified with symplectic manifolds.

The Poisson differential 
$$
\partial_P:D_{*}(M)\rightarrow D_{*+1}(M), \quad \partial_P(Q)=\ldb P,Q\rdb,
$$ 
associated with a Poisson bi-vector $P$ supplies $D_{*}(M)$ with a co-chain complex structure. With any function $f\in C^{\infty}(M)$ on a Poisson manifold $M$ is associated  the vector 
field
\begin{equation}\label{P-Ham field}
P_f\DEF\partial_P(f)=\ldb P,f\rdb=-\gamma_P(df)=-df\rfloor P,
\end{equation}
called the \emph{Hamiltonian vector field}\footnote{$P_f$ is, in fact, the opposite of the usual Hamiltonian vector field but more convenient in the context of this paper.} corresponding to the \emph{Hamiltonian function} $f$.

The following definition is central for this paper.
\begin{defi}\label{Poisson compatibility}
Poisson structures $P_1$ and $P_2$ on a manifold $M$ are called \emph{compatible} if $P_1+P_2$ is a Poisson structure as well.
\end{defi}
\begin{proc}
Poisson structures $P_1$ and $P_2$ are compatible if one of the following equivalent conditions holds:
\begin{enumerate}
\item $\ldb P_1,P_2\rdb=0$;
\item $sP_1+tP_2$ is a Poisson structure, $\forall s,t\in\R$;
\item the bracket $\{\cdot,\cdot\}=s\{\cdot,\cdot\}_{P_1}+t\{\cdot,\cdot\}_{P_2},$, is a Lie algebra structure
on $C^{\infty}(M)$, $\forall s,t\in\R$;
\item $\partial_{P_1}+\partial_{P_2}$ is a differential in $D_{*}(M)$, or, equivalently, $\partial_{P_1}\partial_{P_2}+\partial_{P_2}\partial_{P_1}=0$.
\end{enumerate} 
\end{proc}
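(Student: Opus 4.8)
The plan is to route every implication through one computation: the Schouten self-bracket of a linear combination. Since the Schouten bracket is bilinear, for bi-vectors $P_1,P_2\in D_2(M)$ one has
$$
\ldb sP_1+tP_2,\,sP_1+tP_2\rdb=s^2\ldb P_1,P_1\rdb+st\bigl(\ldb P_1,P_2\rdb+\ldb P_2,P_1\rdb\bigr)+t^2\ldb P_2,P_2\rdb .
$$
The graded anti-commutativity (\ref{Schouten-skew}), with $\bar P_1=\bar P_2=1$, gives $\ldb P_1,P_2\rdb=\ldb P_2,P_1\rdb$, so the cross term collapses to $2st\,\ldb P_1,P_2\rdb$. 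Assuming $P_1,P_2$ are Poisson, i.e. $\ldb P_1,P_1\rdb=\ldb P_2,P_2\rdb=0$, the identity reduces to
$$
\ldb sP_1+tP_2,\,sP_1+tP_2\rdb=2st\,\ldb P_1,P_2\rdb .
$$
Setting $s=t=1$ shows that $P_1+P_2$ is a Poisson bi-vector, i.e. $P_1,P_2$ are compatible in the sense of Definition\,\ref{Poisson compatibility}, precisely when $\ldb P_1,P_2\rdb=0$; this is condition (1). Read for arbitrary $s,t$, the same identity gives (1)$\Leftrightarrow$(2): the right-hand side vanishes for all $s,t$ iff $\ldb P_1,P_2\rdb=0$, and conversely each $sP_1+tP_2$ is then Poisson.

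For (3) I would invoke the $\R$-linear one-to-one correspondence $P\leftrightarrow\{\cdot,\cdot\}_P$ between Poisson bi-vectors and Poisson brackets, so that $s\{\cdot,\cdot\}_{P_1}+t\{\cdot,\cdot\}_{P_2}=\{\cdot,\cdot\}_{sP_1+tP_2}$. A linear combination of bi-derivations is again a bi-derivation, so that axiom is automatic; the only nontrivial requirement for the bracket to be a Lie algebra structure is the Jacobi identity, which is exactly the condition that $sP_1+tP_2$ be a Poisson bi-vector. Hence (3) says precisely that $sP_1+tP_2$ is Poisson for all $s,t$, i.e. (3)$\Leftrightarrow$(2).

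Condition (4) is handled at the level of the Poisson differentials. Since $\partial_{P_i}(Q)=\ldb P_i,Q\rdb$ and the bracket is bilinear, $\partial_{P_1}+\partial_{P_2}=\partial_{P_1+P_2}$, and expanding $(\partial_{P_1}+\partial_{P_2})^2$ while using $\partial_{P_i}^2=0$ (valid since each $P_i$ is Poisson) leaves $\partial_{P_1}\partial_{P_2}+\partial_{P_2}\partial_{P_1}$. The key step is to rewrite the graded Jacobi identity (\ref{Schouten-Jacobi}) in its graded-derivation form
$$
\ldb P_1,\ldb P_2,Q\rdb\rdb=\ldb \ldb P_1,P_2\rdb,Q\rdb+(-1)^{\bar P_1\bar P_2}\ldb P_2,\ldb P_1,Q\rdb\rdb ,
$$
which with $\bar P_1\bar P_2=1$ yields $\partial_{P_1}\partial_{P_2}+\partial_{P_2}\partial_{P_1}=\partial_{\ldb P_1,P_2\rdb}$. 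Thus the anticommutator vanishes iff the operator $\partial_{\ldb P_1,P_2\rdb}=\ldb\ldb P_1,P_2\rdb,\cdot\rdb$ is identically zero, establishing (4)$\Leftrightarrow$(1) once we know that $\partial_R\equiv 0$ forces $R=0$.

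This last point is the only genuinely delicate step. To see that $\partial_R=\ldb R,\cdot\rdb$ vanishing implies $R=0$ for $R=\ldb P_1,P_2\rdb\in D_3(M)$, I would test $\partial_R$ on functions: by (\ref{P-Ham field}), extended to an arbitrary multi-vector, $\ldb R,f\rdb=-df\rfloor R$, and letting $f$ range over local coordinates recovers all components of $R$, so $\partial_R=0$ indeed forces $R=0$. The same observation licenses the fact $\partial_P^2=0\Leftrightarrow\ldb P,P\rdb=0$ used above, via the analogous identity $2\,\partial_P^2(Q)=\ldb\ldb P,P\rdb,Q\rdb$. Everything else is bookkeeping with the conventions $\bar P_i=1$; the one thing to watch is the derivation of the graded-derivation form of Jacobi from the cyclic form (\ref{Schouten-Jacobi}) together with (\ref{Schouten-skew}), a short but sign-sensitive rearrangement.
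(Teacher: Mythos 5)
Your proof is correct and follows essentially the same route as the paper, whose entire argument is the single identity $\ldb P_1+P_2,P_1+P_2\rdb=2\ldb P_1,P_2\rdb$ with the remark that (2)--(4) are obvious consequences; your computation of $\ldb sP_1+tP_2,sP_1+tP_2\rdb=2st\ldb P_1,P_2\rdb$ is that same identity, and your treatment of (3) and (4), including the identity $\partial_{P_1}\partial_{P_2}+\partial_{P_2}\partial_{P_1}=\partial_{\ldb P_1,P_2\rdb}$ and the observation that $\partial_R\equiv 0$ forces $R=0$, correctly supplies the details the paper leaves implicit.
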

\begin{proof}
The first assertion follows directly from the equality, $\ldb P_1+P_2,P_1+P_2\rdb=2\ldb P_1,P_2\rdb$, while (2) - (4) are obvious consequences of it.
\end{proof}

\subsection{Lie algebras} 

The term ``Lie algebra" in the current literature is used with two different meanings: as a concrete \emph{Lie algebra structure} on a concrete vector space and also as an isomorphism class of such structures. Since this distinction is essential in the context
of this paper, we shall use ``Lie algebra structure" instead of the usual ``Lie algebra" to
avoid a possible ambiguity.

Below Lie algebra structures will be denoted by bold Fraktur characters $\gG, \gH$, etc. 
The symbol $|\gG|$ refers to the vector space supporting $\gG$. We use square brackets,  
if necessary with additional indexes, for Lie product operations.

Let $\gG$ be a Lie algebra structure over a ground field $\gk$ and  $V=|\gG|$. This 
structure  naturally extends to a Lie algebra structure on the algebra $\gk[V^*]$ of 
polynomials on $V^*=\Hom_{\gk}(V,\gk)$. In fact, 
denoting by $f_v$ the linear function on $V^*$ corresponding to $v\in V$, we define a 
``Poisson bracket" $\{\cdot,\cdot\}$ on linear functions by putting
$$
\{f_v,f_w\}\DEF f_{[v,w]}, \quad v,w,\in V,
$$ 
and then extend it as a bi-derivation to the whole polynomial algebra. This construction 
remains valid for any larger algebra $A\supset \gk[V^*]$ assuming that any derivation 
of  $\gk[V^*]$ uniquely extends to $A$. For instance, $C^{\infty}(V^*)$ is such an algebra 
for $\gk=\R$. We shall refer 
to the so-defined Lie algebra as the \emph{Poisson structure on the dual to the Lie algebra} 
$\gG$. The corresponding Poisson bi-vector on $V^*$ will be denoted $P_{\gG}$.

In coordinates this Poisson structure is described as follows. Let $\{e_i\}$ be a 
basis in $V$ and $x_i=f_{e_i}$. Then
\begin{equation}\label{? coord}
\{f,g\}=\sum_{i,j,k}c_{ij}^kx_k\frac{\partial f}{\partial x_i}\frac{\partial g}{\partial x_j}
\end{equation}
where $c_{ij}^k$ are the structure constants of $\gG$, and
\begin{equation}\label{Poisson dual}
P_{\gG}=\sum_{i,j,k}c_{ij}^kx_k\xi_i\xi_j.
\end{equation}
Poisson bi-vectors $P_{\gG}$'s  are characterised by the linearity of their components in any cartesian chart on $V^*$. For this reason they are also called \emph{linear}. More generally 
a multi-vector field $Q\in D_{*}(W)$ on an $\R$-vector space $W$ is said to be \emph{linear} 
if its coefficients in a cartesian chart on $W$ are linear. In such a case
$$
Q_{\theta}=\ldb X_{\theta},Q\rdb, \quad \theta\in W,
$$
where $Q_{\theta}$ is the constant vector field that has the same value as $Q$ at $\theta$ 
and $X_{\theta}$ is the constant vector field corresponding to $\theta$. This observation is useful when dealing with linear Poisson structures.

Let $\gG_1$ and $\gG_2$ be Lie algebra structures on a $\gk$-vector space $V$ and $[\cdot,\cdot]_1, [\cdot,\cdot]_2$ the corresponding Lie products. The following is the analogue of Definition\,\ref{Poisson compatibility} for Lie algebras.

\begin{defi}
 Lie algebra structures $\gG_1$ and $\gG_2$ on a $\gk$-vector space $V$ are called \emph{compatible} if $[\cdot,\cdot]\DEF[\cdot,\cdot]_1,+[\cdot,\cdot]_2$
 is a Lie product in $V$.
\end{defi}
The  Lie algebra structure on $V$ defined by the Lie product $[\cdot,\cdot]_1,+[\cdot,\cdot]_2$ will be denoted by 
 $\gG_1+\gG_2$ . Obviously, we have
 \begin{proc}\label{Poisson-lie dual}
Lie algebra structures $\gG_1$ and $\gG_2$ on $V$ are compatible if and only if the corresponding Poisson structures on $V^*$ are 
compatible. $\qquad\qquad\qquad\Box$
\end{proc}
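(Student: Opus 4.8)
The plan is to reduce the statement to two facts about the correspondence $\gG \mapsto P_{\gG}$ between Lie algebra structures on $V$ and linear Poisson bi-vectors on $V^*$: first, that this correspondence is \emph{additive}, and second, the fundamental fact (built into the construction of $P_{\gG}$) that a skew-symmetric bilinear operation on $V$ satisfies the Jacobi identity if and only if its associated linear bi-vector $P_{\gG}$ satisfies $\ldb P_{\gG}, P_{\gG}\rdb = 0$. Granting these, the proposition follows by chaining a sequence of equivalences, which is why it may fairly be called obvious.

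First I would record the additivity $P_{\gG_1 + \gG_2} = P_{\gG_1} + P_{\gG_2}$. This is immediate from the coordinate expression \eqref{Poisson dual}: if $(c_1)_{ij}^k$ and $(c_2)_{ij}^k$ are the structure constants of $\gG_1$ and $\gG_2$ in a fixed basis, then, since the Lie product of $\gG_1 + \gG_2$ is by definition $[\cdot,\cdot]_1 + [\cdot,\cdot]_2$, the structure constants of $\gG_1 + \gG_2$ are $(c_1)_{ij}^k + (c_2)_{ij}^k$; substituting into \eqref{Poisson dual} gives the claim. Equivalently, one argues at the level of linear functions: $\{f_v, f_w\}_{P_{\gG_1+\gG_2}} = f_{[v,w]_1 + [v,w]_2} = \{f_v,f_w\}_{P_{\gG_1}} + \{f_v,f_w\}_{P_{\gG_2}}$, and since a Poisson bi-vector is determined by its action on the generating linear functions, the two bi-vectors coincide.

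Next I would chain the equivalences. By definition, $\gG_1$ and $\gG_2$ are compatible iff $[\cdot,\cdot]_1 + [\cdot,\cdot]_2$ is a Lie product on $V$; as this operation is automatically skew-symmetric and bilinear, the only condition is the Jacobi identity. By the fundamental correspondence, Jacobi for $\gG_1 + \gG_2$ holds iff the associated linear bi-vector, which by additivity equals $P_{\gG_1} + P_{\gG_2}$, satisfies $\ldb P_{\gG_1}+P_{\gG_2},\, P_{\gG_1}+P_{\gG_2}\rdb = 0$, that is, iff $P_{\gG_1} + P_{\gG_2}$ is a Poisson bi-vector. By Definition \ref{Poisson compatibility} this is exactly the compatibility of the Poisson structures $P_{\gG_1}$ and $P_{\gG_2}$ on $V^*$. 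One may note that the Proposition following Definition \ref{Poisson compatibility} then also yields the familiar form $\ldb P_{\gG_1}, P_{\gG_2}\rdb = 0$, since $P_{\gG_1}$ and $P_{\gG_2}$ are themselves Poisson (being associated to the Lie algebra structures $\gG_1$ and $\gG_2$), whence $\ldb P_{\gG_1}+P_{\gG_2},\, P_{\gG_1}+P_{\gG_2}\rdb = 2\ldb P_{\gG_1}, P_{\gG_2}\rdb$.

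The only step carrying genuine content is the equivalence of Jacobi for $\gG$ with $\ldb P_{\gG}, P_{\gG}\rdb = 0$, and the hard part is isolating why it is harmless here. The cleanest justification is that the Jacobiator of a bi-derivation is again a (tri-)derivation of $\gk[V^*]$, hence is determined by its values on the generating linear functions $f_v$, on which it reproduces the Jacobiator of the bracket on $V$; thus $P_{\gG}$ is Poisson precisely when $\gG$ obeys Jacobi. Since this is already implicit in the definition of the Poisson structure on the dual of a Lie algebra, in the write-up it reduces to a one-line remark rather than a real obstacle, and the proposition is proved.
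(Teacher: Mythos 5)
Your proof is correct and fills in exactly the argument the paper leaves implicit: the paper states this proposition with no proof at all ("Obviously, we have\dots\ $\Box$"), relying on the additivity $P_{\gG_1+\gG_2}=P_{\gG_1}+P_{\gG_2}$ and the equivalence of the Jacobi identity for a skew bracket on $V$ with $\ldb P_{\gG},P_{\gG}\rdb=0$, which is precisely what you spell out. Your justification of the latter via the Jacobiator being a tri-derivation determined on the linear generators is the right one-line reason it is harmless.
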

\begin{rmk}
If Poisson structures $P_1,\dots,P_m$ are mutually compatible, then structures
$\lambda_1P_1,\dots,\lambda_mP_m$ with $\lambda_1,\dots\lambda_m\in\gk$
are mutually compatible as well. So, $P=\lambda_1P_1+\dots+\lambda_mP_m$
is a Poisson structure called a \emph{linear combination of structures} $P_i$'s.
 The expression ``\emph{Linear combination of some mutually compatible Lie algebra structures}" has the same meaning.
\end{rmk}
\subsection{The Lie rank of Poisson manifolds and Lie algebras}

Recall that a bi-vector field $Q\in D_2(M)$ generates  a distribution  (with singularities) on 
$M$. This distribution is defined as the $C^{\infty}(M)$--submodule $D_Q(M)$ of 
$D(M)$ generated by vector fields $Q_f=df\rfloor Q, \;f\in C^{\infty}(M)$.

Geometrically,  $D_Q(M)$ may be viewed as a family of vector spaces 
$M\ni x\mapsto \triangle_Q(x)\subset T_{x}M$ on $M$ where the subspace 
$\triangle_Q(x)\subset T_{x}M$ is generated by vectors of the form 
$Q_{f,x}\in T_{x}M,\quad \forall f \in C^\infty(M).$ The function
$$
M\ni x\mapsto {\rm rank}_Q(x) \DEF\dim\triangle_Q(x)
$$
is lower semi-continous with values in even integers. In 
particular, ${\rm rank}_Q(x)$ is locally constant except on a closed subset in $M$ without
internal points and reaches its maximum value, say $2k$, in an open domain of $M$.  
The number $k$ is uniquely characterised by the property that $Q^k\neq 0$, while
$Q^{k+1}=0$ (here $Q^{i}$ stands for the $i$-th wedge power of $Q$). Alternatively, 
$2k$ is equal to the maximal number of independent vector fields of the form $Q_f$. 
The above-said remains valid for bi-vectors with polynomial coefficients on a vector
space over a ground field of characteristic  zero. Such bi-vectors have the same 
rank everywhere except on an algebraic subvariety of $\bf V.$
\begin{defi}\label{Oppa}
\emph{1)} A bi-vector field $Q$ is said to be of \emph{rank} $2k$ if 
$$
Q^k\neq 0,\quad Q^{k+1}=0
$$
\emph{2)} A Lie algebra is said to be of  \emph{Lie rank} $2k$ if the associated linear Poisson 
bi-vector on its dual is of rank $2k$.
\end{defi}

\begin{ex}
If  $2k\leq n$,  then the Lie rank of the Lie algebra
$$
\gG_{n,k}=\underbrace{\gG_2\oplus\dots\oplus\gG_2}_{\mbox{$k$ times}}\oplus\gamma_{n-2k}
$$
where $\gG_2$ is the non-abelian $2$-dimensional Lie algebra and $\gamma_{n-2k}$ the 
$(n-2k)$-dimensional abelian Lie algebra is $2k$.  If $n=2k+\epsilon, \quad\epsilon=0, 1$,
then $\gG_{n,k}$  is an n-dimensional Lie algebra of maximal  Lie rank. 
\end{ex}

If $Q$ is a Poisson bi-vector, then $[Q_f,Q_g]=Q_{\{f,g\}}$, i.e.,  the distribution $D_Q(M)$ 
is a Frobenius distribution. The maximal integral submanifolds of $D_Q(M)$ constitute the 
canonical \emph{symplectic (generalised) foliation} of $M$ associated with $Q$ (see \cite{VKra, Wein}). 
If $Q=P_{\gG}$, then the leaves of this (generalised) foliation are the \emph{orbits of the coadjoint 
representation of $\gG$}.

\section{Modularity of Poisson and Lie algebra structures}\label{mdlr}

In this section, we introduce and study the splitting of Poisson and Lie algebra structures 
into {\it unimodular} and {\it non-unimodular} parts. This splitting is canonical up to a 
gauge transformation and, in a sense, it  reduces the study of general Poisson or Lie algebras 
structures to that of the unimodular ones. The central notions in this construction are the modular 
vector field and the modular class introduced by J.-L. Koszul \cite{Kz}. In the survey 
\cite{Ksmn}, the reader will find an extensive bibliography on this topic. 
 
\subsection{Unimodular Poisson structures}

If $\omega\in\Lambda^n(M)$ is a volume form, then the map
$$
Q\mapsto Q\mathop{\rfloor} \omega,\qquad Q\in  D_*(M),
$$
which we shall call \emph{$\omega$-duality},
is an isomorphism of $C^{\infty}(M)$--modules from $D_*(M)$ onto 
$\Lambda^*(M).$ In particular, the $(n-2)$--form 
$\alpha=\alpha_{P,\omega}=P\rfloor\omega$, $\omega$-dual to the Poisson bi-vector 
$P$, completely characterises this bi-vector. 
\begin{proc}\label{Dual}
$P \in D_2(M)$ is a Poisson bi-vector on $M$ if and only if
\begin{equation}\label {DUAL}
d(P\rfloor\alpha)=2P\rfloor d\alpha \quad\mathrm{with}\quad  \alpha=P\rfloor\omega.
\end{equation}
\end{proc}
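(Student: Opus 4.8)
The plan is to prove the criterion by translating the Poisson condition $\ldb P,P\rdb=0$ into a statement about differential forms via $\omega$-duality, using the fact that $\omega$-duality intertwines the Schouten bracket on multi-vectors with operations built from the de Rham differential on forms. Concretely, I would first express the Lie derivative $L_P=[i_P,d]^{gr}$ in terms of the $\omega$-dual form $\alpha=P\rfloor\omega$. Since $\omega$-duality $Q\mapsto Q\rfloor\omega$ is a $C^\infty(M)$-module isomorphism carrying a $k$-vector to an $(n-k)$-form, and since $i_P\omega=P\rfloor\omega=\alpha$, the key computational input is a formula relating $L_P\omega=[i_P,d]^{gr}\omega=i_P(d\omega)\pm d(i_P\omega)$ to $d\alpha$. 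Because $\omega$ is a volume form we have $d\omega=0$, so $L_P\omega=\pm d\alpha$ up to the appropriate degree sign coming from the conventions in (\ref{Lie derivative}).

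Next I would compute the $\omega$-dual of $\ldb P,P\rdb$ directly from the operator definition (\ref{Schouten}), namely $i_{\ldb P,P\rdb}=[L_P,i_P]^{gr}$. Applying both sides to the volume form $\omega$ gives, on the one hand, $\ldb P,P\rdb\rfloor\omega$, and on the other hand an expression $L_P(i_P\omega)-(-1)^{\dots}i_P(L_P\omega)=L_P\alpha\mp i_P(d\alpha)$. The term $L_P\alpha=[i_P,d]^{gr}\alpha=i_P(d\alpha)\pm d(i_P\alpha)=i_P(d\alpha)\pm d(P\rfloor\alpha)$, again using that $\alpha$ is being acted on by $d$ and that $i_P$ lowers form-degree by $2$. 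Collecting these terms, the two copies of $i_P(d\alpha)$ combine (with a factor reflecting the doubling one already sees in $\ldb P_1+P_2,P_1+P_2\rdb=2\ldb P_1,P_2\rdb$), and I expect to land precisely on
\begin{equation}\label{dual-computation}
\ldb P,P\rdb\rfloor\omega=\pm\bigl(d(P\rfloor\alpha)-2P\rfloor d\alpha\bigr).
\end{equation}
The claimed equivalence then follows immediately: because $\omega$-duality is an isomorphism, $\ldb P,P\rdb=0$ holds if and only if its $\omega$-dual form vanishes, which by (\ref{dual-computation}) is exactly the condition $d(P\rfloor\alpha)=2P\rfloor d\alpha$ of (\ref{DUAL}).

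The main obstacle, and the place where care is essential, is bookkeeping the graded signs. The degrees involved are mixed: $P$ has degree $2$, $\omega$ has degree $n$, $\alpha$ has degree $n-2$, and the operators $i_P$, $d$, $L_P$ carry their own degrees ($-2$, $+1$, $-1$ respectively), so every application of the graded-commutator definitions (\ref{Schouten}) and (\ref{Lie derivative}) and every use of $[i_P,i_Q]^{gr}=0$ and the derivation property (\ref{L as derivation}) must be signed correctly, keeping the $(-1)^{\bar P}$ and $(-1)^{Q}$ conventions fixed from the start. A secondary subtlety is confirming that the two half-contributions assemble with the right numerical coefficient $2$ rather than some other constant; this is where tracking the two ways $i_P$ can contract into $d\alpha$ (one from $L_P$ acting on $\alpha$, one from $i_P$ acting on $L_P\omega$) pays off, and it is reassuring that the factor $2$ matches the one already flagged in the proof of the preceding proposition. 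Once the signs are pinned down the argument is a short, direct identity, so I would present the sign computation carefully and then read off the equivalence.
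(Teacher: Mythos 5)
Your proposal is correct and takes essentially the same approach as the paper: both compute the $\omega$-dual of the Schouten square of $P$ via the operator definition of the bracket and Cartan-type identities, use $d\omega=0$ to get $L_P(\omega)=-d\alpha$, and conclude from the injectivity of $\omega$-duality. The only difference is organizational: the paper packages the term $-d(P\rfloor\alpha)$ as $L_{P\wedge P}(\omega)$ via Formula (\ref{Schouten via L}), whereas you obtain it by expanding $L_P(\alpha)$ inside $[L_P,i_P]^{gr}(\omega)$; your factor-of-two bookkeeping is right, the two contributions of $P\rfloor d\alpha$ combining to give exactly the paper's identity $d(P\rfloor\alpha)-2P\rfloor d\alpha=-\,[\![P,P]\!]\rfloor\omega$.
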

\begin{proof}
For $P=Q$ it easily follows from (\ref{Schouten-Lie}) and (\ref{L as derivation})  that
$$
L_{P\wedge P}-2i_P\circ L_{P}= i_{\ldb P,P\rdb}
$$
and, as a consequence, that
\begin{equation}\label{PPP}
L_{P\wedge P}(\omega) -2P\mathop{\rfloor}L_{P}(\omega) = \ldb 
P,P\rdb\mathop{\rfloor}(\omega)
\end{equation}

On the other hand,  by Definition (\ref{Lie derivative}) for $Q=P$ and $P\wedge P$,
we have
\begin{equation}\label{Pdd}
L_{P}(\omega)=-d(P\mathop{\rfloor}\omega)=-d\alpha
\end{equation}
and
$$
 L_{P\wedge P}(\omega)=-d(i_{P\wedge P}(\omega))=
 -d(P\mathop{\rfloor}(P\mathop{\rfloor}\omega))=-d(P\mathop{\rfloor}\alpha)
$$
 With these substitutions (\ref{PPP}) becomes
$$
 d(P\mathop{\rfloor}\alpha)-2P\mathop{\rfloor} d\alpha=
-\ldb P,P\rdb\mathop{\rfloor}\omega
$$
and the desired result follows from the fact that a multi-vector $Q\in D_{*}(M)$ vanishes
iff $Q\mathop{\rfloor}\omega=0$. 
\end{proof}

\begin{defi}\label{UUU}
A Poisson structure on $M$ is called \emph{unimodular} with respect to $\omega$, or $\emph{$\omega$--unimodular}$, if $ L_{P}(\omega)=0.$
\end{defi}

\begin{proc}\label{Uni}
A Poisson structure $P$ is unimodular if and only if one of the following relations holds
\begin{enumerate}
\item $
d\alpha=0\quad\mbox{with}\quad\alpha=P\mathop{\rfloor}\omega$;
\item $L_{P_{f}}(\omega)=0,\quad\forall\, f\in{\rm C}^\infty(M),$
i.e., the Lie derivatives of $\omega$ along all $P$--Hamiltonian vector fields vanish.
\end{enumerate}
\end{proc}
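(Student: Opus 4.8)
The plan is to run both equivalences off the single identity $L_P(\omega)=-d\alpha$ that was already extracted in the proof of Proposition~\ref{Dual} as formula~(\ref{Pdd}); recall that unimodularity means $L_P(\omega)=0$ by Definition~\ref{UUU}. Item~(1) is then immediate: since $L_P(\omega)=-d\alpha$, the condition $L_P(\omega)=0$ holds if and only if $d\alpha=0$, with no further computation.

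For item~(2) the one substantive step is to establish, for every $f\in C^\infty(M)$, the contraction formula
\begin{equation}\label{key-contr-plan}
i_{P_f}\omega=-df\wedge\alpha .
\end{equation}
I would obtain it as follows. By~(\ref{P-Ham field}) we have $P_f=\ldb P,f\rdb$, so applying~(\ref{Schouten-Lie}) with $Q=f$ gives $i_{P_f}=[L_P,i_f]^{gr}=L_P\circ i_f-i_f\circ L_P$ (a plain commutator, since $i_f$, multiplication by $f$, has operator-degree $0$). Evaluating on $\omega$ yields $i_{P_f}\omega=L_P(f\omega)-f\,L_P(\omega)$. Now expand $L_P=i_P\circ d-d\circ i_P$ from~(\ref{Lie derivative}) (again an unsigned commutator, since $i_P$ lowers degree by the even number $2$): because $\omega$ is of top degree, $d(f\omega)=df\wedge\omega=0$, and $i_P$ is $C^\infty(M)$-linear, so $L_P(f\omega)=-d(f\alpha)=-df\wedge\alpha-f\,d\alpha$, while $f\,L_P(\omega)=-f\,d\alpha$. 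Subtracting gives~(\ref{key-contr-plan}). Since $P_f$ is a genuine vector field and $d\omega=0$, Cartan's formula collapses to $L_{P_f}\omega=d(i_{P_f}\omega)=-d(df\wedge\alpha)=df\wedge d\alpha$, i.e.\ $L_{P_f}\omega=-df\wedge L_P(\omega)$.

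The equivalence then closes formally. If $P$ is unimodular then $d\alpha=0$ by item~(1), whence $L_{P_f}\omega=df\wedge d\alpha=0$ for all $f$. Conversely, if $L_{P_f}\omega=df\wedge d\alpha=0$ for all $f$, I would take $f$ to be the coordinate functions $x_j$ of an arbitrary chart: the relations $dx_j\wedge d\alpha=0$ for every $j$ force the $(n-1)$-form $d\alpha$ to vanish, using only the pointwise nondegeneracy of the wedge pairing $\Lambda^1\times\Lambda^{n-1}\to\Lambda^n$. Hence $d\alpha=0$, and by item~(1) the structure $P$ is unimodular.

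The computation leading to~(\ref{key-contr-plan}) is the only place where care is needed, and the sole real hazard is the bookkeeping of graded signs in~(\ref{Schouten-Lie}) and~(\ref{Lie derivative}); everything else is either formal operator manipulation or the elementary linear-algebra fact that wedging against a covector detects each component of a top-minus-one form.
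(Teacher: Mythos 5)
Your proof is correct and follows essentially the same route as the paper's: both reduce everything to the identity $L_{P_f}(\omega)=-df\wedge L_P(\omega)$ and then invoke the fact that an $(n-1)$-form annihilated by wedging with every $df$ must vanish. The only (immaterial) difference is that you derive the intermediate formula $P_f\rfloor\omega=-df\wedge\alpha$ by a graded-commutator computation, which is in fact how the paper itself proves the same identity later as Proposition~\ref{LIST}(vi).
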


\begin{proof} In view of (\ref{Pdd})  the first assertion is obvious. Then, by 
Definition (\ref{Lie derivative}), we have
$$
\begin{array}{l}
L_{P_{f}}(\omega)=d(P_{f}\mathop{\rfloor}\omega)=
-d((df\mathop{\rfloor}P)\mathop{\rfloor}\omega)=
-d(df\wedge(P\mathop{\rfloor}\omega))=\\df\wedge d(P\mathop{\rfloor}\omega) 
=-df\wedge L_{P}(\omega),
\end{array}
$$
i.e.,
\begin{eqnarray}\label{LPf}
L_{P_{f}}(\omega)=-df\wedge L_{ P}(\omega)
\end{eqnarray}
If  $\rho\in \Lambda^k( M),\, k<n,$ and $df\wedge\rho=0$ holds for all $f\in C^\infty( M),$ 
then, obviously, $\rho=0$. Now this observation applied to (\ref{LPf}) proves the second assertion.
\end{proof}
\begin{rmk}\label{COO}
A direct consequence of \emph{(\ref{LPf})} is that $P$ is unimodular if and only if
$L_{P_{x_i}}(\omega)=0,\,i=1,\dots,n,$ for any local chart 
$(x_1,\dots,x_n)$ on $M.$
\end{rmk}

Compatibility conditions  for unimodular Poisson structures simplify as follows.
\begin{proc}\label{PPO}
Let $P_1,P_2,$ be $\omega$-unimodular Poisson structures. They are 
compatible if and only if $L_{P_1\wedge P_2}(\omega)=0$.
\end{proc}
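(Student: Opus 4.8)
The plan is to reduce the compatibility condition to the stated differential condition by exploiting the $\omega$-duality isomorphism $Q\mapsto Q\rfloor\omega$ together with the expression of the Schouten bracket through the liesation operator. Recall that $P_1$ and $P_2$ are compatible precisely when $\ldb P_1,P_2\rdb=0$, and that $\omega$-duality identifies the vanishing of a multi-vector $Q$ with the vanishing of $Q\rfloor\omega$ (this is exactly the fact used at the end of the proof of Proposition~\ref{Dual}). Consequently it suffices to establish, under the $\omega$-unimodularity of $P_1$ and $P_2$, the identity
$$
\ldb P_1,P_2\rdb\rfloor\omega=L_{P_1\wedge P_2}(\omega),
$$
for then $\ldb P_1,P_2\rdb=0\iff\ldb P_1,P_2\rdb\rfloor\omega=0\iff L_{P_1\wedge P_2}(\omega)=0$, which is the assertion.

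To obtain this identity I would specialise formula~\ref{Schouten via L} to the bi-vectors $P=P_1$ and $Q=P_2$. Since both $P_1$ and $P_2$ have degree $2$, the prefactors $(-1)^Q$ and $(-1)^{\bar P Q}$ occurring there are both equal to $+1$, and the formula collapses to the operator identity
$$
i_{\ldb P_1,P_2\rdb}=L_{P_1\wedge P_2}-i_{P_1}\circ L_{P_2}-i_{P_2}\circ L_{P_1}.
$$
Alternatively, one may derive this by polarising the diagonal relation $L_{P\wedge P}-2\,i_P\circ L_P=i_{\ldb P,P\rdb}$ established in the proof of Proposition~\ref{Dual}: substituting $P=P_1+P_2$, expanding using the bilinearity together with the symmetry of the wedge product and of the Schouten bracket on (even-degree) bi-vectors, and then subtracting the two diagonal relations for $P_1$ and for $P_2$, the cross terms combine with a factor $2$ and yield exactly the same bilinear identity. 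I slightly prefer this polarisation route, as it keeps the argument self-contained and removes any dependence on the precise sign conventions of~\ref{Schouten via L}.

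Evaluating the operator identity on the volume form $\omega$ gives
$$
\ldb P_1,P_2\rdb\rfloor\omega=L_{P_1\wedge P_2}(\omega)-P_1\rfloor L_{P_2}(\omega)-P_2\rfloor L_{P_1}(\omega).
$$
By the hypothesis of $\omega$-unimodularity, $L_{P_1}(\omega)=0$ and $L_{P_2}(\omega)=0$ (Definition~\ref{UUU}), so the two insertion terms on the right vanish, leaving precisely $\ldb P_1,P_2\rdb\rfloor\omega=L_{P_1\wedge P_2}(\omega)$. Combined with the $\omega$-duality equivalence above, this closes the argument in both directions simultaneously.

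I do not expect a genuine conceptual obstacle here: once the right bilinear identity is in hand, the argument is essentially mechanical. The only point demanding care is the sign bookkeeping in passing from the general graded formula~\ref{Schouten via L} to the degree-$2$ case (or, on the polarisation route, verifying that the wedge and Schouten products are symmetric on bi-vectors so that the cross terms correctly assemble with a factor $2$). A minor secondary point is to confirm that the two unwanted terms are genuinely insertions $i_{P_i}$ applied to $L_{P_j}(\omega)$, so that it is exactly $\omega$-unimodularity — and not some weaker condition — that makes them disappear.
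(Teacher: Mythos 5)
Your argument is correct and follows essentially the same route as the paper: specialise formula (\ref{Schouten via L}) to $P=P_1$, $Q=P_2$, apply the resulting operator identity to $\omega$, kill the two insertion terms by $\omega$-unimodularity, and conclude via the equivalence $\ldb P_1,P_2\rdb=0\iff\ldb P_1,P_2\rdb\rfloor\omega=0$. The polarisation variant you sketch is a valid alternative derivation of the same bilinear identity, but it does not change the substance of the proof.
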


\begin{proof} Formula (\ref{Schouten via L}) for $P=P_1, \,Q=P_2$ applied to $\omega$ gives:
$$
L_{P_1\wedge P_2}(\omega)=P_1\rfloor L_{P_2}(\omega)+P_2\rfloor L_{P_1}(\omega) + 
\ldb P_1,P_2\rdb\mathop{\rfloor}\omega.
$$
Due to the unimodularity of $P_1$ and $P_2$, it reduces to 
$$
L_{P_1\wedge P_2}(\omega)=\ldb P_1,P_2\rdb\mathop{\rfloor}\omega.
$$
and it remains to note that $\ldb P_1,P_2\rdb =0$ is equivalent to 
$\ldb P_1,P_2\rdb\mathop{\rfloor}\omega=0$.
\end{proof}
\begin{cor}\label{PROD}
Two $\omega$--unimodular Poisson structures $P_1$ and $P_2$ are compatible if 
$P_1\wedge P_2=0.\qquad\Box$
\end{cor}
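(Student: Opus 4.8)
The plan is to reduce the statement immediately to the equivalence established in Proposition \ref{PPO}. That proposition asserts that two $\omega$--unimodular Poisson structures $P_1$ and $P_2$ are compatible if and only if $L_{P_1\wedge P_2}(\omega)=0$, so it suffices to verify that the stronger hypothesis $P_1\wedge P_2=0$ forces this vanishing. In other words, the corollary simply records a convenient sufficient criterion that makes the condition of Proposition \ref{PPO} transparent.

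This verification is essentially automatic, and requires no computation. The liesation operator $L\colon Q\mapsto L_Q$ is $\R$--linear in its multi-vector argument, since by definition $L_Q=[i_Q,d]^{gr}$ and the correspondence $Q\mapsto i_Q$ is linear (indeed $Q\leftrightarrow i_Q$ is the identification of $D_{*}(M)$ with a subalgebra of graded operators on $\Lambda^{*}(M)$). Hence $L_0=0$, and in particular the hypothesis $P_1\wedge P_2=0$ gives $L_{P_1\wedge P_2}=L_0=0$, so that $L_{P_1\wedge P_2}(\omega)=0$ for any volume form $\omega$.

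Applying Proposition \ref{PPO} then yields at once that $P_1$ and $P_2$ are compatible. I do not expect any genuine obstacle here: the entire substantive content has already been absorbed into Proposition \ref{PPO}, whose proof rests on Formula (\ref{Schouten via L}) and the unimodularity of the two factors. The present corollary is merely the observation that when $P_1\wedge P_2$ vanishes identically, the term $L_{P_1\wedge P_2}(\omega)$ vanishes for the trivial reason that $L$ annihilates the zero multi-vector, so that no analysis of the Schouten bracket is needed at all.
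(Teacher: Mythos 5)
Your proposal is correct and is exactly the argument the paper intends: the corollary is stated with no written proof precisely because it follows immediately from Proposition~\ref{PPO} together with the trivial observation that $L_0=0$, hence $L_{P_1\wedge P_2}(\omega)=0$ when $P_1\wedge P_2=0$. Nothing further is needed.
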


\begin{cor}\label{prod}
Any two $\omega$--unimodular Poisson structures on a 3-dimensional manifold are 
compatible.
\end{cor}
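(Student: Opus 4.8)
The plan is to reduce the statement immediately to Corollary \ref{PROD} by observing that the obstruction controlling compatibility of $\omega$-unimodular structures vanishes for purely dimensional reasons. Recall that Proposition \ref{PPO} tells us two $\omega$-unimodular Poisson structures $P_1, P_2$ are compatible precisely when $L_{P_1 \wedge P_2}(\omega) = 0$, and Corollary \ref{PROD} records the sufficient condition $P_1 \wedge P_2 = 0$. So the only thing I need to check is that, on a $3$-dimensional manifold, the wedge product $P_1 \wedge P_2$ is forced to vanish.

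First I would note the degrees involved: since $P_1, P_2 \in D_2(M)$, their wedge product lies in $D_4(M)$. The exterior algebra of multi-vectors on an $n$-dimensional manifold is concentrated in degrees $0 \leq k \leq n$, that is, $D_k(M) = 0$ whenever $k > n$. With $\dim M = 3$ this gives $D_4(M) = 0$, hence $P_1 \wedge P_2 = 0$ identically, for \emph{any} two bi-vectors on $M$, irrespective of unimodularity.

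Having established $P_1 \wedge P_2 = 0$, I would conclude in either of two equivalent ways. The cleanest is to invoke Corollary \ref{PROD} directly, whose hypothesis is exactly $P_1 \wedge P_2 = 0$. Alternatively, substituting $P_1 \wedge P_2 = 0$ into the criterion of Proposition \ref{PPO} gives $L_{P_1 \wedge P_2}(\omega) = L_0(\omega) = 0$, which is the compatibility condition there. Either route shows $\ldb P_1, P_2 \rdb = 0$, i.e.\ $P_1$ and $P_2$ are compatible.

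There is essentially no hard step here: the corollary is a degenerate special case in which the bilinear obstruction to compatibility has nowhere to live. The one conceptual point worth stressing is \emph{why} unimodularity still matters even though it is never used in the dimension count. It enters through Proposition \ref{PPO}: for general (non-unimodular) structures, compatibility is governed by $\ldb P_1, P_2 \rdb$ together with the cross-terms $P_1 \rfloor L_{P_2}(\omega)$ and $P_2 \rfloor L_{P_1}(\omega)$, which need not vanish merely because $P_1 \wedge P_2$ does. It is precisely the $\omega$-unimodularity of both structures that collapses $L_{P_1 \wedge P_2}(\omega)$ to $\ldb P_1, P_2 \rdb \rfloor \omega$, after which the dimensional vanishing of $P_1 \wedge P_2$ does the rest.
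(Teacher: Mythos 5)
Your argument is correct and is exactly the reasoning the paper intends: on a $3$-dimensional manifold $D_4(M)=0$, so $P_1\wedge P_2=0$ automatically and Corollary \ref{PROD} (equivalently, Proposition \ref{PPO}) applies. The paper leaves this step implicit, and your dimension count is the same route.
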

\begin{rmk}\label{LOB}
The condition $L_{P_1\wedge P_2} =0$, which, obviously, guarantees the
compatibility of $P_1$ and $P_2$ is not, in fact, weaker 
than $P_1\wedge P_2=0$, since $L_Q=0$ is equivalent to $Q=0$ 
for any $Q\in D_*(M)$.
\end{rmk}

\subsection{The Lie rank of unimodular Lie algebras}

The following example shows the existence of unimodular odd-dimensional Lie algebras of
maximal Lie rank, i.e., of rank $2k$ if $n=2k+1$.
 \begin{ex}
Let $V$ be a vector space of dimension $2k+1$ and $(x_1,x_2,\dots,x_{2k+1})$ a cartesian chart on $V^*$, and $(\xi_1,\xi_2,\dots,\xi_{2k+1})$ the dual basis. Then
$$
P=x_{2k+1}(\xi_1\wedge\xi_2+\dots +\xi_{2k-1}\wedge\xi_{2k})
$$
is a Poisson bi-vector, as well as bi-vectors $P_s=x_{2k+1}\xi_{2s-1}\wedge\xi_{2s}, \;s=1,\dots,k$, of  rank two.
Obviously, the rank of $P$ is $2k$, the $P_s$'s are mutually compatible and $P=P_1+\dots+P_k$.
So, the Lie algebra structures $\gG, \gG_1,\dots, \gG_k$ on $V$ corresponding $P, P_1,\dots, P_k$, respectively,
are mutually compatible, $\gG=\gG_1+\dots+\gG_k$ and the Lie rank of $\gG$ is $2k$. 
\end {ex}

Now we shall show that the Lie rank of an unimodular Lie algebra is less than 
its dimension.

\begin{proc}\label{XUP}
If $Q$ is an $\omega$-unimodular Poisson structure of rank $2k=n$ on a manifold $M$, of dimension $2k$ 
then $Q^k\rfloor\omega$ is a constant function, different from zero.
\end{proc}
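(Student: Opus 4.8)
The plan is to set $h\DEF Q^k\rfloor\omega$, which is a function since $Q^k\in D_n(M)$ and $\omega\in\Lambda^n(M)$, and to prove separately that $h$ is (locally) constant and that it never vanishes. The non-vanishing is immediate: the rank hypothesis gives $Q^k\neq 0$, and $\omega$-duality $T\mapsto T\rfloor\omega$ is an isomorphism of $C^\infty(M)$-modules, so $h=Q^k\rfloor\omega\neq 0$. The substance of the statement is therefore the constancy of $h$, which I would establish by showing $dh=0$.

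To prove $dh=0$ I would verify that $Q_f(h)=0$ for every Hamiltonian vector field $Q_f=\ldb Q,f\rdb$ and then invoke non-degeneracy. Writing $X=Q_f$, applying the commutation formula (\ref{[L_X,i_P]}) with the multivector $Q^k$ and evaluating on $\omega$ yields
\[
L_X(Q^k\rfloor\omega)=Q^k\rfloor L_X(\omega)-(L_X Q^k)\rfloor\omega ,
\]
with no sign intervening because $L_X$ has degree $0$ as an operator on $\Lambda^*(M)$. The first term on the right vanishes since $L_{Q_f}(\omega)=0$ by unimodularity (Proposition\,\ref{Uni}(2)). For the second, $L_{Q_f}(Q)=\ldb Q,Q_f\rdb=\ldb Q,\ldb Q,f\rdb\rdb=\partial_Q^2(f)=0$ because $\partial_Q$ is a differential; as the ordinary Lie derivative $L_{Q_f}$ along the vector field $Q_f$ is a derivation of the wedge product, it follows that $L_{Q_f}(Q^k)=0$ and hence $(L_XQ^k)\rfloor\omega=0$. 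Thus $Q_f(h)=L_{Q_f}(h)=0$ for all $f$.

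It remains to pass from the Hamiltonian fields to all of $T_xM$. Since $Q$ has rank $2k=n$ it is non-degenerate, so $\gamma_Q:\Lambda^1(M)\to D(M)$ is an isomorphism and the fields $Q_f=-\gamma_Q(df)$ span $T_xM$ at each point as $f$ ranges over $C^\infty(M)$ (using (\ref{P-Ham field})). Hence $dh$ annihilates a spanning set at every point, so $dh=0$, $h$ is locally constant, and being nowhere zero it is a nonzero constant on each connected component.

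The difficulty here is organisational rather than conceptual. The points to get right are: the justification of the splitting of $L_{Q_f}(Q^k\rfloor\omega)$ through (\ref{[L_X,i_P]}), keeping track of the fact that $L_X$ carries degree $0$ so that the insertion--Lie-derivative commutator has trivial sign; and the verification that $L_{Q_f}$ is an honest derivation of the wedge power $Q^k$, so that $L_{Q_f}(Q)=0$ propagates to $L_{Q_f}(Q^k)=0$. One should also note that the spanning argument strictly needs rank $n$ pointwise; if rank $n$ is only generic, one first gets $dh=0$ on the open dense locus where it holds and then extends it to all of $M$ by continuity.
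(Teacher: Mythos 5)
Your proof is correct, but it takes a genuinely different route from the paper's. The paper establishes constancy in one stroke: since $\ldb Q,Q\rdb=0$, Formula (\ref{Schouten via L}) gives $L_{Q^2}=2i_Q\circ L_Q$ and, inductively, $L_{Q^s}=s\,i_{Q^{s-1}}\circ L_Q$, so that $d(Q^k\rfloor\omega)=\pm L_{Q^k}(\omega)=\pm k\,Q^{k-1}\rfloor L_Q(\omega)$ vanishes directly from $L_Q(\omega)=0$. You instead prove that every Hamiltonian field $Q_f$ preserves $\omega$ (Proposition\,\ref{Uni}, part (2)) and preserves $Q$ (because $\ldb Q,Q_f\rdb=\partial_Q^2f=0$), hence preserves $h=Q^k\rfloor\omega$, and you then pass to $dh=0$ via the spanning of the tangent spaces by Hamiltonian fields. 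Both arguments rest on exactly the same two inputs --- unimodularity and $\ldb Q,Q\rdb=0$ --- but the paper's computation is purely algebraic and needs no pointwise non-degeneracy, whereas your spanning step does. That step is also the only place where your write-up is slightly loose: with rank defined as in Definition\,\ref{Oppa} ($Q^k\neq0$, $Q^{k+1}=0$), the maximal-rank locus is open but not a priori dense, so ``extend by continuity from a dense set'' is not quite the right patch. The repair is immediate, however: on the open set where the rank drops below $2k$ one has $Q^k=0$, hence $h=0$ and $dh=0$ there, and continuity handles the boundary of the maximal-rank locus (in the intended application to linear Poisson bi-vectors the degeneracy locus is an algebraic subvariety in any case). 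So your conclusion stands; the paper's version is simply shorter and avoids the issue altogether.
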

\begin{proof} Since $Q$  is a Poisson structure, then, according to (\ref{Schouten via L}), 
$L_{Q^2}=2i_Q\circ L_Q$. Inductively, one easily finds that
$$
L_{Q^s}=si_{Q^{s-1}}\circ L_Q,\quad s\ge 2.
$$
If $Q^k\ne 0$, then the  function $f=Q^k\rfloor\omega$ is 
different from zero.

But  $L_Q(\omega)=0$, since $Q$ is $\omega$-unimodular ,  and 
hence
$$
df=d(Q^k\rfloor\omega)=L_{Q^k}(\omega)=kQ^{k-1}\rfloor L_Q(\omega)=0. 
$$
\end{proof}
\begin{cor}\label{6.1f}
The Lie rank of an unimodular Lie algebra $\gG$ is less than $\dim\gG$.
\end{cor}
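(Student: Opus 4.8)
The plan is to deduce this directly from Proposition\,\ref{XUP} together with the fact that $P_{\gG}$ is \emph{linear}. Write $n=\dim\gG$ and let $2k$ be the Lie rank, so that $P_{\gG}^k\neq0$ and $P_{\gG}^{k+1}=0$ by Definition\,\ref{Oppa}. Since the rank is always even and cannot exceed $n$, the inequality $2k<n$ is automatic when $n$ is odd, and when $\gG$ is abelian one has $P_{\gG}=0$ and hence Lie rank $0<n$. Thus the only configuration I need to exclude is $2k=n$ with $n$ even and $k\geq1$, and I would argue by contradiction, assuming the Lie rank equals $n$.

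Next I would invoke Proposition\,\ref{XUP} on the space $V^{*}=\gG^{*}$ of dimension $n=2k$, equipped with the constant (translation-invariant) volume form $\omega=dx_1\wedge\dots\wedge dx_n$ in a cartesian chart. The key preliminary point is that unimodularity of the Lie algebra $\gG$ means precisely that $P_{\gG}$ is $\omega$-unimodular for this particular $\omega$, i.e. $L_{P_{\gG}}(\omega)=0$; this is the standard identification of $L_{P_{\gG}}(\omega)$ with the datum $x\mapsto\tr(\ad_x)$. Granting this, the hypotheses of Proposition\,\ref{XUP} are met, and it yields that $f\DEF P_{\gG}^{k}\rfloor\omega$ is a constant function different from zero.

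The contradiction then comes from a homogeneity count. By (\ref{Poisson dual}) we have $P_{\gG}=\sum_{i,j,l}c_{ij}^{l}x_l\,\xi_i\xi_j$, whose coefficients are homogeneous of degree $1$ in the coordinates $x_l$; consequently the $n$-vector $P_{\gG}^{k}$ has a coefficient homogeneous of degree $k$. Writing $P_{\gG}^{k}=g\,\xi_1\cdots\xi_n$ with $g$ homogeneous of degree $k$ and contracting against the constant form $\omega$ (so that $\xi_1\cdots\xi_n\rfloor\omega=\pm1$) gives $f=\pm g$, a homogeneous polynomial of degree $k\geq1$. Such a polynomial is never a nonzero constant, contradicting the conclusion of Proposition\,\ref{XUP}. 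Hence $2k=n$ is impossible, and the Lie rank satisfies $2k<\dim\gG$. The only genuinely delicate step is the first one, confirming that the unimodularity hypothesis on $\gG$ is unimodularity of $P_{\gG}$ with respect to the \emph{constant} volume form; once this is pinned down, the remainder is routine degree bookkeeping.
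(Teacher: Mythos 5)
Your proof is correct and follows essentially the same route as the paper: assume the rank of $P_{\gG}$ equals $\dim\gG=2k$, note that $P_{\gG}^{k}\rfloor\omega$ is a homogeneous polynomial of degree $k$ by linearity of $P_{\gG}$, and contradict Proposition~\ref{XUP}, which forces it to be a nonzero constant. Your extra care about the odd-dimensional and abelian cases and about identifying unimodularity of $\gG$ with $\omega$-unimodularity of $P_{\gG}$ for the cartesian volume form is sound and consistent with the paper's conventions (cf.\ Formula~(\ref{tra})).
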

\begin{proof} Assume that the rank  of the associated Poisson bi-vector $P_{\gG}$ 
on $\vert\gG\vert^*$ is equal to $\dim \gG=2k$. If $\omega$ is a cartesian (``constant") volume form on $\vert\gG\vert^*$, then $f=P^k\rfloor \omega$ is a homogeneous polynomial of order $k$ on $\vert\gG\vert^*$. This contradicts the fact that, according to Proposition
\ref{XUP}, $f$ is a non-zero constant.
\end{proof}

\subsection{Modular vector fields}

Let $\omega$ be a volume form and $P$ a Poisson bi-vector on a manifold $M$.
\begin{defi}
The vector field $\Xi=\Xi_{P,\omega}$ defined by the relation
\begin{equation}\label{POL}
\Xi\mathop{\rfloor}\omega=d(P\rfloor\omega)
\end{equation}
is called the \emph{modular vector field} of $P$  with respect to $\omega$, or the  
\emph{$\omega$--modular vector field}.
\end{defi}

It follows from (\ref{Pdd}) and (\ref{LPf})  that
$$
L_{P_f}(\omega)=df\wedge (\Xi\mathop{\rfloor}\omega)
$$
On the other hand, $0=\Xi\mathop{\rfloor}(df\wedge\omega)= 
\Xi(f)\omega-df\wedge(\Xi\mathop{\rfloor}\omega).$ So,
\begin{equation}\label{DIV}
L_{P_f}(\omega)=\Xi(f)\omega. 
\end{equation}
This shows that $\Xi(f)={\rm div}_\omega (P_f)$ so that  
$\Xi=\Xi_{P,\omega}$ measures the divergence of $P$-Ha\-mil\-to\-ni\-an vec\-tor 
fields with respect to the volume form $\omega.$ This property was the 
original definition of modular vector fields.

\begin{proc}\label {LIST} 
Let $\Xi=\Xi_{P,\omega}$ be the modular field of a Poisson structure $P$ and 
$\alpha=\alpha_{P,\omega}=P\rfloor\omega$. Then the following relations hold:
$$
\begin{array}{lcc}
({\rm i})\quad L_{\Xi}(\alpha)=0;\\
({\rm ii})\quad L_P(d\alpha)=0;\\
({\rm iii})\quad  L_P(\alpha)=-\Xi\mathop{\rfloor}\alpha;\\
({\rm iv})\quad L_\Xi(\omega)=0;\\
({\rm v})\quad L_\Xi(P)=\ldb P,\Xi\rdb=\partial_P(\Xi)=0;\\
({\rm vi})\quad P_f\mathop{\rfloor}\omega=-df\wedge\alpha, \forall f \in C^\infty(M);\\
({\rm vii})\quad [\Xi, P_{f}]=P_{\Xi(f)}, \forall f \in C^\infty(M).
\end{array}
$$
\end{proc}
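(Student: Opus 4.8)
The seven identities are tightly linked, and the plan is to derive all of them from a short list of inputs: the defining relation (\ref{POL}), $\Xi\rfloor\omega=d\alpha$, together with (\ref{Pdd}), $L_P(\omega)=-d\alpha$; the elementary operator facts $i_Pi_Q=i_{P\wedge Q}$ and Cartan's formula $L_X=i_Xd+di_X$ applied to the \emph{vector field} $\Xi$; and the graded Schouten formalism encoded in (\ref{Schouten-Lie}) and (\ref{Schouten-Jacobi}). The computational backbone is the single identity $d(P\rfloor\alpha)=2\,P\rfloor d\alpha$, i.e. $d(i_P\alpha)=2\,i_Pd\alpha$, which is exactly (\ref{DUAL}) of Proposition \ref{Dual} specialised by $\ldb P,P\rdb=0$ (equivalently (\ref{PPP}) with vanishing right-hand side).

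First I would dispose of the two cheap items. For (vi), I rewrite $P_f=-df\rfloor P$ from (\ref{P-Ham field}) and use the contraction identity $(df\rfloor P)\rfloor\omega=df\wedge(P\rfloor\omega)$ already exploited in the proof of Proposition \ref{Uni}; this gives $P_f\rfloor\omega=-df\wedge\alpha$ at once. For (iv), Cartan's formula yields $L_\Xi(\omega)=i_\Xi d\omega+d(i_\Xi\omega)=d(d\alpha)=0$, since $\omega$ is closed (being of top degree) and $i_\Xi\omega=d\alpha$ by (\ref{POL}).

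The heart is (iii). Expanding $L_P\alpha=(i_Pd-di_P)(\alpha)=i_Pd\alpha-d(i_P\alpha)$ and substituting the backbone identity $d(i_P\alpha)=2\,i_Pd\alpha$ collapses this to $L_P\alpha=-i_Pd\alpha$. Since $d\alpha=i_\Xi\omega$ and $i_Pi_\Xi=i_{P\wedge\Xi}=i_{\Xi\wedge P}=i_\Xi i_P$, one has $i_Pd\alpha=\Xi\rfloor\alpha$, which is precisely (iii). Items (i) and (ii) then come for free: applying $d$ to the backbone identity gives $0=2\,d(i_Pd\alpha)$, whence $d(\Xi\rfloor\alpha)=0$; then $L_\Xi\alpha=i_\Xi d\alpha+d(i_\Xi\alpha)=0$, using $i_\Xi d\alpha=i_{\Xi\wedge\Xi}\omega=0$ and $i_\Xi\alpha=\Xi\rfloor\alpha$, which is (i), while $L_P(d\alpha)=-d(i_Pd\alpha)=0$ is (ii). For (v) I pass to $\omega$-duals: by (\ref{Schouten-Lie}), $i_{\ldb P,\Xi\rdb}=[L_P,i_\Xi]^{gr}$, and applying this operator to $\omega$ gives $L_P(i_\Xi\omega)+i_\Xi L_P(\omega)=L_P(d\alpha)-i_\Xi d\alpha$, which vanishes by (ii) and $i_\Xi d\alpha=0$; since a multivector is zero iff its $\omega$-dual is, $\ldb P,\Xi\rdb=0$, and the three expressions in (v) coincide via (\ref{[L_X,i_P]}) (giving $L_\Xi(P)=\ldb P,\Xi\rdb$) and the definition of $\partial_P$. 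Finally (vii) is a graded-Jacobi computation: writing $P_f=\ldb P,f\rdb$ and applying (\ref{Schouten-Jacobi}) to $\Xi,P,f$, the term carrying $\ldb P,\Xi\rdb$ drops out by (v), leaving $\ldb\Xi,\ldb P,f\rdb\rdb=\ldb P,\ldb\Xi,f\rdb\rdb=\ldb P,\Xi(f)\rdb=P_{\Xi(f)}$, since $\ldb\Xi,f\rdb=\Xi(f)$.

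I expect the only genuine difficulty to be the sign and degree bookkeeping of the graded calculus: pinning down the operator degrees of $i_P$ and $L_P$, the signs in the graded commutators of (\ref{Schouten-Lie}), and the degree-dependent sign in the relation $L_Q(\omega)=\pm\,d(i_Q\omega)$ for top-degree $\omega$. Correspondingly, the one place where the Poisson condition actually enters is in establishing the backbone identity $d(i_P\alpha)=2\,i_Pd\alpha$ cleanly from $\ldb P,P\rdb=0$. Once these conventions are fixed and that identity is in hand, every one of (i)--(vii) reduces to a short formal manipulation.
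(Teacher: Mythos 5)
Your proof is correct and follows essentially the same route as the paper's: the key identity $d(P\rfloor\alpha)=2\,P\rfloor d\alpha$ of Proposition~\ref{Dual}, combined with $\Xi\rfloor\alpha=P\rfloor(\Xi\rfloor\omega)=P\rfloor d\alpha$, drives (i)--(iii) exactly as in the text, and (iv)--(vii) follow by the same short formal manipulations. The only cosmetic deviations are in (v), where the paper applies $L_\Xi$ to $\alpha=P\rfloor\omega$ and invokes (\ref{[L_X,i_P]}) instead of evaluating $[L_P,i_\Xi]^{gr}$ on $\omega$, and in (vii), where it uses the Leibniz rule for $L_\Xi$ acting on $df\rfloor P$ rather than the graded Jacobi identity --- both variants hinge on the same cancellation supplied by (v).
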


\begin{proof}
First, by definition,
$
 L_{\Xi}(\alpha)=d(\Xi\mathop{\rfloor}\alpha)+\Xi\mathop{\rfloor}d\alpha.
$
Also
\begin{equation}\label{pre} 
\Xi\mathop{\rfloor}\alpha= \Xi\mathop{\rfloor}(P\mathop{\rfloor}\omega)= 
P\mathop{\rfloor}(\Xi\mathop{\rfloor}\omega)= P\mathop{\rfloor}d\alpha
\end{equation}
and hence, in view of (\ref{DUAL}), we have
$$
d(\Xi\mathop{\rfloor}\alpha)=d(P\mathop{\rfloor}d\alpha)= 
\frac{1}{2}d^2(P\mathop{\rfloor}\alpha)=0
$$
On the other hand,
 $\Xi\mathop{\rfloor}d\alpha=
\Xi\mathop{\rfloor}(\Xi\mathop{\rfloor}\omega)=0$. This proves (i). 

Similarly,
$$
L_P(d\alpha)=[i_P,d]^{gr}(d\alpha)=-d(P\mathop{\rfloor}d\alpha)=0
$$
This proves (ii).

In turn, (iii) directly follows  from (\ref{DUAL}) and (\ref{pre}):
$$
L_P(\alpha)=[i_P,d](\alpha)=P\mathop{\rfloor}d\alpha- 
d(P\mathop{\rfloor}\alpha)=-P\mathop{\rfloor}d\alpha= 
-\Xi\mathop{\rfloor}\alpha.
$$
The simple computation
$$
L_\Xi(\omega)=d(\Xi\mathop{\rfloor}\omega)=d^2(P\rfloor\omega)=0
$$
proves (iv). 
To prove (v) it suffices to show that
  $L_\Xi(P)\mathop{\rfloor}\omega=0.$ But according to (i) and (\ref{[L_X,i_P]}) we have
 $$0=L_\Xi(\a)=L_\Xi(P\mathop{\rfloor}\o)=
-L_\Xi(P)\mathop{\rfloor}\omega+P\mathop{\rfloor}L_\Xi(\omega)= 
-L_\Xi(P)\mathop{\rfloor}\omega.
$$

Now, a particular case of (\ref{Schouten}) is $i_{\ldb f, P\rdb}=[[f,d]^{gr},i_P]^{gr}=-[df,i_P]$, and one gets (vi) 
by applying this relation to $\omega$. 

Finally, recall the general formula
$$
L_X(\rho\mathop{\rfloor}Q)=-L_X(\rho)\mathop{\rfloor}Q+\rho 
\mathop{\rfloor}L_X(Q)
$$
with $X\in D(M),\quad Q\in D_i(M),\quad \rho\in\Lambda^j(M).$ By applying 
this formula to $X=\Xi,\quad Q=P$ and $\rho=df$ one finds that
$$
[ \Xi, P_f]=-L_\Xi(P_f)=L_\Xi(df\rfloor P)=-L_{\Xi}(df)\rfloor P+df\rfloor L_{\Xi}(P).
$$
Now (vii) directly follows from (v) by observing that $L_\Xi(df)=d\,\Xi(f)$. 
\end{proof}

Additivity is an important property of $\omega$--modular fields.
\begin{proc}\label{PFA}
If Poisson structures $P_1$ and $P_2$ are compatible and $\Xi_1,\Xi_2$
 are $\omega$--modular fields for $P_1$ and $P_2$, respectively, then
$\Xi_1+\Xi_2$ is the $\omega$--modular field of $P_1+P_2.$
\end{proc}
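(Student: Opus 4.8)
The plan is to exploit the linearity of the defining relation (\ref{POL}) in the Poisson bi-vector. Since $P_1$ and $P_2$ are compatible, $P=P_1+P_2$ is again a Poisson bi-vector, so its modular vector field $\Xi_{P,\omega}$ is well defined by $\Xi_{P,\omega}\rfloor\omega=d(P\rfloor\omega)$; it therefore suffices to verify that $\Xi_1+\Xi_2$ satisfies this same relation.

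First I would use the linearity of the insertion operator to write $(P_1+P_2)\rfloor\omega=P_1\rfloor\omega+P_2\rfloor\omega$, and then the linearity of the exterior derivative to obtain $d((P_1+P_2)\rfloor\omega)=d(P_1\rfloor\omega)+d(P_2\rfloor\omega)$. By the definition of the modular fields $\Xi_1$ and $\Xi_2$, each summand on the right equals $\Xi_i\rfloor\omega$, whence
\begin{equation*}
d((P_1+P_2)\rfloor\omega)=\Xi_1\rfloor\omega+\Xi_2\rfloor\omega=(\Xi_1+\Xi_2)\rfloor\omega,
\end{equation*}
the last equality being once more the linearity of insertion.

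Finally I would invoke the fact that $\omega$-duality $Q\mapsto Q\rfloor\omega$ is an isomorphism of $C^\infty(M)$-modules, so a vector field is uniquely determined by its insertion into $\omega$. Hence $\Xi_1+\Xi_2$ necessarily coincides with $\Xi_{P_1+P_2,\omega}$, which is the assertion. I expect no genuine obstacle here: the additivity is a formal consequence of the linearity of $d$ and $\rfloor$, and the compatibility hypothesis serves only to guarantee that $P_1+P_2$ is a genuine Poisson structure, so that the statement about its modular field is meaningful.
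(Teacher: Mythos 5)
Your proof is correct and is essentially the argument the paper intends: the paper simply says the result is "straightforward from (\ref{DIV})", i.e.\ from the linearity in $P$ of the divergence characterisation $L_{P_f}(\omega)=\Xi(f)\omega$, whereas you use the linearity in $P$ of the equivalent defining relation (\ref{POL}). Both are one-step linearity arguments, and your closing remark that compatibility is needed only to make $P_1+P_2$ a Poisson structure is exactly right.
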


{\it Proof.} The proof is straightforward from 
(\ref{DIV}).$\qquad\Box$
\begin{cor}\label{BUU} If $P_i,  \,\Xi_i, \,i=1,2$, are as above, then
\begin{equation}\label{buu}
L_{\Xi_1}(P_2)+L_{\Xi_2}(P_1)=0
\end{equation}
\end{cor}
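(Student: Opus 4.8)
The plan is to obtain the identity as an immediate consequence of the additivity of modular fields (Proposition \ref{PFA}) combined with property (v) of Proposition \ref{LIST}, applied three times and expanded by bilinearity of the Schouten bracket. The whole argument is a short formal computation; there is no genuine obstacle.

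First I would recall that property (v) of Proposition \ref{LIST} asserts $L_\Xi(P) = \ldb P, \Xi \rdb = 0$ whenever $\Xi$ is the $\omega$-modular field of the Poisson structure $P$. Applying this to $P_1$ and to $P_2$ separately gives
$$
\ldb P_1, \Xi_1 \rdb = 0, \qquad \ldb P_2, \Xi_2 \rdb = 0.
$$
Next, since $P_1$ and $P_2$ are compatible, $P_1 + P_2$ is again a Poisson structure, and by Proposition \ref{PFA} its $\omega$-modular field is exactly $\Xi_1 + \Xi_2$. Hence property (v) applies once more, now to the pair $(P_1+P_2,\ \Xi_1+\Xi_2)$, yielding
$$
\ldb P_1 + P_2,\ \Xi_1 + \Xi_2 \rdb = L_{\Xi_1+\Xi_2}(P_1+P_2) = 0.
$$

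The final step is to expand the left-hand side using the bilinearity of the Schouten bracket:
$$
\ldb P_1 + P_2,\ \Xi_1 + \Xi_2 \rdb = \ldb P_1, \Xi_1 \rdb + \ldb P_2, \Xi_2 \rdb + \ldb P_1, \Xi_2 \rdb + \ldb P_2, \Xi_1 \rdb.
$$
The first two terms vanish by the computation above, so the remaining two must cancel. Recalling the convention $L_X(Q) = \ldb Q, X \rdb$ from (\ref{[L_X,i_P]}), the surviving terms are precisely $\ldb P_1, \Xi_2 \rdb = L_{\Xi_2}(P_1)$ and $\ldb P_2, \Xi_1 \rdb = L_{\Xi_1}(P_2)$, which gives the asserted relation $L_{\Xi_1}(P_2) + L_{\Xi_2}(P_1) = 0$. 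The only points requiring care are the correct use of the sign-free bilinearity of the bracket (no nontrivial Koszul signs enter here, since expanding in each slot separately is sign-neutral) and the bookkeeping identification of each cross-term with the appropriate Lie derivative; both are routine.
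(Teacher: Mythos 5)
Your proof is correct and follows exactly the paper's own argument: both derive the identity from $L_{\Xi_1}(P_1)=L_{\Xi_2}(P_2)=L_{\Xi_1+\Xi_2}(P_1+P_2)=0$ (Proposition~\ref{LIST}\,(v) together with the additivity of modular fields from Proposition~\ref{PFA}) and then expand by bilinearity of the Schouten bracket.
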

{\it Proof.} The proof follows from $L_{\Xi_1}(P_1)=L_{\Xi_2}(P_2)=
L_{\Xi_1+\Xi_2}(P_1+P_2)=0$ (Proposition\;\ref{LIST} (v)). $\qquad\Box$

 \subsection{The $\omega$--modular class}
 
The following known fact describes the dependence of the $\omega$--modular field on $\omega$. 
For completeness we give a short proof of it.
\begin{proc}\label{GAU}
If $\omega^\prime=f\o$ is another volume form on $M$, then
\begin{equation}\label{gau}
\Xi_{P,f\omega}=\Xi_{P,\omega}-P_{\ln|f|}
\end{equation}
\end{proc}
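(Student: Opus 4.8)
The plan is to compute $\Xi_{P,f\omega}$ directly from its defining relation (\ref{POL}) and to reduce the difference $\Xi_{P,f\omega}-\Xi_{P,\omega}$ to a Hamiltonian vector field via formula (\ref{P-Ham field}). First I would write down the defining equations for both modular fields,
\begin{equation}\label{two-defs}
\Xi_{P,f\omega}\rfloor(f\omega)=d(P\rfloor(f\omega)),\qquad \Xi_{P,\omega}\rfloor\omega=d(P\rfloor\omega).
\end{equation}
The key is to expand the right-hand side of the first equation using the fact that insertion of the bi-vector $P$ is $C^\infty(M)$-linear in the relevant sense: since $P\in D_2(M)$, we have $P\rfloor(f\omega)=f\,(P\rfloor\omega)=f\alpha$. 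Applying the Leibniz rule for $d$ then gives $d(f\alpha)=df\wedge\alpha+f\,d\alpha$, so the defining relation becomes $\Xi_{P,f\omega}\rfloor(f\omega)=df\wedge\alpha+f\,d\alpha$.

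Next I would rewrite each term as an insertion into $f\omega$ so that both sides carry the same volume form and the identity can be read off at the level of vector fields. For the term $f\,d\alpha$, note $d\alpha=\Xi_{P,\omega}\rfloor\omega$ by definition, so $f\,d\alpha=f\,(\Xi_{P,\omega}\rfloor\omega)=\Xi_{P,\omega}\rfloor(f\omega)$. For the term $df\wedge\alpha$, I would invoke relation (vi) of Proposition\,\ref{LIST}, namely $P_g\rfloor\omega=-dg\wedge\alpha$ for any $g$. Taking $g=\ln|f|$ gives $d(\ln|f|)\wedge\alpha=-P_{\ln|f|}\rfloor\omega$, and since $d(\ln|f|)=\frac{1}{f}df$ we obtain $df\wedge\alpha=f\,d(\ln|f|)\wedge\alpha=-f\,(P_{\ln|f|}\rfloor\omega)=-P_{\ln|f|}\rfloor(f\omega)$. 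Substituting both rewritings back yields
\begin{equation}\label{collected}
\Xi_{P,f\omega}\rfloor(f\omega)=\bigl(\Xi_{P,\omega}-P_{\ln|f|}\bigr)\rfloor(f\omega).
\end{equation}

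Finally, since $f\omega$ is again a volume form, the $(f\omega)$-duality map $X\mapsto X\rfloor(f\omega)$ is injective on vector fields (it is an isomorphism of $C^\infty(M)$-modules), so (\ref{collected}) forces $\Xi_{P,f\omega}=\Xi_{P,\omega}-P_{\ln|f|}$, which is exactly (\ref{gau}). I expect the only delicate point to be the clean handling of the term $df\wedge\alpha$: one must recognise it as a Hamiltonian insertion rather than trying to manipulate it head-on, and the substitution $g=\ln|f|$ (which produces the $\frac{1}{f}$ needed to cancel the prefactor $f$) is the crux of the argument. Everything else is bookkeeping with the Leibniz rule and the $C^\infty(M)$-linearity of insertion, together with the injectivity of $\omega$-duality already established in the preliminaries.
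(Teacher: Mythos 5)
Your proof is correct and follows essentially the same route as the paper's: both expand $d(P\rfloor(f\omega))=df\wedge\alpha+f\,d\alpha$ by the Leibniz rule and then identify the term $df\wedge\alpha$ via Proposition\,\ref{LIST}\,(vi) with $g=\ln|f|$, concluding by injectivity of insertion into the volume form $f\omega$. The only cosmetic difference is that the paper first writes $\Xi_{P,f\omega}=\Xi_{P,\omega}+Y$ and solves for $Y$, whereas you carry all terms on one side and read off the answer at the end.
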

{\it Proof.} By definition the vector field $\Xi_{P,f\omega}$ is the unique solution of
\begin{equation}\label{gag}
\Xi_{P,f\omega}\mathop{\rfloor}(f\omega)=d\alpha_{P,f\omega}
\end{equation}
and $\alpha_{P,f\omega}=P\mathop{\rfloor}(f\omega)=f\alpha$ where 
$\alpha=\alpha_{P,\omega}$. If $\Xi_{P,f\omega}=\Xi_{P,\omega}+Y$, then  (\ref{gag})
may be rewritten as
$$
fY\mathop{\rfloor}\omega=df\wedge\alpha\Longleftrightarrow 
Y\mathop{\rfloor}\omega=d(\ln|f|)\wedge\alpha
$$
($f$ is nowhere zero, since $f\o$ is a volume form). Now Proposition \ref{LIST}, 
(vi), shows that $Y=-P_{\ln|f|}.\qquad\Box$

This result has the following cohomological interpretation. By
Proposition \ref{LIST} (v), $\partial_P(\Xi)=0,$
i.e., $\Xi$ is a 1-cocycle of the complex $\{ D_*(M), \partial_P\}$. On the other hand, 
$P$--Hamiltonian fields are co-boundaries of this complex:
$P_g=\partial_P(g)$, for $g \in C^\infty(M)$. Hence Proposition \ref{GAU} yields:

\begin{cor}\label{COH}
The cohomology class of the $\omega$--modular field $\Xi_{P,\omega}$ in 
$\{D_*(M),\partial_P\}$ does not depend on $\omega$ and, therefore, is 
well-defined by $P$. $\qquad\Box$
\end{cor}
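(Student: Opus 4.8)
The plan is to read this off directly from the gauge-transformation formula of Proposition~\ref{GAU}, since the corollary is just its cohomological reformulation. Recall that the degree-one cohomology of the complex $\{D_*(M),\partial_P\}$ is the quotient of the $1$-cocycles $\{X\in D(M):\partial_P(X)=\ldb P,X\rdb=0\}$ by the coboundaries $\{\partial_P(g)=P_g:g\in C^\infty(M)\}$, the latter being exactly the $P$--Hamiltonian vector fields.

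First I would record that $\Xi_{P,\omega}$ is indeed a $1$-cocycle for every choice of volume form $\omega$: this is precisely Proposition~\ref{LIST}~(v), which states $\partial_P(\Xi)=\ldb P,\Xi\rdb=0$. So each $\Xi_{P,\omega}$ determines a class $[\Xi_{P,\omega}]$ in this cohomology, and the only thing left to check is that the class is unchanged when $\omega$ is replaced by another volume form $\omega'$.

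The key step is to compare the two modular fields. Any two volume forms on $M$ differ by multiplication by a nowhere-vanishing function, so I may write $\omega'=f\omega$ with $f\in C^\infty(M)$ nowhere zero. Proposition~\ref{GAU} then gives
$$
\Xi_{P,f\omega}=\Xi_{P,\omega}-P_{\ln|f|}=\Xi_{P,\omega}-\partial_P(\ln|f|),
$$
so that $\Xi_{P,f\omega}-\Xi_{P,\omega}=-\partial_P(\ln|f|)$ is a coboundary. Consequently $[\Xi_{P,f\omega}]=[\Xi_{P,\omega}]$ in $\{D_*(M),\partial_P\}$, which is the assertion.

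I do not expect a genuine obstacle here: all the analytic content --- the divergence interpretation and, above all, the gauge formula (\ref{gau}) --- has already been absorbed into Propositions~\ref{LIST} and~\ref{GAU}. The corollary is a purely formal consequence, the one minor point worth stating explicitly being that the ratio of two volume forms is a nowhere-zero function, which is what makes $\ln|f|$ a legitimate smooth Hamiltonian and hence $P_{\ln|f|}=\partial_P(\ln|f|)$ an honest coboundary.
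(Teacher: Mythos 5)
Your proof is correct and follows exactly the paper's own route: Proposition~\ref{LIST}~(v) makes $\Xi_{P,\omega}$ a $1$-cocycle, and Proposition~\ref{GAU} exhibits the difference of two modular fields as the coboundary $-\partial_P(\ln|f|)$. Nothing is missing.
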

\begin{defi}
The $\partial_P$-cohomology class of the $\omega$-modular field of $P$ is called 
the \emph{modular} class of $P$.
\end{defi}
\begin{cor}\label{HMM}
A Poisson structure $P$  is $\omega$--unimodular with respect to a volume form 
$\omega$ if and only if its modular class vanishes.$\qquad\Box$
\end{cor}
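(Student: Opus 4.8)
The plan is to recognise that $\omega$--unimodularity is nothing but the vanishing of the modular \emph{field} for a suitable volume form, and then to use the gauge behaviour of $\Xi$ under rescaling of $\omega$ (Proposition \ref{GAU}) to translate this into the vanishing of its cohomology class. The first step I would carry out is the elementary observation that $P$ is $\omega$--unimodular if and only if $\Xi_{P,\omega}=0$: by the defining relation (\ref{POL}) one has $\Xi_{P,\omega}\rfloor\omega=d(P\rfloor\omega)=d\alpha$, and since $\omega$--duality is an isomorphism of modules, $\Xi_{P,\omega}=0$ is equivalent to $d\alpha=0$, which by Proposition \ref{Uni} is exactly the unimodularity of $P$.

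Granting this, the forward implication is immediate. If $P$ is unimodular with respect to some $\omega$, then the representative $\Xi_{P,\omega}$ of the modular class (computed with this particular $\omega$) is the zero field, hence trivially a $\partial_P$--coboundary, so the modular class vanishes.

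For the converse I would argue constructively. Assume the modular class vanishes and fix any volume form $\omega$. Since the $1$--coboundaries of $\{D_*(M),\partial_P\}$ are precisely the Hamiltonian fields $P_g=\partial_P(g)$, the hypothesis furnishes a function $g\in C^\infty(M)$ with $\Xi_{P,\omega}=P_g$. The idea is then to absorb $g$ into the volume form: setting $f=e^{g}$ (a strictly positive function, so that $\omega'=f\omega$ is again a volume form) one has $\ln|f|=g$, and Proposition \ref{GAU} gives
$$
\Xi_{P,\omega'}=\Xi_{P,\omega}-P_{\ln|f|}=P_g-P_g=0.
$$
By the first step this says $P$ is $\omega'$--unimodular, which is the assertion to be proved.

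The real content sits in Proposition \ref{GAU}: the rescaling $\omega\mapsto f\omega$ shifts $\Xi$ by the coboundary $-P_{\ln|f|}$, so the cohomology class of the modular field is exactly the obstruction to killing $\Xi$ by a change of volume form. I do not expect any genuine obstacle here; the only mildly nontrivial point is the exponentiation $f=e^{g}$ solving $\ln|f|=g$, which is harmless since $g$ is real--valued in the smooth ($\gk=\R$) setting.
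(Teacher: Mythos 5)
Your proof is correct and is exactly the argument the paper intends: the corollary is stated with a $\Box$ as an immediate consequence of Proposition \ref{GAU} and Corollary \ref{COH}, namely that $\omega$--unimodularity is equivalent to $\Xi_{P,\omega}=0$ and that the rescaling $\omega\mapsto e^{g}\omega$ shifts the modular field by the coboundary $-P_{g}$. Your reconstruction, including the exponentiation step in the converse direction, matches that route precisely.
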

If $P$ is non-degenerate, then $(M, \gamma_P(P))$ is a symplectic
manifold. In this case the isomorphism $\gamma_P:\Lambda^*(M) \rightarrow D_*(M)$  
is also an isomorphism of
complexes  $\{\Lambda^*(M),d\}$ and $\{D_*(M),d_P\}$.
Therefore, if $H^1(M)=0,$ then any non-degenerate Poisson structure on $M$ is $\o$--unimodular
with respect to a suitable volume form $\o.$

\subsection{The modular disassembling of a Poisson structure}

Now we shall show that the $\omega$--modular vector field of a Poisson structure $P$ 
allows one to disassemble this structure, at 
least locally,  into two parts, one of which is 
$\omega$--unimodular, while  all ``$\omega$--non--unimodularity" 
of $P$ is concentrated in the second part.

\begin {proc}\label{SPLIT}
Let $\Xi$ be the $\omega$--modular vector field of a Poisson structure $P$ and 
$\nu\in C^\infty(M)$ be such 
that $\Xi(\nu)=1$. Then
\begin{enumerate}
\item $\Xi\wedge P_\nu$ is a Poisson structure compatible with $P$;
\item $L_{\Xi\wedge P_\nu}(\omega)=-L_P(\omega)$;
\item $P+{\Xi\wedge P_\nu}$ is an  $\omega$-unimodular Poisson structure
compatible with $P$, and $\nu$  is a Casimir function of $P+{\Xi\wedge P_\nu}$;
\item $P_\nu\wedge \Xi=\ldb P, \nu\Xi\rdb=\partial_P(\nu\Xi)$.
\end{enumerate}
\end{proc}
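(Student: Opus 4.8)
The plan is to prove part (4) first, since it identifies $\Xi\wedge P_\nu$ with a Schouten bracket and the other three parts then follow almost formally. For (4) I would expand $\ldb P,\nu\Xi\rdb$ by the graded Leibniz rule for the Schouten bracket, writing $\nu\Xi=\nu\wedge\Xi$ with $\nu$ of degree $0$:
\[
\ldb P,\nu\Xi\rdb=\ldb P,\nu\rdb\wedge\Xi+\nu\,\ldb P,\Xi\rdb .
\]
Here $\ldb P,\nu\rdb=P_\nu$ by (\ref{P-Ham field}) and $\ldb P,\Xi\rdb=\partial_P(\Xi)=0$ by Proposition\,\ref{LIST}\,(v), so $\ldb P,\nu\Xi\rdb=P_\nu\wedge\Xi=\partial_P(\nu\Xi)$, which is (4). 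In particular $\Xi\wedge P_\nu=-\partial_P(\nu\Xi)$, a formula I will reuse.

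For part (1), compatibility with $P$ is then immediate: $\ldb P,\Xi\wedge P_\nu\rdb=-\ldb P,\partial_P(\nu\Xi)\rdb=-\partial_P^2(\nu\Xi)=0$, since $\ldb P,P\rdb=0$ makes $\partial_P$ a differential. To see that $\Xi\wedge P_\nu$ is itself Poisson, the key observation is that $\Xi$ and $P_\nu$ commute: by Proposition\,\ref{LIST}\,(vii), $[\Xi,P_\nu]=P_{\Xi(\nu)}=0$ because $\Xi(\nu)=1$ is constant and the Hamiltonian field of a constant vanishes. Since the Schouten bracket of multivectors assembled from vector fields is expressible solely through the pairwise Lie brackets of those fields, and $[\Xi,\Xi]=[P_\nu,P_\nu]=[\Xi,P_\nu]=0$, the self-bracket $\ldb\Xi\wedge P_\nu,\Xi\wedge P_\nu\rdb$ vanishes term by term.

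Part (2) is a direct computation using (\ref{L as derivation}), which gives $L_{\Xi\wedge P_\nu}(\omega)=\Xi\rfloor L_{P_\nu}(\omega)-L_\Xi(P_\nu\rfloor\omega)$. For the first term, (\ref{DIV}) with $\Xi(\nu)=1$ yields $L_{P_\nu}(\omega)=\omega$, whence $\Xi\rfloor L_{P_\nu}(\omega)=\Xi\rfloor\omega=d\alpha$ by the defining relation (\ref{POL}) of $\Xi$. For the second term, Proposition\,\ref{LIST}\,(vi) gives $P_\nu\rfloor\omega=-d\nu\wedge\alpha$, and expanding $L_\Xi(d\nu\wedge\alpha)$ by Leibniz annihilates both summands, since $L_\Xi(\alpha)=0$ (Proposition\,\ref{LIST}\,(i)) and $L_\Xi(d\nu)=d(\Xi(\nu))=d(1)=0$. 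Hence $L_{\Xi\wedge P_\nu}(\omega)=d\alpha=-L_P(\omega)$ by (\ref{Pdd}).

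Finally, part (3) collects the pieces. The sum $P+\Xi\wedge P_\nu$ is Poisson as a sum of the two compatible Poisson structures of (1), and compatible with $P$ because $\ldb P,P+\Xi\wedge P_\nu\rdb=\ldb P,\Xi\wedge P_\nu\rdb=0$. It is $\omega$-unimodular since (2) gives $L_{P+\Xi\wedge P_\nu}(\omega)=L_P(\omega)+L_{\Xi\wedge P_\nu}(\omega)=0$. That $\nu$ is a Casimir amounts to $\ldb P+\Xi\wedge P_\nu,\nu\rdb=0$; here $\ldb P,\nu\rdb=P_\nu$, while $\ldb\Xi\wedge P_\nu,\nu\rdb=-d\nu\rfloor(\Xi\wedge P_\nu)=-\bigl(\Xi(\nu)P_\nu-P_\nu(\nu)\Xi\bigr)=-P_\nu$, using $\Xi(\nu)=1$ and $P_\nu(\nu)=0$ by the antisymmetry of $P$, so the two contributions cancel. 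I expect the only real subtlety to lie in part (1): recognizing the commutativity $[\Xi,P_\nu]=0$ as the mechanism forcing $\Xi\wedge P_\nu$ to be Poisson, and being careful that the self-Schouten-bracket of a decomposable bivector is governed entirely by the Lie brackets of its two factors.
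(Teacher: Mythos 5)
Your proof is correct and follows essentially the same route as the paper's: the key mechanisms are identical, namely $[\Xi,P_\nu]=P_{\Xi(\nu)}=0$ from Proposition~\ref{LIST}\,(vii), the bi-derivation property of the Schouten bracket, and $L_{P_\nu}(\omega)=\Xi(\nu)\omega=\omega$ from (\ref{DIV}). The only (harmless) variations are that you derive compatibility in (1) from $\partial_P^2=0$ applied to $\nu\Xi$ rather than expanding $\ldb P,\Xi\wedge P_\nu\rdb$ directly, and that you kill $L_\Xi(P_\nu\rfloor\omega)$ via Proposition~\ref{LIST}\,(vi) and the Leibniz rule instead of Formula~(\ref{[L_X,i_P]}).
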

\begin{proof} First, from Proposition \ref{LIST} (vii), we see that 
$$
\ldb \Xi, P_{\nu}\rdb=[\Xi, P_{\nu}]=P_{\Xi(\nu)}=P_1=0.
$$
Since the Schouten bracket is a graded bi-derivation of the exterior algebra $D_{*}(M)$, this implies
$$
\ldb\Xi\wedge P_\nu, \Xi\wedge P_\nu\rdb=0.
$$
Hence $\Xi\wedge P_\nu$ is a Poisson structure. By the same reason,
$$
\ldb P,\Xi\wedge P_\nu\rdb=\ldb P,\Xi\rdb\wedge P_\nu- \Xi\wedge\ldb P,P_\nu\rdb
$$
But $\ldb P,P_\nu\rdb=0$ since $P_\nu$ is a $P$--Hamiltonian field.
So, the right-hand side of the above equality vanishes by Proposition 
\ref{LIST} (v). Therefore,
$\Xi\wedge P_\nu$ is compatible with $P.$

To prove the second assertion, we let $P=\Xi, \;Q=P_\nu$  in  (\ref{L as derivation})  
and then apply the result to $\omega$:
$$
L_{\Xi\wedge P_\nu}(\o)=\Xi\mathop{\rfloor}L_{P_\nu}(\omega)- 
L_\Xi(P_{\nu}\mathop{\rfloor}\omega)
$$
Similarly, when Formula (\ref{[L_X,i_P]}), with $X=\Xi, \;Q=P_\nu$, is applied to 
$\omega$, we obtain
$$
L_\Xi(P_\nu\mathop{\rfloor}\omega)=-L_\Xi(P_\nu)\mathop{\rfloor}\omega+ 
P_\nu\mathop{\rfloor}L_{\Xi}(\omega)
$$
Since, by Proposition \ref{LIST} (v), (vii), $L_{\Xi}(\omega)=0$  and $L_\Xi(P_\nu)=0$, 
this shows that
$$
L_{\Xi\wedge P_\nu}(\o)=\Xi\mathop{\rfloor}L_{P_\nu}(\omega)
$$
On the other hand, according to (\ref{DIV}), $L_{P_\nu}(\omega)=\Xi(\nu)\omega=\omega$ and hence
$$
L_{\Xi\wedge P_\nu}(\o)=\Xi\rfloor\omega=d\alpha=-L_P(\omega).
$$

In turn, the $\omega$--unimodularity of $P+\Xi\wedge P_\nu$
directly follows from Assertion (2), and
$$
d\nu\mathop{\rfloor}(P+\Xi\wedge  P_\nu)=d\nu\mathop{\rfloor}P+ 
\Xi(\nu){P_\nu}=0
$$
proves that $\nu$ is a Casimir function of this structure.

Finally, the fact that $\partial_P$ is a graded derivation of the exterior algebra 
$D_{*}(M)$
together with $P_\nu=\ldb P, \nu\rdb, \;\ldb P, \Xi\rdb=0$ (Proposition\;\ref{LIST} (v)) proves the last assertion.
\end{proof}
\begin{rmk}\label{loc-nu}
The equation $\Xi(\nu)=1$ admits solutions only in the open domain
$U$ of $M$ in which the vector field $\Xi$ does not vanish and these solutions
are, generally, local.  Accordingly, the function $\nu$
in Proposition\,\ref{SPLIT} is defined only locally, in $U$.
\end{rmk}
Thus,  $P$ is the sum 
\begin{equation}\label{BRA}
P=(P-P_\nu\wedge\Xi)+P_\nu\wedge\Xi
\end{equation}
of two compatible Poisson structures, the first of which
is $\omega$-unimodular, while the second one is 
$\omega$-non-unimodular (if different from zero). Accordingly, $P-P_\nu\wedge\Xi$
is the \emph{$\omega$-unimodular part} of $P$ and $P_\nu\wedge\Xi$ is its
\emph{$\omega$-non-unimodular part}. Note that $P_\nu\wedge\Xi$ is of rank 2 
(if different from zero), and as such it is a $\omega$--non--unimodular Poisson structure 
of the smallest possible dimension, which is compatible with $P$.

Formula (\ref{BRA}) is called an \emph{$\omega$-modular disassembling} of $P$. It is,
obviously, unique only up to Poisson bi-vectors of the form $P_f\wedge\Xi$
with $\Xi(f)=0$. Nevertheless, the following interpretation of (\ref{BRA}) in terms 
of Poisson brackets reveals a part of it that is invariant, i.e., independent on the choice of
the normalising function $\nu$.

Denote by $\{\cdot,\cdot\}_{non}$ (resp., by $\{\cdot,\cdot\}_{uni}$) the Poisson 
bracket corresponding to the $\omega$-non-uni-modular
 (resp., $\omega$-unimodular) part of $P$. Then
$$
\{f,g\}_{non}=\{f,\nu\}\Xi(g)-\{g,\nu\}\Xi(f).
$$
If $P$--Hamiltonian fields $P_f$ and $P_g$ are $\omega$--divergenceless i.e., 
$\Xi(f)=\Xi(g)=0$ (see (\ref{DIV})), then
$$
\{f,g\}_{non}=0 \qquad \mbox{is equivalent to}\qquad \{f,g\}_{uni}=\{f,g\}.
$$
This shows that the restriction of the bracket $\{\cdot,\cdot\}_{uni}$ to the
$\omega$--divergenceless part of the original Poisson structure does not depend 
on the choice of the normalising function $\nu$ (see (\ref{BRA})).

The Poisson bi-vector $P_\nu\wedge\Xi$ is a
\emph{$\omega$-modular bi-vector} associated with $P$ in the sense of Definition\,\ref{PNU} below.
All $\omega$-modular bi-vectors associated with $P$ are compatible each other. 
Indeed, by Proposition\;\ref{LIST} (vii), $[P_{f}, \Xi]=0$ 
if $\Xi(f)=\const$. Since the Schouten bracket is a graded bi-derivation of $D_{*}(M)$, 
this shows
$$
\ldb P_\nu\wedge\Xi, P_{\mu}\wedge\Xi\rdb=0 \quad \mathrm{if} \quad \Xi(\nu)=\Xi(\mu)=1.
$$

The following proposition shows that an $\omega$-modular bi-vector coincides with its
$\omega$-non-unimodular part.
\begin{proc}\label{IDE} If $P,\omega,\Xi$ and $\nu$ are as above, then
\begin{enumerate}
\item the $\omega$--modular field of the Poisson structure 
 $P_\nu\wedge \Xi$  coincides with  $\Xi$;
\item if $\Xi(\mu)=1$, then  $(P_\nu\wedge\Xi)_\mu\wedge\Xi=P_\nu\wedge\Xi$.
\end{enumerate}
\end{proc}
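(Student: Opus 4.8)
The plan is to treat the two assertions separately, deriving (1) from the defining relation (\ref{POL}) of the modular field together with Proposition \ref{SPLIT}(2), and (2) from a direct contraction computation of the $(P_\nu\wedge\Xi)$--Hamiltonian field of $\mu$. Throughout I abbreviate $\Pi=P_\nu\wedge\Xi$, which is a Poisson bi-vector by Proposition \ref{SPLIT}(1), and I use that $\Xi\wedge P_\nu=-\Pi$, since $\Xi$ and $P_\nu$ are $1$-vectors.

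For (1), I would start from the definition (\ref{POL}) of the modular field of $\Pi$, namely $\Xi_{\Pi,\omega}\rfloor\omega=d(\Pi\rfloor\omega)$. Applying (\ref{Pdd}) to the $2$-vector $\Pi$ gives $L_\Pi(\omega)=-d(\Pi\rfloor\omega)$, so that $\Xi_{\Pi,\omega}\rfloor\omega=-L_\Pi(\omega)$. Now Proposition \ref{SPLIT}(2) reads $L_{\Xi\wedge P_\nu}(\omega)=-L_P(\omega)$, and since $\Xi\wedge P_\nu=-\Pi$ this is precisely $L_\Pi(\omega)=L_P(\omega)$. Combining (\ref{Pdd}) and (\ref{POL}) for $P$ itself, one has $-L_P(\omega)=d(P\rfloor\omega)=\Xi\rfloor\omega$, whence $\Xi_{\Pi,\omega}\rfloor\omega=\Xi\rfloor\omega$. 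Because $\omega$--duality is an isomorphism of $C^\infty(M)$--modules, this forces $\Xi_{\Pi,\omega}=\Xi$, which is (1).

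For (2), I would compute the Hamiltonian field of $\mu$ with respect to $\Pi$ using (\ref{P-Ham field}): $(P_\nu\wedge\Xi)_\mu=\ldb\Pi,\mu\rdb=-d\mu\rfloor\Pi$. Contracting the $1$-form $d\mu$ into the decomposable bi-vector $P_\nu\wedge\Xi$ gives $d\mu\rfloor(P_\nu\wedge\Xi)=P_\nu(\mu)\,\Xi-\Xi(\mu)\,P_\nu$, where I have used $\langle d\mu,X\rangle=X(\mu)$. Since $\Xi(\mu)=1$ by hypothesis, this yields $(P_\nu\wedge\Xi)_\mu=-P_\nu(\mu)\,\Xi+P_\nu$. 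Wedging on the right with $\Xi$ and using $\Xi\wedge\Xi=0$ makes the first term vanish, leaving $(P_\nu\wedge\Xi)_\mu\wedge\Xi=P_\nu\wedge\Xi$, as claimed.

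I do not expect a genuine obstacle here: both parts reduce to bookkeeping once Proposition \ref{SPLIT}(2) is in hand for (1) and the decomposability of $\Pi$ is exploited for (2). The only points demanding care are the signs in passing between $\Xi\wedge P_\nu$ and $P_\nu\wedge\Xi$ and in the contraction identity; but in (2) the decisive simplification is that wedging with $\Xi$ annihilates the $\Xi$--component, so the result depends only on the normalisation $\Xi(\mu)=1$ and not on the value $P_\nu(\mu)$.
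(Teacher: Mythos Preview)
Your proof is correct and follows essentially the same route as the paper: part (1) is deduced from Proposition~\ref{SPLIT}(2) via the defining relation (\ref{POL}) and (\ref{Pdd}), and part (2) from the direct contraction $-d\mu\rfloor(P_\nu\wedge\Xi)=\Xi(\mu)P_\nu-P_\nu(\mu)\,\Xi$ followed by wedging with $\Xi$. The paper simply writes the coefficient of $\Xi$ as $\{\mu,\nu\}$ rather than $P_\nu(\mu)$, but these coincide, and your handling of the sign in $\Xi\wedge P_\nu=-P_\nu\wedge\Xi$ is more explicit than the paper's.
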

\begin{proof}
Since $L_Q=d(Q\rfloor\omega)$ if $Q\in D_2(M)$, the first assertion is just an interpretation of Proposition \ref{SPLIT}(2). The second one follows from:
$$
(P_{\nu}\wedge\Xi)_\mu=-d\mu\mathop{\rfloor}(P_\nu\wedge\Xi)= 
\Xi(\mu)P_\nu-\{\mu,\nu\}\Xi.
$$
\end{proof}

Now we are ready to define general $\omega$--modular bi-vectors. To this end we 
need the following generalisation of Proposition\,\ref{IDE}.
\begin{proc}\label{UAB}
Let $X,\Xi\in D(M)$ and the volume form $\omega\in\Lambda^n(M)$ be such that
\begin{equation}\label{uab}
[X,\Xi]=0, \quad L_\Xi(\omega)=0,\quad L_X(\omega)=\omega .
\end{equation}
Then $P=X\wedge\Xi$ is a Poisson structure, which coincides with its 
$\omega$--non--unimodular part, and $\Xi$ is the  $\omega$--modular field of $P$.
\end{proc}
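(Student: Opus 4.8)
The plan is to verify the three claims in turn, using the characterisations of Poisson structures and modular fields established above. First I would show that $P = X\wedge\Xi$ is a Poisson structure, i.e.\ that $\ldb P,P\rdb = 0$. Since the Schouten bracket is a graded bi-derivation of the exterior algebra $D_*(M)$, expanding $\ldb X\wedge\Xi, X\wedge\Xi\rdb$ reduces everything to the elementary brackets $\ldb X,X\rdb$, $\ldb \Xi,\Xi\rdb$ and $\ldb X,\Xi\rdb$. For vector fields the first two vanish automatically (a vector field always Schouten-commutes with itself, as $\ldb X,X\rdb = 2[X,X] = 0$), and the hypothesis $[X,\Xi]=0$ kills the cross-terms. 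So this step is a short formal computation with the Leibniz rule, entirely analogous to the opening lines of the proof of Proposition\,\ref{SPLIT}.

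Next I would compute the $\omega$--modular field of $P=X\wedge\Xi$ directly from its defining relation $\Xi_{P,\omega}\rfloor\omega = d(P\rfloor\omega)$. The natural route is to evaluate $L_P(\omega)$, since $L_P(\omega) = -d(P\rfloor\omega) = -\Xi_{P,\omega}\rfloor\omega$ by \eqref{Pdd} and the definition \eqref{POL}. To get $L_P(\omega)$ I apply the derivation formula \eqref{L as derivation} with $P\rightsquigarrow X$, $Q\rightsquigarrow\Xi$, giving
$$
L_{X\wedge\Xi}(\omega) = i_X\circ L_\Xi(\omega) - L_X\circ i_\Xi(\omega).
$$
The hypothesis $L_\Xi(\omega)=0$ annihilates the first term. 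For the second term I use $L_X(\omega)=\omega$ together with the commutation of $i_\Xi$ and $L_X$: indeed \eqref{[L_X,i_P]} gives $[i_\Xi,L_X]^{gr} = i_{L_X(\Xi)} = i_{\ldb\Xi,X\rdb} = 0$ by the first hypothesis in \eqref{uab}. Hence $L_X\circ i_\Xi(\omega) = i_\Xi\circ L_X(\omega) = i_\Xi(\omega) = \Xi\rfloor\omega$, and therefore $L_P(\omega) = -\Xi\rfloor\omega$. Comparing with $L_P(\omega) = -\Xi_{P,\omega}\rfloor\omega$ and using that $\omega$--duality is an isomorphism (a multi-vector vanishes iff its $\omega$-contraction does) yields $\Xi_{P,\omega}=\Xi$.

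Finally, that $P$ coincides with its $\omega$--non--unimodular part follows because its $\omega$--unimodular part $P - P_\nu\wedge\Xi_{P,\omega} = P - P_\nu\wedge\Xi$ is required to vanish; concretely, by Proposition\,\ref{IDE}(2) applied with the modular field just identified, $(X\wedge\Xi)_\nu\wedge\Xi = X\wedge\Xi$ whenever $\Xi(\nu)=1$, so the modular disassembling \eqref{BRA} of $P$ has trivial unimodular summand. I expect the only real subtlety to be the bookkeeping of signs and of the graded commutator $[i_\Xi,L_X]^{gr}$ in the modular-field computation; once \eqref{[L_X,i_P]} is invoked correctly the three hypotheses in \eqref{uab} each play exactly one role (Poisson condition, $L_\Xi(\omega)=0$ for the vanishing term, $L_X(\omega)=\omega$ for the surviving term), and no genuinely hard step remains. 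This proposition is essentially an abstraction of Proposition\,\ref{IDE}, with the concrete data $X=P_\nu$, and $L_{P_\nu}(\omega)=\Xi(\nu)\omega=\omega$ replaced by the axioms \eqref{uab}.
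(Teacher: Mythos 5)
Your proposal is correct and follows essentially the same route as the paper: the same Schouten-bracket expansion for $\ldb P,P\rdb=0$, the same computation of $L_{X\wedge\Xi}(\omega)=-\Xi\rfloor\omega$ via (\ref{L as derivation}) and (\ref{[L_X,i_P]}), and the same reduction of the last claim to Proposition\,\ref{IDE}(2). The only cosmetic difference is in that last step: the paper first chooses $\nu$ with $\Xi(\nu)=1$, $X(\nu)=0$ so that $P_\nu=X$ and Proposition\,\ref{IDE}(2) applies verbatim, whereas you apply the underlying one-line computation $(X\wedge\Xi)_\mu=\Xi(\mu)X-X(\mu)\Xi$ directly to $X$, which is equally valid.
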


\begin{proof}
First, observe that conditions (\ref{uab}) assures independence of vector fields
$X$ and $\Xi$. Since $\ldb X\wedge\Xi,X\wedge\Xi\rdb 
=2[X,\Xi]\wedge X\wedge\Xi$, the condition $[X,\Xi]=0$ implies that $P$ is a 
Poisson structure. On the other hand, in view of (\ref{[L_X,i_P]}), (\ref{L as derivation}) 
and (\ref{uab}) we have
$$
\begin{array}{l}
L_{P}(\omega)=X\mathop{\rfloor}L_\Xi(\omega)-L_X(\Xi\mathop{\rfloor}\omega)=
L_X(\Xi)\mathop{\rfloor}\omega-\Xi\mathop{\rfloor}L_X(\omega)= 
-\Xi\mathop{\rfloor}\omega .
\end{array}
$$
This shows (see (\ref{Pdd}) and (\ref{POL})) that $\Xi$ is the 
$\omega$--modular vector field of $P$.

Since $X$ and $\Xi$  commute, there exists, at least locally,  a function $\nu$ such that 
$\Xi(\nu)=1,\quad X(\nu)=0$. For such a function $\nu$, $P_\nu=-d\nu\rfloor(X\wedge\Xi)=X,$ 
i.e., locally, $P=P_\nu\wedge\Xi$ and, therefore, by Proposition\,\ref{IDE} (2),
$P$ coincides with its own $\omega$--non--unimodular part. 
\end{proof}

\begin{defi}\label{PNU} A Poisson bi-vector described in Proposition \emph{\ref{UAB}}
 is called an \emph{$\omega$--modular bi-vector}.
\end{defi}
As a direct consequence of the $\omega$--modular disassembling (\ref{BRA}) and of Proposition \, \ref{UAB}, we obtain
\begin{cor}
A Poisson structure, which coincides with its $\omega$-non-unimodular part is locally
an $\omega$--modular bi-vector.
\end{cor}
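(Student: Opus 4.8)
The plan is to read the statement off directly from the $\omega$-modular disassembling (\ref{BRA}) together with the characterisation of $\omega$-modular bi-vectors in Proposition \ref{UAB}. The real content has already been packaged into Propositions \ref{SPLIT}, \ref{IDE} and \ref{UAB}, so the argument is largely a matter of bookkeeping: the task is to identify the two commuting vector fields whose wedge product realises $P$ as a modular bi-vector, and to check the three defining conditions.

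First I would apply the disassembling to the given Poisson structure $P$. On the open domain $U$ where the modular field $\Xi=\Xi_{P,\omega}$ does not vanish, choose a local function $\nu$ with $\Xi(\nu)=1$; such $\nu$ exists by Remark \ref{loc-nu}. Then formula (\ref{BRA}) writes $P$ as the sum of its $\omega$-unimodular part $P-P_\nu\wedge\Xi$ and its $\omega$-non-unimodular part $P_\nu\wedge\Xi$. By hypothesis $P$ coincides with the latter, so the unimodular part vanishes and hence $P=P_\nu\wedge\Xi$ locally on $U$. Setting $X\DEF P_\nu$, this exhibits $P=X\wedge\Xi$, and it remains only to verify the conditions (\ref{uab}) of Proposition \ref{UAB} (equivalently, of Definition \ref{PNU}). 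The commutation $[X,\Xi]=[P_\nu,\Xi]=0$ follows from Proposition \ref{LIST}(vii), since $[\Xi,P_\nu]=P_{\Xi(\nu)}=P_1=0$; the identity $L_\Xi(\omega)=0$ is exactly Proposition \ref{LIST}(iv); and $L_X(\omega)=L_{P_\nu}(\omega)=\Xi(\nu)\omega=\omega$ by (\ref{DIV}). Thus $X$ and $\Xi$ satisfy (\ref{uab}), so $P=X\wedge\Xi$ is an $\omega$-modular bi-vector, which is the assertion.

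The only genuine subtlety, and the reason for the word \emph{locally} in the statement, is the existence of the normalising function $\nu$: the equation $\Xi(\nu)=1$ is solvable only where $\Xi\neq 0$, and only locally even there. I would therefore restrict attention throughout to the domain $U$ on which $\Xi$ is non-vanishing. This restriction is harmless under the hypothesis: were $\Xi$ to vanish on an open set, the $\omega$-non-unimodular part $P_\nu\wedge\Xi$, and hence $P$ itself, would vanish there, so that on the region where $P$ is a genuinely non-unimodular structure the field $\Xi$ is automatically nonzero and $\nu$ is available. Hence the expected obstacle is not analytic but merely one of correct localisation, and the identification $X=P_\nu$ makes the verification of (\ref{uab}) immediate from the lemmas already established.
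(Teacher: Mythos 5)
Your proof is correct and is exactly the argument the paper intends: the modular disassembling (\ref{BRA}) forces $P=P_\nu\wedge\Xi$ locally where $\Xi\neq 0$, and the three conditions (\ref{uab}) for $X=P_\nu$ follow from Proposition \ref{LIST} (iv), (vii) and Formula (\ref{DIV}). Your remark on why the localisation to the set where $\Xi$ is non-vanishing is harmless is a sensible clarification of a point the paper leaves implicit.
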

In what follows, we shall assume, when 
referring to an $\omega$--modular bi-vector of the form $X\wedge\Xi$, that the conditions of Proposition\,\ref{UAB} are satisfied.

\begin{rmk}
If $X,Y\in D(M)$, then, as it is easy to see, 
$\ldb X\wedge Y, X\wedge Y\rdb=2[X,Y]\wedge X\wedge Y$. Hence the bi-vector $X\wedge Y$ 
is Poisson if and only if the distribution generated by $X$ and $Y$ is integrable. In particular, 
the distribution associated with an $\omega$--modular bi-vector is integrable.
\end{rmk}

\subsection{Compatibility of $\omega$--modular bivectors}

Now we shall discuss compatibility conditions involving  $\omega$--modular bivectors.  First, we consider
the inverse of the $\omega$--modular splitting procedure.
\begin {proc}\label{VUB}
An  $\omega$--modular bi-vector $X\wedge\Xi$ and an $\omega$-unimodular  structure $Q$ are compatible if and only if
$$
L_\Xi(Q)=0,\quad\Xi\wedge L_X(Q)=0.
$$
\end{proc}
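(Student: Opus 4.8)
The plan is to compute the Schouten bracket $\ldb X\wedge\Xi, Q\rdb$ and to show, under the standing hypotheses on $X,\Xi,\omega$ from Proposition\,\ref{UAB}, that it vanishes precisely when the two stated conditions hold. Since $Q$ is $\omega$--unimodular and $X\wedge\Xi$ is $\omega$--modular (hence $\omega$-dual data are well controlled), the natural move is to test the bracket against $\omega$: by the last remark of Proposition\,\ref{Dual} a multi-vector vanishes iff its $\omega$-contraction does, so $\ldb X\wedge\Xi, Q\rdb=0$ is equivalent to $\ldb X\wedge\Xi, Q\rdb\rfloor\omega=0$, which by Definition\,(\ref{Lie derivative}) is tied to Lie derivatives of the $(n-2)$-forms $Q\rfloor\omega$ and $(X\wedge\Xi)\rfloor\omega$.

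First I would expand the bracket using the graded bi-derivation property of the Schouten bracket over the wedge product:
\begin{equation}\label{expand-bracket}
\ldb X\wedge\Xi, Q\rdb=X\wedge\ldb \Xi,Q\rdb-\ldb X,Q\rdb\wedge\Xi=-X\wedge L_\Xi(Q)+L_X(Q)\wedge\Xi,
\end{equation}
where I have used $\ldb \Xi,Q\rdb=-L_\Xi(Q)$ and $\ldb X,Q\rdb=-L_X(Q)$ in the convention $L_Y(Q)=\ldb Q,Y\rdb$ of (\ref{[L_X,i_P]}); the sign bookkeeping for the degree-one fields $X,\Xi$ against the bi-vector $Q$ will have to be done carefully, but this is routine. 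From (\ref{expand-bracket}) the sufficiency direction is immediate: if $L_\Xi(Q)=0$ and $\Xi\wedge L_X(Q)=0$, then both summands vanish (the second because $L_X(Q)\wedge\Xi=\pm\,\Xi\wedge L_X(Q)$ up to sign), so the bracket is zero and the structures are compatible by Proposition\,\ref{Poisson-lie dual}'s criterion, i.e. condition (1) of the Proposition following Definition\,\ref{Poisson compatibility}.

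The more delicate direction is necessity: I must show that the vanishing of the \emph{sum} $-X\wedge L_\Xi(Q)+L_X(Q)\wedge\Xi$ forces each stated condition separately. Here the plan is to exploit that $X$ and $\Xi$ are independent (guaranteed by (\ref{uab}) as noted in Proposition\,\ref{UAB}) and to extract components by wedging with $\Xi$ or by contracting. Wedging (\ref{expand-bracket}) with $\Xi$ kills the first term, giving $\Xi\wedge L_X(Q)\wedge\Xi$-type expressions that isolate part of the data; to get the clean conditions I expect to invoke the modular identity $L_\Xi(Q)=0$ first as a consequence of additivity/cohomological rigidity (Corollary\,\ref{BUU} and Proposition\,\ref{LIST}(v)), and only then read off $\Xi\wedge L_X(Q)=0$ from the residual term.

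The main obstacle I anticipate is the necessity argument: disentangling the two wedge terms in (\ref{expand-bracket}) so that the single equation ``sum $=0$'' yields the two separate conditions, rather than merely their combination. The cleanest route is probably to pass to the local coordinates adapted to the modular splitting, namely choose $\nu$ with $\Xi(\nu)=1$, $X(\nu)=0$ as in the proof of Proposition\,\ref{UAB}, so that $X=P_\nu$ and $P=P_\nu\wedge\Xi$; then contract (\ref{expand-bracket}) against $d\nu$ and against $\omega$ to separate the $X$-direction from the $\Xi$-direction and thereby isolate $L_\Xi(Q)$ and $\Xi\wedge L_X(Q)$ one at a time. I would then translate the resulting form-identities back via $\omega$-duality to recover the stated multi-vector conditions, completing the equivalence.
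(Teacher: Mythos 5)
Your proposal is correct and follows essentially the same route as the paper: expand $\ldb X\wedge\Xi,Q\rdb$ by the bi-derivation property of the Schouten bracket, obtain sufficiency at once, and for necessity first deduce $L_\Xi(Q)=0$ from the additivity of modular fields (Corollary\,\ref{BUU}, i.e.\ Propositions\,\ref{PFA} and \ref{LIST}(v)) and then read off $\Xi\wedge L_X(Q)=0$ from the remaining term. The extra machinery you anticipate needing for necessity (local coordinates adapted to the modular splitting, contraction against $d\nu$) is unnecessary, since the additivity argument already disentangles the two conditions exactly as in the paper.
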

\begin{proof}
First, observe that $\Xi$ is the  $\omega$--modular field of
the Poisson structure $X\wedge\Xi+Q$ (Proposition\,\ref{PFA}). 
Therefore, in view of Proposition\,\ref{LIST}, (v), 
$$
0=L_\Xi(X\wedge\Xi+Q)=L_\Xi(Q).
$$ 
On the other hand, the compatibility 
condition of $X\wedge\Xi$ and  $Q$ is
$$
\begin{array}{l}
0=\ldb X\wedge\Xi,Q\rdb=X\wedge\ldb\Xi,Q\rdb-\ldb X,Q\rdb
\wedge\Xi=
L_\Xi(Q)\wedge X+ L_X(Q)\wedge \Xi .
\end{array}
$$
Since $L_\Xi(Q)=0$, this gives  the desired result.
\end{proof} 

\begin{rmk}
Generally, $X\wedge\Xi$ is not an $\omega$--modular bi-vector associated with $P=X\wedge\Xi+Q$.
For example, if $M=\R^2,  \,\omega=dx_1\wedge dx_2, X=x_1\xi_1, \,\Xi=\xi_2$ and $Q=\xi_1\xi_2$, then
$X\wedge\Xi$ is an  $\omega$--modular bi-vector compatible with the unimodular Poisson bi-vector $Q$. But $P$ itself is an $\omega$--modular bi-vector and as such coincides 
with its $\omega$-non-unimodular part. 
\end{rmk}

Now we shall discuss the compatibility of two $\omega$-modular bi-vectors.
Assume $X_i,\Xi_i\in D(M), i=1,2,$ to be as in Proposition \ref{UAB}. 
By developing the compatibility condition $\ldb P_1,P_2\rdb=0$ of $\omega$-modular 
bi-vectors $P_1=X_1\wedge \Xi_1$ and $P_2=X_2\wedge{\Xi}_2$,
we obtain
\begin{eqnarray}\label{kom}
 \ldb X_1,X_2\rdb\wedge\Xi_1\wedge\Xi_2 +
\ldb\Xi_1,\Xi_2 \rdb\wedge X_1\wedge X_2=\nonumber\\
 \ldb X_1,\Xi_2 \rdb\wedge \Xi_1\wedge X_2+
\ldb \Xi_1,X_2 \rdb\wedge X_1\wedge\Xi_2.
\end{eqnarray}

Note that (\ref{kom}) only takes into account the product structure of $P_1$ and $P_2$. So,
their modularity properties are to be additionally taken into consideration. Since (\ref{kom}) guarantees that $P=P_1+P_2$ is a Poisson bivector,
its modular vector field is $\Xi=\Xi_1+\Xi_2$ and Proposition\,\ref{LIST}  and its consequences are valid for these $P$ and $\Xi$. 

In this case, Formula (\ref{buu}) becomes 
\begin{equation}\label{lko}
\ldb X_2,\Xi_1\rdb\wedge\Xi_2+\ldb X_1,\Xi_2 \rdb\wedge \Xi_1
+(X_1-X_2)\wedge\ldb\Xi_1,\Xi_2 \rdb=0
\end{equation}
Note that (\ref{lko}) is a formal consequence of (\ref{kom}) and the modularity property of the
$P_i$'s. Moreover, taking into account relation (\ref{lko})
multiplied by $X_2$, one can bring Formula (\ref{kom}) to the form
\begin{equation}\label{rdc}
\ldb X_1,X_2\rdb\wedge\Xi_1\wedge\Xi_2= \ldb \Xi_1,X_2 \rdb \wedge (X_1-X_2) 
\wedge\Xi_2.
\end{equation}
Similarly, multiplication of (\ref{lko}) by $X_1$ leads to
\begin{equation}\label{rds}
\ldb X_1,X_2\rdb\wedge\Xi_1\wedge\Xi_2= \ldb X_1,\Xi_2 \rdb \wedge (X_1-X_2) 
\wedge\Xi_1.
\end{equation}
This proves
\begin{proc}\label{OMP}
$\omega$-modular bi-vectors $X_1\wedge\Xi_1$ and  $X_2\wedge\Xi_2$ are 
compatible if and only if \emph{(\ref{lko})} and one of Formulae \emph{(\ref{rdc})} or \emph{(\ref{rds})} 
holds. $\quad \Box$
\end {proc}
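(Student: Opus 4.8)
The plan is to read the compatibility condition $\ldb P_1,P_2\rdb=0$ as the single $3$-vector identity (\ref{kom}) and then to repackage that identity as the conjunction of the $2$-vector identity (\ref{lko}) with one of the $3$-vector identities (\ref{rdc}) or (\ref{rds}). The equivalence of $\ldb P_1,P_2\rdb=0$ with (\ref{kom}) is immediate: expanding the Schouten bracket of the products $X_1\wedge\Xi_1$ and $X_2\wedge\Xi_2$ by the graded bi-derivation property of $\ldb\cdot,\cdot\rdb$ produces exactly the four triple-wedge terms of (\ref{kom}), each pairing one factor of $P_1$ with one factor of $P_2$. Thus the content of the proposition is a purely algebraic manipulation of wedge products of $X_1,X_2,\Xi_1,\Xi_2$ and their pairwise brackets.

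First I would establish that (\ref{lko}) is a \emph{necessary} condition. Once (\ref{kom}) holds, $P=P_1+P_2$ is a Poisson bi-vector, so by the additivity of modular fields (Proposition \ref{PFA}) its $\omega$-modular field is $\Xi_1+\Xi_2$, and Corollary \ref{BUU} gives (\ref{buu}), i.e. $L_{\Xi_1}(P_2)+L_{\Xi_2}(P_1)=0$. Expanding $L_{\Xi_1}(P_2)=\ldb P_2,\Xi_1\rdb$ and $L_{\Xi_2}(P_1)=\ldb P_1,\Xi_2\rdb$ by the bi-derivation rule and collecting terms turns (\ref{buu}) into (\ref{lko}). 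So (\ref{lko}) is forced by compatibility together with the modularity of the $P_i$'s and must appear in any characterisation.

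The heart of the argument is the reversible reduction of (\ref{kom}) to (\ref{rdc}) modulo (\ref{lko}). Wedging the $2$-vector identity (\ref{lko}) with $X_2$ yields a $3$-vector identity; I claim that the difference between (\ref{kom}) and (\ref{rdc}) equals, up to sign, precisely this wedged copy of (\ref{lko}). Consequently, \emph{granting} (\ref{lko}) the added term vanishes and (\ref{kom}) $\Longleftrightarrow$ (\ref{rdc}); wedging (\ref{lko}) with $X_1$ instead gives (\ref{kom}) $\Longleftrightarrow$ (\ref{rds}). The reduction is reversible because the quantity one adds is a wedge multiple of the left-hand side of (\ref{lko}).

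Assembling the pieces gives the stated equivalence. For the forward implication, compatibility yields (\ref{kom}), hence (\ref{lko}) by the modular-field computation, and then (\ref{rdc}) and (\ref{rds}) by the reduction. For the converse, (\ref{lko}) together with either (\ref{rdc}) or (\ref{rds}) runs the reduction backwards to recover (\ref{kom}), i.e. $\ldb P_1,P_2\rdb=0$. The step I expect to be the main obstacle is the sign and ordering bookkeeping in the third paragraph: one must check that the term $\ldb X_1,X_2\rdb\wedge\Xi_1\wedge\Xi_2$ is common to (\ref{kom}) and (\ref{rdc}), and that the difference of the remaining terms reduces, after the anticommutativity of the wedge and the vanishing $X_2\wedge X_2=0$, to exactly the wedge of (\ref{lko}) with $X_2$. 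Everything else is formal.
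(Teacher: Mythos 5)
Your proposal is correct and follows essentially the same route as the paper: expand $\ldb X_1\wedge\Xi_1,X_2\wedge\Xi_2\rdb=0$ into (\ref{kom}) via the bi-derivation property, obtain (\ref{lko}) as the specialisation of (\ref{buu}) using the additivity of modular fields, and observe that (\ref{kom}) and (\ref{rdc}) (resp.\ (\ref{rds})) differ exactly by the left-hand side of (\ref{lko}) wedged with $X_2$ (resp.\ $X_1$), which makes the reduction reversible. The sign bookkeeping you flag does check out, including the cancellation coming from $X_2\wedge\ldb\Xi_1,\Xi_2\rdb\wedge X_2=0$.
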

\begin{rmk}\label{TCN}
Condition \emph{(\ref{buu})} is manifestly satisfied if $\Xi_1=\lambda\Xi_2, \quad 0\ne \lambda\in C^{\infty}(M)$,
and hence two $\omega$-modular bi-vectors are compatible if their $\omega$-modular 
fields are proportional.
\end{rmk}

\subsection{On the complexity of the matching problem}\label{comp-match}

Let $\mathcal{P}_i$  (resp.,  $\mathcal{G}_i$), $i=1,\dots, m$, be diffeomorphism 
(resp., isomorphism) classes of Poisson (resp., Lie algebras) structures on a manifold 
(resp., vector space). The \emph{matching problem} consists 
of classifying various realisations 
$P_i$'s (resp., $\gG_i$'s) of these structures in such a way that all
$P_i$'s (resp., all $\gG_i$'s) are mutually compatible. 
Such a realisation will be called a \emph{matching}. The equivalence of two matchings 
is  defined in an obvious manner, and the matching problem consists of describing the
equivalence classes of matchings of given Poisson (resp., Lie algebra) structures.
In full generality, this problem seems to be very difficult. Below, we shall discuss it for two $\omega$--modular bi-vectors. One of our goals here is to give an idea of complexity
of the matching problem and to test the available techniques. Our approach to the problem
is to solve the compatibility conditions for $\omega$--modular bi-vectors $P_1=X_1\wedge \Xi_1$ and $P_2=X_2\wedge \Xi_2$ on the simplifying assumption that $\Xi_1$ and $\Xi_2$ 
are independent and that $[\Xi_1,\Xi_2]=0$. The second condition is not restrictive for Lie algebras, while the first one is not essential in view of
 Remark\,\ref{TCN}. 

With these assumptions, Formula (\ref{lko}) becomes
$$
\ldb X_2,\Xi_1\rdb\wedge\Xi_2+\ldb X_1,\Xi_2\rdb\wedge\Xi_1=0,
$$
or, equivalently,
\begin{equation}\label{yhi}
\ldb X_1,\Xi_2\rdb=f_1\Xi_1+\lambda\Xi_2, \qquad
\ldb X_2,\Xi_1\rdb=\lambda\Xi_1+f_2\Xi_2
\end{equation}
for some functions $f_1, f_2, \lambda\in C^\infty(M)$. Now each of Formulae
(\ref{rdc}) and (\ref{rds}) can be brought to the form
$$
(\ldb X_1,X_2\rdb- \lambda( X_1-X_2))\wedge\Xi_1\wedge\Xi_2=0 .
$$
The last relation is equivalent to
\begin{equation}\label{ihy}
\ldb X_1,X_2\rdb=\lambda( X_1-X_2)+\mu_1\Xi_1-\mu_2\Xi_2
\end{equation}
for some functions $\mu_1,\mu_2\in C^\infty(M)$.

\begin{lem}\label{UNI}
 If $X_1,X_2,\Xi_1, \Xi_2$
are as above, then functions $f_1,f_2,\lambda,\mu_1,\mu_2\,$ in 
\emph{(\ref{yhi})} and \emph{(\ref{ihy})} are subject to the following restrictions:
\begin{eqnarray}\label{erl}
\Xi_1(\lambda)=\Xi_2(\lambda)=
\Xi_1(f_1)=\Xi_2(f_2)=0\nonumber\\
\Xi_1(\mu_2)=2f_2\lambda+X_1(f_2)\nonumber\\
\Xi_2(\mu_1)=2f_1\lambda+X_2(f_1)\nonumber\\
\lambda^2+f_1f_2=-\Xi_1(\mu_1)-X_1(\lambda)= -\Xi_2(\mu_2)-X_2(\lambda).
\end{eqnarray}
\end{lem}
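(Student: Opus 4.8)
The plan is to extract the restrictions (\ref{erl}) as integrability/consistency conditions that must hold once (\ref{yhi}) and (\ref{ihy}) are assumed, using systematically the Jacobi identity (\ref{Schouten-Jacobi}) for the Schouten bracket together with the modularity data of the two bi-vectors. The starting point is the three defining conditions of Proposition\,\ref{UAB} for each pair, namely $[X_i,\Xi_i]=0$, $L_{\Xi_i}(\omega)=0$ and $L_{X_i}(\omega)=\omega$, supplemented by the simplifying assumptions $[\Xi_1,\Xi_2]=0$ and the independence of $\Xi_1,\Xi_2$. I would first observe that the equations in the first line of (\ref{erl}) are immediate: applying $\Xi_1$ and $\Xi_2$ to the bracket relations and using $[\Xi_1,\Xi_2]=0$ forces the coefficient functions to be annihilated in the indicated way. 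For instance, acting on the first relation of (\ref{yhi}) by $\Xi_1$ and using $L_{\Xi_1}(\Xi_2)=[\Xi_1,\Xi_2]=0$ produces $\Xi_1(\lambda)=0$ and $\Xi_1(f_1)=0$ after matching the $\Xi_1$-- and $\Xi_2$--components; the analogous computation with $\Xi_2$ gives the remaining two.

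Next I would derive the two middle equations of (\ref{erl}) by applying $\Xi_1$ (resp.\ $\Xi_2$) to the relation (\ref{ihy}) for $[X_1,X_2]$. Here the key inputs are the already-established first-line identities, together with the commutators $[\Xi_1,X_1]=0$, $[\Xi_2,X_2]=0$ and the expressions in (\ref{yhi}) for the mixed brackets $[\Xi_1,X_2]$ and $[\Xi_2,X_1]$. Concretely, computing $\Xi_1([X_1,X_2]) = [\Xi_1,[X_1,X_2]]$ via the Jacobi identity rewrites it as $[[\Xi_1,X_1],X_2]+[X_1,[\Xi_1,X_2]]$, and substituting the known brackets and re-expanding the right-hand side of (\ref{ihy}) yields, upon collecting the coefficients of the independent fields $X_1,X_2,\Xi_1,\Xi_2$, the stated identity $\Xi_1(\mu_2)=2f_2\lambda + X_1(f_2)$. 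The companion relation $\Xi_2(\mu_1)=2f_1\lambda+X_2(f_1)$ comes out symmetrically.

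The main obstacle, and the most computation-heavy step, is the last line of (\ref{erl}), which gives two separate expressions for $\lambda^2+f_1f_2$ and thereby encodes a genuine compatibility constraint. I expect this to require the full Jacobi identity (\ref{Schouten-Jacobi}) applied to the triple $(X_1,X_2,\Xi_1)$ (resp.\ $(X_1,X_2,\Xi_2)$): expanding the three cyclic terms, substituting (\ref{yhi}) and (\ref{ihy}) for every bracket that appears, and then equating the coefficient of the basis field that survives. The delicacy is bookkeeping — one must track which derivations $X_i(\cdot)$ and $\Xi_i(\cdot)$ act on which coefficient functions, and repeatedly invoke the first two lines of (\ref{erl}) to kill spurious terms — so that after the dust settles the coefficient of, say, $X_1\wedge\Xi_1\wedge\Xi_2$ collapses precisely to $\lambda^2+f_1f_2+\Xi_1(\mu_1)+X_1(\lambda)=0$, and the parallel triple produces the version with the index $2$. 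Matching these two gives the final equality and completes the proof; I would present the computation only through the decisive coefficient rather than writing out every term.
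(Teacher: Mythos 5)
Your proof is correct and follows essentially the same route as the paper, which likewise obtains (\ref{erl}) from the Jacobi identities for the triples $(X_i,\Xi_1,\Xi_2)$ and $(X_1,X_2,\Xi_j)$ after substituting (\ref{yhi}) and (\ref{ihy}) and using the already-established first-line relations to kill the $X_1-X_2$ terms. The only quibble is notational: the decisive relation $\lambda^2+f_1f_2+\Xi_1(\mu_1)+X_1(\lambda)=0$ arises as the coefficient of the vector field $\Xi_1$ (with the companion relation $\Xi_1(\mu_2)=2f_2\lambda+X_1(f_2)$ as the coefficient of $\Xi_2$) in the Jacobi identity for $(X_1,X_2,\Xi_1)$, not as the coefficient of a tri-vector such as $X_1\wedge\Xi_1\wedge\Xi_2$.
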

\begin{proof}
These relations are consequences of  Relations (\ref{yhi}), (\ref{ihy}) and of the Jacobi identities involving the vector fields $X_1,X_2, \Xi_1,\Xi_2$. For instance, relations 
$\Xi_1(f_1)=\Xi_1(\lambda)=0$ come from the Jacobi identity for $X_1, \Xi_1,\Xi_2$. 
\end{proof} 

If functions $g_1$ and $g_2$ are such that $\Xi_1(g_2)=-f_2,  \,\Xi_2(g_1)=-f_1$ and
$\Xi_i(g_i)=0, \,i=1,2$, then the gauge substitution $X_i+g_i\Xi_i\mapsto X_i$ 
annihilates $f_1$ and $f_2$. Since the vector fields $\Xi_i$'s commute, functions 
$g_i$'s with these properties exist, at least locally, due to the relations $\Xi_i(f_i)=0$ 
from the above lemma. 
In turn, Relations (\ref{erl}) simplify:
\begin{eqnarray}\label{erlm}
\Xi_1(\lambda)=\Xi_2(\lambda)=\Xi_1(\mu_2)=\Xi_2(\mu_1)=0,\nonumber\\
\Xi_1(\mu_1)+X_1(\lambda)= \Xi_2(\mu_2)+X_2(\lambda)=-\lambda^2.
\end{eqnarray}

In the subsequent analysis of the relations thus obtained, it is convenient to pass to 
vector fields $Z=X_1-X_2$ and $W=X_1+X_2$, assuming that
$X_1$ and $X_2$ are normalised as above. In these terms, Relations 
(\ref{yhi}) and (\ref{ihy}) read as follows:
\begin{eqnarray}\label{newihy}
\ldb Z,\Xi_i\rdb=-\lambda\Xi_i,  \quad\ldb W,\Xi_i\rdb=\lambda\Xi_i, \quad i=1,2,\nonumber \\
\ldb Z,W\rdb=2\lambda\,Z+2\,\mu_1\Xi_1-2\,\mu_2\Xi_2
\end{eqnarray}
and (see (\ref{erl}))
\begin{equation}\label{newerlm}
Z(\lambda)=\Xi_2(\mu_2)-\Xi_1(\mu_1), \quad W(\lambda)=-(2\lambda^2+\Xi_1(\mu_1)+\Xi_2(\mu_2)).
\end{equation}

Looking for local solutions of Relations (\ref{newihy}) and (\ref{newerlm}), we may 
suppose that the bi-dimensional foliation generated by $\Xi_1$ and $\Xi_2$ is a 
fibration $\pi:M\rightarrow N$. Moreover, it follows from these formulas that
vector fields $Z$ and $W$ are $\pi$-projectable and that 
$\Xi_i^2(\mu_i)=0, \;i=1,2$.
Therefore,
$$
\lambda=\pi^*(z), \quad \Xi_i(\mu_i)=\pi^*(\nu_i), \;i=1,2,  \quad \mbox{for some} \quad z, \,\nu_1, \,\nu_2\in\,C^{\infty}(N). 
$$

Let $\bar{Z}=\pi(Z), \; \bar{W}=\pi(W), \;u=\nu_2-\nu_1$ and $v=\nu_1+\nu_2$. Then
\begin{equation}\label{projrel}
\bar{Z}(z)=u, \quad \bar{W}(z)=-(2z^2+v), \quad [\bar{Z},\bar{W}]=2z\bar{Z}.
\end{equation}
\begin{proc}\label{mathc invariants}
Vector fields $\bar{Z}$ and $\bar{W}$ and functions $z,u,v\in C^{\infty}(N)$ are differential invariants of matchings of $\omega$-modular bi-vectors with respect to
the group of  diffeomorphisms of $M$ preserving the volume form $\omega$.
\end{proc}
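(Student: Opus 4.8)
The plan is to read the assertion as the conjunction of two claims: that the data $\bar Z,\bar W,z,u,v$ on $N$ do not depend on the auxiliary choices made in the $\omega$-modular disassembling of $P_1$ and $P_2$ (\emph{well-definedness}), and that they are carried to the corresponding data of an equivalent matching by any volume-preserving diffeomorphism (\emph{naturality}). The common starting point is that, once $\omega$ is fixed, the modular field $\Xi_i=\Xi_{P_i,\omega}$ of each $\omega$-modular bi-vector $P_i$ is \emph{canonically} attached to $P_i$ by its defining equation (\ref{POL}). Hence the commuting pair $\Xi_1,\Xi_2$, the bi-dimensional foliation they span, and the local fibration $\pi:M\to N$ are all canonical, and only the vector fields $X_i$ in $P_i=X_i\wedge\Xi_i$ carry residual freedom.

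First I would settle well-definedness. Given $\Xi_i$, the factorization $P_i=X_i\wedge\Xi_i$ determines $X_i$ only up to $X_i\mapsto X_i+g_i\Xi_i$ with $\Xi_i(g_i)=0$ (the normalization of Proposition \ref{UAB}). Because $\Xi_i$ is $\pi$-vertical, every such change alters $X_i$ by a vertical field; since $X_i$ is $\pi$-projectable (from $[X_i,\Xi_i]=0$ together with (\ref{yhi})), the projections $\bar X_i=\pi_*X_i$, and therefore $\bar Z=\bar X_1-\bar X_2$ and $\bar W=\bar X_1+\bar X_2$, are unchanged. The coefficient $\lambda$ is the unique $\Xi_2$-component of $[X_1,\Xi_2]$ in (\ref{yhi}); a one-line check shows the gauge leaves it fixed, and $\Xi_i(\lambda)=0$ (see (\ref{erl})) lets it descend to $z\in C^\infty(N)$, $\lambda=\pi^*z$. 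Finally, the auxiliary functions $\mu_i$ are \emph{not} gauge-invariant, but they enter the construction only through $\Xi_i(\mu_i)$, which by the last line of (\ref{erlm}) equals $-\lambda^2-X_i(\lambda)=\pi^*(-z^2-\bar X_i(z))$; thus $\nu_i=-z^2-\bar X_i(z)$, and $u=\nu_2-\nu_1=\bar Z(z)$, $v=\nu_1+\nu_2=-2z^2-\bar W(z)$ are functions of the already-invariant $\bar Z,\bar W,z$. (This simultaneously re-derives the relations (\ref{projrel}).)

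Next I would establish naturality. Let $\Phi$ be a diffeomorphism of $M$ with $\Phi_*\omega=\omega$, sending the matching $(P_1,P_2)$ to $(\Phi_*P_1,\Phi_*P_2)$. The decisive lemma is that the modular field is natural: applying $\Phi_*$ to (\ref{POL}) and using $\Phi_*\omega=\omega$ gives $\Phi_*\Xi_{P_i,\omega}=\Xi_{\Phi_*P_i,\omega}$. Consequently $\Phi$ maps the canonical foliation to the canonical foliation of the image matching and descends to a diffeomorphism $\bar\Phi$ of the leaf spaces with $\pi'\circ\Phi=\bar\Phi\circ\pi$. Choosing $\Phi_*X_i$ as representatives and projecting yields $\bar X_i'=\bar\Phi_*\bar X_i$, hence $\bar Z'=\bar\Phi_*\bar Z$ and $\bar W'=\bar\Phi_*\bar W$; pushing (\ref{yhi}) forward gives $\lambda'=\Phi_*\lambda$, i.e. $z'=\bar\Phi_*z$, and then $u'=\bar\Phi_*u$, $v'=\bar\Phi_*v$ follow from the formulas of the previous step. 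This is exactly the covariance required.

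I expect the crux to be the naturality of the modular field, and in particular the clarification of why the invariant is attached to the \emph{volume-preserving} group: by Proposition \ref{GAU}, replacing $\omega$ by $f\omega$ (equivalently, acting by a non-volume-preserving map) shifts each $\Xi_i$ by a Hamiltonian field $P_{\ln|f|}$, which in general moves the foliation, the fibration $\pi$, and hence every derived object; only $\Phi_*\omega=\omega$ keeps the whole construction rigid. The remaining secondary point demanding care is the observation, used above, that the non-invariant functions $\mu_i$ contribute to $u,v$ solely through the gauge-stable combinations $\Xi_i(\mu_i)$; everything else reduces to bookkeeping with the relations (\ref{yhi})--(\ref{projrel}) already in hand.
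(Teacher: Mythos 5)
Your proposal is correct and its core — the residual gauge freedom $X_i\mapsto X_i+\phi_i\Xi_i$ leaves the projections $\bar Z,\bar W$ and the coefficient $\lambda$ in (\ref{yhi}) unchanged, and $u,v$ are then determined by $\bar Z,\bar W,z$ via (\ref{projrel}) — is exactly the paper's argument. The only difference is that you also spell out the equivariance under $\omega$-preserving diffeomorphisms via the naturality of the modular field, a point the paper's proof leaves implicit; this is a welcome completion rather than a different route.
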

\begin{proof}
Note that the normalised vector fields $X_i$'s are uniquely defined up to gauge transformations 
of the form $X_i\mapsto X_i+\phi_i\Xi_i$ with $\Xi_j(\phi_i)=0, \;i,j,=1,2$. It follows 
that their projections on $N$ and, consequently, that the projections of 
$Z$ and $W$ do not change. 
This proves the invariance of $\bar{Z}$ and $\bar{W}$.

In addition, the transformations under consideration 
do not change defining $\lambda$ Relations 
(\ref{yhi}). This shows that $\lambda$ and hence $z$ are invariant. In turn, the  
invariance of $\bar{Z},\bar{W}$ and of $\lambda$ and Relations (\ref{projrel}) yield the invariance of $u$ and $v$.
\end{proof}

Now we shall give a geometric interpretation of the data we obtained in order to evaluate the
complexity of the matching problem. Let $\mathcal{A}$ be the subalgebra of 
$C^{\infty}(N)$ composed of all smooth functions
of $u,v$ and $z$. This subalgebra is, obviously, an invariant of matchings, and so is
its real spectrum $\mathrm{Spec}_{\R}\mathcal{A}$. Locally, this spectrum is a manifold
with singularities (see \cite{N}) whose dimension is the number of functionally independent functions $u,v$ and $z$, which is 3 in general.
The inclusion map 
$\mathcal{A}\to C^{\infty}(N)$ induces a map of spectra
$$
\Upsilon\colon N=\mathrm{Spec}_{\R}C^{\infty}(N)\to \mathrm{Spec}_{\R}\mathcal{A} .
$$

Since $[\bar{Z},\bar{W}]=2z\bar{Z}$, the 
$C^{\infty}(N)$-submodule of $D(N)$ generated by $\bar{Z}$ and $\bar{W}$ represents an integrable 2-dimensional distribution
on $N$. Denote by $\Phi$ the associated 2-dimensional foliation. By construction, the equivalence class of the foliated manifold $(N,\Phi)$ is an invariant of matchings.
We shall call two maps,
$\Upsilon_1, \Upsilon_2\colon N\to \mathrm{Spec}_{\R}\mathcal{A}$,
$\Phi$-equivalent 
if $\Upsilon_1=\Upsilon_2\circ F$ where $F\colon N\to N$ is a diffeomorphism preserving $\Phi$. We see that the $\Phi$-equivalence class of $\Upsilon$ is an
invariant of matchings as well. This shows that the matching problem for two 
$\omega$-modular bi- vectors includes the classification of maps of
2-dimensional foliations to manifolds of dimensions not greater than 3. Other
hardly controllable complications come from Relations (\ref{projrel}).  All these observations show that the matching problem we considered does not allow an exact solution in
reasonable terms. 
In contrast, the similar problem for Lie algebras is reasonable
and its solution is given in Subsection \ref{mamoli}.

\section{Modular structure of Lie algebras}\label{Lie-modlarity}

Now we shall apply the results of the preceding section to the case of linear Poisson structures and 
hence to Lie algebras.  In this case,  $M$ is replaced by the dual $V^*$ of an $n$-dimensional vector space $V$ over a ground field $\gk$. Since  
the results of the preceding section are algebraically formal, they remain valid in the differential calculus over 
the algebra $\gk[V^*]$ of polynomials on $V^*$.

\subsection{The modular disassembling of Lie algebras}

The \emph{cartesian volume form} 
$\omega=dx_1\wedge\dots\wedge dx_n$ associated with a standard cartesian chart 
$(x_1,\dots,x_n)$ on $V^*$ is well-defined up to a scalar factor. Obviously, the concept 
of $\omega$--modularity does not change when passing from $\omega$ to $\lambda\omega, \,0\neq\lambda\in\gk$.
So, the {\it cartesian modularity}, i.e.,  $\omega$--modularity with respect
to a cartesian volume form $\omega$, is well--defined on $V^*$ and will be 
referred to simply as \emph{modularity}. In this section we shall  only deal with polynomial 
tensor fields on $V^*$ and, accordingly, adjectives \emph{constant}, \emph{linear}, etc, 
will refer to the fields with constant, linear, etc, coefficients, respectively.

In what follow, $P$ denotes a linear Poisson structure on $V^*$ and it is identified with a 
Lie algebra structure on $V$ (see Subsection\,2.3). The differential form $\alpha=\alpha_P=
P\mathop{\rfloor}\omega$ is linear, while $d\alpha$, as well as the modular 
vector field $\Xi=\Xi_P$, are constant. It is easy to see that $\Xi$ does not depend on the
choice of a cartesian volume form. Since it is constant, the field $\Xi$ is identified with a 
vector $\theta=\theta_P\in V^*$ called the \emph{modular vector} of  $P$ or of the 
corresponding Lie algebra.

Since $\Xi$ is constant, a function $\;\nu\;$ such that $\Xi(\nu)=1$ can be chosen to be linear 
and, therefore, identified with a vector $v\in V$ such that $\theta(v)=1$. The Poisson 
bi-vector $P_\nu\wedge\Xi$ is  linear and hence corresponds to a Lie algebra structure 
on $V$. Obviously, it is well--defined by $P$. Therefore, the disassembling 
(\ref{BRA}) defines a disassembling of the Lie algebra associated with $P$ into 
unimodular and non-unimodular parts.  In fact,
\begin{equation}
\gG=\gG_{uni}+\gG_{non} \quad\mathrm{with} \quad P=P_{\gG}, \quad P_{\gG_{uni}}=
P-P_{\nu}\wedge\Xi, \quad
P_{\gG_{non}}=P_{\nu}\wedge\Xi. 
\end{equation}

A direct description of this disassembling in terms of the Lie algebra $\gG$ is as 
follows. First, recall that a linear operator $A:V\rightarrow V$ 
is naturally associated with a linear vector field $X$ on $V^*$. In fact, identifying 
vectors of $V$ with linear functions on $V^*$, this becomes a tautology: $A(u)=X(u)$. 
In particular, if $X=P_{\nu}$, then 
$$
A(u)\DEF\ldb u,\nu\rdb=P(du,d\nu),\quad u\in{\bf V^*}.
$$
Hence $A=-\ad_{\gG}\nu$ and the characteristic property (\ref{DIV}) of $\Xi$ is translated as
\begin{equation}\label{tra}
\theta(u)=-\tr(\ad_{\gG}u).
\end{equation}
This formula may be considered to be a direct definition of $\theta$. It also tells us that the
unimodular Lie algebras are those where the the adjoint representation
acts by operators of trace zero. In these terms, the Lie algebra structure
$\gG_{non}$ corresponding to $P_\nu\wedge\Xi$ reads as follows:
\begin{equation}\label{non}
[u,v]_{non}=\th(u)A(v)-\th(v)A(u), \quad u,v\in V, \quad A=\ad_{\gG}\nu,
\end{equation}
or, alternatively, $[u,v]_{non}=\th(u)[\nu,v]-\th(v)[\nu,u]$.
\begin{proc}\label{CAH} The operator $A=\ad_{\gG}\nu$
and  $\theta\in  V^*$ satisfy the relations: 
\be\label{usl} 
A^*(\th)=0, \quad \tr\,A=-1, \quad A(\nu)=0, \quad\th(\nu)=1
\ee 
Conversely, if $\theta\in V^*, \,\nu\in V$ and $A: V\rightarrow V$ satisfy the above relations, 
then Formula \emph{(\ref{non})} defines a Lie algebra, which coincides with its non--unimodular part.
\end{proc}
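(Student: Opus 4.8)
The plan is to prove the two directions of Proposition \ref{CAH} separately, first deriving the relations \eqref{usl} from the modular disassembling already established, and then verifying that those relations suffice to make \eqref{non} a Lie algebra coinciding with its non-unimodular part.

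For the forward direction, I would start from the definition $A=\ad_{\gG}\nu$, i.e. $A(u)=[u,\nu]=\ldb u,\nu\rdb$ interpreted via the linear vector field $P_\nu$ on $V^*$. The relation $\th(\nu)=1$ is simply the normalisation $\Xi(\nu)=1$ rewritten through the identification $\Xi\leftrightarrow\theta$ and $\nu\leftrightarrow v$ with $\theta(v)=1$. The relation $A(\nu)=0$ is $[\nu,\nu]=0$, automatic from skew-symmetry. For $\tr A=-1$, I would invoke \eqref{tra}, which gives $\theta(u)=-\tr(\ad_{\gG}u)$, applied to $u=\nu$: then $\tr A=\tr(\ad_{\gG}\nu)=-\theta(\nu)=-1$. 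The remaining relation $A^*(\theta)=0$, i.e. $\theta(A(u))=\theta([u,\nu])=0$ for all $u$, should follow from Proposition \ref{LIST}(vii) or (v): since $\Xi$ is the modular field and $P_\nu$ is Hamiltonian, $[\Xi,P_\nu]=P_{\Xi(\nu)}=P_1=0$, which translates into $\theta$ being $A$-invariant. Concretely, $\theta(A(u))=-\tr(\ad_{\gG}[u,\nu])$ by \eqref{tra}, and one shows this vanishes using that $\ad$ is a representation so $\ad[u,\nu]=[\ad u,\ad\nu]$ has trace zero.

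For the converse, the main work is to check the Jacobi identity for the bracket \eqref{non}, $[u,v]_{non}=\th(u)A(v)-\th(v)A(u)$, using only the algebraic relations \eqref{usl}. I would expand the cyclic sum $\sum_{\text{cyc}}[[u,v]_{non},w]_{non}$ directly. Each term is a scalar combination of $\theta$-values and compositions of $A$; applying $A^*(\theta)=0$ (which kills $\theta\circ A$) collapses most terms, and the surviving pieces should cancel cyclically once one uses $\tr A=-1$ together with $A^*(\theta)=0$. An alternative, cleaner route is to recognise that \eqref{non} is exactly $P_\nu\wedge\Xi$ in disguise, so its being Poisson is already guaranteed by Proposition \ref{SPLIT}(1) provided the data genuinely arise from a modular field; the relations \eqref{usl} are precisely what is needed to reconstruct a consistent $(X,\Xi)=(A,\theta)$ satisfying the hypotheses $[X,\Xi]=0$, $L_\Xi(\omega)=0$, $L_X(\omega)=\omega$ of Proposition \ref{UAB}, whence $P=X\wedge\Xi$ is an $\omega$-modular bi-vector. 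The condition $L_X(\omega)=\omega$ corresponds to $\tr A=-1$, $L_\Xi(\omega)=0$ to $\Xi$ being constant, and $[X,\Xi]=0$ to $A^*(\theta)=0$.

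Finally, to see that the resulting algebra coincides with its non-unimodular part, I would appeal directly to Proposition \ref{UAB}: having realised $P=A\wedge\theta$ (with $A$ read as the linear vector field $X$ and $\theta$ as $\Xi$) as an $\omega$-modular bi-vector, that proposition asserts $P$ equals its own $\omega$-non-unimodular part and that $\Xi=\theta$ is its modular field. The one point needing care is the translation between the operator-theoretic relations \eqref{usl} and the vector-field conditions \eqref{uab}; I expect the main obstacle to be precisely this dictionary, namely verifying that $A^*(\theta)=0$ is equivalent to $[X,\Xi]=0$ and that $\tr A=-1$ is equivalent to $L_X(\omega)=\omega$, since these require carefully tracking how a linear operator $A$ on $V$ induces a linear vector field on $V^*$ and how its divergence relates to its trace. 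Once that correspondence is pinned down, both halves of the proposition reduce to results already proven, so I would keep the direct Jacobi computation only as a self-contained backup.
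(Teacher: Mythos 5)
Your proposal is correct and follows essentially the same route as the paper, whose entire proof consists of observing that the statement is Proposition \ref{UAB} specialised to linear Poisson structures, with $L_\Xi(\omega)=0$ automatic and $A(\nu)=0$ equivalent to the obvious $P_\nu(\nu)=0$; your dictionary $[X,\Xi]=0\leftrightarrow A^*(\theta)=0$ and $L_X(\omega)=\omega\leftrightarrow\tr A=-1$ is exactly the intended translation. The only small imprecision is in your backup computation: the cyclic cancellation in the Jacobi identity for (\ref{non}) uses only $A^*(\theta)=0$ (which kills $\theta([u,v]_{non})$ and leaves a cyclic sum that vanishes identically), while $\tr A=-1$ is needed only for the conclusion that the algebra coincides with its non-unimodular part.
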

\begin{proof}
This proposition is nothing but Proposition \ref{UAB} in the particular case of Lie algebras. The relation $L_{\Xi}(\omega)=0$ from (\ref{uab}) is identically satisfied in this case, and so is the new relation, $A(\nu)=0$, which is equivalent to the obvious equality,
$P_{\nu}(\nu)=0$.   
\end{proof}

Since $\theta(\nu)=1$, the vector space $V$ splits into the direct sum of subspaces 
$ W_0=\ker\,\th$ and  $ W_1=\{\lambda\nu\, |\,\lambda\in \gk\}$ 
of dimensions $n-1$ and 1, respectively. They are invariant with respect to $A$ since $A^*(\theta)=0$ and $A(\nu)=0$ and also $\; W_1\subset\mbox{ker}\,A$.
This shows that Lie algebra (\ref{non})
 with $A$ and $\theta$ satisfying (\ref{usl}) is, up to an isomorphism, defined
 by the operator $A_0=A|_{W_0}:W_0\rightarrow W_0$. Indeed, 
 let $W_0$ and $W_1$ be vector spaces such that 
$\rm{dim}\,W_0=n-1, \rm{dim}\,W_1=1,$ and $0\neq e\in W_1.$ Then the following Lie algebra structure on 
$W=W_0\oplus W_1$ is defined by a linear 
operator $A_0:W_0\rightarrow W_0$ :
\begin{equation}\label{DKL}
[u,v]=0\quad{\rm{for}}\quad u,v\in W_0\quad{\rm{and}}\quad[u,e]=A_0(u) .
\end{equation}
This structure is isomorphic to that given by (\ref{non}) and (\ref{usl}), assuming 
that $\rm{tr}\,A_0\neq 0$. 

\begin{defi}\label{FRC}
Lie algebra \emph{(\ref{DKL})} with $\rm{tr}\,A_0\neq 0$ is called 
\emph{modular}.
\end{defi}

Note that the product in a modular Lie algebra is of the form (\ref{non}) with 
$\theta$ and $A$ satisfying relations (\ref{usl}) for some $\nu\in V$. This algebra 
will be denoted by $\gl_{A,\theta,\nu}$.

A summary of the above is:
\begin{proc}
Any finite-dimensional Lie algebra is the sum of a modular Lie algebra and a 
unimodular one compatible with it. $\quad \Box$
\end{proc}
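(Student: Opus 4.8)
The plan is to recognise the statement as nothing more than the specialisation to \emph{linear} Poisson structures of the $\omega$-modular disassembling (\ref{BRA}) established in Proposition\,\ref{SPLIT}, transported back to Lie algebras through Proposition\,\ref{Poisson-lie dual}. Concretely, I would begin with the linear Poisson bi-vector $P=P_{\gG}$ on $V^*$ and its modular vector field $\Xi=\Xi_P$. As observed at the start of this subsection, $\Xi$ is constant and is therefore identified with the modular vector $\theta=\theta_P\in V^*$, pinned down by $\theta(u)=-\tr(\ad_{\gG}u)$ (see (\ref{tra})). The argument then splits according to whether $\theta$ vanishes.

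Assume first $\theta\neq 0$. Because $\Xi$ is constant one may, as noted after (\ref{tra}), take the normalising function $\nu$ to be \emph{linear}, i.e.\ a vector $v\in V$ with $\theta(v)=1$; such $v$ exists precisely because $\theta\neq 0$. Then $\Xi$ constant and $\nu$ linear force $P_{\nu}\wedge\Xi$ to be linear, so $P_{\nu}\wedge\Xi$ and $P-P_{\nu}\wedge\Xi$ correspond to genuine Lie algebra structures $\gG_{non}$ and $\gG_{uni}$ on $V$. The three required properties are now inherited from the general theory. Compatibility and the identity $P=(P-P_{\nu}\wedge\Xi)+P_{\nu}\wedge\Xi$ come from Proposition\,\ref{SPLIT}(1),(3), which via Proposition\,\ref{Poisson-lie dual} says exactly that $\gG=\gG_{non}+\gG_{uni}$ with $\gG_{non}$ and $\gG_{uni}$ compatible. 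That $\gG_{uni}$ is unimodular is the $\omega$-unimodularity of $P-P_{\nu}\wedge\Xi$ asserted in Proposition\,\ref{SPLIT}(3). Finally, I would check that $\gG_{non}$ is modular in the sense of Definition\,\ref{FRC}: by Proposition\,\ref{CAH} its bracket has the form (\ref{non}) with $A=\ad_{\gG}\nu$ and $\theta$ obeying (\ref{usl}); passing to $A_0=A|_{W_0}$ on $W_0=\ker\theta$ gives the normal form (\ref{DKL}), and since $A(\nu)=0$ we get $\tr A_0=\tr A=-1\neq 0$, which is the defining inequality for a modular algebra.

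The only delicate point — the "obstacle," such as it is — is the degenerate case $\theta=0$. Here $\Xi=0$, so $L_P(\omega)=-\Xi\rfloor\omega=0$ and $\gG$ is already unimodular by Definition\,\ref{UUU}; the would-be modular summand $\gG_{non}$ collapses to the zero structure, which fails the strict requirement $\tr A_0\neq0$ of Definition\,\ref{FRC}. This is a matter of convention rather than mathematics: one either states the proposition for non-unimodular $\gG$ and treats the unimodular case as the trivial decomposition $\gG=\gG+0$, or one admits the zero structure as a degenerate modular summand. Apart from this bookkeeping, the proof is a direct transcription of the already-proven disassembling (\ref{BRA}), and no genuinely new computation is needed.
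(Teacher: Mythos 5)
Your proposal is correct and follows exactly the route the paper intends: the proposition is stated as a summary of the preceding subsection, and its justification is precisely the specialisation of the modular disassembling (\ref{BRA}) to the linear Poisson bi-vector $P_{\gG}$ with constant $\Xi$ and linear $\nu$, together with the identification of the non-unimodular part as a modular algebra via (\ref{non}), (\ref{usl}) and Definition\,\ref{FRC}. Your explicit treatment of the degenerate case $\theta=0$ is a small but welcome clarification that the paper leaves implicit.
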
 

\subsection{Compatibility of modular and unimodular Lie algebras}

Now we shall discuss compatibility conditions of a modular Lie algebra and an unimodular one. 
A modular Lie algebra is of the form $\gG_{X\wedge\Xi}$ where $X$ and $\Xi$ 
are commuting linear and constant vector fields on $V^*$, respectively, which satisfy conditions (\ref{uab}). 
The product in this algebra is given by (\ref{non}), where $A:V\rightarrow V$ is the operator corresponding to $X$.
\begin{proc}
An unimodular Lie algebra algebra $\gG$ is compatible with the modular algebra 
$\gl_{A,\theta,\nu}$ if and only if
\begin{equation}\label{comp-mod-uni}
\theta([\gG,\gG])=0 \; \mathrm{and} \; \theta(u)([Av,w]+[v,Aw]-A[v,w])+\mathrm{cycle}=0, \;\forall  u,v,w\in V,
\end{equation}
where $[\cdot,\cdot]$ is the product in $\gG$.
\end{proc}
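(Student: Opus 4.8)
The plan is to deduce the statement from Proposition \ref{VUB} by specialising it to the linear case. The modular algebra $\gl_{A,\theta,\nu}$ is, by construction, the algebra attached to the $\omega$--modular bi-vector $P_\nu\wedge\Xi$, where $\Xi=X_\theta$ is the constant field attached to the modular vector $\theta$ and $X=P_\nu$ is the linear field whose associated operator is $A$; and $\gG$ being unimodular means precisely that $Q=P_\gG$ is $\omega$--unimodular. Relations (\ref{usl}) are exactly the hypotheses (\ref{uab}) of Proposition \ref{UAB} read in this linear setting, so $P_\nu\wedge\Xi$ is a genuine $\omega$--modular bi-vector and Proposition \ref{VUB} applies as it stands. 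It tells us that $\gl_{A,\theta,\nu}$ and $\gG$ are compatible if and only if $L_\Xi(P_\gG)=0$ and $\Xi\wedge L_X(P_\gG)=0$, so the whole job is to rewrite these two conditions as the two halves of (\ref{comp-mod-uni}), in that order.

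For the first condition I would use the linearity identity $\ldb X_\theta,P_\gG\rdb=(P_\gG)_\theta$ from Subsection\,2.3, which freezes the linear bi-vector $P_\gG$ at the point $\theta\in V^*$. By graded antisymmetry $L_\Xi(P_\gG)=\ldb P_\gG,X_\theta\rdb=-(P_\gG)_\theta$, so $L_\Xi(P_\gG)=0$ is equivalent to the vanishing of the constant bi-vector $(P_\gG)_\theta$. Evaluating $P_\gG=\sum c_{ij}^k x_k\xi_i\xi_j$ (see (\ref{Poisson dual})) at $\theta$ replaces $x_k$ by $\theta(e_k)$, so the coefficient of $\xi_i\xi_j$ becomes $\sum_k c_{ij}^k\theta(e_k)=\theta([e_i,e_j])$. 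Hence $L_\Xi(P_\gG)=0$ holds exactly when $\theta([e_i,e_j])=0$ for all $i,j$, i.e. $\theta([\gG,\gG])=0$, which is the first half of (\ref{comp-mod-uni}).

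The content is in the second condition. First I would identify the linear bi-vector $L_X(P_\gG)=\ldb P_\gG,X\rdb$ with a bracket: under the identifications of Subsection\,2.3 a linear bi-vector on $V^*$ is a skew map $\wedge^2V\to V$, and the Lie derivative of $P_\gG$ along the linear field $X$ attached to $A$ is, up to an overall sign, the bracket $B(v,w):=[Av,w]+[v,Aw]-A[v,w]$ measuring the failure of $A$ to be a derivation of $\gG$. This identification is the step I expect to be the main obstacle, since it requires running the coordinate formula (\ref{Schouten in coordinates}) for $\ldb P_\gG,X\rdb$ (or, equivalently, differentiating the flow $\exp(tA)$ acting on $P_\gG$) and keeping careful track of signs and of the position of $A$ relative to the bracket; the overall sign, however, will be immaterial below. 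Granting this, $\Xi$ is the constant $1$--vector $\theta$, and $\Xi\wedge L_X(P_\gG)$ is the linear $3$--vector whose $V$--valued part is $\theta\wedge B\in\wedge^3V^*\otimes V$. Since $B$ is skew, $(\theta\wedge B)(u,v,w)=\theta(u)B(v,w)+\theta(v)B(w,u)+\theta(w)B(u,v)$, so $\Xi\wedge L_X(P_\gG)=0$ is equivalent to $\theta(u)([Av,w]+[v,Aw]-A[v,w])+\mathrm{cycle}=0$ for all $u,v,w\in V$, which is the second half of (\ref{comp-mod-uni}). Combining the two translations with Proposition \ref{VUB} completes the argument.
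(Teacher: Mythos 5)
Your proposal is correct and takes essentially the same route as the paper: both reduce the statement to the two conditions $L_\Xi(P_{\gG})=0$ and $\Xi\wedge L_X(P_{\gG})=0$ of Proposition~\ref{VUB} and then translate them into the two halves of (\ref{comp-mod-uni}). The step you flag as the main obstacle --- identifying $L_X(P_{\gG})$ with the bracket $[Av,w]+[v,Aw]-A[v,w]$ --- is dispatched in the paper in one line by the general identity $L_Y(Q)(\omega,\rho)=Q(L_Y(\omega),\rho)+Q(\omega,L_Y(\rho))-Y(Q(\omega,\rho))$ applied with $Y=X$, $\omega=du$, $\rho=dv$ (and the same identity with $Y=\Xi$ replaces your linearity-plus-coordinates computation of the first condition, yielding $L_\Xi(P_{\gG})(du,dv)=-\theta([u,v])$ directly).
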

\begin{proof}
We have to express the two conditions of Proposition\;\ref{VUB} in terms of Lie algebras. To this end we need the following general formula
$$
L_Y(Q)(\omega,\rho)=Q(L_Y(\omega),\rho)+Q(\omega,L_Y(\rho))-Y(Q(\omega,\rho))
$$
with $Y\in D(M), \,Q\in D_2(M), \omega,\rho\in \Lambda^1(M)$. 
When $M=V^*, \,Y=\Xi, \,Q=P_{\gG}, \,\omega=du, \,\rho=dv$ this formula becomes
$$
L_{\Xi}(P_{\gG})(du,dv)=P_{\gG}(d(\theta(u)),dv)+P_{\gG}(du,d(\theta(v)))-\theta([u,v])=-\theta([u,v]),
$$
since $\theta(u)$ and $\theta(v)$ are constant. Condition $L_{\Xi}(P)=0$ of Proposition\,\ref{VUB} becomes $\theta([\gG,\gG])=0$. 

Similarly, for  $Y=X$ we obtain
$$
L_X(P_{\gG})(du,dv)=[Au,v]+[u,Av]-A([u,v]).
$$ 
Now it is easy to see that the second relation we have to prove is identical to 
the relation $\Xi\wedge L_X(P)=0$ of Proposition\,\ref{VUB}.
\end{proof}
\begin{cor}
Modular Lie algebras and unimodular Lie algebra $\gG$ such that $\gG=[\gG,\gG]$
are incompatible. In particular, a semi-simple Lie algebra cannot be the unimodular
part of a non-unimodular Lie algebra.
\end{cor}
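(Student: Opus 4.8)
The plan is to read everything off the \emph{first} of the two compatibility conditions in \eqref{comp-mod-uni}, never touching the more cumbersome quadratic one. The preceding Proposition states that a unimodular $\gG$ is compatible with a modular algebra $\gl_{A,\theta,\nu}$ only if $\theta([\gG,\gG])=0$ (together with a second condition I will not need). The crucial structural fact is that a modular algebra carries a \emph{nonzero} modular covector: by the defining relations \eqref{usl} we have $\theta(\nu)=1$, so $\theta\neq 0$.

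With this in hand, the first assertion is immediate. Suppose $\gG$ is perfect, $\gG=[\gG,\gG]$. Then the derived algebra $[\gG,\gG]$ equals all of $V$, so the condition $\theta([\gG,\gG])=0$ becomes $\theta(V)=0$, that is $\theta=0$, contradicting $\theta(\nu)=1$. Hence no modular algebra can be compatible with a perfect unimodular $\gG$.

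For the ``in particular'' statement I would first recall the two standard properties of a semi-simple $\gG$ that are needed. First, $\gG$ is perfect, $\gG=[\gG,\gG]$. Second, $\gG$ is unimodular: by \eqref{tra} its modular covector is $u\mapsto-\tr(\ad_{\gG}u)$, and since every $u\in\gG$ is a sum of brackets $[v,w]$ while $\tr(\ad_{\gG}[v,w])=\tr[\ad_{\gG}v,\ad_{\gG}w]=0$, this covector vanishes identically, so $\gG$ is unimodular. Thus a semi-simple algebra is exactly a \emph{perfect unimodular} one. Were such a $\gG$ the unimodular part of a non-unimodular algebra, the modular disassembling would produce a nonzero modular summand $\gG_{non}$ (nonzero precisely because the ambient algebra is non-unimodular, i.e. its modular vector $\Xi$ does not vanish, so $X\wedge\Xi\neq0$ by Proposition\,\ref{UAB}) compatible with $\gG$, contradicting the first assertion.

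I do not anticipate any genuine obstacle here; this is a short corollary of the compatibility criterion. The only points to keep straight are that modularity of $\gl_{A,\theta,\nu}$ forces $\theta\neq0$, and that ``non-unimodular'' for the ambient algebra is exactly what makes the disassembled modular summand nontrivial. In particular the second, quadratic condition of \eqref{comp-mod-uni} plays no role in the argument.
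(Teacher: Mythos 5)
Your proof is correct and fills in exactly the argument the paper leaves implicit: the corollary follows immediately from the first condition $\theta([\gG,\gG])=0$ of the preceding proposition together with $\theta(\nu)=1$ from \eqref{usl}, and your verification that semi-simple algebras are perfect and unimodular (via $\tr(\ad_{\gG}[v,w])=0$) is the standard route. Nothing differs in substance from the paper's intent, and you are right that the second, quadratic condition is not needed.
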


\subsection{The matching problem for modular Lie algebras}\label{mamoli}

 Let $X_1\wedge\Xi_1, X_2\wedge\Xi_2$ be linear Poisson bi-vectors on 
$V^*$, with $X_i$ and $\Xi_i$ as in Section\,3.7. 
The modular field $\Xi$ of $X_i\wedge\Xi_i$ is a constant 
vector field on $V^*$ and, therefore, $[\Xi_1,\Xi_2]=0.$ For this reason, 
$X_1,X_2,\Xi_1,\Xi_2$ satisfy Relations (\ref{yhi}) and (\ref{ihy}) and, as 
a consequence, they satisfy the relations of Lemma\,\ref{UNI}. Since $X_1,X_2$ are linear 
vector fields on $V^*$, functions $f_1, f_2$ and $\lambda$ in (\ref{yhi}) 
are constant, while $\mu_1,\mu_2$ in (\ref{ihy}) are linear. As in 
Subsection\;\ref{comp-match}, after a suitable substitution 
$X_i+g_i\Xi_i\mapsto X_i, \,i=1,2,$ with linear $g_i$'s, functions $f_1$ and $f_2$ are eliminated. In this case relations in Lemma\,\ref{UNI} reduce to
\begin{equation}\label{ekl}
\Xi_1(\mu_2)= \Xi_2(\mu_1)=0, \quad\Xi_1(\mu_1)= 
\Xi_2(\mu_2)=-\lambda^2.
\end{equation}
Let $\bar V^*$ be the quotient of $V^*$  by the 2-dimensional subspace 
$\mathrm{span}(\Xi_1,\Xi_2)$. Then Relations (\ref{yhi}) and 
(\ref{ihy}) show that vector fields $X_1$ and 
$X_2$ project to some vector fields $\bar{X_1},\bar{X_2}$ on $\bar{V}^*$, 
respectively, and that
$[\bar{X_1},\bar{X_2}]=\lambda(\bar{X_1}-\bar{X_2}).$ So, $\bar{X_1}$ and $\bar{X_2}$ 
generate a 2-dimensional Lie algebra on $\bar{V}^*$. 

Let $V_1^*$ be a complement of $V_0^*=\rm{span}(\theta_1,\theta_2)$ in $V^*$ and let
$\pi_i:V^*\rightarrow V_i^*, \,i=0,1$, be the corresponding projections. 
A 
vector field $X\in D(V^*)$ which is projectable on $\bar{V}^*$ can be written as
$$
X=X_0+a_1\Xi_1+a_2\Xi_2, \quad a_1, a_2\in C^{\infty}(V^*)
$$ 
where $X_0$ is parallel to $V_1^*$. If $X$ is linear, then $X_0, a_1$ and $a_2$ 
are linear too.
In particular, vector fields $Z=X_1-X_2$ and $W=X_1+X_2$ can be written as 
\begin{equation}\label{fieldsDeco}
Z=Z_0+\alpha_1\Xi_1+\alpha_2\Xi_2, \quad W=W_0+\beta_1\Xi_1+\beta_2\Xi_2.
\end{equation}
In these terms, relations (\ref{newihy}) are equivalent to
\begin{eqnarray}\label{1-part}
-\Xi_1(\alpha_1)=\Xi_1(\beta_1)=-\Xi_2(\alpha_2)=\Xi_2(\beta_2)=-\lambda, \nonumber \\
\quad \Xi_1(\alpha_2)=\Xi_1(\beta_2)=\Xi_2(\alpha_1)= \Xi_2(\beta_1)=0.
\end{eqnarray} 
\begin{eqnarray}\label{0-part}
Z_0(\beta_1)-W_0(\alpha_1)-\lambda(3\alpha_1+\beta_1)=2\mu_1, \nonumber \\
Z_0(\beta_2)-W_0(\alpha_2)+\lambda(\beta_2-3\alpha_2)=-2\mu_2.
\end{eqnarray} 
Note that the linear functions $\alpha_i$'s and $\beta_i$'s depend on the choice of the 
complementary subspace $V_1^*$.

We also need the following lemma.
\begin{lem}\label{div X_0}
Let $\omega$ be a cartesian volume form on $V^*$ and 
$\omega=\pi_0^*(\omega_0)\wedge\pi_1^*(\omega_1)$
where $\omega_i$ is a cartesian volume form on $V_i^*$. Then
$$
L_X(\omega)=(\Div_{\omega_0}\bar{X}_0+\Xi_1(\alpha_1)+\Xi_2(\alpha_2))\omega
$$
where $\bar{X}_0$ is the restriction of $X_0$ to $V_1^*$. 
\end{lem}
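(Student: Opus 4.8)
The plan is to compute the Lie derivative $L_X(\omega)$ directly by decomposing $X$ according to Formula (\ref{fieldsDeco}), i.e.\ $X=X_0+a_1\Xi_1+a_2\Xi_2$ where $X_0$ is parallel to $V_1^*$, and then exploit the multiplicative structure of $\omega=\pi_0^*(\omega_0)\wedge\pi_1^*(\omega_1)$. Since the liesation operator $L$ is additive in its argument, I would write
\[
L_X(\omega)=L_{X_0}(\omega)+L_{a_1\Xi_1}(\omega)+L_{a_2\Xi_2}(\omega)
\]
and treat the three terms separately. The whole computation rests on the standard identity $L_X(\omega)=(\Div_\omega X)\,\omega$ for a vector field $X$ and a volume form $\omega$, together with the Cartan-type formula $L_{fY}(\omega)=f\,L_Y(\omega)+df\wedge(Y\rfloor\omega)$ for rescaling a vector field by a function $f$.

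First I would handle the constant fields $\Xi_1,\Xi_2$. Because $\Xi_i$ is constant and $\omega$ is cartesian, $L_{\Xi_i}(\omega)=0$ (this is Proposition \ref{LIST}(iv), or simply $\Div_\omega\Xi_i=0$). Hence $L_{a_i\Xi_i}(\omega)=da_i\wedge(\Xi_i\rfloor\omega)$. The key observation is that $\Xi_i\rfloor\omega$ contracts away the $i$-th direction of the $V_0^*$-factor, so when we wedge with $da_i$ only the $\partial a_i/\partial\xi_i$-component survives, where $\xi_i$ is the coordinate dual to $\Xi_i$; concretely $da_i\wedge(\Xi_i\rfloor\omega)=\Xi_i(a_i)\,\omega$. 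Identifying $a_1=\alpha_1$ and $a_2=\alpha_2$ via (\ref{fieldsDeco}), this contributes exactly $(\Xi_1(\alpha_1)+\Xi_2(\alpha_2))\,\omega$, which accounts for the last two summands in the claimed formula.

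For the remaining term $L_{X_0}(\omega)$, the point is that $X_0$ is parallel to $V_1^*$, so its only nonzero components lie along $V_1^*$-directions. Writing $\omega=\pi_0^*(\omega_0)\wedge\pi_1^*(\omega_1)$ and using that the divergence of a $V_1^*$-parallel field against this product form only sees the $V_1^*$-derivatives of the $V_1^*$-components, I would show $L_{X_0}(\omega)=(\Div_{\omega_0}\bar X_0)\,\omega$, where $\bar X_0$ is the restriction of $X_0$ to $V_1^*$ and the divergence is taken with respect to $\omega_1$. (The potential index clash between $\omega_0$ and $\omega_1$ in the statement I would resolve in favor of the factor genuinely differentiated, namely $\omega_1$; in either case the essential content is that only the $V_1^*$-coordinate divergence of $X_0$ enters.) Summing the three contributions yields the asserted identity.

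The main obstacle is bookkeeping rather than conceptual: one must be careful that $X_0$ may still have coefficients depending on the $V_0^*$-coordinates, so in expanding $\Div_\omega X_0$ a priori there could be cross terms involving $\partial/\partial\xi_i$ of the $V_1^*$-components. The crux is to verify these cross terms vanish — this follows because $X_0$ has \emph{no} components along $\Xi_1,\Xi_2$, so differentiating its surviving $V_1^*$-components in the $V_0^*$-directions produces terms that are annihilated upon contraction with $\omega$ (they reintroduce a repeated $V_0^*$-direction in the wedge). Making this cancellation precise in the anti-commuting $\xi$-calculus of Formula (\ref{Schouten in coordinates}) is the one place where care is genuinely required; everything else is a direct application of $L_X(\omega)=(\Div_\omega X)\,\omega$ and additivity of $L$.
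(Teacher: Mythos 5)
Your proof is correct and takes essentially the same route as the paper's: decompose $X$ as in (\ref{fieldsDeco}), use additivity of $L$, obtain $L_{\alpha_i\Xi_i}(\omega)=d\alpha_i\wedge(\Xi_i\rfloor\omega)=\Xi_i(\alpha_i)\,\omega$ from $L_{\Xi_i}(\omega)=0$, and reduce $L_{X_0}(\omega)$ to the divergence of $\bar{X}_0$ via the product structure $\omega=\pi_0^*(\omega_0)\wedge\pi_1^*(\omega_1)$ (the paper does this by the derivation property of $L$ on the wedge, which is the same computation as your coordinate bookkeeping). Your resolution of the subscript in favor of the factor actually differentiated, i.e.\ $\Div_{\omega_1}\bar{X}_0$, is also the right reading; the $\omega_0$ in the statement is a slip.
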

\begin{proof}
Obviously, $X_0\rfloor\pi_0^*(\omega_0)=\Xi_0\rfloor\omega=\Xi_1\rfloor\omega=0$ so that
$$L_{X_0}(\pi_0^*(\omega_0))=0 \quad\mathrm{and} \quad L_{\alpha_i\Xi_i}(\omega)=
d\alpha_i\wedge(\Xi_i\rfloor\omega)=
\Xi_i(\alpha_i)\omega
$$
and
$$L_{X_0}(\omega)=\pi_0^*(\omega_0)\wedge L_{X_0}(\pi_1^*(\omega_1))=
\pi_0^*(\omega_0)\wedge\pi_1^*(L_{\bar{X}_0}\omega_1)=\Div_{\omega_0}\bar{X}_0\cdot\omega.
$$
\end{proof}

A linear function $\varphi$ on $V^*$ can be decomposed into the sum 
$\varphi=\varphi^0+\varphi^1$ where $\varphi^i$ 
is linear and vanishes on  $V_i^*, \,i=0,1$. Accordingly, Relations (\ref{1-part}) and 
(\ref{0-part}) split into two parts. The first explicitly describes functions $\alpha_i^1, \beta_i^1, \,i=1,2$, while the 
second one, in view of (\ref{ekl}), set no additional restrictions on these functions.      
This gives us the freedom to annihilate $\alpha_i^0$'s by means of the substitution
$$
(X_1+\alpha_1^0\Xi_1)\mapsto X_1, \quad(X_2-\alpha_2^0\Xi_2)\mapsto X_2,
$$
which is still at our disposal.  In this normalisation, Relations (\ref{0-part}) become
\begin{eqnarray}\label{reduced 0-part}
Z_0(\beta_1^0)-\lambda\beta_1^0=2\mu_1^0,  \quad\quad
Z_0(\beta_2^0)+\lambda\beta_2^0=-2\mu_2^0.
\end{eqnarray} 

Now the obtained data allow us to describe matchings of two modular Lie algebras. In fact, 
the matchings with non-zero invariant $\lambda$ 
are characterised by  ordered quadruples $(\mathcal{V},A,B,\lambda)$ where 
$\mathcal{V}$ is a vector space, $A,B:\mathcal{V}\rightarrow \mathcal{V}$
are linear operators such that $[A,B]=2\lambda A$ and $\tr\,B=2(1-\lambda)$.
The matchings with $\lambda=0$ are characterised by 
quintuples $(\mathcal{V},A,B,\nu_1,\nu_2)$ with commuting 
$A,B:\mathcal{V}\rightarrow \mathcal{V}$ such that $\tr\,A=0, \tr\,B=2$, and
$\nu_1,\nu_2\in\Ker\,A$. Quadruples $(\mathcal{V},A,B,\lambda)$ and 
$(\mathcal{V}',A',B',\lambda')$ are \emph{equivalent} if $\lambda=\lambda'$ and 
there exists an isomorphism $\Phi:\mathcal{V}\rightarrow \mathcal{V}'$
such that $ A'=\Phi A\Phi^{-1}, B'=\Phi B\Phi^{-1}$. Similarly, quintuples 
$(\mathcal{V},A,B,\nu_1,\nu_2)$ and $ (\mathcal{V}',A',B',\nu_1',\nu_2')$ are 
\emph{equivalent} if, additionally, $\nu_i'=\Phi(\nu_i)$. In other words, 
the group 
$\gG\gl({\mathcal{V}})$ acts naturally on quadruples and quintuples defined on 
${\mathcal{V}}$ and their equivalence classes are  labeled by the orbits of this action.

The quadruple (resp., quintuple) associated with a compatible pair 
$X_1\wedge \Xi_1, \;X_2\wedge \Xi_2$ is constructed on the subspace 
$$
\mathcal{V}=\Ann(\theta_1,\theta_2)\subset V, \quad
\Ann(\theta_1,\theta_2)=\{v\in V\mid \theta_1(v)=\theta_2(v)=0\}.
$$
We identify $\Ann(\theta_1,\theta_2)$ and, therefore, $\mathcal{V}$ with the dual
to the quotient space $\bar{V}=V^*/\mathrm{span}(\theta_1,\theta_2)$. In other words,
we identify vectors $v\in\mathcal{V}$ with linear functions $\varphi$ on $V^*$ such
that $\Xi_i(\varphi)=0, \,i=1,2$. It follows from (\ref{newihy}) that the subspace containing 
these functions is invariant with respect to the action of vector fields $Z$ and $W$, 
and we define the operators $A$ and $B$ to be the restrictions of $Z$ and $W$ to this subspace,
respectively. If $\lambda=0$, then we set $\nu_i=\beta_i^0$ for a suitable choice 
of $V_1^*$ (see below). 
\begin{thm}\label{LieMathing}
Assume that $\dim\mathcal V=n-2$ and $A,B, \nu_1, \nu_2$ are as above. Then the matchings 
of $n$-dimensional modular Lie algebras are classified by equivalence 
classes of quadruples $(\mathcal{V},A,B,\lambda)$ if $\lambda\neq 0$ and by quintuples 
$(\mathcal{V},A,B,\nu_1,\nu_2)$ if $\lambda=0$.
\end{thm}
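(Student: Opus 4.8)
The statement is the end-point of the reduction performed just above it, so the plan is to package that reduction as a bijection between equivalence classes of matchings and $\gG\gl(\mathcal{V})$-orbits of the algebraic data, and to check that this bijection is equivariant. I begin with the \emph{forward map}. Given a compatible pair $X_1\wedge\Xi_1,\ X_2\wedge\Xi_2$ with $\Xi_1,\Xi_2$ independent, I first put it in the normal form established above: the gauge substitutions $X_i+g_i\Xi_i\mapsto X_i$ kill $f_1,f_2$, and the further substitution annihilating the $\alpha_i^0$'s brings the defining relations to (\ref{ekl}), (\ref{newihy}) and (\ref{reduced 0-part}). The subspace $\mathcal{V}=\Ann(\theta_1,\theta_2)$, viewed as the space of $\Xi$-invariant linear functions, is invariant under $Z=X_1-X_2$ and $W=X_1+X_2$ by (\ref{newihy}), so $A\DEF Z|_{\mathcal{V}}$ and $B\DEF W|_{\mathcal{V}}$ are well defined and $\lambda$ is read off from (\ref{newihy}); if $\lambda=0$ I additionally record $\nu_i=\beta_i^0$ from the decomposition (\ref{fieldsDeco}).

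Next I verify that the required relations hold and are invariants. Restricting the third relation of (\ref{newihy}) to $\mathcal{V}$, where the $\Xi_i$-terms act trivially on $\Xi$-invariant functions, yields $[A,B]=2\lambda A$. The trace condition follows by splitting $\tr_V\hat W$ along $0\to\mathcal{V}\to V\to V/\mathcal{V}\to 0$: each $X_i$ is modular, so the normalisation $L_{X_i}(\omega)=\omega$ of Proposition\,\ref{UAB} fixes $\tr_V\hat X_i$, while the first two relations of (\ref{newihy}) force $\hat W^{*}$ to act by a scalar in $\lambda$ on $\mathrm{span}(\theta_1,\theta_2)=(V/\mathcal{V})^{*}$, and combining the two gives the stated value $\tr B=2(1-\lambda)$. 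In the case $\lambda=0$ the same bookkeeping produces $[A,B]=0$, $\tr A=0$, $\tr B=2$, and $\nu_i\in\Ker A$ drops out of (\ref{reduced 0-part}) and (\ref{ekl}). That these data are unchanged by the residual gauge freedom $X_i\mapsto X_i+\phi_i\Xi_i$ (with $\Xi_j(\phi_i)=0$) and that an isomorphism of matchings induces a $\Phi\in\gG\gl(\mathcal{V})$ conjugating $(A,B)$ and sending $\nu_i\mapsto\Phi(\nu_i)$ are proved exactly as in Proposition\,\ref{mathc invariants}; the one point needing care is that changing the complement $V_1^{*}$ affects only the $\varphi^1$-parts harmlessly and, for $\lambda=0$, is what is absorbed into the clause ``for a suitable choice of $V_1^{*}$'' defining $\nu_i$.

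For the \emph{inverse map} I reconstruct a matching from an abstract datum. Set $V=\mathcal{V}\oplus\langle e_1,e_2\rangle$, take $\theta_1,\theta_2$ to be the coordinate functionals of the added summand, and extend $A,B$ to operators $\hat Z,\hat W$ on $V$ acting on $\langle e_1,e_2\rangle$ by the scalars in $\lambda$ dictated by (\ref{newihy}); then $\hat X_i=\frac12(\hat W\pm\hat Z)$ give the linear vector fields $X_i$ and $P_i=X_i\wedge\Xi_i$. The condition $\tr B=2(1-\lambda)$ is precisely what forces each reconstructed $\hat X_i$ to carry the trace required by $L_{X_i}(\omega)=\omega$, so the $P_i$ are genuine modular bi-vectors; and $[A,B]=2\lambda A$ together with the $\theta_i$-eigenvalue structure yields (\ref{lko}) and one of (\ref{rdc})/(\ref{rds}), whence by Proposition\,\ref{OMP} the $P_i$ are compatible. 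For $\lambda=0$ the off-diagonal blocks $\langle e_1,e_2\rangle\to\mathcal{V}$ of $\hat W$ are governed by (\ref{reduced 0-part}), which no longer determines them, and the surviving freedom is exactly a pair $\nu_1,\nu_2\in\Ker A$ — this is why a quintuple rather than a quadruple is needed. Checking that extraction and reconstruction are mutually inverse on equivalence classes then closes the argument.

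The main obstacle is this inverse direction: proving that the listed relations are not merely necessary but \emph{sufficient}, so that every admissible quadruple or quintuple is realised by an actual compatible pair, and that reconstruction is inverse to extraction modulo $\gG\gl(\mathcal{V})$. The delicate point is the degenerate case $\lambda=0$, where the relations (\ref{reduced 0-part}) collapse and one must confirm that the leftover data are faithfully and exactly $(\nu_1,\nu_2)\in(\Ker A)^2$ up to $\gG\gl(\mathcal{V})$, with no further hidden constraints surviving from (\ref{ekl}) and Lemma\,\ref{UNI}.
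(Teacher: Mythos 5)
Your overall architecture (extract the invariants, verify the relations, reconstruct) is the same as the paper's, but there is a genuine gap in the forward direction that undermines the classification precisely where the theorem splits into two cases. Your map simply forgets the components $\beta_1^0,\beta_2^0$ when $\lambda\neq 0$, and for the quadruple $(\mathcal V,A,B,\lambda)$ to classify matchings you must show that this forgotten data is determined by what you keep — in fact that it vanishes. The paper does this in two steps that are absent from your argument: first, the complement $V_1^*$ is fixed \emph{uniquely} by requiring $\mu_1^0=\mu_2^0=0$, which turns (\ref{reduced 0-part}) into $A(\beta_1^0)=\lambda\beta_1^0$, $A(\beta_2^0)=-\lambda\beta_2^0$; second, the relation $[A,B]=2\lambda A$ forces $A$ to be \emph{nilpotent} when $\lambda\neq 0$ ($A$ is an eigenvector of $\ad B$ with nonzero eigenvalue, so all traces $\tr A^k$ vanish), hence $A\pm\lambda\id$ is invertible and $\beta_1^0=\beta_2^0=0$. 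Without the nilpotency observation you cannot assert that (\ref{reduced 0-part}) ``determines'' the off-diagonal blocks when $\lambda\neq0$, and without the canonical normalisation of $V_1^*$ the vectors $\nu_i=\beta_i^0$ in the case $\lambda=0$ are not even well defined. Your remark that changing $V_1^*$ ``affects only the $\varphi^1$-parts harmlessly'' is incorrect: the decomposition $\varphi=\varphi^0+\varphi^1$ depends on $V_1^*$ through both summands, and it is exactly the $\varphi^0$-components (where $\beta_i^0$ and $\mu_i^0$ live) that move, by elements of $\mathcal V$; this is why the paper pins down $V_1^*$ by $\mu_1^0=\mu_2^0=0$ before reading off anything.

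The remainder of your outline matches the paper: your trace computation via splitting $\tr$ along $0\to\mathcal V\to V\to V/\mathcal V\to 0$ is equivalent to the paper's use of Lemma \ref{div X_0} together with $L_W(\omega)=2\omega$ and the relations (\ref{1-part}); and the reconstruction you sketch is exactly the paper's, completed there by an explicit assignment of $\alpha_i,\beta_i,\mu_i$ (namely $\alpha_i^0=\mu_i^0=0$, $\alpha_1^1=\lambda\varphi_1$, $\alpha_2^1=-\lambda\varphi_2$, $\beta_i^1=-\lambda\varphi_i$, $\mu_i^1=-\lambda^2\varphi_i$, with $\beta_i^0=0$ or $\nu_i$ according to $\lambda$) followed by a direct verification of modularity and compatibility. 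So the ``main obstacle'' you flag in the inverse direction is routine computation; the item you actually must supply is the nilpotency-plus-normalisation step above.
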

\begin{proof}
Choose the complement $V_1^*$ so that $\mu_1^0=\mu_2^0=0$. It is not difficult to 
see that such $V_1^*$ exists and is unique. With this choice of $V^*_1$, functions 
$\mu_i=\mu_i^1$ are completely  determined by Relations (\ref{ekl}), 
and Relations (\ref{reduced 0-part}) simplify as follows:
\begin{eqnarray}\label{simplified 0-part}
A(\beta_1^0)-\lambda\beta_1^0=0,  \quad\quad
A(\beta_2^0)+\lambda\beta_2^0=0.
\end{eqnarray} 
The last of the commutation Relations (\ref{newihy}) implies that  $[A,B]=2\lambda A$. From this
it follows that $A$ is nilpotent if $\lambda\neq 0$ and, as a consequence, that 
$A\pm\lambda\id$ is non-degenerate. This shows that
the only solution of (\ref{simplified 0-part}) is $\beta_1^0=\beta_2^0=0$.  

Since $L_W(\omega)=L_{X_1}(\omega)+L_{X_2}(\omega)=2\omega$ (see (\ref{uab})) and 
$\Xi_1(\alpha_1)=\Xi_2(\alpha_2)=\lambda$ (see (\ref{1-part})), it follows from 
Lemma\,\ref{div X_0}
that $\Div_{\omega_0}W_0+2\lambda=2$ and hence $\tr\,B=\Div_{\omega_0}W_0=2(1-\lambda)$.
If $\lambda=0$, then (\ref{simplified 0-part}) just tells us that 
$\beta_1^0, \,\beta_2^0\in\Ker A$ and there are no other restrictions on these functions. 
This proves that the quadruples/quintuples associated with matchings of two modular Lie algebras possess the required properties.

Now we have to show that a given abstract quadruple/quintuple is associated with 
a pair of compatible modular Lie algebras. To this end, set
$V_1=\mathcal{V}, \;V=V_1\oplus V_0$ where $V_0$ is a 2-dimensional vector space,
so that $V^*=V^*_1\oplus V_0^*$. Then fix two independent vectors 
$\theta_1,\theta_2 $ in $V_0^*$, which will be understood as vectors in $V^*$, and
define $\Xi_i$ as the constant vector field on $V^*$ corresponding to 
$\theta_i, \,i=1,2$. Observe that, by construction, $V_1$ as a subset of $V$ is
$\Ann(\theta_1,\theta_2)$. Vector fields $Z_0$ and $W_0$ on $V^*$ are defined as
corresponding to operators $A\oplus 0_{V_0}, \,B\oplus 0_{V_0}\colon V\to V$.

Finally, define functions
$\alpha_i, \,\beta_i$ and $\mu_i$ by setting 
$$\alpha_i^0=\mu_i^0=0, \,\alpha_1^1=\lambda\varphi_1,  
\,\alpha_2^1=-\lambda\varphi_2, \,\beta_i^1=-\lambda\varphi_i, \,\mu_i^1=-\lambda^2\varphi_i
$$ 
where the $\varphi_i's$ are the linear function on $V^*$ vanishing on $V_1^*$ and such that 
$\varphi_i(\theta_j)=\delta_{ij}$, which are naturally identified with some vectors in 
$V_0$. Also, we set $\beta_i^0=0$ if $\lambda\neq 0$, and $\beta_i^0=\nu_i$  if $\lambda=0$.
In the last case, the vectors $\nu_i$'s are interpreted as linear functions on $V^*$.

Now, vector fields $Z$ and $W$ and, therefore, vector fields 
$X_1=\frac{1}{2}(Z+W), X_2=\frac{1}{2}(W-Z))$ are defined by
Formula (\ref{fieldsDeco}) with  the $\alpha_i$'s and $\beta_i$'s as above, and a direct computation shows that the linear bi-vectors $X_i\wedge\Xi_i$ thus constructed are 
modular and compatible. 
\end{proof}

\begin{rmk}
Since the representations of bi-dimensional Lie algebras are well-known, Theorem 
\emph{\ref{LieMathing}} gives an exhaustive description of the matchings of modular Lie 
algebras. In particular, given such a representation, the value of the invariant 
$\lambda$ can be found from the trace formula, $\tr\,B=2(1-\lambda)$. 
\end{rmk}
\begin{rmk}
It is easy to  see that matchings with proportional $\Xi_1$ and $\Xi_2$ are completely characterised by quadruples $(\mathcal{V}, A_1,A_2, \nu)$ such that 
$\dim\,\mathcal{V}=n-1, \tr\,A_1=\tr\,A_2=1$ and $\nu\neq 0\in\gk$. Namely, 
$\mathcal{V}=\Ann (\theta_1), \,\Xi_2=\nu\Xi_1$ and $A_i\colon \mathcal{V}\to\mathcal{V}$ 
is the linear operator corresponding to the vector field $X_i$
projected on the quotient space $V^*/ \mathrm{span}(\theta_1)$, which is identified with 
$\mathcal{V}^*$.  
\end{rmk}

\section{The disassembling problem}\label{dis-probl}

This section contains the main results of our paper. Here, we discuss how a Lie algebra 
can be  
disassembled into pieces which are themselves Lie algebras. We
introduce some basic disassembling techniques which yield  various results on the assembling and disassembling of Lie algebras.  As our main result, we prove that  any 
finite-dimensional Lie algebra over an algebraically closed field of characteristic  
zero or over  $\R$ can be  assembled in a few steps from elementary constituents, called \emph{lieons}, which are of two 
types, called \emph{dyons} and \emph{triadons} (Theorems \ref{C-dis} and \ref{R-dis}).

In this section, ``Lie algebra" refers to a finite-dimensional Lie algebra 
over a ground field $\gk$ of characteristic zero.  

\subsection{Statement of the problem}\label{statemant}

\par\medskip
\textit{Simple disassemblings and lieons.} 

We introduce the necessary terminology  in order to 
formulate the {\it disassembling  problem}. 

\begin{defi}\label{base-dis}
A \emph {simple disassembling} of a Lie algebra structure $\gG$ on a vector 
space $V$ is a representation of $\gG$ as a sum 
\begin{equation}\label{dec}
\gG = \gG_1 + \dots +\gG_k
\end{equation}
of mutually compatible Lie algebra structures $\gG_i$'s on $V$.
The Lie algebras 
$\gG_i$'s in \emph{(\ref{dec})} are called \emph{primary constituents} of 
$\gG$. 
\end{defi}

In this case,  slightly abusing the language,  we speak of a (simple) 
disassembling of the Lie algebra $\gG$ into Lie algebras $\gG_1,...,\gG_k$ or, 
alternatively, that  $\gG$ is assembled from $\gG_1,...,\gG_k$. Accordingly,  
we write 
\begin{equation}\label{2-dec}
 \gG_1 + \dots +\gG_k = \gH_1  + \dots + \gH_l,
\end{equation}
in order to express one of the  two following facts:
\begin{itemize}
\item a Lie algebra structure on a vector space $V$ admits two 
(different) disassemblings into Lie algebras structures $\gG_i$'s and 
$\gH_i$'s, respectively; 
\item the Lie algebras assembled from Lie algebras $\gG_i$'s and $\gH_i$'s, 
respectively, are isomorphic. 
\end{itemize} 

Having disassembled a Lie algebra $\gG$ into constituents $\gG_1,\dots , \gG_k$ 
it is natural to look for 
a further disassembling of the $\gG_i$'s and so on. This 
way one gets {\it secondary, ternary}, etc, constituents. The inverse to such 
a multi-step disassembling procedure will be called an \emph{assembling} 
procedure. 
A natural question, which we call the \emph{disassembling problem}is: 
\begin{quote}
{\it What are the \emph{``finest" (``simplest")} constituents of which any Lie 
algebra over a given ground field can be assembled?} 
\end{quote}

It is not difficult to imagine that the following Lie algebras 
must be in the list of the ``finest" algebras: 
\begin{itemize}
  \item the 1-dimensional Lie algebra, $\boldsymbol{\gamma}$,
  \item the unique non-abelian  2-dimensional Lie algebra, $\between$,
  \item the 3-dimensional Heisenberg Lie algebra  (over $\gk$), $\pitchfork$.
\end{itemize}
In terms of generators, the Lie algebras $\between$ and $\pitchfork$ are described 
as follows: 
\begin{align*}
 \between &= \{e_1, e_2 \mid [e_1,e_2]=e_2\} \\
  \pitchfork &= \{\e_1, \e_2, \e_3 \mid [\e_1,\e_2]=\e_3,\ [\e_1,\e_3]=[\e_2,\e_3]=0\}.
  \end{align*}
\noindent They are ``simplest" in any 
reasonable sense of the word. In particular, $\between$ is the ``simplest" 
non-unimodular algebra, while $\pitchfork$ is the ``simplest" non-abelian unimodular one. 

Denote by ${\between}_n, n\geqslant 2$, (resp., ${\pitchfork}_n, n\geqslant 3$) 
the direct sum of $\between$ (resp., $\pitchfork$) and the 
$(n-2)$-dimensional (resp., $(n-3)$-dimensional) abelian Lie algebra. We shall 
also use the symbol $\boldsymbol{\gamma}_n$ for the $n$-dimensional abelian Lie algebra. 
\begin{defi}\label{lieons}
A Lie algebra structure isomorphic to $\between_n$ $($resp., to $\pitchfork_n)$ is called 
an \emph{$n$-dyon} $($resp., an \emph{$n$-triadon}$)$. The collective name for both 
is \emph{$n$-lieon}.
\end{defi}
When the dimension $n$ is clear from the context we shall omit the prefix ``$n-$". 

Solving the disassembling problem for 3-dimensional Lie algebras is rather simple
(see \cite{MVV}):
\begin{ex}
Any unimodular 3-dimensional Lie algebra can be simply assembled from $l$ 
triadons, $l\leq 3$. Any non-unimodular 3-dimensional Lie algebra 
can be simply assembled from $l$ triadons, $l\leq 2$, and one 
dyon. In this sense, one can say 
that all 3-dimensional Lie algebras are simply assembled from $\between$'s, 
$\pitchfork$'s and $\boldsymbol{\gamma}$, since 
$\between_3=\between\oplus\boldsymbol{\gamma}$. 
\end{ex}

\noindent See also \cite{Mor} for an explicit description of the 
algebraic variety $\mathrm{Lie}(3)$ of all Lie algebra structures on a 3-dimensional 
vector space.

It is sometimes more efficient to use ``chemical" formulas such as 
\begin{equation}\label{2h=2b}
 2\pitchfork = 2\between + 2\boldsymbol{\gamma}.
\end{equation}
This formula, which will be proven below, is synonymous to $\pitchfork + 
\pitchfork = \between_3 + \between_3.$ We stress that formulas such as 
(\ref{2h=2b}) only tell us that a given Lie algebra can be assembled either from the 
Lie algebras indicated in the left-hand side or from those in the right-hand side of the equality.\\
\par
\noindent\textit{Assemblage schemes.}  

\noindent Now we pass to a necessary bureaucracy. An {\it assembling scheme} 
(or an {\it a-scheme} for short) $\gS$ is a finite graph, whose set of vertices 
$vert\,\gS$ is a disjoint union of nonempty subsets $vert_s\gS, \,s=0,\dots ,m$, 
called {\it levels}, such that 
\begin{enumerate}
\item $vert_0\gS$ consists of only one vertex $o_{\gS}$, called
the {\it origin} of $\gS$;
\item edges of $\gS$ only connect vertices of consecutive levels. If 
$v_0\in vert_s\gS$ and \\ $v_1\in vert_{s+1}\gS$ are  ends of an edge, 
then they are called \\ the {\it origin} and the {\it end} of this edge, 
respectively; 
\item any vertex $v\in vert_s\gS , s>0$,  is the end of only one
edge; 
\item none of the vertices $v\in vert_s\gS , s<m$, is the origin of exactly one edge. A vertex which is not the origin of an edge is called an {\it end} of $\gS$.

\end{enumerate} 
\begin{wrapfigure}[13]{o}{4cm}
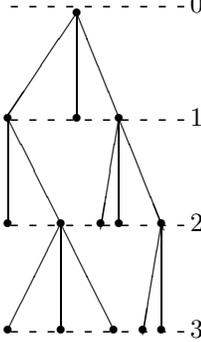
 
\figone
\caption{An a-sche\-me of length~3.}
\end{wrapfigure}

The number $m$, denoted also by $|\gS|$, is called the {\it length} of $\gS$. 
Obviously, $\gS$ is a connected graph and there exists at most one edge 
connecting two given vertices of it. All vertices in $vert_m\,\gS$ are ends. 
The a-schemes $\gS$ and $\gS'$ are \emph{equivalent} if they are equivalent
as graphs. 

Fig. 1 illustrates these formal definitions. The dashed lines in this figure indicate
the levels of the a-scheme considered.

\par\medskip
\noindent\textit{Multi-step disassemblings.} 

\begin{defi}
 Let $\gG$ be a Lie algebra structure on a vector space $V$ and
 $\gS$ be an a-scheme. A system $\{\gG_v\}, v\in vert\,\gS$,
 of Lie algebra structures on $V$ is called an $m$-step $(m=|\gS|)$
 \emph{disassembling} of $\gG$ if
 \begin{enumerate}
\item $\gG =\gG_{o_{\gS}}$;
\item If $v_1,\dots,v_p\in vert\,\gS$ are ends of edges having the common
origin v, then structures $\gG_{v_1},\dots ,\gG_{v_p}$ are mutually compatible 
and $\gG_v = \gG_{v_1}+\dots +\gG_{v_p}$. 
 \end{enumerate}
\end{defi}


A disassembling according an a-scheme $\gS$ will be called $\gS$-disassembling.
 The structure $\gG_v, v\in vert_s\gS,$ is an {\it $(s$-level$)$ term} of the 
$\gS$-disassembling and an {\it end term} if $v$ is an end point of $\gS$. 
If $(\gG_1,\dots,\gG_r)$ is the set of all end terms of a disassembling of $\gG$, 
then we say that $\gG$ \emph{is assembled from these Lie algebras}. 

We stress that if $v$ and $w$ are not the ends of two edges with a common 
origin, then  $\gG_v$ and $\gG_w$ are not, in general, compatible.

\begin{defi}\label{finest}
\begin{enumerate}
\item A disassembling of $\gG$ is called \emph{complete} if all its end terms are 
isomorphic either to $\between_n$ or to $\pitchfork_n$. 
\item Disassemblings of isomorphic Lie algebras $\gG$ and $\gH$ are called 
\emph{equivalent} if there exist an equivalence 
$\sigma:vert\,\gS\rightarrow vert\,\mathfrak{H}$ 
of the corresponding a-schemes and an isomorphism $\varphi:\gG\rightarrow\gH$ 
such that $\varphi$ is also an isomorphism of 
$\gG_v$ and $\gH_{\sigma(v)}$, for all $v$ in $vert\,\gS$.
\end{enumerate}
\end{defi}
 
Obviously, nonequivalent disassemblings can have equivalent a-schemes. 
The above-mentioned formula $\pitchfork+\pitchfork=\between_3+\between_3$
is a simple example. 

\par\medskip
\noindent\textit{The disassembling problem.}

\noindent  The \emph{disassembling problem} is the question:
\begin{quote}
{\it Can a given Lie algebra be completely disassembled?}
\end{quote}
Below we shall develop some disassembling techniques and prove that any Lie 
algebra over an algebraically closed field or over $\R$ 
can be completely disassembled. This result confirms that lieons are 
elementary constituents of which all Lie algebras are made. 
Borrowing from terminology of theoretical physics, one may say that $\gamma$ creates 
the ``vacuum" which makes interactions possible between the constituents,  
$\between$ and $\pitchfork$, of the ``Lie matter".

We remark that the number of elementary 
constituents for Lie algebras cannot be reduced to one. Indeed, according 
to Proposition\,\ref{PFA}, by 
assembling unimodular Lie algebras, one only obtains unimodular algebras. 
So, only unimodular Lie algebras can be assembled from triadons. 
On the other hand, it is not difficult to show (see \cite{MVV})
that $\pitchfork$ cannot be assembled from 3-dyons only (compare 
with (\ref{2h=2b})\,!). Hence, neither $\pitchfork$ nor $\between$ can be 
excluded from the list of the ``finest" Lie algebras.

We shall now explain some basic techniques and constructions that will be used in our 
analysis of the disassembling problem. 

\par\bigskip
\subsection{Reduction to solvable and semi-simple algebras} \label{ReductionSolvable}

\noindent First, we shall show that the problem of disassembling splits naturally into a
``solvable" and a ``semi-simple" part. Given a Lie algebra $\gA$
and a vector space $V$ considered as an abelian Lie algebra, the semi-direct product defined  
by a representation $\rho:\gA\rightarrow \rm{End}\,V$ will be denoted by 
$\gA\oplus_{\rho}V$.
\begin{proc}\label{s-tr}
Let the Lie algebra $\gG$  be the semi-direct product of a subalgebra $\gG_0$ and of an 
ideal $\gH$. Identifying $|\gG|$ with $|\gG_0|\oplus |\gH|$ we obtain the 
simple disassembling 
\begin{equation}\label{csp}
\gG =(\gG_0 \oplus_{\rho}|\gH |)\,+\,(\bg_m\oplus \gH),
\end{equation}
where $\rho$ is the
canonical representation of $\gG_0$ in $|\gH|$\,  and \, 
$\bg_m, \,m=\dim\,\gG_0$, is the abelian structure on $|\gG_0|$. 
\end{proc}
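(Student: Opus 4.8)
The plan is to write down the two primary constituents explicitly on $V=|\gG_0|\oplus|\gH|$ and to observe that their sum reproduces the product of $\gG$, so that the decomposition and the compatibility drop out simultaneously. Write each vector as $x+h$ with $x\in|\gG_0|$ and $h\in|\gH|$, and let $[\cdot,\cdot]$ denote the product of $\gG$. The semi-direct product $\gG_0\oplus_\rho|\gH|$ carries the product $[\cdot,\cdot]_1$ fixed by $[x,y]_1=[x,y]$ for $x,y\in|\gG_0|$, by $[x,h]_1=\rho(x)(h)$, and by $[h,h']_1=0$; whereas $\bg_m\oplus\gH$ carries the product $[\cdot,\cdot]_2$ with $[x,y]_2=0$, $[x,h]_2=0$, and $[h,h']_2=[h,h']$. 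Here $\rho$ is the canonical action $\rho(x)=(\ad_{\gG}x)|_{|\gH|}$, which takes values in $\End(|\gH|)$ precisely because $\gH$ is an ideal, so that $[x,h]=\rho(x)(h)$.

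First I would check that each summand is a Lie algebra structure. For $[\cdot,\cdot]_2$ this is immediate: it is the direct sum of the abelian algebra $\bg_m$ on $|\gG_0|$ and the genuine Lie algebra $\gH$ (a subalgebra of $\gG$), and a direct sum of Lie algebras is again a Lie algebra. For $[\cdot,\cdot]_1$ the only point requiring attention is the Jacobi identity, which for a semi-direct product of $\gG_0$ with an abelian factor is equivalent to $\rho$ being a representation, that is, to $\rho([x,y])=\rho(x)\rho(y)-\rho(y)\rho(x)$. Evaluated on $h\in|\gH|$, the two sides read $[[x,y],h]$ and $[x,[y,h]]-[y,[x,h]]$, and these coincide by the Jacobi identity in $\gG$; thus $\rho$ is a representation and $[\cdot,\cdot]_1$ is a Lie product.

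The decisive, and essentially only computational, step is then to compare the two constituent products on the three types of arguments: $[x,y]_1+[x,y]_2=[x,y]$, $[x,h]_1+[x,h]_2=\rho(x)(h)=[x,h]$, and $[h,h']_1+[h,h']_2=[h,h']$. By bilinearity these identities give $[\cdot,\cdot]_1+[\cdot,\cdot]_2=[\cdot,\cdot]$ on all of $V$. Since $[\cdot,\cdot]$ is a Lie product by hypothesis, this single identity establishes both that $\gG=(\gG_0\oplus_\rho|\gH|)+(\bg_m\oplus\gH)$ and that the two constituents are compatible, compatibility being by definition exactly the assertion that their sum is a Lie product. I do not expect any genuine obstacle: the content is simply the familiar fact that a semi-direct product splits off the abelianised acting part, repackaged in the compatibility language. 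The one place deserving care is the identification of $\rho$ with the restricted adjoint action and the verification that it is a representation, since this is what makes the first summand a bona fide Lie algebra; the compatibility itself needs no separate Jacobi computation, as it is read off the elementary comparison above.
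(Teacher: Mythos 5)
Your proof is correct and is precisely the elaboration of what the paper leaves implicit: the paper's own proof consists of the single line ``The proof is clear from the definitions,'' and your explicit comparison of the three cases $[x,y]$, $[x,h]$, $[h,h']$, together with the check that $\rho$ is a representation via the Jacobi identity in $\gG$ and the ideal property of $\gH$, is exactly the content being taken for granted. Nothing is missing and no alternative route is involved.
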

\begin{proof}
The proof is clear from the definitions. 
\end{proof} 

Applying Proposition \ref{s-tr} to the Levi-Malcev decomposition 
$\gG=\gH\oplus_{\rho}\gR$ of a Lie algebra $\gG$, where $\gR$ 
is the radical of $\gG$  and $\gH\subset\gG$ is a 
semi-simple subalgebra complementary to $\gR$, we obtain: 
\begin{cor}\label{dis-reduct}
The disassembling problem for Lie algebras over a field $\gk$ of 
characteristic zero reduces to the case of solvable algebras $($over $\gk)$ and to the case of
abelian extensions of semi-simple algebras $($over $\gk)$, i.e., to algebras of 
the form $\gH\oplus_{\rho}W$ where $\rho:\gH\rightarrow \rm{End}\,W$  
is a finite-dimensional representation of the semi-simple algebra $\gH$.
\end{cor}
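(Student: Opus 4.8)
The plan is to reduce everything to two inputs: the Levi–Malcev decomposition and Proposition\,\ref{s-tr}. First I would recall that, since the ground field $\gk$ has characteristic zero, every finite-dimensional Lie algebra $\gG$ admits a Levi decomposition $\gG=\gH\oplus_\rho\gR$, where $\gR$ is the radical (a solvable ideal), $\gH$ is a semi-simple subalgebra complementary to $\gR$, and $\rho$ is the canonical action of $\gH$ on $\gR$. This is precisely the situation of a semi-direct product of a subalgebra $\gH$ with an ideal $\gR$, so Proposition\,\ref{s-tr} applies directly.

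Next I would feed this into Proposition\,\ref{s-tr} with the subalgebra taken to be the Levi factor $\gH$ and the ideal taken to be $\gR$. Identifying $|\gG|$ with $|\gH|\oplus|\gR|$, the proposition yields the simple disassembling
\[
\gG=(\gH\oplus_\rho|\gR|)+(\bg_m\oplus\gR),\qquad m=\dim\gH,
\]
into two mutually compatible primary constituents. The remaining work is purely to recognize these two constituents. The first, $\gH\oplus_\rho|\gR|$, is the semi-direct product of the semi-simple algebra $\gH$ with the \emph{abelian} algebra on the vector space $W:=|\gR|$, via $\rho$; this is exactly an abelian extension $\gH\oplus_\rho W$ of a semi-simple algebra. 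The second, $\bg_m\oplus\gR$, is the direct sum of the abelian algebra $\bg_m$ with the solvable radical $\gR$, and a direct sum of solvable algebras is solvable, so this constituent is solvable.

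Finally I would articulate the reduction itself. A complete disassembling of $\gG$ is obtained by prepending the single step above to complete disassemblings of each of its two constituents: one attaches the two constituent a-schemes to the two first-level vertices, the compatibility required at those vertices being guaranteed by Proposition\,\ref{s-tr}. Hence, if every solvable algebra and every abelian extension of a semi-simple algebra can be completely disassembled, so can $\gG$. I expect the only genuine obstacle to be bookkeeping rather than mathematics --- namely checking that gluing the constituent a-schemes under the origin satisfies the level and end-of-edge conditions in the definition of a multi-step disassembling; the substantive content is entirely provided by Levi's theorem and Proposition\,\ref{s-tr}.
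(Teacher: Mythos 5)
Your proposal is correct and follows exactly the paper's route: apply Proposition\,\ref{s-tr} to the Levi--Malcev decomposition $\gG=\gH\oplus_\rho\gR$, and identify the two resulting constituents as an abelian extension of the semi-simple Levi factor and a solvable algebra. The extra bookkeeping you mention about gluing a-schemes is indeed routine and the paper leaves it implicit.
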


\par\smallskip
\noindent\textit{Solvable Lie algebras.} 

\noindent The first of these two problems admits a simple solution. 
\begin{proc}\label{dis-solv}
Any solvable Lie algebra over a field $\gk$ can be completely disassembled. 
\end{proc}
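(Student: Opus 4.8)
The plan is to argue by induction on $\dim\gG$, peeling off a codimension-one ideal at each step so as to reduce the whole problem to the \emph{almost-abelian} case, which carries all the content. First I would note that since $\gG$ is solvable we have $[\gG,\gG]\neq|\gG|$, so any hyperplane $\gH$ of $|\gG|$ containing $[\gG,\gG]$ satisfies $[\gG,\gH]\subseteq[\gG,\gG]\subseteq\gH$ and is therefore a codimension-one ideal; it is again solvable. Choosing $e\notin\gH$, I realise $\gG$ as the semi-direct product of the one-dimensional subalgebra $\gk e$ and the ideal $\gH$, and Proposition \ref{s-tr} gives the simple disassembling
\[
\gG=\gA+(\bg_1\oplus\gH),\qquad \gA=\gk e\oplus_{\rho}|\gH|,\quad \rho=(\ad_{\gG}e)|_{\gH}.
\]

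The constituent $\bg_1\oplus\gH$ has $\gH$ as its only non-abelian part, and $\dim\gH<\dim\gG$, so the inductive hypothesis furnishes a complete disassembling of $\gH$ on $|\gH|$. This extends verbatim to $\bg_1\oplus\gH$ by adjoining the central line $\gk e$ to every end term, which merely turns each $\between_m$ into $\between_{m+1}$ and each $\pitchfork_m$ into $\pitchfork_{m+1}$; since the associated Poisson bi-vectors on the dual are unchanged by the extra (non-occurring) coordinate, all pairwise Schouten brackets still vanish and mutual compatibility is preserved. Thus it only remains to disassemble the almost-abelian constituent $\gA$.

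This is the heart of the argument. With $W=|\gH|$ abelian, the bracket of $\gA=\gk e\oplus_{\rho}W$ is entirely encoded by the operator $\rho\in\End W$ through $[e,w]=\rho(w)$ and $[w,w']=0$. I claim that \emph{any} decomposition $\rho=\rho_1+\dots+\rho_r$ into linear operators induces mutually compatible structures $\gA_s=\gk e\oplus_{\rho_s}W$ with $\gA=\gA_1+\dots+\gA_r$: each $\gA_s$ is a Lie algebra because its derived space lies in the abelian part $W$, so the Jacobi identity is automatic, and the sum of any two of them is again of the form $\gk e\oplus_{(\rho_s+\rho_t)}W$, hence Lie, so the family is pairwise and therefore mutually compatible. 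Choosing the $\rho_s$ of rank one, $\rho_s(w)=\ell_s(w)u_s$ with $0\neq u_s\in W$, each $\gA_s$ becomes a lieon: when $\ell_s(u_s)\neq0$ the relation $[e,u_s]=\ell_s(u_s)u_s$ spans a non-abelian plane while $\ker\ell_s$ is central, so $\gA_s\cong\between_n$ is a dyon; when $\ell_s(u_s)=0$ the vectors $u_s$ and any $w_0$ with $\ell_s(w_0)=1$ are independent and the sole nonzero bracket is $[e,w_0]=u_s$, the Heisenberg relation, so $\gA_s\cong\pitchfork_n$ is a triadon. As every operator is a sum of rank-one operators, $\gA$ is completely disassembled.

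Collecting the two branches into a single assembling scheme yields a complete disassembling of $\gG$, and the recursion terminates because $\dim\gH<\dim\gG$. The only degenerate terminal case is a purely abelian algebra $\bg_n$: for $n\geq2$ I would disassemble it as $\between_n+\between_n$ with opposite actions (two dyons whose brackets cancel to zero), while $\bg_1$ is the ``vacuum'' and carries no bracket to disassemble. I expect no real obstacle here — the proposition is genuinely easy — the one point that must be handled with care being the automatic mutual compatibility in the almost-abelian step, which relies solely on each $\rho_s$ having image inside the abelian part $W$; the existence of the codimension-one ideal, the bookkeeping of the a-scheme, and the abelian edge case are all routine.
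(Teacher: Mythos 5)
Your proof is correct and follows essentially the same route as the paper: peel off a codimension-one ideal containing $[\gG,\gG]$, apply Proposition \ref{s-tr} and induction, and reduce to the almost-abelian algebra $\Gamma_A=\gk e\oplus_\rho W$, which you split into lieons via a rank-one decomposition of $\rho$ — this is exactly the paper's decomposition $A=\sum a_{ij}E_{ij}$ into matrix units, with $E_{ii}$ giving dyons and $E_{ij}$ ($i\neq j$) giving triadons. Your only additions are the coordinate-free phrasing and the explicit treatment of the abelian terminal case via $\bg_n=\between_n+(-\between_n)$, which the paper leaves implicit.
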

\begin{proof}
Let $\gG$  be a solvable algebra. Then any subspace of $|\gG|$ containing the 
derived algebra $[\gG , \gG]$ is, obviously, an ideal of $\gG$. Let $\gs$ be such 
an ideal of codimension one and $\bg$ one-dimensional 
subalgebra complementary to $\gs$. Thus $\gG$ is a semi-direct product,
$\gG=\bg\oplus_{\rho}\gs$.Applying Proposition 
\ref{s-tr}, we see that $\gG$ can be disassembled into 
two structures, one of which is $\bg\oplus\gs$ where $\gs$ is a solvable algebra, 
while  the other is of the form $\bg\oplus_{\rho}V$ where $\rho$ is a 
representation of $\bg$ in the vector space $V=|\gs|$. Now easy induction 
arguments reduce the problem to the disassembling of algebras of the second type. 
This can be done as follows. 

Fix a base element $\nu\in \bg$ and put $A=\rho(\nu):V\rightarrow V, 
\,\,0\neq\nu\in \bg$. Then the Lie product in the algebra $\bg\oplus_{\rho}V$ is  
described by the relations
$$
[\nu,v]=Av, \,\,[v_1,v_2]=0, \,\,\,\nu\in\bg, \,v_1,v_2\in V,
$$
which show that it is completely determined by the operator $A$. Denote the Lie
algebra thus defined by $\Gamma_A$ and remark that the Lie algebras $\Gamma_A$  
and $\Gamma_{\lambda A}, \,\,0\neq\lambda\in \gk$,  \,are isomorphic, since the 
operator $A$ in this construction is defined up to a scalar factor.  
Hence it remains to show that the algebras $\Gamma_A$'s 
can be completely disassembled.

To this end, first, note that \\
1) $\between=\Gamma_A$  if $A$ is the identity operator on a 
1-dimensional vector space $V$;  \\
2) $\pitchfork =\Gamma_A$  if $A$ is a 
nontrivial nilpotent operator on a 2-dimensional vector space $V$. 

Secondly, if $\|a_{ij}\|$  is the matrix of $A$ in a basis $\{ e_i\}$ of $V$, 
then $A=\sum a_{ij}E_{ij}$, \,where the operator $E_{ij}:V\rightarrow V$   is 
defined by $E_{ij}(e_i)=e_j$ and $E_{ij}(e_k)=0, \,\,k\neq i$. Now observe 
that the structure $\Gamma_{E_{ij}}$ is isomorphic to $\pitchfork_n$,  
if $i\neq j$,  and to $\between_n$, if $i=j$. Finally,  since 
$\Gamma_{a_{ij}E_{ij}}$ is isomorphic to $\Gamma_{E_{ij}}$, if $a_{ij}\neq 0$, 
we see that the desired disassembling is
\begin{equation}\label{g-dec}
\Gamma_A=\sum_{i,j}\Gamma_{a_{ij}E_{ij}}=\sum_{i,j,\,a_{ij}\neq 0}L_{ij}.
\end{equation}
where the notation $L_{ij}=\Gamma_{a_{ij}E_{ij}}$ is introduced to emphasise that 
$\Gamma_{a_{ij}E_{ij}}$ is a lieon.
\end{proof}

The disassembling (\ref{g-dec}) depends on the choice of a basis in $V$. This fact 
can be used to show that the process of complete disassembling of a given Lie 
algebra is not unique. For instance, let $\dim\,V=2$ and $A:V\rightarrow V$ be an operator
with eigenvalues $\pm 1$. In the basis of eigenvectors $\{e_1,e_2\}$, the disassembling (\ref{g-dec}) is $\Gamma_A=\Gamma_{E_{11}}-\Gamma_{E_{22}}$, 
or, symbolically, $\Gamma_A=2\between_3$. On the other hand, in the basis 
$\{e_1 + e_2, \,\,e_1-e_2\}$ we have 
$\Gamma_A=\Gamma_{E_{12}}+\Gamma_{E_{21}}\Leftrightarrow \Gamma_A=2\pitchfork$. 
This proves Formula (\ref{2h=2b}). 

\par\medskip
\noindent\textit{From semi-simple to simple Lie algebras.} 

\noindent The disassembling problem  for algebras $\gG\oplus_{\rho}V$, where $\gG$ is
semi-simple, is easily reduced to the case where $\gG$ is simple.
Indeed, observe that, for  a direct sum of Lie algebras, 
$\gG=\gG_1\oplus\dots\oplus\gG_k$,  there is a natural assemblage, $\gG=\bar{\gG}_1+\dots +\bar{\gG}_k$, in which $\bar{\gG}_i$ is the direct sum of abelian structures
on subspaces $|\gG_j|, j\neq i$, and of $\gG_i$ on $|\gG_i|$. If  
$\rho$ is a representation of $\gG$ in $V$, then the representation $\rho_i$
of $\bar{\gG_i}$ in $V$ is defined to be trivial on $|\gG_j|, \;j\neq i,$ and to coincide 
with $\rho$ on $\gG_i$.  
\begin{proc}\label{dec-sum}
Let $\gG$ be the direct sum of Lie algebras, 
$\gG_1\oplus\dots\oplus\gG_k$. Then
\begin{equation}\label{dec-repr}
\gG\oplus_{\rho}V=\bar{\gG}_1\oplus_{\rho_1}V+\dots+\bar{\gG}_k\oplus_{\rho_k}V,
\end{equation}
 is a simple disassembling of $\gG\oplus_{\rho}V$. 
\end{proc}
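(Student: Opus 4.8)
The plan is to work directly with the Lie brackets, all of which live on the single vector space $|\gG\oplus_\rho V|=|\gG_1|\oplus\dots\oplus|\gG_k|\oplus V$. Let $\pi_i\colon\gG\to\gG$ be the projection onto the $i$-th summand $\gG_i$, so that the $\pi_i$ are idempotent endomorphisms with $\sum_i\pi_i=\id$. First I would record the relevant brackets. Writing elements of $|\gG|$ as $g$, the target structure $\gG\oplus_\rho V$ has
$$
[(g,v),(g',v')]=\bigl([g,g']_\gG,\ \rho(g)v'-\rho(g')v\bigr),
$$
while the $i$-th primary constituent $\bar\gG_i\oplus_{\rho_i}V$ has bracket $[\cdot,\cdot]_i$ whose $\gG$-component is $[\pi_i g,\pi_i g']_\gG$ (supported in the $i$-th slot) and whose representation is $\rho_i=\rho\circ\pi_i$. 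Summing over $i$ and using $\sum_i\pi_i=\id$ together with the linearity of $\rho$, the $\gG$-components reassemble slot by slot into $[g,g']_\gG$ and the $V$-components into $\rho(g)v'-\rho(g')v$; hence $\sum_i[\cdot,\cdot]_i$ is exactly the target bracket.

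The substantive content is the mutual compatibility of the $k$ constituents. Since the Schouten bracket of the associated linear Poisson bi-vectors is bilinear, mutual compatibility follows from pairwise compatibility (cf.\ the Remark after Proposition\,\ref{Poisson-lie dual}); I would in fact prove the uniform statement that for every subset $S\subseteq\{1,\dots,k\}$ the partial sum $\sum_{i\in S}[\cdot,\cdot]_i$ is a Lie product, the cases $|S|=1,2$ being the ones actually required. Put $\pi_S=\sum_{i\in S}\pi_i$; as a sum of projections onto ideals of $\gG$, $\pi_S$ is again an idempotent Lie-algebra endomorphism of $\gG$. Let $\gG_S$ denote the Lie algebra structure on $|\gG|$ that equals $\gG_i$ on the slots $i\in S$ and is abelian on the remaining slots; then $[g,g']_{\gG_S}=\pi_S[g,g']_\gG=[\pi_S g,\pi_S g']_\gG$ and $\rho_S\DEF\sum_{i\in S}\rho_i=\rho\circ\pi_S$.

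The key step is to verify that $\rho_S=\rho\circ\pi_S$ is a representation of $\gG_S$. Using the identity just displayed and the fact that $\rho$ is a representation of $\gG$,
$$
\rho_S\bigl([g,g']_{\gG_S}\bigr)=\rho\bigl([\pi_S g,\pi_S g']_\gG\bigr)=[\rho(\pi_S g),\rho(\pi_S g')]=[\rho_S(g),\rho_S(g')].
$$
Therefore $\gG_S\oplus_{\rho_S}V$ is a genuine semi-direct-product Lie algebra structure, and a comparison of brackets shows that its product is precisely $\sum_{i\in S}[\cdot,\cdot]_i$. This simultaneously yields that every partial sum is a Lie product --- hence that the $\bar\gG_i\oplus_{\rho_i}V$ are mutually compatible --- and, for $S=\{1,\dots,k\}$, recovers $\gG\oplus_\rho V$, completing the identification of (\ref{dec-repr}) as a simple disassembling. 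The only point needing care --- the main obstacle, such as it is --- is recognising that $\rho_S$ must be checked to be a representation of the \emph{modified} structure $\gG_S$ (abelianised off $S$) rather than of $\gG$; this is settled at once by the observation that $\pi_S$ is an idempotent Lie homomorphism, after which everything reduces to routine verification in the spirit of Proposition\,\ref{s-tr}.
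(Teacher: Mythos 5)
Your proof is correct and takes essentially the same route as the paper, whose entire proof is ``This proposition follows from the definitions''; you have simply written out the routine verification the paper leaves implicit. The one point you rightly flag---that $\rho_S=\rho\circ\pi_S$ must be checked to be a representation of the abelianised structure $\gG_S$ rather than of $\gG$, which follows since $\pi_S$ is an idempotent Lie homomorphism onto a direct summand---is exactly the content hidden in that one-line proof.
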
 
\begin{proof}
This proposition follows from the definitions. 
\end{proof}
If $W_i=\oplus_{j\neq i}|\gG_j|$ and $\tilde{\rho}_i=\rho_i^0\oplus\rho\vert_{\gG_i}$
where $\rho_i^0$ is the trivial representation of $\gG_i$ in $W_i$, then
$$
\bar{\gG}_i\oplus_{\rho_i}V=\gG_i\oplus_{\tilde{\rho}_i}(W_i\oplus V).
$$
Now this observation and Proposition\,\ref{dec-sum} give the desired reduction to 
simple Lie algebras:
\begin{cor}\label{dec-ss}
If $\gG=\gG_1\oplus\dots\oplus\gG_k$ is a decomposition of the semi-simple Lie algebra
$\gG$ into a direct sum of simple algebras, then 
\begin{equation}
\gG\oplus_{\rho}V=\gG_1\oplus_{\tilde{\rho}_1}(W_1\oplus V)+\dots +
\gG_k\oplus_{\tilde{\rho}_k}(W_k\oplus V)
\end{equation} 
is a simple disassembling of $\gG\oplus_{\rho}V$. 
\end{cor}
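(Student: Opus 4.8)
The plan is to deduce this directly from Proposition \ref{dec-sum} together with the identification stated immediately before the corollary, so that essentially all the content lies in verifying that identification carefully. First I would recall that Proposition \ref{dec-sum} already supplies the simple disassembling
$$
\gG\oplus_{\rho}V=\bar{\gG}_1\oplus_{\rho_1}V+\dots+\bar{\gG}_k\oplus_{\rho_k}V
$$
as Lie algebra structures on the common vector space $|\gG|\oplus V=|\gG_1|\oplus\dots\oplus|\gG_k|\oplus V$. It therefore suffices to identify each summand $\bar{\gG}_i\oplus_{\rho_i}V$ with $\gG_i\oplus_{\tilde{\rho}_i}(W_i\oplus V)$ as a structure on this same space, not merely up to isomorphism: indeed both expressions live on $|\gG_i|\oplus W_i\oplus V=|\gG|\oplus V$, so I would argue that the identity map is already an equality of Lie products.

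The heart of the verification is a block-by-block bracket comparison on the three summands $|\gG_i|$, $W_i$, and $V$. On the $\bar{\gG}_i\oplus_{\rho_i}V$ side, the bracket restricts to the $\gG_i$-bracket on $|\gG_i|$; it vanishes on $W_i$ and between $W_i$ and $|\gG_i|$ (because $W_i$ is precisely the abelian part of $\bar{\gG}_i$); the action of $|\gG_i|$ on $V$ is $\rho|_{\gG_i}$, while $W_i$ acts trivially on $V$ (since $\rho_i$ is trivial on $W_i$), and $V$ is abelian. On the $\gG_i\oplus_{\tilde{\rho}_i}(W_i\oplus V)$ side, $W_i\oplus V$ is an abelian ideal, $\gG_i$ carries its own bracket, and $\gG_i$ acts on $W_i\oplus V$ through $\tilde{\rho}_i=\rho_i^0\oplus\rho|_{\gG_i}$, that is, trivially on $W_i$ and by $\rho|_{\gG_i}$ on $V$. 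Listing these brackets shows they agree term by term, so the two structures coincide. Substituting this identification into the disassembling from Proposition \ref{dec-sum} then yields the asserted formula.

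I expect no genuine obstacle here; the only point requiring care is the bookkeeping observation that the abelian summand $W_i$ is central in $\bar{\gG}_i$ and acts trivially on $V$, so it may be absorbed into the abelian module on which $\gG_i$ acts, upgrading the pair $(V,\rho_i)$ to the module $(W_i\oplus V,\tilde{\rho}_i)$. For completeness I would also note that $\rho_i$ and $\tilde{\rho}_i$ are genuine representations, which is immediate since each is assembled from $\rho|_{\gG_i}$ and trivial actions, so that both semidirect products are well defined and the comparison above is legitimate.
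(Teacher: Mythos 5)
Your proposal is correct and follows the paper's own route exactly: the paper likewise deduces the corollary from Proposition~\ref{dec-sum} together with the identification $\bar{\gG}_i\oplus_{\rho_i}V=\gG_i\oplus_{\tilde{\rho}_i}(W_i\oplus V)$ stated just before the corollary, which it leaves as an observation and you verify block by block. No gap; your bracket-by-bracket check merely makes explicit what the paper treats as immediate from the definitions.
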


Thus, we have reduced the general disassembling problem to the disassembling of Lie algebras
of the form $\gG\oplus_\rho V$ with simple $\gG$. This case presents the main difficulties
and its solution is essentially based on the {\it stripping procedure} which we shall now describe. 
\subsection{The stripping procedure}\label{striptease}

First, we shall introduce some special algebras which are used in this procedure.

\par\medskip
\noindent\textit{Dressing algebras.} 

\noindent A {\it dressing algebra} is defined on the direct sum $W_0\oplus W$ 
of two vector spaces $W_0$ and $W$ supplied with a bilinear skew-symmetric 
${W_0}$-valued form $\beta :W\times W\rightarrow W_0$. The product in this algebra
is defined by the formula   
\begin{equation}
[(w_0,w),(w_0',w')]=(\beta (w,w'),0),\quad w_0, w_0^{\prime}\in W_0,\,\,w, w^{\prime}\in W.
\end{equation}
Denote the Lie algebra thus defined by $\gA_{\beta}$. If $\dim\,W=2, \,\,\dim\,W_0=1$ 
and $\beta\neq 0$, then $\gA_{\beta}$ is isomorphic to 
$\pitchfork$.

A Lie algebra $\gA$ is isomorphic to a dressing algebra if and only if the derived subalgebra 
$[\gA,\gA]$ is contained in its center. Indeed, if this condition is satisfied, one can set $W_0$ to be the center and $W$ to be any subspace complementary to the center. 

\begin{proc}\label{dress}
Let $\beta$  and $\beta '$ be $W_0$-valued skew-symmetric bilinear forms on 
$W$. Then, the  Lie algebras $\gA_{\beta}$  and $\gA_{\beta '}$ are compatible. 
Moreover, any dressing algebra can be simply disassembled into a number of 
triadons. 
\end{proc}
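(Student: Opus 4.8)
The plan is to treat the two assertions separately, reducing both to the single observation that in a dressing algebra every bracket lands in the centre, so that the Jacobi identity becomes automatic.

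For the first assertion, I would add the two products directly:
\[
[(w_0,w),(w_0',w')]_\beta+[(w_0,w),(w_0',w')]_{\beta'}=\bigl((\beta+\beta')(w,w'),\,0\bigr),
\]
which is precisely the product of the dressing algebra $\gA_{\beta+\beta'}$. Thus it suffices to check that $\gA_\gamma$ is a Lie algebra for an arbitrary $W_0$-valued skew form $\gamma$. Skew-symmetry of the bracket is inherited from $\gamma$, and the Jacobi identity holds trivially: every bracket takes values in $W_0$, while bracketing any element of $W_0$ with anything vanishes (the product depends only on the $W$-components), so all iterated brackets are zero. Hence $[\cdot,\cdot]_\beta+[\cdot,\cdot]_{\beta'}$ is a Lie product and $\gA_\beta,\gA_{\beta'}$ are compatible. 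The same computation shows that for any finite family $\beta^{(1)},\dots,\beta^{(m)}$ each partial sum of the $\beta^{(i)}$ again defines a dressing algebra, whence the algebras $\gA_{\beta^{(1)}},\dots,\gA_{\beta^{(m)}}$ are mutually compatible.

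For the second assertion, the idea is to split $\beta$ into elementary pieces each of which is a triadon, and then invoke the mutual compatibility just established. First I would fix a basis $w_1,\dots,w_p$ of $W_0$ and write $\beta=\sum_{k=1}^p\beta_k\,w_k$, where each $\beta_k\in\Lambda^2 W^*$ is an ordinary scalar-valued alternating form on $W$. The key algebraic input is that every alternating $2$-form over $\gk$ is a finite sum of decomposable forms $\phi\wedge\psi$ with $\phi,\psi\in W^*$ (its Darboux normal form). Applying this to each $\beta_k$ and discarding the vanishing summands yields a finite decomposition
\[
\beta=\sum_{l}(\phi_l\wedge\psi_l)\,w_{k(l)},\qquad \phi_l,\psi_l\in W^*\ \text{independent},
\]
and correspondingly $\gA_\beta=\sum_l \gA_{(\phi_l\wedge\psi_l)w_{k(l)}}$.

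It then remains to recognise each summand as a triadon. Given independent $\phi,\psi\in W^*$ and $0\neq w\in W_0$, I would choose a basis $e_1,e_2,\dots$ of $W$ with $\phi=e_1^*,\ \psi=e_2^*$; the only nonvanishing bracket among basis vectors of $W_0\oplus W$ is then $[e_1,e_2]=w$, so the subalgebra spanned by $e_1,e_2,w$ is the Heisenberg algebra $\pitchfork$ and everything else is central and abelian, giving $\gA_{(\phi\wedge\psi)w}\cong\pitchfork_n$. Since all these summands are mutually compatible by the first part, the displayed decomposition is a simple disassembling of $\gA_\beta$ into triadons. I do not expect a serious obstacle here; the only point needing care is the reduction of a general $W_0$-valued skew form to decomposable scalar pieces over the ground field, together with the bookkeeping that ensures each retained piece is genuinely of rank two, hence an honest triadon rather than a degenerate or purely abelian summand.
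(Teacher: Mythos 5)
Your proof is correct and follows essentially the same route as the paper: compatibility comes from the identity $\gA_{\beta+\beta'}=\gA_{\beta}+\gA_{\beta'}$ together with the observation that every bracket lands in the central summand $W_0$, and the disassembling comes from splitting $\beta$ into rank-two scalar pieces tensored with basis vectors of $W_0$, each of which is a triadon. The only (cosmetic) difference is that the paper splits each component $\beta^k$ coordinate-wise with respect to a fixed basis of $W$, whereas you use the Darboux normal form to write it as a sum of decomposables, which yields fewer summands but is otherwise the same argument.
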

\begin{proof} The compatibility of $\gA_{\beta}$  and $\gA_{\beta '}$ immediately follows from
\begin{equation}
\gA_{\beta + \beta '}=\gA_{\beta}+\gA_{\beta '}.
\end{equation}

If $e_1,\dots,e_m$ is a basis in $W_0$ and 
$\varepsilon_1\dots,\varepsilon_{n-m}$ is a basis in $W$, then
$$
\beta =\sum_{i,j,k}\beta_{ij}^k\varepsilon_k
$$
for some $\gk$-valued skew-symmetric bilinear forms $\beta_{ij}^k$ on $W$ 
such that $\beta_{ij}^k(e_p,e_q)=0$ if $(p,q)$ differs from both $(i,j)$ and $(j,i)$. If 
$\beta_{ij}^k\neq 0$, then the algebra $\gA_{\beta_{ij|k}}$ with 
$\beta_{ij|k}=\beta_{ij}^k\varepsilon_k$ is isomorphic to $\pitchfork_n$. 
Hence, 
\begin{equation}\label{dec-dress}
\gA_{\beta}={\sum_{i,j,k}}'\gA_{\beta_{ij|k}}, \quad
\end{equation}
where the summation $\sum_{i,j,k}'$ runs over all triples $i,j,k$ for 
which $\beta_{ij}^k\neq 0$. 
\end {proof}

\medskip
\noindent\textit{Definition and examples of $d$-pairs}.

\noindent Let $\gs$ be a subalgebra of a Lie algebra $\gG$ and let $W$  be a 
complement of $|\gs|$ in $|\gG|$. If $[W,W]\subset\gs$ and $[\gs,W]\subset W$, 
the pair $(\gs,W)$  \,is called a $d$-{\it pair} in $\gG$. A $d$-pair $(\gs,W)$ 
is {\it trivial} if $W$ \,is an abelian subalgebra. 

The dressing algebra $\gA_{\beta}$  defined on $|\gG|$ with $W_0=|\gs|$ and 
$\beta(w_1,w_2)=[w_1,w_2], \\ \,w_1,w_2\in W$ will be called {\it associated} with 
$(\gs,W)$. 

\begin{ex}\label{pair}
Let $V$ be a vector space and $V_1, V_2$ be complementary subspaces. Consider the  subalgebra $\gs=\gs(V_1,V_2)$ of the Lie algebra 
$\gG\gl(V)$ composed of operators, leaving $V_1$ and $V_2$ invariant. The 
linear subspace $W=W(V_1,V_2)$ formed by operators sending $V_1$ to $V_2$ and 
vice-versa is a complement to $\gs(V_1,V_2)$ in $\gG\gl(V)$. Then ($\gs,W$) is 
a $d$-pair in $\gG\gl(V)$. 
\end{ex}
\begin{rmk}
A $d$-pair $(\gs,W)$ in $\gG$ supplies $\gG$ with the structure of  
$\dF_2$-graded algebra structure $(\dF_2=\Z/2\Z)$ and vice-versa. Namely, if
$\gG$=$\gG_0\oplus\gG_1$, then $\gs=\gG_0, \,W=\gG_1$. 
\end{rmk}
With a $d$-pair $(\gs,W)$ is naturally 
associated the involution $I:|\gG| \rightarrow |\gG|, \,I^2=id_{|\gG |}$ for which $|\gs |$ and $W$ are the  
eigenspaces corresponding to the eigenvalues $1$ and $-1$, respectively. Obviously, $I$ is an automorphism of 
$\gG$. Conversely, the $\pm 1$-eigenspaces of an involutive automorphism $I$ of 
$\gG$ form a $d$-pair in $\gG$. So, there is a one-to-one correspondence between
$d$-pairs and involutions on $\gG$. Which of these points of
view is more convenient depends on the context.
\begin{ex}\label{d-gl-pair}
The transposition of matrices, $T:M\mapsto M^t, \,M\in \gG\gl(n,\gk)$, is an 
anti-automorphism of $\gG\gl(n,\gk)$, i.e., $[M,N]^t=[N^t,M^t]$. So,  
$\mathfrak{t}=-T$ is an involution  on $\gG\gl(n,\gk)$. The $1$-eigenspace of 
$\mathfrak{t}$ is the space of skew-symmetric matrices and is identified with the 
special orthogonal subalgebra $\gs\go(n,\gk)\subset \gG\gl(n,\gk)$, while the 
$(-1)$-eigenspace $S(n,\gk)$ consists of the symmetric matrices. So, 
$(\gs\go(n,\gk),S(n,\gk))$ is a $d$-pair in $\gG\gl(n,\gk)$. The subalgebra 
$\gs\gl(n,\gk)\subset\gG\gl(n,\gk)$ of traceless matrices is 
$\mathfrak{t}$-invariant. Hence $\mathfrak{t}_0=\mathfrak{t}|_{\gs\gl(n,\gk)}$ 
is an involution in $\gs\gl(n,\gk)$ and $(\gs\go(n,\gk),S_0(n,\gk))$, where
$S_0(n,\gk)$ is the space of traceless symmetric matrices, is the 
$d$-pair corresponding to $\mathfrak{t}_0$.  
\end{ex} 

\bigskip
\noindent\textit{The stripping procedure}. 

 \noindent The following evident fact is one of the most efficient 
disassembling instruments.
 \begin{lem}[The Stripping Lemma]\label{str}
Let $(\gs,W)$  be a $d$-pair in a Lie algebra $\gG$. If $\gA_{\beta}$ is 
the associated dressing algebra and $\rho$ is the restriction of the adjoint 
representation of $\gG$ to $\gs$, then 
\begin{equation}\label{strip}
\gG=(\gs\oplus_{\rho}W)+\gA_{\beta},
\end{equation}
is a simple disassembling of $\gG$. 
 \end{lem}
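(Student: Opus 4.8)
The plan is to use the vector-space splitting $|\gG|=|\gs|\oplus W$ supplied by the $d$-pair and to observe that the Lie product of $\gG$ decomposes along this grading into exactly two pieces, each of which is separately a Lie algebra structure on $|\gG|$. Concretely, for $s,s'\in\gs$ and $w,w'\in W$, the $d$-pair conditions $[\gs,\gs]\subset\gs$, $[\gs,W]\subset W$ and $[W,W]\subset\gs$ determine where each of the four bilinear contributions $[s,s']$, $[s,w]$, $[w,s']$, $[w,w']$ lands. Thus the product of $\gG$ splits, by bilinearity, as the sum of a bracket $[\cdot,\cdot]'$ that retains the $[\gs,\gs]$- and $[\gs,W]$-parts but kills $[W,W]$, and a bracket $[\cdot,\cdot]''$ that retains only the $[W,W]$-part.

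First I would identify $[\cdot,\cdot]'$ with the semidirect product $\gs\oplus_\rho W$. Here $\rho(s)\DEF\ad_\gG(s)|_W$ maps $\gs$ into $\End W$ precisely because of the invariance $[\gs,W]\subset W$, and it is a representation since $\ad_\gG$ is one (the Jacobi identity in $\gG$) and $W$ is $\ad_\gs$-invariant. Hence $\gs\oplus_\rho W$ is a genuine Lie algebra structure on $|\gG|$ whose only nonzero brackets are $[s,s']$ and $[s,w]=\rho(s)w$; on these arguments it agrees with $\gG$, and it vanishes on $W\times W$.

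Next I would identify $[\cdot,\cdot]''$ with the associated dressing algebra $\gA_\beta$, where $\beta(w,w')=[w,w']_\gG\in|\gs|=W_0$. The form $\beta$ is skew-symmetric because the Lie bracket is, and it is $W_0$-valued exactly by the condition $[W,W]\subset\gs$. Moreover $\gA_\beta$ is a Lie algebra because its derived algebra lies in its center, so every doubly-iterated bracket vanishes and the Jacobi identity holds trivially (this is the criterion for dressing algebras already recorded in the excerpt).

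Finally, the sum $[\cdot,\cdot]'+[\cdot,\cdot]''$ reproduces the original bracket of $\gG$ on each of the three relevant types of arguments, by construction. Since this sum is the Lie product of $\gG$, which satisfies the Jacobi identity, the two structures $\gs\oplus_\rho W$ and $\gA_\beta$ are automatically compatible, and \eqref{strip} is therefore a simple disassembling. There is no real obstacle here---this is why the statement is labelled evident; the only points meriting care are the verification that $\rho$ is a representation (where the $d$-pair invariance $[\gs,W]\subset W$ together with the Jacobi identity of $\gG$ enters) and the check that the grading decomposition of the bracket is exhaustive, i.e.\ that the three cases $\gs\times\gs$, $\gs\times W$, $W\times W$ cover all of $|\gG|\times|\gG|$ by bilinearity.
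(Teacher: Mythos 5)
Your proof is correct and is precisely the verification the paper leaves implicit when it says the lemma "follows directly from the definitions": you split the bracket along the $d$-pair grading, identify the two pieces with $\gs\oplus_{\rho}W$ and $\gA_{\beta}$, and note that their sum being the original Jacobi-satisfying bracket is exactly the compatibility required for a simple disassembling. No gaps; the extra care you take with $\rho$ being a representation and with exhaustiveness of the case split is exactly the right content to make the "evident" proof explicit.
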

 \begin{proof} The proof follows directly from the definitions.
 \end{proof}
 \begin{rmk}The dressing algebra $\gA_{\beta}$ may be viewed as a ``mantle" that 
 covers the ``shoulder" $\gs$ of $\gs\oplus_{\rho}W$. This motivates the  terminology. 
 According to Proposition\,\ref{dress},  $\gA_{\beta}$ can be completely 
 disassembled. This reduces the disassembling problem  for $\gG$ to a simpler algebra, 
 namely, $\gs\oplus_{\rho}W$. 
 \end{rmk}

The {\it stripping procedure} is a series of applications of the 
Stripping Lemma which gradually simplify the algebras to which it can be applied.
In order to make the term ``simplification" precise, we 
define the {\it complexity} $l(\gG)$ of a Lie algebra  $\gG$ to be  the dimension 
of its ``semi-simple part", i.e., the dimension of its Levi subalgebra. Lie 
algebras of complexity zero are 
solvable and, according to Proposition \ref{dis-solv}, can be completely 
disassembled. Thus, we can conclude:
\begin{quote}
\emph{All Lie algebras over a given ground field $\gk$ can be 
completely disassembled if any algebra of the form $\gG\oplus_{\rho}V$, with $\gG$ 
simple, admits a $d$-pair $(\gs,W)$ such that $l(\gs)<l(\gG)$.} 
\end{quote}
In fact, the dressing algebra $\gA_{\beta}$  in (\ref{strip}) can be 
completely disassembled (Proposition \ref{dress}).  On the other hand, 
$l(\gs\oplus_{\rho'}W)=l(\gs)<l(\gG)$. So, by applying Poposition \ref{s-tr} 
to the Levi-Malcev decomposition of the algebra $\gs\oplus_{\rho'}W$, we reduce 
the problem to an algebra of the form $\bar{\gG}\oplus_{\bar{\rho}}\bar{V}$ where 
$\bar{\gG}$ is the semi-simple  part of $\gs$, and 
$l(\bar{\gG}\oplus_{\bar{\rho}}\bar{V})=l(\bar{\gG})=l(\gs)<\l(\gG)$. Finally, 
according to Proposition \ref{dec-sum}, the algebra 
$\bar{\gG}\oplus_{\bar{\rho}}\bar{V}$ disassembles into algebras of the form 
$\gH\oplus_{\tau}U$ with $l(\gH)\leq l(\bar{\gG})$ and $\gH$ simple. 

We shall call a $d$-pair $(\gs,W)$ in a Lie algebra $\gG$, as well as the 
corresponding $d$-involution, {\it simplifying} if $l(\gs)<l(\gG)$. 
In the rest of this section, we shall 
concentrate on the existence of simplifying $d$-pairs for Lie algebras of the 
form $\gG\oplus_{\rho}V$ \,with $\gG$ simple.  

\bigskip
\noindent\textit{Multi-involution disassembling.} 

\noindent 
Keeping in mind that the disassembling problem is reduced to that for abelian extensions of simple 
Lie algebras, we shall adapt the Stripping Lemma to abelian extensions of arbitrary
Lie algebras, because it is convenient to set the problem in this general context.

Let $P_1,\dots ,P_l$ be commuting involutions in a Lie algebra $\gG$. Denote by 
$\dF_2^l$ the algebra of $\dF_2$-valued $l$-vectors with coordinate-wise 
multiplication. Let $\varsigma=(\varsigma_1,\dots,\varsigma_l)\in\dF_2^l$.
The common eigenspace  of the involutions $P_1,\dots,P_l$ that corresponds to 
eigenvalues $\lambda_i=(-1)^{\varsigma_i},  \;i=1,\dots,l$, 
will be denoted by $|\gG |_{\varsigma}$. Then 
\begin{equation}
|\gG |=\bigoplus_{\varsigma\in \dF_2^l}|\gG |_{\varsigma}
\end{equation}
Obviously, $[|\gG |_{\varsigma},|\gG |_{\sigma}]=|\gG |_{\varsigma+\sigma}$, for $\varsigma$ and $\sigma\in \dF_2^l$.
For any $\varsigma\in \dF_2^l$, we consider the skew-symmetric and bilinear product 
$[\cdot,\cdot]_{\varsigma}$
defined on homogeneous elements of $\gG$ by the formula 
\begin{equation}
[u,v]_{\varsigma}=[u,v], \;\mbox{if} \;\;\xi \cdot\tau=\varsigma, 
\quad u\in |\gG |_{\xi}, \;v\in |\gG |_{\tau},\quad
\mbox{and zero otherwise}. 
\end{equation}
\begin{proc}\label{multi-inv} The product $[ \cdot , \cdot]_{\varsigma}$ defines a Lie 
algebra structure, $\gG_{\varsigma}$, on $|\gG |$. 
\end{proc}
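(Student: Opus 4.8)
The plan is to verify the three Lie-algebra axioms for the skew-symmetric bilinear product $[\cdot,\cdot]_\varsigma$. Bilinearity is built into the definition, since the eigenspace decomposition $|\gG|=\bigoplus_\varsigma |\gG|_\varsigma$ is a direct sum and $[\cdot,\cdot]_\varsigma$ is prescribed on homogeneous pairs and then extended bilinearly. Skew-symmetry is immediate from the commutativity of the coordinate-wise product on $\dF_2^l$: for $u\in|\gG|_\xi$, $v\in|\gG|_\tau$ the selection condition $\xi\cdot\tau=\varsigma$ is symmetric in $\xi,\tau$, so when it holds $[u,v]_\varsigma=[u,v]=-[v,u]=-[v,u]_\varsigma$, and both sides vanish otherwise. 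Thus the whole content is the Jacobi identity, which I would check on homogeneous elements $u\in|\gG|_\xi$, $v\in|\gG|_\eta$, $w\in|\gG|_\zeta$.

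The key observation is that each of the three summands in the $\gG_\varsigma$-Jacobi expression is either the corresponding ordinary iterated bracket or zero. Writing $T_1=[[u,v],w]$, $T_2=[[v,w],u]$, $T_3=[[w,u],v]$, the term $[[u,v]_\varsigma,w]_\varsigma$ equals $T_1$ precisely when $\xi\cdot\eta=\varsigma$ and $(\xi+\eta)\cdot\zeta=\varsigma$, and vanishes otherwise; let $\chi_1\in\{0,1\}$ denote the indicator of this condition, and let $\chi_2,\chi_3$ be its cyclic analogues. All three ordinary terms $T_1,T_2,T_3$ lie in the single component $|\gG|_{\xi+\eta+\zeta}$ and sum to zero by the Jacobi identity in $\gG$. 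Hence the $\gG_\varsigma$-Jacobi expression equals $\chi_1 T_1+\chi_2 T_2+\chi_3 T_3$, and it suffices to prove $\chi_1=\chi_2=\chi_3$; the expression is then $\chi(T_1+T_2+T_3)=0$.

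To establish $\chi_1=\chi_2=\chi_3$ I would exploit that every operation in sight, the sum and the product on $\dF_2^l$, is coordinate-wise, and that each selection condition is a conjunction over the $l$ coordinates. This reduces the claim to a single coordinate, i.e.\ to $a,b,c,s\in\dF_2$, where one compares the three scalar condition-pairs ($ab=s$ and $(a+b)c=s$, together with their cyclic shifts). The decisive point is the identity $x+x=0$ in $\dF_2$: when $s=1$ the requirement $ab=1$ forces $a=b=1$, whence $a+b=0$ and $(a+b)c=0\neq1$, so all three scalar conditions fail together; when $s=0$ a short finite check over the eight triples $(a,b,c)$ shows the three conditions coincide, all holding exactly when at most one of $a,b,c$ equals $1$. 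Since the conditions agree coordinate by coordinate, so do their conjunctions $\chi_1,\chi_2,\chi_3$.

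The main obstacle is organising this verification cleanly rather than any deep difficulty: one must spot the coordinate-wise reduction to avoid an unwieldy direct manipulation in $\dF_2^l$, and one must handle the case $\varsigma\neq0$ correctly. In that case every $\chi_j$ vanishes, reflecting the fact that $\gG_\varsigma$ is then two-step nilpotent, the bracket of any two $[\cdot,\cdot]_\varsigma$-brackets being zero, so the Jacobi identity holds trivially. Once the coordinate reduction is in place, the remaining work is exactly the elementary $\dF_2$ case analysis sketched above.
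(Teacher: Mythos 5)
Your proposal is correct and follows essentially the same route as the paper: both arguments reduce the Jacobi identity to checking that the three cyclic selection conditions coincide, using the $\dF_2$ identity $x+x=0$ to show they all fail unless $\varsigma=0$ and, coordinate-wise, at most one of the three degrees is $1$ (the paper phrases this common condition as $\mu\nu=\nu\xi=\xi\mu=0$), after which the ordinary Jacobi identity in $\gG$ finishes the proof. Your coordinate-wise reduction is just a more explicit rendering of the same computation.
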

\begin{proof}We have to check the Jacobi identity for the bracket 
$[\cdot,\cdot]_{\varsigma}$.  It follows directly from the definition that 
the double bracket $[u,[v,w]_{\varsigma}]_{\varsigma}$
with $\,u\in\gG_{\mu}, 
v\in\gG_{\nu}, w\in\gG_{\xi},$ is different from zero only if $\varsigma=0$ 
and $\mu\nu=\nu\xi=\xi\mu=0$. In this case 
$[u,[v,w]_{\varsigma}]_{\varsigma}=[u,[v,w]]$
and  $[u,[v,w]_{\varsigma}]_{\varsigma}+cycle=[u,[v,w]]+cycle=0$.
On the other hand, if $\varsigma, \mu, \nu, \xi$ do not satisfy the above condition, 
all the double commutators of elements $u,v$ and $w$ with respect to 
$[\cdot,\cdot]_{\varsigma}$ vanish. 
\end{proof}

The Lie algebras $\gG_{\varsigma}$'s are not in general mutually compatible. Nevertheless, 
some combinations appear implicitly in the \emph{multi-involution disassembling 
procedure} which is described below.

Let $I_0$ be the involution of $\gG=\gH\oplus_{\rho}V$ whose eigenspaces 
corresponding to eigenvalues $1$ and $-1$ are $|\gH|$ and $V$, respectively. An 
involution $I$ in $\gG$ commuting with $I_0$, and the corresponding $I$ $d$-pair,
will be called \emph{adapted} (to the semi-direct sum structure of $\gG$).
Obviously, both $|\gH|$ and $V$ are $I$-invariant. Let 
$\gH=\gH_0\oplus\gH_1, \,V=V_0\oplus V_1$ be the splittings into eigenspaces   
corresponding to the eigenvalues $1$ and $-1$ of $I$, respectively.  The $d$-pair associated with 
$I$ is $(\gH_0\oplus_{\rho_0} V_0, |\gH_1\oplus_{\rho_1} V_1|)$ where $\rho_i$ 
stands for the restriction of $\rho$ to $\gH_i$ and $V_i$. Removing, according to the 
Stripping Lemma, the associated dressing algebra, we get the following Lie algebra
\begin{equation}\label{newSemidirect}
(\gH_0\oplus_{\rho_0}V_0)\oplus_{\varrho}(|\gH_1|\oplus V_1)
\end{equation}
with the representation $\varrho$ defined by formulas
$$
\varrho(h_0)(h_1)=[h_0,h_1],  \;\varrho(h_0)(v_1)=\rho(h_0)(v_1), \;\varrho(v_0)(h_1)=
-\rho(h_1)(v_0), \;\varrho(v_0)(v_1)=0
$$
where $h_i\in \gH_i, v_i\in V_i, \;i=0,1$.  On the other hand, Algebra (\ref{newSemidirect}) 
may be viewed as a semi-direct product of $\gH_0$ and the ideal $\mathcal{I}$ whose 
support is $V_0\oplus|\gH_1|\oplus V_1$. The product $[\cdot,\cdot]'$ in this ideal is such 
that $[V_0,|\gH_1|]'\subset V_1, \,[V_0,V_1]'=[|\gH_1|,V_1]'=0$. So, $\mathcal{I}$ is 
nilpotent and as such can be completely disassembled. Since
$$
(\gH_0\oplus_{\rho_0}V_0)\oplus_{\varrho}(|\gH_1|\oplus V_1)=
\gH_0\oplus_{\varrho_0}|\mathcal{I}|+\gamma_m\oplus\mathcal{I}, \quad m=\dim\,\gH_0, 
$$
where $\varrho_0$ is the direct sum of the natural actions of $\gH_0$ on $|\gH_1|, \,V_0$ 
and $V_1$, the disassembling problem for Algebra (\ref{newSemidirect}) and, 
therefore, for $\gG$ is reduced to that for $\gH_0\oplus_{\varrho_0}|{\mathcal{I}}|$. This 
passage from $\gH\oplus_{\rho}V$ to $\gH_0\oplus_{\varrho_0}|{\mathcal{I}}|$ will be 
referred to as the \emph{stripping \emph{(of $\gH\oplus_{\rho}V$)} by $I$}.
\begin{proc}\label{stripping-semi}
Let $I_1,\dots ,I_l$ be commuting involutions on a Lie algebra $\gG$. Then $\gG$ can be assembled from lieons
and the algebra $\gG_{(0,\dots,0)}\oplus_{\rho} W$ where $W=
\oplus_{\zeta \neq  0} |\gG_{\zeta}|$
and $\rho$ is the direct sum of the  natural actions of $\gG_{(0,\dots,0)}$ on the $\gG_{\zeta}$'s.
\end{proc}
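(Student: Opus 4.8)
The plan is to argue by induction on the number $l$ of commuting involutions, using single-involution stripping as the inductive engine. The involutions $I_1,\dots,I_l$ equip $|\gG|$ with the $\dF_2^l$-grading $|\gG|=\bigoplus_{\varsigma\in\dF_2^l}|\gG|_\varsigma$ into their common eigenspaces, and $[|\gG|_\varsigma,|\gG|_\sigma]\subset|\gG|_{\varsigma+\sigma}$. The key observation is that the shoulder produced after stripping off a few of the involutions is always the common $+1$-eigenspace of those involutions, a genuine subalgebra carrying the original bracket (note that on $|\gG|_{(0,\dots,0)}$ the product $[\cdot,\cdot]_{(0,\dots,0)}$ coincides with $[\cdot,\cdot]$, so this subalgebra is precisely $\gG_{(0,\dots,0)}$), while the ``mantle'' peeled off at each step, a dressing algebra together with a nilpotent ideal, disassembles completely into lieons by Propositions \ref{dress} and \ref{dis-solv}.

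For the base case $l=1$ the $d$-pair of $I_1$ is $(|\gG|_{(0)},|\gG|_{(1)})$, and the Stripping Lemma (Lemma \ref{str}) gives $\gG=(|\gG|_{(0)}\oplus_{\rho}|\gG|_{(1)})+\gA_\beta$ with $\gA_\beta$ a dressing algebra; since $|\gG|_{(0)}\oplus_\rho|\gG|_{(1)}$ is exactly $\gG_{(0)}\oplus_\rho W$, the claim holds. For the inductive step I would first apply the statement to the $l-1$ commuting involutions $I_1,\dots,I_{l-1}$, reducing $\gG$, up to lieons, to an abelian extension $\gs'\oplus_\rho W'$, where $\gs'$ is the common $+1$-eigenspace of $I_1,\dots,I_{l-1}$ and $W'=\bigoplus_{(\varsigma_1,\dots,\varsigma_{l-1})\neq 0}|\gG|_\varsigma$. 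The remaining involution $I_l$ commutes with $I_1,\dots,I_{l-1}$, hence preserves each of their common eigenspaces; therefore it restricts to an automorphism of $\gs'\oplus_\rho W'$ commuting with the canonical involution of this semi-direct product, i.e.\ it is an \emph{adapted} involution. Stripping $\gs'\oplus_\rho W'$ by $I_l$ then removes further lieons and replaces the shoulder $\gs'$ by its $I_l$-fixed part, the common $+1$-eigenspace $|\gG|_{(0,\dots,0)}$ of all $l$ involutions, while the $(-1)$-eigenpart of $\gs'$ joins $W'$ to form the new abelian ideal $\bigoplus_{\varsigma\neq 0}|\gG|_\varsigma=W$. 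This is exactly $\gG_{(0,\dots,0)}\oplus_\rho W$, closing the induction.

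The step I expect to require the most care is the eigenspace bookkeeping that makes the two descriptions of the shoulder agree. I must verify that stripping the already-reduced abelian extension by $I_l$ leaves the original bracket on $|\gG|_{(0,\dots,0)}$ untouched and sends every graded piece $|\gG|_\varsigma$ with $\varsigma\neq(0,\dots,0)$ into the abelian ideal, acted on by $|\gG|_{(0,\dots,0)}$ through the unchanged adjoint action. This is precisely where the order of stripping matters: one removes the involutions successively so that at each stage the freshly reduced algebra is again an abelian extension to which ``stripping by $I$'' applies, and the commutativity of the $I_j$'s is exactly what guarantees that the next involution descends to this reduced algebra and respects its grading. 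Granting this tracking, all constituents removed along the way are lieons and the single surviving non-lieon end term is $\gG_{(0,\dots,0)}\oplus_\rho W$, as asserted.
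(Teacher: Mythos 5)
Your proof is correct and follows essentially the same route as the paper: induction on the number of involutions, with the Stripping Lemma handling the base case and the ``stripping by an adapted involution'' procedure (which peels off a dressing algebra and a nilpotent ideal, both completely disassemblable by Propositions \ref{dress} and \ref{dis-solv}) carrying the inductive step, the commutativity of the $I_j$'s guaranteeing that each successive involution descends to the reduced abelian extension. The eigenspace bookkeeping you flag as the delicate point is exactly what the paper's proof relies on as well, so there is nothing further to add.
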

\begin{proof}
We shall proceed by induction. First, the Stripping Lemma applied to involution $I_1$ 
shows that $\gG$ can be assembled from lieons and the algebra  
$\gG_0\oplus_{\varrho_0}W_0, \,W_0=|\gG_1|$, where $|\gG_0|$ and $|\gG_1|$ are the
eigenspaces of $I_1$ corresponding to the eigenvalues $1$ and $-1$, respectively, 
and $\varrho_0$ is the natural action of $\gG_0$ on $|\gG_1|$. Since the involution 
$I_2$ commutes with $I_1$, it leaves invariant both $|\gG_0|$ and $|\gG_1|$ and, 
therefore, induces an adapted involution $I$ on the algebra 
$\gG_0\oplus_{\varrho_0}W_0$. Now, by stripping the semi-direct product 
$\gG_0\oplus_{\varrho_0}W_0$ by $I$, we see that $\gG$ can be assembled from lieons 
and $\gG_{(0,0)}\oplus_{\varrho_1}W_1$ where $W_1=\oplus_{\zeta \neq  0}|\gG_{\zeta}|$ 
with $\zeta\in\dF_2^2$. Continuing this process, we get the desired result.
\end{proof}

\bigskip
\noindent\textit{Complete disassembling of classical Lie algebras.}

\noindent A simple application of Proposition\,\ref{stripping-semi}
is the following.
\begin{proc}\label{dis-classics}
Lie algebras  $\gs\gl(n,\gk), \,\go(n,\gk), \,\gs\go(n,\gk), \,\gu$ and $\gs\gu$ can 
be completely disassembled.
\end{proc}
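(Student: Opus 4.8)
The plan is to exhibit on each of these classical algebras a large family of commuting involutive automorphisms and then to feed them into Proposition~\ref{stripping-semi}, which reduces the complete disassembling of $\gG$ to the disassembling of the \emph{single} reduced algebra $\gG_{(0,\dots,0)}\oplus_{\rho}W$. The whole point is to choose the involutions so that the common fixed subalgebra $\gG_{(0,\dots,0)}$ is as small as possible: if it comes out \emph{abelian}, the reduced algebra is solvable and Proposition~\ref{dis-solv} closes the argument. So the target is ``strip away everything off-diagonal and be left with the diagonal''.

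Concretely, for $\gG$ equal to one of $\gs\gl(n,\gk)$, $\go(n,\gk)$, $\gs\go(n,\gk)$, $\gu$ or $\gs\gu$, I would take the diagonal sign matrices $D_s=\diag(1,\dots,1,-1,1,\dots,1)$ (with $-1$ in the $s$-th slot) and let $I_s$ be conjugation by $D_s$, for $s=1,\dots,n$. Each $D_s$ is simultaneously trace-preserving, orthogonal and unitary, and $D_s^2=\id$, so every $I_s$ is an involutive automorphism of each algebra in the list, and the $I_s$ commute since the $D_s$ do. On the matrix-unit basis one computes $I_s(E_{ij})=\varepsilon E_{ij}$ with $\varepsilon=-1$ exactly when precisely one of $i,j$ equals $s$; thus $E_{ij}$ (or its skew-symmetric, resp. anti-Hermitian, combination) is a simultaneous eigenvector of all the $I_s$, and the common $+1$-eigenspace $\gG_{(0,\dots,0)}$ is spanned by the \emph{diagonal} elements lying in $\gG$. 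In every case this is an abelian subalgebra: the diagonal Cartan subalgebra for $\gs\gl$, the purely imaginary diagonal for $\gu$ and $\gs\gu$, and (being the diagonal skew-symmetric matrices) even $0$ for $\go$, $\gs\go$, which is a fortiori abelian.

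Granting this, Proposition~\ref{stripping-semi} yields that $\gG$ is assembled from lieons together with the one algebra $\gG_{(0,\dots,0)}\oplus_{\rho}W$, where $W=\bigoplus_{\zeta\neq 0}|\gG_{\zeta}|$ is carried as an abelian ideal. Since $\gG_{(0,\dots,0)}$ is abelian and $W$ is an abelian ideal, the bracket of this semidirect product lands inside $W$, so its derived algebra is abelian: the reduced algebra is solvable (in fact metabelian). By Proposition~\ref{dis-solv} it admits a complete disassembling into lieons, and assembling this with the lieons delivered by the stripping shows that $\gG$ itself is completely disassembled.

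The argument is essentially bookkeeping once Proposition~\ref{stripping-semi} is available; the only step needing care is the case-by-case verification that $\gG_{(0,\dots,0)}$ really is abelian, i.e. that after annihilating every off-diagonal eigenspace only the (abelian) diagonal survives. The one genuinely degenerate case is $\go,\gs\go$ with these naive sign involutions, where $\gG_{(0,\dots,0)}=0$ and the reduced algebra is merely abelian; this is still solvable and therefore harmless, but if a nontrivial Cartan is preferred one may instead use the $m=\lfloor n/2\rfloor$ block-sign involutions $J_k$ that flip the index pair $2k-1,2k$ together. Their common fixed subalgebra is the abelian Cartan $\bigoplus_k\gs\go(2,\gk)$, and the resulting reduced solvable algebra then carries a nontrivial Cartan action on its root spaces, again completely disassembled by Proposition~\ref{dis-solv}.
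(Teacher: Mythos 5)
Your proposal is correct and follows essentially the same route as the paper: your conjugations by the diagonal sign matrices $D_s$ are exactly the involutions $I_j$ that the paper builds from the $d$-pairs of Example~\ref{pair} (splitting $V$ as $\mathrm{span}\{e_j\}\oplus\mathrm{span}\{e_1,\dots,\hat e_j,\dots,e_n\}$), and both arguments conclude by observing that the common fixed subalgebra is diagonal, hence abelian, so that Proposition~\ref{stripping-semi} reduces everything to a solvable algebra handled by Proposition~\ref{dis-solv}. The only cosmetic difference is for $\gu$ and $\gs\gu$, where the paper passes to the underlying real $2n$-dimensional space with its complex structure $J$ and uses the projectors onto the $2$-dimensional $J$-invariant blocks, which amounts to the same family of involutions you use.
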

\begin{proof}
Let $\{e_1,\dots,e_n\}$ be a basis in a $\gk$-vector space $V$. We shall 
identify operators in $\mathrm{End}\,V$ and their matrices in this basis. 
Consider the $d$-pair $(\gs_j,W_j)$ in $\gG=\gG\gl(V)$ associated 
with the subspaces $V_1=\mathrm{span}\{e_j\}$ and 
$V_2=\mathrm{span}\{e_1,\dots,\hat e_j,\dots,e_n\}$ by the construction of 
Example \ref{pair}  and denote the corresponding involution in 
$\gG\gl(V)$  by $I_j$ . Note that involutions $I_1,\dots,I_n$ commute.  

The common eigenspace  
$|\gG_{\zeta}|, \,\zeta\in \dF_2^n$, of involutions $I_1,\dots,I_n$, as it is easy 
to see, is different from zero if and only if the number of non-zero components in $\zeta$ 
is zero or $2$. In the first case the subspace $|\gG_{(0,\dots,0)}|$ is 
composed of operators for which $e_1,\dots,e_n$ are eigenvectors, i.e., 
it consists of diagonal matrices. In the second case, let $\zeta=(ij)\in \dF_2^n, 
i\neq j$, be the $\dF_2$--vector with two nonzero components at the $i$-th 
and $j$-th places. Then the subspace $|\gG_{(ij)}|$ consists of operators 
$\lambda \varepsilon_{ij}+\mu\varepsilon_{ji}, \;\lambda,\mu\in \gk$, where 
$\varepsilon_{ij}$ is the operator sending $e_j$ to $e_i$ and 
annihilating  the $e_k$'s for $k\neq j$. So the algebra 
$\gG_{(0,\dots,0)}$ is abelian. Therefore, the algebra $\gG_{(0,\dots,0)}\oplus_{\rho} W$
is solvable and can be completely disassembled. Now it directly follows from 
Proposition\,\ref{stripping-semi} that the algebra $\gG\gl(n,\gk)=\gG\gl(V)$ can be 
completely disassembled as well.

Algebras $\gs\gl(n,\gk), \,\go(n,\gk), \,\gs\go(n,\gk)$ are invariant with respect to the
involutions $I_j$'s. Obviously, for each of them the subspace
$|\gG_{\zeta}|$ is a subspace of  $|\gG\gl(n,k)_{\zeta}|$.
In particular, this shows that the algebra $\gG_{(0,\dots,0)}\oplus W$ is solvable, and
Proposition\,\ref{stripping-semi} gives the desired result.

In order to completely disassemble the symplectic algebra $\gs\gp(n,\gk)$, the preceding
procedure must be slightly modified. Let $\sigma(u,v)$ be a symplectic form on 
$V, \,\dim\,V=2n$. We interpret the algebra  $\gs\gp(n,\gk)$ as the algebra
$$
\gs\gp(\sigma)=\{A\in\mathrm{End}\;V \;|\; \sigma(Au,v)+\sigma(u,Av)=0, \,\forall u,v\in V\}.
$$
Let $V=V_1\oplus\dots\oplus V_n, \,\dim\,V_i=2, \forall i$, be a 
$\sigma$-orthogonal decomposition of $V$ and $P_i:V\rightarrow V$ the associated 
projector on $V_i$. Then $I_i=\id_V-2P_i$ is an involution of $\gG\gl(V)$. It is easy to 
see that involutions $I_i$'s commute and their common eigenspace, on which they 
are the identity maps, is $\gs\gp(\sigma_1)\oplus,\dots,\oplus\gs\gp(\sigma_n)$ with  \quad 
$\sigma_i=\sigma|_{V_i}$ (=$\gG_{(0.\dots,0)}$ in the notation of 
Proposition\,\ref{stripping-semi}). Note that $\gs\gp(\sigma_i)$ is isomorphic to 
$\gs\gp(2,\gk)$. So, in contrast with the preceding case, 
the algebra $\gG_{(0.\dots,0)}\oplus W$ of Proposition\,\ref{stripping-semi} is not 
solvable. Therefore, it cannot be completely disassembled by the techniques developed previously. This small difficulty can be resolved by the introduction of an additional involution.  

Let $J_i:V_i\rightarrow V_i$ be a complex structure on $V_i$ compatible with 
$\sigma_i$. This means that $J_i^2=-\id_{V_i}$ and $\sigma_i(J_iu,v)+\sigma_i(u,J_iv)=0$. 
If $J=J_1\oplus\dots\oplus J_n$, then $\sigma(Ju,v)+\sigma(u,Jv)=0$ and 
$$
I_0: \End\,V\rightarrow\End\,V, \quad \quad A\mapsto -JAJ
$$ 
is an involution,  which leaves invariant 
the subalgebra $\gs\gp(\sigma)$. Moreover, $I_0$ commutes with the involutions 
$I_1,\dots,I_n$, and their common eigenspace, on which they act as the identity, 
is the abelian subalgebra $\gH$ composed of elements 
$\lambda_1J_1\oplus\dots\oplus\lambda_nJ_n, \,\lambda_1,\dots,\lambda_n\in\gk$.   
Now, applying Proposition\,\ref{stripping-semi} to the involutions 
$I_0,I_1,\dots,I_n$ and taking into account that in this case the algebra 
$\gG_{(0.\dots,0)}\oplus W=\gH\oplus W$ is solvable, we see that $\gs\gp(\sigma)$ 
can be completely disassembled.

Similar arguments can be applied to the algebras $\gu(n)$ and $\gs\gu(n)$.
Interpret an $n$-dimensional complex vector space as a $2n$--dimensional 
$\R$--vector space $V$ supplied with a complex structure $J, \,J^2=-\id_V$. Then 
split $V$ into a direct sum of 2-dimensional $J$-invariant subspaces and  consider, 
as above, the corresponding involutions $I_1,\dots,I_n$. In this case the algebra 
$\gG_{(0.\dots,0)}$ is abelian. Exactly as in the previous case, it consists of operators 
$\lambda_1J_1\oplus\dots\oplus\lambda_nJ_n, \,\lambda_1,\dots,\lambda_n\in\gk$, 
with $J_i=J|_{V_i}$ and Proposition\,\ref{stripping-semi} gives the desired result again.
\end{proof}

This proof of Proposition\,\ref{stripping-semi} is not very constructive in the sense 
that the corresponding a-scheme is rather complicated. It
was given here with the aim of illustrating the Stripping Lemma at work. A short and 
constructive procedure of complete disassembling of
classical Lie algebras will be described in 
Section\,\ref{classical}.

We remark that any simple Lie algebra $\gG$ over a ground field $\gk$ possesses a nontrivial involution. A more difficult question is whether such an involution can be extended to the algebra $\gG\oplus_{\rho}V$.

\subsection{Extensions of $d$-pairs and involutions} 

Let $\rho$ be a representation of $\gG$ in a vector space $V$ and let $(\gs,W)$ be 
a $d$-pair in $\gG$. 
\begin{defi}A decomposition 
$V=V_0\oplus V_1$ is called a \emph{$\rho$-extension} of $(\gs,W)$  if 
\begin{enumerate}
\item $V_i$ \,is invariant with respect to the operators $\rho (s), 
s\in\gs, \;i=0,1$; 
\item $\rho(w)(V_0)\subset V_1,\,\, \rho(w)(V_1)\subset V_0$, \,if  $w\in W$. 
\end{enumerate}
\end{defi}
\begin{lem}
If $V=V_0\oplus V_1$ is a $\rho$--extension of $(\gs,W)$, then 
$(\gs\oplus_{\rho|_{_{\gs}}}V_0,\,W\oplus V_1)$ is a $d$-pair in the Lie 
algebra $\gG\oplus_{\rho}V$. 
\end{lem}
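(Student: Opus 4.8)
The plan is to verify directly the three defining properties of a $d$-pair for the pair $(\gp,U)$, where I write $\gp=\gs\oplus_{\rho|_{\gs}}V_0$ and $U=W\oplus V_1$, both regarded as subspaces of $\gG\oplus_\rho V$. Recall that on the underlying space $|\gG\oplus_\rho V|=|\gG|\oplus V$ the bracket is
\[
[(g,v),(g',v')]=\bigl([g,g'],\,\rho(g)v'-\rho(g')v\bigr),
\]
with $V$ an abelian ideal. Since $|\gG|=|\gs|\oplus W$ and $V=V_0\oplus V_1$, I would regroup
\[
|\gG\oplus_\rho V|=\bigl(|\gs|\oplus V_0\bigr)\oplus\bigl(W\oplus V_1\bigr)=|\gp|\oplus U,
\]
which already shows that $U$ is a vector-space complement of $|\gp|$; all that remains is to check the two bracket inclusions, after first confirming that $\gp$ is a subalgebra.

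First I would confirm that $\gp$ is indeed a subalgebra carrying the claimed semi-direct structure. For $(s,v_0),(s',v_0')\in|\gs|\oplus V_0$ the bracket equals $\bigl([s,s'],\,\rho(s)v_0'-\rho(s')v_0\bigr)$; here $[s,s']\in|\gs|$ because $\gs$ is a subalgebra of $\gG$, while $\rho(s)v_0'$ and $\rho(s')v_0$ lie in $V_0$ by condition (1) of a $\rho$-extension. Hence the result lies in $|\gs|\oplus V_0=|\gp|$, and the induced product is exactly that of $\gs\oplus_{\rho|_{\gs}}V_0$.

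Next come the two defining inclusions. For $[U,U]\subset\gp$, take $(w,v_1),(w',v_1')\in W\oplus V_1$; the bracket is $\bigl([w,w'],\,\rho(w)v_1'-\rho(w')v_1\bigr)$, and $[w,w']\in|\gs|$ because $[W,W]\subset\gs$ (since $(\gs,W)$ is a $d$-pair), whereas $\rho(w)v_1'$ and $\rho(w')v_1$ lie in $V_0$ by condition (2). Thus the bracket lands in $|\gp|$. For $[\gp,U]\subset U$, take $(s,v_0)\in|\gs|\oplus V_0$ and $(w,v_1)\in W\oplus V_1$; the bracket is $\bigl([s,w],\,\rho(s)v_1-\rho(w)v_0\bigr)$, where $[s,w]\in W$ since $[\gs,W]\subset W$, while $\rho(s)v_1\in V_1$ by condition (1) and $\rho(w)v_0\in V_1$ by condition (2). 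Hence the bracket lands in $U$, completing the verification.

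There is no genuine obstacle here: the entire argument is a matter of tracking, for each cross-term of the semi-direct bracket, into which summand of the decompositions $|\gs|\oplus W$ and $V_0\oplus V_1$ it falls. The two defining properties of a $\rho$-extension were tailored exactly so that this $\dF_2$-graded bookkeeping respects the splitting $|\gp|\oplus U$. The only point requiring a moment's care is the cross-term structure $\rho(g)v'-\rho(g')v$ of the bracket: one must check that both terms appearing in each computation fall into the \emph{same} summand, which they do precisely because conditions (1) and (2) act symmetrically on the two components.
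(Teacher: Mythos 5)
Your proof is correct and follows essentially the same route as the paper: both verify directly that $\gs\oplus_{\rho|_{\gs}}V_0$ is a subalgebra and then check the two bracket inclusions $[\gp,U]\subset U$ and $[U,U]\subset\gp$ by computing the semi-direct product bracket componentwise and invoking the $d$-pair and $\rho$-extension conditions. Your write-up merely spells out a few steps (the subalgebra check, the complement statement) that the paper labels as obvious.
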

\begin{proof}
Obviously, $(\gs\oplus_{\rho|_{_{\gs}}}V_0) $  is a subalgebra of 
$\gG\oplus_{\rho}V$. Denoting the Lie product in 
$\gG\oplus_{\rho}V$  by $[\cdot,\cdot]_{\rho}$, we have
\begin{align*}
[(s,v_0),(w,v_1)]_{\rho} &=([s,w],\rho(s)(v_1)-\rho(w)(v_0))\in
W\oplus V_1\\
[(w,v_1),(w',v_1')]_{\rho} &=([w,w'],\rho(w)(v_1')-\rho(w')(v_1)) \in 
\gs\oplus_{\rho|_{_{\gs}}}V_0
\end{align*}
where $v_0\in V_0, v_1\in V_1, w\in W,$ etc. In other words, 
$$
[(\gs\oplus_{\rho|_{_{\gs}}}V_0,\,W\oplus V_1]_{\rho}\subset W\oplus V_1,\quad 
[W\oplus V_1,W\oplus V_1]_{\rho}\subset\gs\oplus_{\rho|_{_{\gs}}}V_0.
$$
\end{proof}

Let $(\gs,W)$ and $\rho$ be as above. A linear operator $A:V\rightarrow V$ is 
called {\it splitting} (with respect to $(\gs,W)$ and $\rho$) if 
\begin{equation}\label{spl-oper}
\rho(s)\circ A - A\circ\rho(s)=0,\quad \rho(w)\circ A+A\circ\rho(w)=0,\quad 
s\in\gs,\quad w\in W.
\end{equation}
In particular, $A$ is an endomorphism of the $\gs$-module $(V,\rho|_{_{s}})$. A 
splitting operator $A$ is called a {\it splitting involution} if, in addition, 
$A^2=id_V$. The splitting involution $id_{V_1}\oplus (-id_{V_2})$ is naturally 
associated with a $\rho$-extension $V=V_0\oplus V_1$ and vice-versa. Note also that the splitting operators with respect to $(\gs,W)$ \,and $\rho$ form a 
vector space, which will be denoted by $\mathcal{S}_1=\mathcal{S}_1(\gs,W,\rho)$, and that 
the product of two splitting operators is an endomorphism of the $\gG$-module $(V,\rho)$. 
So, denoting by $\mathcal{S}_0(\rho)$ the algebra of these endomorphisms, we see that
$$
\mathcal{S}(\gs,W,\rho)=\mathcal{S}_0(\rho)\oplus\mathcal{S}_1(\gs,W,\rho)
$$
is an associative $\dF_2$--graded algebra

Denote by $V_{(\lambda)}$ the root space of $A$ corresponding to an eigenvalue 
$\lambda\in \gk$ \, of $A$. 
\begin{lem}
Let $A$ be a splitting operator. Then 
$$
\rho(s)(V_{(\lambda)})\subset V_{(\lambda)}, \quad 
\rho(w)(V_{(\lambda)})\subset V_{(-\lambda)}, \quad \mathrm{if}\quad s\in\gs,\quad w\in W.
$$
\end{lem}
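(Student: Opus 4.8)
The plan is to treat the two inclusions separately, in each case working with the root space $V_{(\lambda)}=\ker(A-\lambda\,\id)^N$ for $N=\dim V$ and exploiting the (anti)commutation of the relevant operator with $A$. The generalized-eigenspace formulation is what lets the argument go through over an arbitrary $\gk$ of characteristic zero, without assuming that $A$ is semisimple.

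For the first inclusion I would observe that the defining relation $\rho(s)\circ A=A\circ\rho(s)$ says $\rho(s)$ commutes with $A$, hence with every polynomial in $A$, in particular with $(A-\lambda\,\id)^N$. Therefore, if $v\in V_{(\lambda)}$, i.e. $(A-\lambda\,\id)^N v=0$, then
$$
(A-\lambda\,\id)^N\,\rho(s)v=\rho(s)\,(A-\lambda\,\id)^N v=0,
$$
so $\rho(s)v\in V_{(\lambda)}$. This part is immediate.

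The slightly more substantial point is the second inclusion, where one uses the anticommutation $\rho(w)\circ A=-A\circ\rho(w)$ from the definition of a splitting operator. The key observation is the intertwining identity
$$
(A+\lambda\,\id)\circ\rho(w)=-\,\rho(w)\circ(A-\lambda\,\id),
$$
which follows by expanding both sides and substituting the anticommutation relation. Iterating it gives $(A+\lambda\,\id)^N\circ\rho(w)=(-1)^N\,\rho(w)\circ(A-\lambda\,\id)^N$, so that $(A-\lambda\,\id)^N v=0$ forces $(A+\lambda\,\id)^N\rho(w)v=0$; that is, $\rho(w)v\in V_{(-\lambda)}$. The induction on the exponent is routine, the only care needed being to move the factor $(A+\lambda\,\id)$ across $\rho(w)$ one step at a time using the displayed identity.

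I do not expect a genuine obstacle: the whole statement reduces to the two (anti)commutation relations built into the notion of a splitting operator, together with the elementary remark that these relations pass to powers of $A-\lambda\,\id$. The only mild subtlety worth flagging is the systematic use of root spaces in place of eigenspaces, which keeps the proof valid when $A$ fails to be diagonalizable.
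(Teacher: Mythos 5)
Your argument is correct and is essentially the paper's own proof: both rest on the commutation $(\lambda I-A)\circ\rho(s)=\rho(s)\circ(\lambda I-A)$ and the intertwining identity $(A+\lambda I)\circ\rho(w)=-\rho(w)\circ(A-\lambda I)$, iterated on the power of $(A-\lambda I)$ defining the root space. The paper states these two relations and concludes in one line; you merely spell out the routine iteration.
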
\label{pre-spl}
\begin{proof}
Let $I=id_V$. Then, obviously, 
$$
(\lambda I-A)\circ\rho(s)=\rho(s)\circ (\lambda I-A),\quad (\lambda 
I+A)\circ\rho(w)=\rho(w)\circ (\lambda I-A)
$$
for any $s\in\gs,\quad w\in W$. Since $V_{(\lambda)}= \ker (\lambda I-A)^k$ for 
some $k\in\N$, the assertion directly follows from the above relations. 
\end{proof}
\begin{cor}\label{ospl}
Assume that the eigenvalues of a non-degenerate splitting operator $A$  belong to $\gk$ 
and divide them into two parts $\Lambda_0$ and $\Lambda_1$, in such a way 
that opposite eigenvalues $\lambda$ and $-\lambda$ do not belong to the same 
part. Then the pair 
$$
V_0=\sum_{\lambda\in\Lambda_0}V_{(\lambda)},\quad 
V_1=\sum_{\lambda\in\Lambda_1}V_{(\lambda)}
$$
is an extension of $(\gs,W)$. In particular, if $A$ is a splitting involution, 
then the pair $(V_{(1)},V_{(-1)})$ \,is an extension of $(\gs,W)$. 
\end{cor}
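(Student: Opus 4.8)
The plan is to recognise this as an essentially immediate corollary of the preceding Lemma, so that the work reduces to bookkeeping with the root-space decomposition of $A$. First I would record that, since $A$ is non-degenerate and all of its eigenvalues lie in $\gk$, the space $V$ splits as the direct sum of the root spaces $V_{(\lambda)}$ over the (necessarily nonzero) eigenvalues $\lambda$ of $A$. Grouping these root spaces according to the partition of the eigenvalue set into $\Lambda_0$ and $\Lambda_1$ then yields $V=V_0\oplus V_1$, which is exactly the direct-sum decomposition demanded in the definition of a $\rho$-extension.

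Next I would check the two defining conditions of a $\rho$-extension. Condition (1), the $\rho(s)$-invariance of $V_0$ and $V_1$ for $s\in\gs$, follows at once from the preceding Lemma, which gives $\rho(s)(V_{(\lambda)})\subset V_{(\lambda)}$; summing over $\lambda\in\Lambda_i$ and using linearity yields $\rho(s)(V_i)\subset V_i$ for $i=0,1$.

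For condition (2) I would again invoke the Lemma, which provides $\rho(w)(V_{(\lambda)})\subset V_{(-\lambda)}$ for $w\in W$. The only place where the hypothesis on opposite eigenvalues is actually used is to verify that $V_{(-\lambda)}\subset V_1$ whenever $\lambda\in\Lambda_0$. Here I would distinguish two cases: if $-\lambda$ is an eigenvalue of $A$, then since $\lambda$ and $-\lambda$ cannot lie in the same part we have $-\lambda\in\Lambda_1$, whence $V_{(-\lambda)}\subset V_1$; if $-\lambda$ is not an eigenvalue, then $V_{(-\lambda)}=0\subset V_1$ trivially. Summing over $\lambda\in\Lambda_0$ gives $\rho(w)(V_0)\subset V_1$, and the symmetric argument gives $\rho(w)(V_1)\subset V_0$. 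This establishes that $(V_0,V_1)$ is a $\rho$-extension of $(\gs,W)$.

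Finally, for the special case of a splitting involution I would note that $A^2=\id_V$ forces the minimal polynomial of $A$ to divide $(x-1)(x+1)$; as the ground field has characteristic zero this polynomial has distinct roots, so $A$ is diagonalisable with eigenvalues among $\pm 1$, the root spaces coincide with the genuine eigenspaces $V_{(1)}$ and $V_{(-1)}$, and non-degeneracy is automatic. Taking $\Lambda_0=\{1\}$ and $\Lambda_1=\{-1\}$ then reduces this to the case already treated. I expect no genuine obstacle: the entire substance lies in the preceding Lemma, and the only subtlety worth flagging is the harmless degenerate case $V_{(-\lambda)}=0$ arising when $-\lambda$ fails to be an eigenvalue of $A$.
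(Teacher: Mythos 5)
Your proposal is correct and follows exactly the route the paper takes: the paper's entire proof is the one-line remark that the corollary ``follows from the above lemma,'' and your argument is just a careful spelling-out of that deduction, including the harmless case where $-\lambda$ fails to be an eigenvalue. Nothing further is needed.
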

\begin{proof}
The proof follows from the above lemma. 
\end{proof}

A division of eigenvalues of a splitting operator $A$ as in corollary\,\ref{ospl} 
is possible only if $A$ is non-degenerate. The following proposition simplifies the 
search for non-degenerate 
operators in $\mathcal{S}_1(\gs,W,\rho)$.
\begin{proc}\label{ext-ext}
Let $\bar{\gk}$ \,be an extension of the ground field $\gk$ and 
$\bar{\gs},\bar{W},\bar{\rho}$ be the corresponding extensions of
$\gs, W$ and $\rho$, respectively. Then 
\begin{enumerate}
\item
$\mathcal{S}_0(\bar{\rho})=\mathcal{S}_0(\rho)\otimes_{\gk}\bar{\gk},  
\quad\mathcal{S}_1(\bar{\gs},\bar{W},\bar{\rho})=\mathcal{S}_1(\gs,W,\rho)
\otimes_{\gk}\bar{\gk}$;  
\item If \;$\mathcal{S}_1(\bar{\gs},\bar{W},\bar{\rho})$ \,contains a
non-degenerate operator, then $\mathcal{S}_1(\gs,W,\rho)$ does also. 
\end{enumerate}
\end{proc}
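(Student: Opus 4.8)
The plan is to recognize that both $\mathcal{S}_0(\rho)$ and $\mathcal{S}_1(\gs,W,\rho)$ are solution spaces of homogeneous $\gk$-linear systems on $\End_\gk(V)$, and then to exploit the good behaviour of such spaces under extension of scalars, together with the fact that $\gk$, being of characteristic zero, is infinite. For part (1), I would write $\mathcal{S}_0(\rho)=\ker\Phi$, where $\Phi\colon\End_\gk(V)\to\Hom_\gk(\gG,\End_\gk(V))$ sends $A$ to the map $g\mapsto\rho(g)A-A\rho(g)$, and $\mathcal{S}_1(\gs,W,\rho)=\ker\Psi$, where $\Psi\colon\End_\gk(V)\to\Hom_\gk(\gs,\End_\gk(V))\oplus\Hom_\gk(W,\End_\gk(V))$ sends $A$ to the pair $\bigl(s\mapsto\rho(s)A-A\rho(s),\,w\mapsto\rho(w)A+A\rho(w)\bigr)$; these are exactly the conditions (\ref{spl-oper}). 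Using the canonical identifications $\End_{\bar\gk}(V\otimes_\gk\bar\gk)=\End_\gk(V)\otimes_\gk\bar\gk$ and the analogous ones for the $\Hom$-spaces (all spaces being finite-dimensional), the maps defining $\mathcal{S}_0(\bar\rho)$ and $\mathcal{S}_1(\bar\gs,\bar W,\bar\rho)$ become precisely $\Phi\otimes_\gk\id_{\bar\gk}$ and $\Psi\otimes_\gk\id_{\bar\gk}$; here one uses that it suffices to impose the defining relations on $\gk$-bases of $\gG$, $\gs$, $W$, since both sides are $\bar\gk$-linear in the Lie-algebra argument. Because $-\otimes_\gk\bar\gk$ is exact, it commutes with the formation of kernels, whence $\mathcal{S}_0(\bar\rho)=\mathcal{S}_0(\rho)\otimes_\gk\bar\gk$ and $\mathcal{S}_1(\bar\gs,\bar W,\bar\rho)=\mathcal{S}_1(\gs,W,\rho)\otimes_\gk\bar\gk$.

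For part (2), I would set $S=\mathcal{S}_1(\gs,W,\rho)$ and fix a $\gk$-basis $A_1,\dots,A_r$ of $S$; by part (1) this same family is a $\bar\gk$-basis of $\mathcal{S}_1(\bar\gs,\bar W,\bar\rho)$. Forming $p(t_1,\dots,t_r)=\det(t_1A_1+\dots+t_rA_r)$ gives a polynomial whose coefficients lie in $\gk$, since the matrix entries of the $A_i$ do and the determinant is a polynomial with integer coefficients in those entries. The hypothesis that $\mathcal{S}_1(\bar\gs,\bar W,\bar\rho)$ contains a non-degenerate operator says exactly that $p$ fails to vanish at some point of $\bar\gk^{\,r}$, hence that $p$ is not the zero polynomial. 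The conclusion then follows from the standard fact that a nonzero polynomial over an infinite field is not identically zero as a function: as $\gk$ has characteristic zero it is infinite, so there exists $(t_1,\dots,t_r)\in\gk^r$ with $p(t_1,\dots,t_r)\neq0$, and $A=t_1A_1+\dots+t_rA_r$ is then a non-degenerate operator lying in $S$.

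The only genuinely substantive point, and the place where the standing hypotheses are really used, is the infinitude of $\gk$ in the final step; everything else is the routine observation that the conditions (\ref{spl-oper}) are $\gk$-linear and that the resulting solution spaces base-change correctly. I do not anticipate a serious obstacle, but I would take care to state the polynomiality of $p$ and the coincidence of the $\gk$- and $\bar\gk$-bases cleanly, since it is precisely these facts that allow the passage from the formal non-vanishing of $p$ over $\bar\gk$ to its functional non-vanishing over $\gk$.
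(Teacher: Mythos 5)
Your proposal is correct and follows essentially the same route as the paper: part (1) is the observation that the solution space of the $\gk$-linear system (\ref{spl-oper}) base-changes correctly (the paper phrases this as a dimension count plus the obvious inclusion, you phrase it as exactness of $-\otimes_\gk\bar\gk$ applied to kernels, which amounts to the same thing), and part (2) is exactly the paper's argument with the determinant polynomial $\det(t_1A_1+\dots+t_mA_m)$, its formal non-vanishing over $\bar\gk$, and the infinitude of $\gk$. No gaps.
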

\begin{proof}
The inclusion $\mathcal{S}_1(\gs,W,\rho)\otimes_{\gk}\bar{\gk}\subset 
\mathcal{S}_1(\bar{\gs},\bar{W},\bar{\rho})$ is obvious. But $\mathcal{S}_1(\bar{\gs},\bar{W},\bar{\rho})$ 
\,is the solution space of the linear system (\ref{spl-oper}) interpreted as a 
system over $\bar{\gk}$. So, its dimension over $\bar{\gk}$ coincides with 
that of  the solution space of (\ref{spl-oper}) over $\gk$, i.e., with the dimension of
$\mathcal{S}_1(\gs,W,\rho)$.
Hence $\mathcal{S}_1(\gs,W,\rho)\otimes_{\gk}\bar{\gk}= 
\mathcal{S}_1(\bar{\gs},\bar{W},\bar{\rho})$. Similar arguments prove that
$\mathcal{S}_0(\bar{\rho})=\mathcal{S}_0(\rho)\otimes_{\gk}\bar{\gk}$.

To prove the second assertion, fix a base in $\mathcal{S}_1(\gs,W,\rho)$ and
consider the polynomial 
$P(t)=\det\,(t_1A_1+\dots+t_mA_m)$ in the variables $t_i$'s. The zero set
of $P(t)$ in $\gk^m$ is in one-to-one correspondence with the degenerate operators in 
$\mathcal{S}_1(\gs,W,\rho)$. On the other hand, the extended operators 
$\bar{A}_i=A_i\otimes_{\gk}\bar{\gk}, \,i=1,\dots,m$, form a basis in 
$\mathcal{S}_1(\bar{\gs},\bar{W},\bar{\rho})$ and the degenerate operators in
$\mathcal{S}_1(\bar{\gs},\bar{W},\bar{\rho})$ are in one-to-one correspondence
with the zeros of the polynomial $\bar{P}(t)=\det\,(t_1\bar{A}_1+\dots+t_m\bar{A}_m)$
in $\bar{\gk}^m$. Since $\mathcal{S}_1(\bar{\gs},\bar{W},\bar{\rho})$ contains a 
non-degenerate operator, the polynomial $\bar{P}(t)$ is nontrivial. But, by 
construction,  $P(t)=\bar{P}(t)$. Since the field $\gk$ is infinite, any nontrivial
polynomial with coefficients in $\gk$ is non-zero as a function on $\gk^m$.
\end{proof}

Since the structure of representations of simple Lie algebras over 
algebraically closed fields is well-known, this proposition is useful in the search for $d$-pairs in abelian extensions of simple Lie algebras
over arbitrary ground fields.

\subsection{Some properties of the algebra $\mathcal{S}(\gs,W,\rho)$}
 In this subsection we keep the notation of the previous one.
\begin{lem}\label{k-rootsInS_1}
Assume that $0\neq A\in\mathcal{S}_1(\gs,W,\rho)$ and that $\rho$ is irreducible. If one of the 
eigenvalues $\lambda$ of $A$ belongs to $\gk$, then 
\begin{enumerate}
\item $V=\Ker(A^2-\lambda^2I)$ and $\lambda\neq 0$;
\item $V_0=\Ker(A-\lambda I), \,V_1=\Ker(A+\lambda I)$ is a $\rho$--extension of 
$(\gs, W)$. 
\end{enumerate}
\end{lem}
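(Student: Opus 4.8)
The plan is to exploit the $\dF_2$-grading of the associative algebra $\mathcal{S}(\gs,W,\rho)=\mathcal{S}_0(\rho)\oplus\mathcal{S}_1(\gs,W,\rho)$ established in the previous subsection. Since $A\in\mathcal{S}_1(\gs,W,\rho)$, its square $A^2$ lies in the even part $\mathcal{S}_0(\rho)=\End_{\gG}(V,\rho)$; concretely, from $\rho(s)A=A\rho(s)$ and $\rho(w)A=-A\rho(w)$ one checks at once that $\rho(g)A^2=A^2\rho(g)$ for every $g\in\gG=\gs\oplus W$, so $A^2$ is a genuine endomorphism of the $\gG$-module $V$. I would then prove (1) as follows. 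Pick $v\neq 0$ with $Av=\lambda v$; then $A^2v=\lambda^2v$, so $\Ker(A^2-\lambda^2 I)\neq 0$. As $A^2-\lambda^2 I\in\End_{\gG}(V,\rho)$, its kernel is a $\gG$-submodule of $V$, and the irreducibility of $\rho$ forces it to be all of $V$. This yields $A^2=\lambda^2 I$, i.e.\ $V=\Ker(A^2-\lambda^2 I)$.

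The only genuinely delicate point is showing $\lambda\neq 0$, and here it is the \emph{anticommutation} relation of a splitting operator that is used rather than the commutation one. Suppose $\lambda=0$, so that $A^2=0$. The relations $\rho(s)A=A\rho(s)$ and $\rho(w)A=-A\rho(w)$ give $\rho(g)(\im A)\subset\im A$ for all $g\in\gG$, so $\im A$ is a $\gG$-submodule; it is nonzero because $A\neq 0$, hence $\im A=V$ by irreducibility. But $A^2=0$ means $A$ vanishes on $\im A=V$, whence $A=0$, a contradiction. Therefore $\lambda\neq 0$.

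For (2) I would argue that $A^2=\lambda^2 I$ makes $A$ a root of $(t-\lambda)(t+\lambda)$; since $\lambda\neq 0$ and the characteristic is zero, the scalars $\lambda$ and $-\lambda$ are distinct, so this polynomial has simple roots and $A$ is diagonalizable with $V=V_0\oplus V_1$, where $V_0=\Ker(A-\lambda I)$ and $V_1=\Ker(A+\lambda I)$; explicitly $v=\tfrac{1}{2\lambda}(A+\lambda I)v+\tfrac{1}{2\lambda}(\lambda I-A)v$ splits any $v\in V$, and $V_0\cap V_1=0$ because $2\lambda\neq 0$. Moreover $A$ is non-degenerate (as $A^2=\lambda^2 I$ with $\lambda\neq 0$), its eigenvalues $\pm\lambda$ lie in $\gk$, and $\lambda,-\lambda$ fall into different halves of the partition $\Lambda_0=\{\lambda\}$, $\Lambda_1=\{-\lambda\}$. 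Hence Corollary \ref{ospl} applies verbatim and gives that $(V_0,V_1)$ is a $\rho$-extension of $(\gs,W)$, completing the proof. The main obstacle is precisely the $\lambda\neq 0$ step: one must pass from the even element $A^2$, handled by Schur-type irreducibility, to the odd operator $A$ itself, noting that it is $\im A$—rather than $\Ker A$—that the grading relations produce as a submodule.
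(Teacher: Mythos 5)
Your proof is correct and follows essentially the same route as the paper: a Schur-type irreducibility argument showing that the nonzero $\gG$-submodule $\Ker(A^2-\lambda^2 I)$ is all of $V$, a separate submodule argument ruling out $\lambda=0$, and an appeal to Corollary~\ref{ospl} for part (2). One small correction to your closing remark: the grading relations make $\Ker A$ a $\gG$-submodule just as well as $\im A$ (if $Av=0$ then $A\rho(w)v=-\rho(w)Av=0$), and it is in fact $\Ker A$ that the paper uses to exclude $\lambda=0$; your $\im A$ variant is equally valid.
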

\begin{proof} 
If $\lambda=0$, then, obviously,
$\Ker\,A$ is $\rho$--invariant and hence $ \Ker\,A=V$, i.e., $A=0$ in contradiction 
with the assumption. So, $\lambda\neq 0$.

Then we have $0\neq \Ker(A-\lambda I)\subset\Ker(A^2-\lambda^2I)$.
On the other hand, $\rho(w)$ sends $\Ker(A-\lambda I)$ to $\Ker(A+\lambda I)$
and vice-versa.
Therefore $\Ker(A^2-\lambda^2I)$ is $\rho$--invariant, and $V=\Ker(A^2-\lambda^2I)$,
since $\rho$ is irreducible. 

The second assertion directly follows from Corollary\,\ref{ospl}.
\end{proof}
An immediate consequence of this lemma is
\begin{cor}\label{cor-k-rootsInS_1}
Let $\gk$ be algebraically closed and $\rho$ irreducible. If $\mathcal{S}_1(\gs, W, \rho)$ 
is nontrivial, then there is a $\rho$--extension of $(\gs, W)$. 
\end{cor}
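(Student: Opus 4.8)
The plan is to read the corollary off directly from Lemma~\ref{k-rootsInS_1}, whose second assertion already manufactures a $\rho$--extension out of any nonzero operator in $\mathcal{S}_1(\gs, W, \rho)$ that possesses an eigenvalue lying in $\gk$. Thus the whole content reduces to supplying such an operator together with a ground-field eigenvalue, and the two hypotheses do exactly this: nontriviality of $\mathcal{S}_1(\gs, W, \rho)$ furnishes the operator, while algebraic closure of $\gk$ furnishes the eigenvalue.

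Concretely, I would first use the hypothesis that $\mathcal{S}_1(\gs, W, \rho)$ is nontrivial to pick some $0 \neq A \in \mathcal{S}_1(\gs, W, \rho)$. Since $\gk$ is algebraically closed, the characteristic polynomial of $A$ splits over $\gk$, so $A$ has at least one eigenvalue $\lambda \in \gk$. With this choice the hypotheses of Lemma~\ref{k-rootsInS_1} are satisfied --- $A$ is a nonzero element of $\mathcal{S}_1(\gs, W, \rho)$, $\rho$ is irreducible, and $\lambda$ belongs to $\gk$ --- so its second part immediately gives that
\[
V_0 = \Ker(A - \lambda I), \qquad V_1 = \Ker(A + \lambda I)
\]
is a $\rho$--extension of $(\gs, W)$, which is precisely the claim.

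I do not expect any real obstacle, since this is genuinely an immediate consequence. The one point that deserves a moment's attention is that the decomposition into $\pm\lambda$--eigenspaces is only meaningful when $\lambda \neq 0$; but this nonvanishing is supplied by the first part of Lemma~\ref{k-rootsInS_1} (an eigenvalue $0$ would make $\Ker A$ a nonzero proper $\rho$--invariant subspace, contradicting irreducibility). Hence the appeal to the lemma is justified and the corollary follows at once.
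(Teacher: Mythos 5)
Your proof is correct and coincides with the paper's: the corollary is stated there as an immediate consequence of Lemma~\ref{k-rootsInS_1}, obtained exactly as you describe by choosing a nonzero $A\in\mathcal{S}_1(\gs,W,\rho)$ and using algebraic closure of $\gk$ to produce an eigenvalue in $\gk$. Your remark that $\lambda\neq 0$ is guaranteed by the first part of the lemma is a correct reading of the argument.
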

\begin{proc}\label{S-asTelo}
Let $\gG$ be simple and let $\rho$ be irreducible. Then
\begin{enumerate}
\item $\mathcal{S}_0(\rho)$ is a division algebra $($over $\gk)$.
\item If $\mathcal{S}(\gs, W, \rho)$ is not a division algebra, then
the $d$-pair $(\gs, W)$ admits a $\rho$--extension.
\end{enumerate}
\end{proc}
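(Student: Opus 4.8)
The plan is to prove part (1) by Schur's lemma and then to deduce part (2) from it together with Corollary~\ref{ospl}, by converting the failure of $\mathcal{S}(\gs,W,\rho)$ to be a division algebra into the existence of a splitting involution. For part (1), recall that $\mathcal{S}_0(\rho)$ is by definition the algebra $\End_{\gG}(V,\rho)$ of endomorphisms of the $\gG$-module $(V,\rho)$. Given $0\neq B\in\mathcal{S}_0(\rho)$, both $\Ker B$ and $\im B$ are $\gG$-submodules of $V$; since $\rho$ is irreducible and $B\neq 0$ we must have $\Ker B=0$ and $\im B=V$, so $B$ is invertible and $B^{-1}$ is again $\gG$-linear. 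Hence every nonzero element of $\mathcal{S}_0(\rho)$ is invertible, which is the assertion. (Only irreducibility of $\rho$ is used here; simplicity of $\gG$ is not needed for this part.)

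For part (2), the first step I would carry out is to check that $\mathcal{S}(\gs,W,\rho)$ is a \emph{graded} division algebra, i.e.\ that every nonzero homogeneous element is invertible inside $\mathcal{S}$. For the even part this is part (1). For the odd part, take $0\neq A\in\mathcal{S}_1(\gs,W,\rho)$; since $A$ commutes with every $\rho(s)$, $s\in\gs$, and anticommutes with every $\rho(w)$, $w\in W$, the subspace $\Ker A$ is $\rho(\gG)$-invariant, so irreducibility forces $\Ker A=0$ and $A$ is invertible as an operator (this is exactly the $\lambda=0$ step in the proof of Lemma~\ref{k-rootsInS_1}). Then $A^2\in\mathcal{S}_0$ is nonzero, hence invertible in the division algebra $\mathcal{S}_0$, and since $A^2$ commutes with $A$ one obtains $A^{-1}=(A^2)^{-1}A\in\mathcal{S}_1$.

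The decisive step is then to produce a splitting involution. Assuming $\mathcal{S}$ is not a division algebra, I would pick a nonzero non-invertible $x=a+A$ with $a\in\mathcal{S}_0$ and $A\in\mathcal{S}_1$; by the previous step both $a$ and $A$ must be nonzero, for otherwise $x$ would be homogeneous and invertible. Factoring out the invertible $a$ gives $x=a(\id_V+B)$ with $B=a^{-1}A\in\mathcal{S}_1$ nonzero, so that $\id_V+B$ is non-invertible. From the identity
\begin{equation*}
(\id_V+B)(\id_V-B)=(\id_V-B)(\id_V+B)=\id_V-B^2,
\end{equation*}
invertibility of $\id_V-B^2$ would force invertibility of $\id_V+B$; hence $\id_V-B^2$ is a non-invertible element of the division algebra $\mathcal{S}_0$ and therefore vanishes, giving $B^2=\id_V$. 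Thus $B$ is a splitting involution, and Corollary~\ref{ospl} provides the desired $\rho$-extension $(V_{(1)},V_{(-1)})$ of $(\gs,W)$.

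The one genuinely nonroutine point — and the step I expect to require the most care — is precisely this reduction to the homogeneous condition $B^2=\id_V$ via the factorization of $\id_V-B^2$: it is what lets a possibly inhomogeneous obstruction $x=a+A$ be replaced by an honest involution in $\mathcal{S}_1$, and it relies essentially on $\mathcal{S}_0$ having no nonzero non-invertible elements. Everything else is Schur's lemma, a kernel-invariance argument, and an appeal to results already established in the excerpt.
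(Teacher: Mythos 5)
Your proof is correct and follows essentially the same route as the paper's: both arguments take a non-invertible element $a+A$, show that both components are nonzero, normalize by the invertible even part to obtain a degenerate $\id_V+B$ with $0\neq B\in\mathcal{S}_1(\gs,W,\rho)$, and conclude that $B^2=\id_V$ so that the $\pm 1$-eigenspaces furnish the $\rho$-extension. The only difference is cosmetic: where you derive $B^2=\id_V$ from the factorization $(\id_V+B)(\id_V-B)=\id_V-B^2$ together with the division property of $\mathcal{S}_0(\rho)$, the paper observes that $B$ has eigenvalue $-1$ and invokes Lemma~\ref{k-rootsInS_1}, whose proof rests on the same Schur-type kernel-invariance argument you spell out when establishing that $\mathcal{S}(\gs,W,\rho)$ is a graded division algebra.
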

\begin{proof}
The first assertion is the classical Schur lemma. Let $0\neq A=A_0+A_1
\in\mathcal{S}(\gs, W, \rho)$  be a degenerate operator with 
$\,A_0\in\mathcal{S}_0(\rho)$  and $A_1\in\mathcal{S}_1(\gs, W, \rho)$. Such an operator exists, since $\mathcal{S}(\gs, W, \rho)$ is not a division algebra. The first assertion of the proposition implies that $A_1\neq 0$ and $A_0^{-1}\in\mathcal{S}_0(\rho)$, if $A_0\neq 0$. In this case the operator $B=AA_0^{-1}=I+A_1A_0^{-1}$
is degenerate too, and, as a consequence, one of the eigenvalues of 
$B_1= A_1A_0^{-1}\in \mathcal{S}_1(\gs, W, \rho)$ is $-1$. Now  the second assertion
follows from Lemma \ref{k-rootsInS_1}. Moreover, this lemma shows that, in fact, 
$A_0\neq 0$. Indeed, assuming the contrary we see that $A_1$ is degenerate and, therefore, one of its eigenvalues is $0$ in contradiction with the lemma.
\end{proof}

Now we shall specialise the above results in the case where $\gk=\R$.
\begin{proc}\label{AlmExt}
Let $\gG$ be a simple Lie algebra over $\R$, $\rho:\gG\to \End\,V$ an irreducible 
representation of $\gG$, and $(\gs, W)$ a $d$-pair in $\gG$. If $\mathcal{S}_1(\gs, W, \rho)$ 
is not trivial, then $(\gs, W)$
admits a $\rho$--extension except, possibly, in the case where  $\mathcal{S}(\gs, W, \rho)$
is isomorphic to $\C$.
\end{proc}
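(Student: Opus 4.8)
The plan is to reduce everything to the structure of the finite--dimensional associative $\R$--algebra $\mathcal{S}=\mathcal{S}(\gs,W,\rho)=\mathcal{S}_0\oplus\mathcal{S}_1$, and to exploit the dichotomy of Proposition \ref{S-asTelo}. First I would dispose of the easy case: if $\mathcal{S}$ is \emph{not} a division algebra, then Proposition \ref{S-asTelo}(2) directly produces a $\rho$--extension and there is nothing more to prove. Hence the whole content of the proposition is the case in which $\mathcal{S}$ is a division algebra, and the task reduces to showing that the only such case admitting no $\rho$--extension is $\mathcal{S}\cong\C$.

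Assume then that $\mathcal{S}$ is a division algebra. By the Schur Lemma (Proposition \ref{S-asTelo}(1)) its even part $\mathcal{S}_0=\End_\gG(V)$ is a division algebra, and since $\mathcal{S}$ is a finite--dimensional associative division algebra over $\R$, the theorem of Frobenius gives $\mathcal{S}\cong\R$, $\C$, or $\mathbb{H}$. As the unit $\id_V$ lies in $\mathcal{S}_0$, the hypothesis $\mathcal{S}_1\neq 0$ excludes $\mathcal{S}\cong\R$. Next I would record the precise obstruction: by Lemma \ref{k-rootsInS_1} a $\rho$--extension exists as soon as some nonzero $A\in\mathcal{S}_1$ has a real eigenvalue, while conversely the splitting involution attached to a $\rho$--extension is an element $A\in\mathcal{S}_1$ with $A^2=\id$. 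Thus a $\rho$--extension exists if and only if $\mathcal{S}_1$ contains an element $A$ with $A^2=c\,\id$ for some real $c>0$, and the problem becomes a computation of the squares of odd elements in the two surviving algebras.

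In the case $\mathcal{S}\cong\C$ one has $\mathcal{S}_0=\R$ and $\mathcal{S}_1=\R\,J$ with $J^2=-\id$, so every nonzero odd element squares to a negative scalar; no $\rho$--extension exists, and this is exactly the exception permitted by the statement. For $\mathcal{S}\cong\mathbb{H}$ I would first note that the $\dF_2$--grading comes from an order--two automorphism, which is inner by Skolem--Noether; a short computation then forces $\mathcal{S}_0\cong\C$, say $\mathcal{S}_0=\R\oplus\R\,J_0$ with $J_0^2=-\id$, and every nonzero $A\in\mathcal{S}_1$ satisfies $AJ_0=\pm J_0A$. In the commuting case $\mathcal{S}=\C[A]$ is commutative with $A^2\in\C^\ast$, hence $\mathcal{S}\cong\C\times\C$ is not a division algebra and Proposition \ref{S-asTelo}(2) again applies; so the only way $\mathcal{S}$ can genuinely be $\mathbb{H}$ is the anticommuting case together with $A^2$ a negative real.

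The main obstacle is precisely this last configuration: $\End_\gG(V)=\C$ together with a $\C$--antilinear splitting operator of negative square, where no odd element has positive square and the direct criterion yields nothing. The plan here is to show that this configuration does not survive under the standing hypotheses. My approach would be to pass to the algebraic closure via Proposition \ref{ext-ext}: over $\C$ the space $\mathcal{S}_1(\bar\gs,\bar W,\bar\rho)$ is again nontrivial and Corollary \ref{cor-k-rootsInS_1} supplies a $\bar\rho$--extension, and I would attempt to descend the corresponding splitting involution to $\R$, using that $J_0$ commutes with $\rho(\gG)$ to control the two real forms of $V$ and to apply Corollary \ref{ospl}. I expect the whole weight of the proposition to rest on this descent step, and on deciding exactly when it fails in the genuinely quaternionic situation; this is the delicate representation--theoretic point that the hedging ``except, possibly'' in the statement is meant to absorb.
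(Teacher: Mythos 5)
Your reduction is the paper's: dispose of the non-division case by Proposition~\ref{S-asTelo}(2), invoke Frobenius to get $\R$, $\C$ or the quaternions, rule out $\R$ since $\mathcal{S}_1\neq 0$, and in the quaternionic case pin down $\mathcal{S}_0\cong\C$ (your Skolem--Noether argument is a legitimate variant of the paper's dimension count on $\mathcal{S}_1\cdot\mathcal{S}_1$). But the decisive step --- showing that the quaternionic configuration, where $\mathcal{S}_0=\R\oplus\R J$ and every nonzero $A\in\mathcal{S}_1$ is $J$-antilinear with $A^2$ a negative real, still yields a $\rho$--extension --- is left as an unexecuted plan, and the plan as stated has a real gap. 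Passing to $V\otimes_\R\C$ via Proposition~\ref{ext-ext} does not put you in a position to apply Corollary~\ref{cor-k-rootsInS_1}: that corollary requires the representation to be irreducible, and when $\End_\gG(V)\cong\C$ the extended module $V\otimes_\R\C$ splits as a sum of two non-isomorphic irreducibles. Even granting an extension upstairs, the descent you propose is exactly the step that can fail for an arbitrary splitting of the complexified space (it must be compatible with the real structure), and you give no mechanism for forcing it. You also end by suggesting that a possible failure here is what the ``except, possibly'' clause absorbs; it is not --- the statement permits failure only when $\mathcal{S}\cong\C$ (which you correctly identified earlier as the genuine exception), so the quaternionic case must be settled affirmatively.

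The paper's device avoids descent altogether: instead of extending scalars, it uses $J\in\mathcal{S}_0$ to regard $V$ \emph{itself} as a complex vector space $V_\C$ of half the real dimension, on which $\rho$ extends to a $\C$-linear representation $\rho_\C$ of the complexification $\gG^\C=\gG\otimes_\R\C$ by setting $\rho_\C(g\otimes\sqrt{-1})=J\rho(g)$. Corollary~\ref{cor-k-rootsInS_1} applied to the $d$-pair $(\gs\otimes_\R\C,\,W\otimes_\R\C)$ in $\gG^\C$ then produces a $\rho_\C$--extension $V_\C=V_0\oplus V_1$; since the $V_i$ are in particular real subspaces of $V$ invariant under $\rho(\gs)$ and interchanged by $\rho(W)$, this is already the required $\rho$--extension of $(\gs,W)$, with nothing to descend. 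This is the missing idea in your write-up.
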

\begin{proof}
Proposition \ref{S-asTelo} allows us to restrict to the case when  
$\mathcal{S}(\gs, W, \rho)$ is a division algebra. Since $\mathcal{S}_1(\gs, W, \rho)$ 
is nontrivial, the dimension of $\mathcal{S}(\gs, W, \rho)$ is greater than 1. 
Now  the classical Frobenius theorem tells us that this algebra is isomorphic either 
to $\C$ or to $\Q$, and we have to analyse the second alternative only. 

In this case the dimension of the division algebra $\mathcal{S}_0(\rho)$ is 
less than 4, since $\mathcal{S}_1(\gs, W, \rho)$ is nontrivial. By the Frobenius theorem
this implies that $\mathcal{S}_0(\rho)$ is isomorphic either to $\R$ or to $\C$.
The first case is impossible. Indeed, in this case the dimension
of the subspace $\mathcal{S}_1(\gs, W, \rho)\cdot\mathcal{S}_1(\gs, W, \rho)$ is
greater than 1 in contradiction with the fact that it should be contained in
the 1-dimensional space $\mathcal{S}_0(\rho)$.

So, $\mathcal{S}_0(\rho)$ is isomorphic to $\C$, and $V$ acquires the structure of a vector 
space over $\C$ by means of one of two operators $J\in\mathcal{S}_0(\rho)\subset\End\,V$ 
such that $J^2=-\id_V$. 

Denote by $V_{\C}$ this complex vector space. The representation
$\rho$ naturally extends to a representation $\rho_{\C}:\gG^{\C}\to \End_{\C}V_{\C}$ of the 
complexification $\gG^{\C}=\gG\otimes_{\R}\C$ of $\gG$ in $V_{\C}$: 
$$
\rho_{\C}(g\otimes \sqrt{-1})\df J(\rho(g)), \,g\in\gG.
$$
By Corollary \ref{cor-k-rootsInS_1}, the $d$-pair $(\gs\otimes_{\R}\C, W\otimes_{\R}\C)$ 
in $\gG^{\C}$ admits a $\rho_{\C}$--extension whose restriction to $\rho$ is, obviously, 
a $\rho$--extension of $(\gs, W)$.
\end{proof}

\subsection{Some $d$-pairs associated with 3-dimensional simple subalgebras}
\label{d-3}

Here we shall construct  simplifying $d$-pairs for abelian extensions of Lie algebras
possessing a simple $3$--dimensional subalgebra. 

First, we shall collect some necessary facts about simple $3$-dimensional 
algebras (see, for instance, \cite{Jac}). Let $\gH$ be a  simple $3$-dimensional 
algebra and let  
$h\in \gH$ be a regular element. Then there exists a basis 
$(e_1,e_2,e_3=h)$ in $|\gH|$ such that $[e_1,e_2]=h, \;[h,e_1]=\alpha e_2, 
\;[h,e_2]=\beta e_1, \;, \alpha,\beta\in\gk$ (the ground field), 
\;$\alpha\beta\neq 0$. Set $\kappa=\alpha\beta$. So, the characteristic 
polynomial of $\textrm{ad}\,h$ is $t(t^2-\kappa)$. If $\kappa=\lambda^2, 
\;\lambda\in \gk$, then $\gH$ splits and there exists a basis 
$(h'=2\lambda^{-1}h,x,y)$ \;of $\gH$,\;called an $\gs\gl_2$-triple, such that 
$[h',x]=2x, \;[h',y]=-2y, \;[x,y]=h'$. If $\kappa$ is not a square in 
$\gk$, i.e., if the polynomial $t^2-\kappa$ is irreducible, consider the 
extension $\bar{\gk}$ \;of $\gk$ \; by adding the square  roots of 
$\kappa$ to $\gk$. We still denote these roots by 
$\pm\lambda\in\bar{\gk}$. The extended algebra 
$\bar{\gH}=\gH\otimes_{\gk}\bar{\gk}$ splits over $\bar{\gk}$ and, as 
before, one can find an $\gs\gl_2$-triple $(h'=2\lambda^{-1}h,x,y)$ in it. 
Recall also, that if $\varrho$ \;is a representation of $\gH$, or of 
$\bar{\gH}$, then the eigenvalues of $\varrho(h')$ are integers and 
the multiplicities of opposite eigenvalues are equal. So, the eigenvalues of $h$ 
are of the form $\pm(m/2)\lambda$ \; with $\lambda^2= \kappa, \;m\in \Z$. 
Since the element $h$ is semi-simple, the operator $\varrho(h)$ is semi-simple 
as well (see \cite{Hum}). Therefore, the representation space $U$ of $\varrho$ 
splits into a direct sum of 1-dimensional and 2-dimensional 
$\varrho(h)$-invariant subspaces in such a way that the 1-dimensional subspaces 
belong to $\ker\,\varrho(h)$ \;while each of the 2-dimensional ones is 
annihilated by an operator $\varrho(h)^2-(1/4)m^2\kappa$ for a suitable 
integer $m\neq 0$. We shall call them \emph{eigenlines} and 
\emph{$m$-eigenplanes}, respectively. Obviously, if 
$\gH$  splits, then any eigenplane splits into the two eigenlines generated 
by the eigenvectors of eigenvalues $\pm(m/2)\lambda$. In the non-split case, the 
eigenplanes are irreducible with respect to  $\varrho(h)$. 
 
Now we shall associate a $d$-pair to a simple 3-dimensional subalgebra $\gH$
of a Lie algebra $\gG$. First, we recall the following elementary fact. Let $\gA$ be a Lie 
algebra, $x\in \gA$  and $\gA_{\mu}$ the root space of the 
operator $\textrm{ad}\,x$ \;corresponding to the eigenvalue $\mu$. Then 
\begin{equation}\label{comut}
[\gA_{\mu},\gA_{\nu}]\subset\gA_{\mu+\nu}.
\end{equation}

Let $h\in\gH$ be as above and  $A=\textrm{ad}_{\gG}h$. Set 
$$
\gG_0=\ker\,A, \quad\gG_m=\ker(A^2-(m^2/4)\kappa\;\textrm{id}_{\gG}).
$$ 
Then $\gG=\bigoplus_{m\geq 0}\gG_m$ and there are commutation relations
\begin{equation}\label{g-com}
[\gG_k, \gG_l]\subset \gG_{k+l}\oplus \gG_{k-l}.
\end{equation}
If $\gH$ splits, this directly follows from (\ref{comut}). Indeed, 
in this case  $\gG_m$, for $m>0$, \;splits into a direct sum of root spaces 
corresponding to the eigenvalues $\pm(m/2)\lambda$ (remark that $\gG_k=\gG_{-k}$). 
If $\gH$ does not split, one obtains the result by extending the scalars from 
$\gk$ to $\bar{\gk}$. In fact, the extended subalgebra $\bar{\gH}$ of the 
extended algebra $\bar{\gG}$ splits and hence the extended analogues 
$\bar{\gG}_m$'s of the subspaces $\gG_m$'s commute according to (\ref{g-com}), 
while $\gG_m\subset \bar{\gG}_m$. 

Relations (\ref{g-com}) show that
\begin{equation}\label{1stD-pair}
\gs=\bigoplus_{m\geq 0}\gG_{2m}, \quad  W=\bigoplus_{m\geq 0}\gG_{2m+1}
\end{equation}
is a $d$-pair in $\gG$, which will be called the \emph{first d-pair associated with} 
$\gH$ if $W\neq\{0\}$. 
Since $\gH\subset\gs$, this $d$-pair is trivial if and only if $\gG=\gs\oplus_{\rho} W$. 
In particular, it is nontrivial and simplifying if  $\gG$ is semi-simple.

If $W=\{0\}$, i.e., $\gG=\oplus_{m\geq 0}\gG_{2m}$, then
\begin{equation}\label{2dD-pair}
\gs=\bigoplus_{m\geq 0}\gG_{4m}, \quad  W=\bigoplus_{m\geq 0}\gG_{4m+2}
\end{equation}
is the \emph{second d-pair associated with} $\gH$. Since $h\in\gG_0\subset\gs$ 
and $x,y\in\gG_2\subset W$, this $d$-pair is nontrivial and, obviously, simplifying 
if $\gG$ is simple.

\subsection{Solution of the disassembling problem for algebraically closed fields}

With $d$-pairs (\ref{1stD-pair}) and (\ref{2dD-pair}) at our disposal we can solve
the disassembling problem for Lie algebras over algebraically closed fields.
In this subsection, we keep the notation of the previous one and we assume the ground 
field $\gk$ to be algebraically closed.

\begin{proc}\label{split-by-3d}
Let $\gG$ be a Lie algebra over an algebraically closed field which possesses a 
simple $3$-dimensional subalgebra $\gH$,
and let $\rho$ be a representation of $\gG$ in $V$. Then one of the two $d$-pairs associated
with $\gH$ admits a $\rho$-extension which is nontrivial if $\gG$ is simple.
\end{proc}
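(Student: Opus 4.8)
The plan is to realise the required $\rho$-extension explicitly from the $\ad h'$-weight decomposition of $V$. Since $\gk$ is algebraically closed, $\gH$ splits and carries an $\gs\gl_2$-triple $(h'=2\lambda^{-1}h, x, y)$, where $h$ is the regular element defining the grading $\gG=\bigoplus_{m\geq 0}\gG_m$; in terms of $\ad h'=2\lambda^{-1}\ad h$ the subspace $\gG_m$ is precisely the sum of the $\ad h'$-eigenspaces for the eigenvalues $\pm m$. First I would record the elementary weight-shift rule: if $g\in\gG$ satisfies $[h',g]=jg$ and $v\in V$ satisfies $\rho(h')v=kv$, then $\rho(h')\rho(g)v=\rho(g)\rho(h')v+\rho([h',g])v=(k+j)\rho(g)v$, so $\rho(g)$ raises the $\rho(h')$-weight by $j$. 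Because $h$ is semi-simple, $\rho(h')$ is semi-simple, and because $\rho|_{\gH}$ is a representation of $\gs\gl_2$ its eigenvalues are integers; hence $V=\bigoplus_{k\in\Z}V^{[k]}$ splits into $\rho(h')$-weight spaces, and $\rho(\gG_m)V^{[k]}\subset V^{[k+m]}\oplus V^{[k-m]}$.

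Next I would split into the two cases governing which $d$-pair is available. If the first $d$-pair \emph{(\ref{1stD-pair})} exists, i.e. $W\neq\{0\}$, observe that $\gs=\bigoplus_m\gG_{2m}$ carries only even $\ad h'$-weights while $W=\bigoplus_m\gG_{2m+1}$ carries only odd ones, and set
\[
V_0=\bigoplus_{k\ \mathrm{even}}V^{[k]},\qquad V_1=\bigoplus_{k\ \mathrm{odd}}V^{[k]}.
\]
By the weight-shift rule $\rho(\gs)$ shifts weights by an even integer and so preserves each $V_i$, while $\rho(W)$ shifts by an odd integer and so interchanges $V_0$ and $V_1$; these are exactly conditions (1)--(2) of a $\rho$-extension. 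If instead $W=\{0\}$ for the first pair, I would use the second $d$-pair \emph{(\ref{2dD-pair})}, for which $\gs=\bigoplus_m\gG_{4m}$ shifts weights by $0\,(4)$ and $W=\bigoplus_m\gG_{4m+2}$ shifts by $2\,(4)$; here I would assign weight spaces by residue mod $4$,
\[
V_0=\bigoplus_{k\equiv 0,1\,(4)}V^{[k]},\qquad V_1=\bigoplus_{k\equiv 2,3\,(4)}V^{[k]}.
\]
Since adding $0\,(4)$ preserves each of the residue blocks $\{0,1\}$ and $\{2,3\}$ while adding $2\,(4)$ exchanges them, $\rho(\gs)$ preserves $V_0,V_1$ and $\rho(W)$ swaps them, so $(V_0,V_1)$ is again a $\rho$-extension. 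Note the second pair is genuinely available, since $x,y\in\gG_2\subset W$ are nonzero.

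Finally I would address nontriviality. By the lemma relating $\rho$-extensions to $d$-pairs in $\gG\oplus_\rho V$, the decomposition $V=V_0\oplus V_1$ yields the $d$-pair $(\gs\oplus_{\rho|_{\gs}}V_0,\,W\oplus V_1)$, which is nontrivial precisely when $W\oplus V_1$ is non-abelian; since $[(w,0),(w',0)]=([w,w'],0)$, it suffices that $[W,W]\neq\{0\}$. When $\gG$ is simple, the $d$-pair $(\gs,W)$ in $\gG$ is itself nontrivial (as already noted for both \emph{(\ref{1stD-pair})} and \emph{(\ref{2dD-pair})}), so $[W,W]\neq\{0\}$ and the extended $d$-pair is nontrivial as claimed. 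I expect the only delicate points to be the clean separation of the two cases and the mod-$4$ bookkeeping in the second one; everything else is forced by the $\ad h'$-weights. One may alternatively package both constructions through Corollary\,\ref{ospl} by exhibiting the splitting involution $A$ that acts as $(-1)^{k}$ on $V^{[k]}$ in the first case, and as $+1$ on the block $k\equiv 0,1\,(4)$ and $-1$ on $k\equiv 2,3\,(4)$ in the second, the required relations $A\rho(s)=\rho(s)A$ and $A\rho(w)+\rho(w)A=0$ being immediate from the weight shifts.
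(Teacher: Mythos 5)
Your proof is correct and follows essentially the same route as the paper: your weight spaces $V^{[k]}$ are exactly the paper's eigenspaces $V_k^{\prime}=\ker(B-\frac{k}{2}\lambda\,\mathrm{id}_V)$ of $B=\rho(h)$, and your even/odd and mod-$4$ groupings coincide with the paper's extensions (\ref{1st-ext}) and (\ref{fine-V}), with the same nontriviality argument via $[W,W]\neq\{0\}$. The only inessential difference is that for the first $d$-pair the paper works with the coarser subspaces $V_m=\ker(B^2-(m^2/4)\kappa\,\mathrm{id}_V)$ and the relation $[\gG_k,V_l]\subset V_{k+l}\oplus V_{k-l}$, precisely so that this half of the argument survives over non-closed fields (it is reused for $\R$ later), whereas your fine weight decomposition requires $\gH$ to split.
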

\begin{proof}
We identify $\gG$ (resp., $V$) with the subalgebra $\gG\oplus_{\rho}\{0\}$ 
(resp., the subspace $\{0\}\oplus_{\rho}V$) in the algebra $\gG\oplus_{\rho}V$. 
Let $h\in\gH\subset\gG$ be as in Subsection \ref{d-3}. Set 
$$
B=\rho(h), \quad V_0=\ker\,B, \quad V_m=\ker(B^2-(m^2/4)\kappa\;\textrm{id}_V).
$$ 
Then $V=\oplus_{m\geq 0}V_m$. If $\gG$ is simple, then the first $d$-pair 
$(\gs, W)$ associated with $\gH$ is nontrivial if $W\neq\{0\}$, as we have already
observed after Formula (\ref{1stD-pair}). If $W\neq\{0\}$, then
\begin{equation}\label{1st-ext}
\gs_{\rho}\df\gs\oplus\left(\oplus_{m\geq 0}V_{2m}\right), 
\quad W_{\rho}\df W\oplus_{m\geq 0}V_{2m+1}
\end{equation}
is the required extension. Indeed, this directly follows from the commutation relations
\begin{equation}\label{V-com}
[\gG_k, V_l]\subset V_{k+l}\oplus V_{k-l},
\end{equation}
which can be proved by the same arguments as for (\ref{g-com}). Note that
this part of the proof does not require $\gk$ to be algebraically closed.

If $W=0$, we consider finer decompositions of $\gG$ and $V$ using the fact 
that $\gH$ splits if $\gk$ is algebraically closed. Set
\begin{equation}
\gG_m^{\prime}=\ker\;(A-\frac{m}{2}\lambda\id_{\gG}), \quad
V_m^{\prime}=\ker\;(B-\frac{m}{2}\lambda\id_V). 
\end{equation}
Then, obviously, $\gG_m=\gG_m^{\prime}\oplus\gG_{-m}^{\prime}, 
\;V_m=V_m^{\prime}\oplus V_{-m}^{\prime}$ and (see (\ref{comut}))
\begin{equation}\label{l-com}
[L_k, L_l]\subset L_{k+l}.
\end{equation}
where $L_s$ stands for one of the subspaces $\gG_s^{\prime}, \,V_s^{\prime}$.
Now it immediately follows from Relations (\ref{l-com})  that the subspaces
\begin{equation}\label{fine-V} 
 \mathcal{V}_0=\bigoplus_{k\in\Z}(V_{4k}^{\prime}\oplus V_{4k+1}^{\prime}),
 \quad \mathcal{V}_1=\bigoplus_{k\in\Z}(V_{4k+2}^{\prime}\oplus V_{4k+3}^{\prime}).
\end{equation}
provide a $\rho$-extension of the second  $d$-pair associated with $\gH$. As we have
already observed the second $d$-pair is nontrivial.
\end{proof}

An important consequence of Proposition\,\ref{split-by-3d} is
\begin{cor}\label{nontrivS_1}
Let $\gH$ be a simple 3-dimensional  subalgebra of a Lie algebra $\gG$
over a ground field $\gk$ and let $(\gs, W)$ be the nontrivial $d$-pair associated with $\gH$. 
Then $\mathcal{S}_1(\gs, W, \rho)$ is nontrivial. 
\end{cor}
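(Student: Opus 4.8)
The plan is to deduce the statement directly from Proposition \ref{split-by-3d}, using the elementary correspondence between $\rho$-extensions and splitting involutions together with the field-extension Proposition \ref{ext-ext} to pass from an algebraically closed field back to the arbitrary ground field $\gk$. Throughout, $\rho$ is a representation of $\gG$ on a nonzero space $V$.

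First I would record the easy half of the correspondence noted just before Corollary \ref{ospl}: if $V=V_0\oplus V_1$ is any $\rho$-extension of $(\gs,W)$, then $A=\id_{V_0}\oplus(-\id_{V_1})$ lies in $\mathcal{S}_1(\gs,W,\rho)$. Indeed, $A$ commutes with every $\rho(s)$, $s\in\gs$, since each $V_i$ is $\rho(s)$-invariant, and it anti-commutes with every $\rho(w)$, $w\in W$, since $\rho(w)$ interchanges $V_0$ and $V_1$; writing $v=v_0+v_1$ gives $\rho(w)Av+A\rho(w)v=0$ in one line. As $A\neq 0$ whenever $V\neq 0$, the mere existence of a $\rho$-extension already forces $\mathcal{S}_1(\gs,W,\rho)$ to be nontrivial.

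Next I would pass to the algebraic closure $\bar{\gk}$ and set $\bar{\gG}=\gG\otimes_{\gk}\bar{\gk}$, $\bar{\gH}=\gH\otimes_{\gk}\bar{\gk}$, with $\bar{\rho}$ the extended representation on $\bar V=V\otimes_{\gk}\bar{\gk}$. By the discussion of Subsection \ref{d-3}, $\bar{\gH}$ splits and carries an $\gs\gl_2$-triple, hence is a simple $3$-dimensional subalgebra of $\bar{\gG}$. Since the $d$-pairs \eqref{1stD-pair} and \eqref{2dD-pair} are built from the $\ad h$-eigenspace decomposition, which is compatible with scalar extension ($\bar{\gG}_m=\gG_m\otimes_{\gk}\bar{\gk}$), the nontrivial $d$-pair $(\gs,W)$ over $\gk$ extends to $(\bar{\gs},\bar W)=(\gs\otimes_{\gk}\bar{\gk},\,W\otimes_{\gk}\bar{\gk})$. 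Here the condition $W=\{0\}$ holds over $\gk$ iff $\bar W=\{0\}$ holds over $\bar{\gk}$, so the very same choice between the first and the second $d$-pair is forced over both fields, and $(\bar{\gs},\bar W)$ is indeed the nontrivial $d$-pair associated with $\bar{\gH}$. Applying Proposition \ref{split-by-3d} over the algebraically closed field $\bar{\gk}$ produces a $\bar{\rho}$-extension of $(\bar{\gs},\bar W)$, whence $\mathcal{S}_1(\bar{\gs},\bar W,\bar{\rho})\neq 0$ by the first paragraph. Finally I would descend via Proposition \ref{ext-ext}(1), which gives $\mathcal{S}_1(\bar{\gs},\bar W,\bar{\rho})=\mathcal{S}_1(\gs,W,\rho)\otimes_{\gk}\bar{\gk}$; nontriviality of the left side forces $\mathcal{S}_1(\gs,W,\rho)\neq 0$.

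The only real obstacle is the bookkeeping in the middle step: checking that scalar extension carries the \emph{nontrivial} $d$-pair associated with $\gH$ to the nontrivial $d$-pair associated with $\bar{\gH}$ (so that Proposition \ref{split-by-3d} is invoked on the correct pair), and that $\bar{\gH}$ genuinely stays simple. Note that when $W\neq\{0\}$ this detour through $\bar{\gk}$ can be avoided altogether, since the first part of the proof of Proposition \ref{split-by-3d} already exhibits a $\rho$-extension over $\gk$; the extension argument is needed only for the second $d$-pair, where $\gH$ need not split over $\gk$.
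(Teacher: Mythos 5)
Your argument is correct and is precisely the proof the paper intends: the paper's one-line justification ``follows from Proposition~\ref{ext-ext}~(1)'' implicitly relies on exactly the chain you spell out --- a $\rho$-extension yields the splitting involution $\id_{V_0}\oplus(-\id_{V_1})\in\mathcal{S}_1$, Proposition~\ref{split-by-3d} supplies such an extension over $\bar{\gk}$, and $\mathcal{S}_1(\bar{\gs},\bar W,\bar{\rho})=\mathcal{S}_1(\gs,W,\rho)\otimes_{\gk}\bar{\gk}$ forces nontriviality over $\gk$. Your added bookkeeping (that scalar extension preserves the eigenspace decomposition, hence sends the nontrivial $d$-pair to the nontrivial $d$-pair, and that the detour is only needed when $W=\{0\}$) is a welcome completion of details the paper leaves unstated, not a different route.
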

\begin{proof}
This corollary follows from Proposition\,\ref{ext-ext} (1).
\end{proof}
\begin{thm}\label{C-dis}
Any finite-dimensional Lie algebra over an algebraically closed field of 
characteristic zero can be completely disassembled. 
\end{thm}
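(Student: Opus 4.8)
The plan is to verify the hypothesis of the reduction criterion stated in Subsection \ref{striptease}, which already packages the full induction on complexity. By Corollary \ref{dis-reduct} the disassembling problem splits into the solvable case, settled by Proposition \ref{dis-solv}, and the case of an algebra $\gG\oplus_{\rho}V$ with $\gG$ simple. Since an algebra of complexity $l=0$ is precisely a solvable one, Proposition \ref{dis-solv} also supplies the base of the induction, and the criterion tells us that it suffices to exhibit, for each $\gG\oplus_{\rho}V$ with $\gG$ simple, a \emph{simplifying} $d$-pair, i.e. one whose shoulder $\gs$ satisfies $l(\gs)<l(\gG)=\dim\gG$.

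The geometric input is the existence of a three-dimensional simple subalgebra. Over an algebraically closed field of characteristic zero, $\gG$ is semi-simple and carries a root-space decomposition, so for any root $\alpha$ the elements $e_{\alpha},e_{-\alpha},h_{\alpha}$ span a subalgebra $\gH$ isomorphic to $\gs\gl(2,\gk)$ (alternatively one applies the Jacobson--Morozov theorem to any nonzero nilpotent element). I would then invoke Proposition \ref{split-by-3d}: one of the two $d$-pairs $(\gs,W)$ associated with $\gH$ admits a $\rho$-extension $V=V_0\oplus V_1$, nontrivial because $\gG$ is simple. By the lemma that turns a $\rho$-extension of $(\gs,W)$ into a $d$-pair of $\gG\oplus_{\rho}V$, the pair $(\gs\oplus_{\rho|_{_{\gs}}}V_0,\,W\oplus V_1)$ is a $d$-pair in $\gG\oplus_{\rho}V$.

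It remains to check that this $d$-pair is simplifying, which is the only genuine computation. In either of the two cases of Subsection \ref{d-3} the constituent $d$-pair $(\gs,W)$ of $\gG$ has $W\neq\{0\}$, so $\gs$ is a \emph{proper} subalgebra of the simple algebra $\gG$. The shoulder $\gs\oplus_{\rho|_{_{\gs}}}V_0$ is an abelian extension of $\gs$, hence has the same Levi subalgebra as $\gs$, so its complexity equals $l(\gs)$; therefore $l(\gs)\leq\dim\gs<\dim\gG=l(\gG\oplus_{\rho}V)$, and the $d$-pair is simplifying. Feeding this into the Stripping Lemma \ref{str}, one removes the associated dressing algebra (completely disassembled into triadons by Proposition \ref{dress}) and is left with $\gs\oplus_{\rho'}W$ of complexity $l(\gs)<l(\gG)$; Proposition \ref{s-tr} applied to its Levi--Malcev decomposition together with Corollary \ref{dec-ss} then reduces this to abelian extensions of simple algebras of strictly smaller complexity, which closes the induction.

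The main obstacle is entirely concentrated in Proposition \ref{split-by-3d}, namely the construction of a nontrivial $\rho$-extension of a $d$-pair arising from an $\gs\gl(2,\gk)$-subalgebra; granting that result, the argument above is pure bookkeeping of the complexity inequality and of the well-foundedness of the induction on $l(\gG)$.
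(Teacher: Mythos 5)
Your proposal is correct and follows essentially the same route as the paper: obtain an $\gs\gl(2,\gk)$-subalgebra of the simple factor (the paper cites Morozov's lemma, you also note the root-space alternative), invoke Proposition~\ref{split-by-3d} to produce a nontrivial $\rho$-extension of one of the associated $d$-pairs, and verify it is simplifying so that the stripping procedure's induction on the complexity $l(\cdot)$ closes. Your write-up merely makes explicit the bookkeeping (base case, properness of $\gs$, invariance of the Levi part under abelian extension) that the paper's terser proof leaves implicit.
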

\begin{proof}
By Morozov's lemma, any simple Lie algebra $\gG$ over an algebraically closed field $\gk$ of 
characteristic  zero possesses a 3-dimensional subalgebra $\gH$ isomorphic to 
$\gs\gl(2,{\gk})$ (see \cite{Jac}, \cite{Hum}). If the algebra $\gG$ in Proposition
\,\ref{split-by-3d} is simple, then the $\rho$-extension of one of the $d$-pairs
$(\gs, W)$ associated with $\gH$ is simplifying. Indeed, the Stripping Lemma 
applied to this extended $d$-pair leads to an algebra of the form $\gs\oplus_{\rho'}V'$
(see the proof of Proposition\,\ref{split-by-3d})   whose semi-simple part coincides
with that of $\gs$. Hence $l(\gs)<l(\gG)$ since $\gs$ is a proper subalgebra of $\gG$.
\end{proof}

\subsection{Hyper-simple Lie algebras}

\noindent In this section, we discuss those simple Lie algebras  which cannot be 
directly disassembled by the preceding methods. 
\begin{defi}
A simple Lie algebra is called \emph{hyper-simple} if all its proper 
subalgebras are abelian. 
\end{defi}
This definition is justified by the following
\begin{proc}\label{simplest-in}
A simple Lie algebra $\gG$ over a field $\gk$ of characteristic zero contains either 
a hyper-simple  subalgebra or a subalgebra isomorphic to $\gs\gl(2,\gk)$. 
\end{proc}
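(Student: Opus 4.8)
The plan is to argue by strong induction on $\dim\gG$, using the abstract Jordan decomposition available in any semi-simple Lie algebra over a field of characteristic zero. Every $x\in\gG$ decomposes uniquely as $x=x_s+x_n$ with $x_s$ ad-semi-simple, $x_n$ ad-nilpotent and $[x_s,x_n]=0$, both lying in $\gG$; this splits the analysis into two cases according to whether $\gG$ possesses a nonzero ad-nilpotent element. The abstract Jordan decomposition is intrinsic and compatible with restriction to semi-simple subalgebras, a point I will use to keep the two cases clean.

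If some $0\neq x_n\in\gG$ is ad-nilpotent, then by the Jacobson--Morozov (Morozov) lemma, valid over any field of characteristic zero (see \cite{Jac,Hum}), $x_n$ can be completed to an $\gs\gl_2$-triple, so $\gG$ contains a subalgebra isomorphic to $\gs\gl(2,\gk)$ and we are done. Thus I may assume that every element of $\gG$ is ad-semi-simple.

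The heart of the matter is then the following claim: under this assumption every solvable subalgebra $\gM\subseteq\gG$ is abelian. To see this, recall that for solvable $\gM$ over characteristic zero the derived algebra $[\gM,\gM]$ acts nilpotently (Lie's theorem after passing to the algebraic closure), so each $x\in[\gM,\gM]$ has $\ad_\gM x$ nilpotent; but $\ad_\gM x$ is the restriction to the invariant subspace $|\gM|$ of the semi-simple operator $\ad_\gG x$, hence is itself semi-simple, and therefore zero. Consequently $[\gM,\gM]\subseteq Z(\gM)$. If $\gM$ were non-abelian, I would pick $u,v$ with $w=[u,v]\neq 0$; then $w$ is central in $\gM$, the subspace $\langle v,w\rangle$ is $\ad_\gM u$-invariant, and $\ad_\gM u$ sends $v\mapsto w\mapsto 0$. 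Semi-simplicity of $\ad_\gM u$ (again a restriction of $\ad_\gG u$) forces $v$ and $w$ to be proportional, i.e. $[u,v]=\lambda v$ with $\lambda\neq 0$, which contradicts the centrality of $w=\lambda v$. Hence $\gM$ is abelian, proving the claim.

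Granting the claim, the induction closes quickly and needs no separate base case. If $\gG$ is hyper-simple we are done, taking $\gG$ itself. Otherwise $\gG$ has a proper non-abelian subalgebra $\gM$; by the claim $\gM$ is not solvable, so its Levi--Malcev decomposition has a nonzero semi-simple part, which contains a simple ideal $\gS_0$. Since $\gS_0\subseteq\gM\subsetneq\gG$ we have $\dim\gS_0<\dim\gG$, and the induction hypothesis provides, inside $\gS_0\subseteq\gG$, either a hyper-simple subalgebra or a copy of $\gs\gl(2,\gk)$. I expect the main obstacle to be the verification of the claim---specifically the interplay between ad-nilpotency computed inside $\gM$ and ad-semi-simplicity computed inside $\gG$, for which the compatibility of the Jordan decomposition with restriction to invariant subspaces (and the passage to the algebraic closure when invoking Lie's theorem) must be handled carefully.
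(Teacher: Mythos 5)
Your proof is correct and follows essentially the same route as the paper's: when a proper non-abelian subalgebra is non-solvable, pass to a simple ideal of its Levi factor and induct on dimension; when it is solvable and non-abelian, produce a nonzero ad-nilpotent element of $\gG$ (via the abstract Jordan decomposition and restriction of $\ad_{\gG}x$ to invariant subspaces) and apply the Jacobson--Morozov lemma. The only difference is organisational: you front-load the dichotomy on the existence of ad-nilpotent elements in $\gG$ and, via the $\langle v,w\rangle$ computation, spell out the step the paper states tersely, namely that a non-abelian solvable subalgebra forces some $\ad_{\gG}g$ to have a nontrivial nilpotent part.
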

\begin{proof}
If $\gG$ is not hyper-simple, then it contains a proper non-abelian subalgebra 
$\gH$. If the semi-simple part of $\gH$ is nontrivial, then $\gH$ contains a proper
simple subalgebra, and one gets the desired result by obvious induction arguments. 
If, on the contrary, the semi-simple part is trivial, then $\gH$ is a non-abelian
solvable algebra and, therefore, it contains a nontrivial nilpotent element 
$g$. Hence the endomorphism $ad_{\gG}g$ of $|\gG|$ has a nontrivial 
nilpotent part. In other words, $g$, considered as an element of $\gG$, has 
a nontrivial nilpotent part $g_n$ which, according to a well-known property 
of semi-simple algebras, belongs to $\gG$, i.e., $\gG$ possesses a nontrivial 
nilpotent element. By Morozov's lemma, such an element is contained in a 
3-dimensional subalgebra of $\gG$ isomorphic to $\gs\gl(2,\gk)$. 
\end{proof}

\begin{cor}\label{semisimple}
All the elements of a hyper-simple Lie algebra are semi-simple. 
\end{cor}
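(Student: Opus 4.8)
The plan is to argue by contradiction, exploiting Proposition \ref{simplest-in} together with the fact that $\gs\gl(2,\gk)$ is non-abelian. Suppose $\gG$ is hyper-simple yet contains an element $g$ that is not semi-simple. Since $\gG$ is simple, and hence semi-simple, over a field of characteristic zero, $g$ admits an abstract Jordan decomposition $g=g_s+g_n$ with $g_s,g_n\in\gG$, where $g_s$ is semi-simple, $g_n$ is ad-nilpotent, and $[g_s,g_n]=0$; this is the ``well-known property of semi-simple algebras'' already invoked in the proof of Proposition \ref{simplest-in}. The failure of $g$ to be semi-simple means precisely that $g_n\neq 0$.

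Next I would feed this nonzero nilpotent element into Morozov's (Jacobson--Morozov) lemma, exactly as in the proof of Proposition \ref{simplest-in}, to obtain a $3$-dimensional subalgebra $\gH\subset\gG$ isomorphic to $\gs\gl(2,\gk)$ and containing $g_n$. Since $\gs\gl(2,\gk)$ is non-abelian, the hyper-simplicity of $\gG$ forbids $\gH$ from being a proper subalgebra; hence $\gH=\gG$, that is, $\gG\cong\gs\gl(2,\gk)$.

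The final step is to rule out this last possibility. In any $\gs\gl_2$-triple $(h,x,y)$ one has $[h,x]=2x$, so the span of $h$ and $x$ is a two-dimensional non-abelian subalgebra of $\gs\gl(2,\gk)$, isomorphic to $\between$; it is proper and non-abelian, contradicting the hyper-simplicity of $\gG\cong\gs\gl(2,\gk)$. Thus the assumption that $g$ is not semi-simple is untenable, and every element of a hyper-simple Lie algebra is semi-simple. The only point requiring care is the bookkeeping over a possibly non-algebraically-closed ground field: one must invoke both the abstract Jordan decomposition and the Jacobson--Morozov construction in their characteristic-zero, not-necessarily-closed form. Both hold in that generality, and both were already used in exactly this way in the preceding proof, so this presents no genuine obstacle.
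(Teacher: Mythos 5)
Your argument is correct and follows essentially the same route as the paper's own proof: pass to the nontrivial nilpotent part via the abstract Jordan decomposition, invoke Jacobson--Morozov to embed it in a copy of $\gs\gl(2,\gk)$, and derive a contradiction from the $2$-dimensional non-abelian subalgebra $\langle h,x\rangle$ therein. The only cosmetic difference is that you first conclude $\gH=\gG$ before exhibiting that subalgebra, whereas the paper observes directly that the $2$-dimensional non-abelian subalgebra of $\gs\gl(2,\gk)\subset\gG$ already violates hyper-simplicity.
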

\begin{proof}
Assume that $\gG$ has a non-semi-simple element. Then the nilpotent part
of such an element is nontrivial and, by a well-known property of semi-simple
Lie algebras, belongs to $\gG$. Hence $\gG$ has a nontrivial nilpotent
element. This element is contained in a $3$-dimensional 
subalgebra of $\gG$ which is isomorphic to $\gs\gl(2,\gk)$ (see the proof 
of the above proposition). But $\gs\gl(2,\gk)$ and, therefore, $\gG$ contains a 
$2$-dimensional non-abelian subalgebra in contradiction with the fact that
$\gG$ is a hyper-simple algebra. 
\end{proof}

The existence and diversity of hyper-simple algebras depend exclusively on the
arithmetic properties of the ground field $\gk$. For instance, there are no 
hyper-simple Lie algebras over algebraically closed fields. Indeed, any simple Lie 
algebra over such a field $\gk$ contains a $3$-dimensional simple 
subalgebra isomorphic to $\gs\gl(2,\gk)$, which, in turn, contains a 
proper $2$-dimensional non-abelian subalgebra. On the contrary, there is 
only one (up to isomorphism) hyper-simple algebra over $\R$, namely, 
$\gs\go(3,\R)$ (see Proposition\,\ref{R-simp} below). 

Now we shall collect some elementary properties of hyper-simple algebras. Below
$C_x$ stands for the centralizer of an element $x\in\gG$. 
. 

\begin{proc}\label{solid property}
Let $\gG$ be a hyper-simple Lie algebra and $0\neq x\in \gG$. Then 
\begin{enumerate}
\item $C_x$ is abelian;
\item if\, $y,z\in\gG, \,y\neq 0$, and
$[x,y]=[z,y]=0$, then $[x,z]=0$; 
\item if\, $0\neq y\in\gG$, then either $C_x=C_y$ or $C_x\cap
C_y=\{0\}$; 
\item $C_x$ is a Cartan subalgebra of $\gG$, i.e., $y\in C_x$ if\, $[x,y]\in 
C_x$;
\item all nonzero elements of $\gG$ are regular;
\item $[\gG,C_x]\cap C_x=\{0\}$.
\end{enumerate}
\end{proc}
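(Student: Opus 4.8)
The plan is to prove the six assertions essentially in the stated order, since each later item rests on the earlier ones together with two facts already at hand: $\gG$ is simple, so its center is trivial, and, by Corollary~\ref{semisimple}, every element of $\gG$ is semi-simple, whence every operator $\ad_\gG u$ is semi-simple.

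Items (1)--(3) I expect to be soft consequences of hyper-simplicity. For (1), I would note that $C_x$ is a subalgebra containing $x$ and that it is \emph{proper}: if $C_x=\gG$ then $x$ would be central, contradicting the triviality of the center of the simple algebra $\gG$; hyper-simplicity then forces $C_x$ to be abelian. Item (2) is then immediate, since the hypotheses put $x,z\in C_y$ and $C_y$ is abelian by (1). For (3), assuming $C_x\cap C_y$ contains some $w\neq 0$, I would show $C_x\subseteq C_y$ and conclude by symmetry: any $u\in C_x$ commutes with $w$ because $C_x$ is abelian, so $u\in C_w$; likewise $y\in C_w$ because $w\in C_y$; and $C_w$ is abelian by (1), so $[u,y]=0$, i.e. $u\in C_y$.

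Assertion (4) is where semi-simplicity enters. Writing $z=[x,y]$ and assuming $z\in C_x$, i.e. $\ad_x z=0$, I get $\ad_x^2 y=\ad_x z=0$; since $\ad_x$ is semi-simple, $\ker\ad_x^2=\ker\ad_x$, so $\ad_x y=z=0$ and $y\in C_x$. Together with (1) this exhibits $C_x$ as an abelian (hence nilpotent) self-normalising subalgebra --- indeed $[y,C_x]\subseteq C_x$ forces $[x,y]\in C_x$, hence $y\in C_x$ --- that is, a Cartan subalgebra. For (5), I would combine (1) and (4) to say that \emph{every} $C_x$ with $x\neq0$ is a Cartan subalgebra, then invoke the standard fact that over an infinite field (characteristic zero suffices) all Cartan subalgebras of a finite-dimensional Lie algebra share the same dimension, the rank $\ell$; using semi-simplicity to identify $C_x=\ker\ad_x$ with the Fitting null component $\gG^{0}(\ad_x)$, I conclude $\dim\gG^{0}(\ad_x)=\ell$ is minimal for every nonzero $x$, so every nonzero $x$ is regular. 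I expect (5) to be the main obstacle, as it is the only point forcing me outside the self-contained centralizer combinatorics to quote a genuinely structural theorem; an attempt to stay internal --- observing that (3) partitions $\gG\setminus\{0\}$ into the punctured Cartan subalgebras meeting pairwise only at $0$, with $\dim C$ constant on each class --- still seems to require the equal-dimension/conjugacy input to pass from density of the regular locus to equality of all these dimensions, so quoting it is cleanest.

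Finally, for (6) I would again use semi-simplicity of $\ad_x$ to split $\gG=C_x\oplus\im\ad_x$ with $C_x=\ker\ad_x$, so $\im\ad_x\cap C_x=\{0\}$, and then show $[\gG,C_x]\subseteq\im\ad_x$. For each $c\in C_x$ the operator $\ad_c$ commutes with $\ad_x$ (as $[x,c]=0$), hence preserves this decomposition, and it annihilates $C_x$ because $C_x$ is abelian by (1); therefore $\im\ad_c=\ad_c(\im\ad_x)\subseteq\im\ad_x$, giving $[\gG,c]\subseteq\im\ad_x$ for all $c$ and thus $[\gG,C_x]\cap C_x=\{0\}$.
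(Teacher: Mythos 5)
Your proof is correct, and for items (1)--(3) it coincides with the paper's argument (proper centralizer because the center of the simple algebra $\gG$ is trivial, then hyper-simplicity forces it to be abelian, etc.). Where you genuinely diverge is in (4) and (6). For (4) the paper argues via the normalizer: $N_x$ must be a proper subalgebra (otherwise $C_x$ would be a nonzero proper ideal of the simple algebra $\gG$), hence abelian by hyper-simplicity, hence contained in $C_x$, so $N_x=C_x$; you instead invoke Corollary~\ref{semisimple} to get $\ker\ad_{\gG}x\circ\ad_{\gG}x=\ker\ad_{\gG}x$, which yields the stated implication $[x,y]\in C_x\Rightarrow y\in C_x$ verbatim and self-normalization as a consequence. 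For (6) the paper argues by contradiction on a single bracket $0\neq[y,z]\in C_x$ using (3) and (4); you instead split $\gG=\ker\ad_{\gG}x\oplus\mathrm{im}\,\ad_{\gG}x$ (semi-simplicity again) and show that each $\ad_{\gG}c$, $c\in C_x$, preserves this splitting and kills the first summand, so $[\gG,C_x]\subseteq\mathrm{im}\,\ad_{\gG}x$. This has the small advantage of being a statement about the spanned subspace $[\gG,C_x]$ rather than about individual brackets, so no reduction to generators is needed. For (5) the paper writes only ``directly from (4)''; your explicit appeal to the fact that in characteristic zero all Cartan subalgebras have the same dimension is exactly the glue that step requires, and your remark that this external input seems unavoidable is well placed. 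In short, you trade a second use of hyper-simplicity (the normalizer trick) for the linear algebra of semi-simple operators; both routes are sound, and yours is somewhat tighter at the level of (6).
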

\begin{proof}

(1) Since $0\neq x\in C_x$, the center of $C_x$ is nontrivial. But  the center
of $\gG$ is trivial. Hence $C_x$ does not coincide with $\gG$, i.e., it is a 
proper subalgebra of $\gG$. As such it is abelian. 

\noindent(2) Obviously, $x$ and $z$ belong to $C_y$, which, by (1), is abelian. 

\noindent(3) If $0\neq y\in C_x$, then, according to (2), any $z\in C_y$ 
belongs to $C_x$. 

\noindent(4)  $C_x$ is an ideal in its normalizer $N_x$. Since $\gG$ is simple, 
$N_x$ is a proper subalgebra of $\gG$ and, as such, must be abelian. 
So, $N_x\subset C_x$ and hence $N_x=C_x$. 

\noindent(5) Directly from (4). 

\noindent(6) We have to prove that $[y,C_x]\cap C_x=\{0\}, \,\forall y\in 
\gG$. Assume the contrary and consider an element $z\in C_x$ such that $0\neq 
[y,z]\in C_x$. It follows, according to (4), that $C_x=C_z$ implies $[y,z]\in C_z$. 
Again by (4), this implies  that $y\in C_z$ and, therefore, $[y,z]=0$ in contradiction 
with the assumption. 
\end{proof}

Now we need some information on the operators of the adjoint 
representation of a hyper-simple algebra. 

\medskip
\subsection{On the adjoint representation of hyper-simple Lie algebras}

First, we mention without proof the following elementary facts.
\begin{lem}\label{sk-sym}
Let $V$ be a finite-dimensional vector space and let $b(\cdot,\cdot)$ be a 
non-degenerate symmetric bilinear form on $V$. If $A:V\rightarrow V$ is a 
linear operator, which is skew-symmetric with respect to $b$, i.e., 
$b(Au,v)+b(u,Av)=0, \,u,v\in V$, then the minimal polynomial of $A$ is of 
the form $t^r\varphi(t^2), \varphi(0)\neq0, \,r\geq 0$. If $A$ is 
semi-simple and $\varphi=\varphi_1^{n_1}\cdot,\dots,\cdot \varphi_m^{n_m}$ 
is the canonical factorisation of the polynomial $\varphi$ into irreducible 
and relatively prime factors, then $r=0$ or $1$ and $n_1=\dots =n_m=1$.  
\end{lem}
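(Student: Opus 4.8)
The plan is to reduce everything to a single algebraic identity coming from skew-symmetry. Let $A^{*}$ denote the adjoint of $A$ with respect to $b$, defined by $b(Au,v)=b(u,A^{*}v)$; this exists and is unique because $b$ is non-degenerate. The hypothesis $b(Au,v)+b(u,Av)=0$ says exactly that $A^{*}=-A$. Since the $b$-adjoint is an anti-homomorphism of the operator algebra, $(A^{i})^{*}=(A^{*})^{i}=(-1)^{i}A^{i}$, so for any polynomial $p(t)=\sum_i a_i t^{i}$ one has $\bigl(p(A)\bigr)^{*}=\sum_i a_i(-1)^{i}A^{i}$, i.e.\ $\bigl(p(A)\bigr)^{*}$ is the operator obtained by substituting $A$ into $p(-t)$. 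Everything will follow by applying this to the minimal polynomial.

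First I would prove (1). Let $p$ be the minimal polynomial of $A$ and $n=\deg p$. From $p(A)=0$ we get $\bigl(p(A)\bigr)^{*}=0$, so $A$ is annihilated by $p(-t)$ and therefore $p(t)\mid p(-t)$; comparing degrees and leading coefficients gives $p(-t)=(-1)^{n}p(t)$. Writing $p(t)=t^{r}s(t)$ with $s(0)\neq 0$, so that $r$ is the order of vanishing of $p$ at $0$, the relation becomes $s(-t)=(-1)^{n-r}s(t)$. If $n-r$ were odd, $s$ would be an odd polynomial, whence its constant term would satisfy $s(0)=-s(0)$, forcing $s(0)=0$ (this is where $\mathrm{char}\,\gk=0$ enters), contradicting $s(0)\neq 0$. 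Hence $n-r$ is even, $s$ is an even polynomial, and so $s(t)=\varphi(t^{2})$ for some $\varphi$ with $\varphi(0)=s(0)\neq 0$. This yields $p(t)=t^{r}\varphi(t^{2})$, which is assertion (1).

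For (2), assume $A$ is semi-simple, so that in characteristic zero its minimal polynomial $p$ is square-free, i.e.\ a product of pairwise distinct irreducible factors. Square-freeness immediately gives $r\le 1$, since $r\ge 2$ would mean $t^{2}\mid p$. It remains to pass square-freeness from $p$ to $\varphi$. Because $\varphi(0)\neq 0$, the polynomial $t$ does not divide $\varphi(t^{2})$, so in $p=t^{r}\varphi(t^{2})$ the factors $t^{r}$ and $\varphi(t^{2})$ are coprime; hence $\varphi(t^{2})$ is itself square-free. Now if some exponent $n_i\ge 2$ occurred in $\varphi=\varphi_{1}^{n_{1}}\cdots\varphi_{m}^{n_{m}}$, then $\varphi_i(t^{2})^{2}\mid\varphi(t^{2})$ with $\varphi_i(t^{2})$ non-constant, contradicting square-freeness of $\varphi(t^{2})$. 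Therefore $n_{1}=\dots=n_{m}=1$.

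The argument is elementary once $A^{*}=-A$ is recorded; the only points that demand care are the parity bookkeeping producing $s(-t)=(-1)^{n-r}s(t)$ and the coprimality observation that transfers square-freeness among $p$, $\varphi(t^{2})$ and $\varphi$. The genuinely indispensable hypothesis beyond formal manipulation is $\mathrm{char}\,\gk=0$ (in fact $\neq 2$ suffices), used both to exclude the odd-$s$ alternative and to guarantee that ``semi-simple'' is equivalent to ``square-free minimal polynomial''.
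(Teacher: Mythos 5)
Your proof is correct. The paper states this lemma explicitly without proof ("we mention without proof the following elementary facts"), so there is no argument in the paper to compare against; your reduction to the identity $\bigl(p(A)\bigr)^{*}=p(-A)$ for $A^{*}=-A$, yielding $p(-t)=(-1)^{n}p(t)$ for the minimal polynomial and hence the evenness of $s$, is the standard and intended route, and the square-freeness bookkeeping for the semi-simple case is handled correctly.
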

\begin{cor}\label{sk-sym-sim}
The assertion of the above lemma is valid for operators of the adjoint 
representation of a hyper-simple algebra and  in this case $r=1$. 
\end{cor}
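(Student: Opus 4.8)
The plan is to apply Lemma~\ref{sk-sym} with $b$ taken to be the Killing form of the hyper-simple algebra $\gG$ and with $A=\ad x$ for a fixed nonzero $x\in\gG$. First I would recall that, since a hyper-simple algebra is in particular simple and hence semi-simple, Cartan's criterion guarantees that the Killing form $b(y,z)=\tr(\ad y\circ\ad z)$ is a non-degenerate symmetric bilinear form on $|\gG|$. The invariance identity $b([x,y],z)+b(y,[x,z])=0$, valid for the Killing form of any Lie algebra, says precisely that $\ad x$ is skew-symmetric with respect to $b$. Thus the hypotheses of Lemma~\ref{sk-sym} are satisfied, and the minimal polynomial of $\ad x$ has the form $t^{r}\varphi(t^{2})$ with $\varphi(0)\neq 0$.

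To invoke the stronger, second conclusion of the lemma, I would use Corollary~\ref{semisimple}: every element of a hyper-simple algebra is semi-simple, which means exactly that the operator $\ad x$ is semi-simple in the sense required by the lemma. Hence $r=0$ or $r=1$ and every irreducible factor of $\varphi$ occurs with multiplicity one.

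Finally, to pin down $r=1$ rather than $r=0$, I would observe that $\ad x(x)=[x,x]=0$, so for $0\neq x$ the kernel of $\ad x$ is nonzero; equivalently, $0$ is an eigenvalue of $\ad x$, so $t$ divides its minimal polynomial and $r\geq 1$. Combined with $r\in\{0,1\}$, this forces $r=1$.

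I do not expect a genuine obstacle: the corollary is essentially a bookkeeping combination of Lemma~\ref{sk-sym}, the non-degeneracy and invariance of the Killing form, and Corollary~\ref{semisimple}. The only points deserving a word of care are confirming that the Killing form is the correct choice of $b$ (so that non-degeneracy really does follow from simplicity) and checking that ``semi-simple element'' as used in Corollary~\ref{semisimple} coincides with ``$\ad x$ is a semi-simple operator'' as demanded by the lemma; both are standard facts of the structure theory of semi-simple Lie algebras.
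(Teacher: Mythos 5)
Your proposal is correct and follows the same route as the paper's own proof: skew-symmetry of $\ad x$ with respect to the non-degenerate Killing form puts you in the setting of Lemma~\ref{sk-sym}, Corollary~\ref{semisimple} supplies the semi-simplicity of $\ad x$ needed for the sharper conclusion, and the nontriviality of $\ker(\ad x)$ (via $[x,x]=0$) forces $r=1$. The only difference is that you spell out the last step explicitly, which the paper leaves as a one-line remark.
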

\begin{proof}
Recall that the operators of the adjoint representation of a Lie algebra $\gG$ 
are skew-symmetric with respect to the Killing form, which is non-degenerate 
when $\gG$ is semi-simple. Moreover, according to Corollary \ref{semisimple}, 
for a hyper-simple Lie algebra  
these operators are semi-simple and hence $n_i=1$ for all $i$. 
Since the kernel of an adjoint representation operator is nontrivial, $r=1$. 
\end{proof}
 
It is not difficult to see that if $g(\tau)$ is an irreducible polynomial, 
then the polynomial $h(t)=g(t^2)$ either is irreducible or 
of the form $\psi(t)\psi(-t)$ with irreducible and relatively prime $\psi(t)$ and 
$\psi(-t)$. Hence, by Lemma \ref{sk-sym},
the minimal polynomial 
$f(t)$ of a semi-simple skew-adjoint operator $A$ is of the form 
\begin{equation}\label{s-factor}
F(t)=t^{\epsilon}f_1(t^2)\cdot\dots\cdot f_k(t^2)\psi_1(t)\psi_1(-t)\cdots\psi_l(t)\psi_l(-t), 
\quad \epsilon=0,1,
\end{equation}  
with relatively prime and irreducible factors. Under the hypothesis of 
Lemma \ref{sk-sym} with $\epsilon=1$ we have the following direct sum 
decomposition 
\begin{equation}\label{sum-dir} 
V=\ker\,A\oplus \ker\,f_1(A^2)\oplus\ldots\oplus \ker\,f_k(A^2)\oplus 
\ker\,g_1(A^2) \oplus\ldots\oplus \ker\,g_l(A^2) 
\end{equation}  
with $g_i(t^2)=\psi_i(t)\psi_i(-t)$.
\begin{lem}\label{sk-sym1}
The subspaces in decomposition \emph{(\ref{sum-dir})} are mutually orthogonal with 
respect to the form $b$.
\end{lem}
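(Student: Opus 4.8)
The plan is to reduce the whole statement to a single coprimality criterion for $b$-skew-adjoint operators and then to check that criterion for each pair of summands in (\ref{sum-dir}). First I would record the elementary consequence of skew-symmetry that, for every polynomial $p$,
\begin{equation*}
b(p(A)u,v)=b(u,p(-A)v),\qquad u,v\in V,
\end{equation*}
which follows by linearity from $b(A^{k}u,v)=(-1)^{k}b(u,A^{k}v)$, the latter being an immediate induction on $k$ from $b(Au,v)=-b(u,Av)$. In words, the $b$-adjoint of $p(A)$ is $p(-A)$.

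The key step is the following lemma: if $p(t)$ and $q(-t)$ are relatively prime, then $\Ker p(A)$ and $\Ker q(A)$ are $b$-orthogonal. To prove it I would choose, by Bézout, polynomials $a(t),c(t)$ with $a(t)p(t)+c(t)q(-t)=1$, take $u\in\Ker p(A)$ and $v\in\Ker q(A)$, and compute
\begin{equation*}
b(u,v)=b\bigl(a(A)p(A)u,v\bigr)+b\bigl(c(A)q(-A)u,v\bigr).
\end{equation*}
The first summand vanishes since $p(A)u=0$; moving the operator in the second summand to the right by the adjoint relation gives $b\bigl(u,c(-A)q(A)v\bigr)=0$, since $q(A)v=0$. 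Hence $b(u,v)=0$.

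It then remains to apply this lemma to every pair of distinct summands in (\ref{sum-dir}). Their annihilating polynomials are $t$ for $\Ker A$, the $f_{i}(t^{2})$ for $\Ker f_{i}(A^{2})$, and the $g_{i}(t^{2})=\psi_{i}(t)\psi_{i}(-t)$ for $\Ker g_{i}(A^{2})$; each of these is an even function of $t$, except $t$ itself, so every one of them is carried to itself, or merely changed in sign, under $t\mapsto-t$. Consequently, for two summands annihilated by $p$ and $q$ the pair $p(t),q(-t)$ is coprime exactly when $p(t),q(t)$ is, and the latter holds because the factors $t,\,f_{1}(t^{2}),\dots,f_{k}(t^{2}),\,g_{1}(t^{2}),\dots,g_{l}(t^{2})$ are pairwise relatively prime: the minimal polynomial (\ref{s-factor}) is a product of distinct irreducible factors by Corollary \ref{sk-sym-sim}, and the $f_{i}(t^{2}),g_{i}(t^{2})$ have nonzero constant term because $\varphi(0)\neq0$ in Lemma \ref{sk-sym}, making them coprime to $t$. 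The coprimality lemma then yields the $b$-orthogonality of each such pair.

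I expect the only delicate point to be the bookkeeping of the substitution $t\mapsto-t$ in the adjoint relation and the resulting matching of annihilating polynomials. Once one observes that all the polynomials involved are even (or, for $\Ker A$, coprime to the rest through the factor $t$), no individual case presents a genuine obstacle, and the verification becomes routine.
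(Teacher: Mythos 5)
Your proof is correct and follows essentially the same route as the paper's: a B\'ezout identity combined with the adjoint relation $b(p(A)u,v)=b(u,p(-A)v)$. The only cosmetic difference is that you state the coprimality criterion symmetrically for $p(t)$ and $q(-t)$, whereas the paper builds the evenness of one argument into its lemma ($\varphi(t^2)$ versus $\phi(t)$); the two arguments coincide once you observe, as you do, that every annihilating polynomial in (\ref{sum-dir}) except $t$ is even.
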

\noindent {\it Proof}. This is a direct consequence of the fact that 
$b(\ker\,\varphi(A^2),\ker\,\phi(A))=0$, if polynomials $\varphi(t^2)$ and 
$\phi(t)$ are relatively prime. To prove this assertion, consider the identity
$$
\varphi(t^2)\alpha(t)+\phi(t)\beta(t)=1
$$
where $\alpha(t), \,\beta(t)$ are some polynomials . Let $u\in 
\ker\,\varphi(A^2), \,v\in\ker\,\phi(A)$ and 
$\alpha(t)=\alpha_0(t^2)+t\alpha_1(t^2)$. Then 
\begin{align*}
0=b([(\alpha_0(A^2)-A\alpha_1(A^2))\varphi(A^2)]u,v)=
b(u,[(\alpha_0(A^2)+A\alpha_1(A^2))\varphi(A^2)]v)=\\b(u,[\alpha(A)\varphi(A^2)]v)=
b(u,v-\phi(A)\beta(A)v)=b(u,v). \qquad \square
\end{align*}

Finally, we shall prove some properties of the operators of the adjoint representation of a hyper-simple 
Lie algebra $\gG$ (over $\gk$). Below $(\cdot,\cdot)$ stands for the 
Killing form on $\gG$ and  $\gk_f$ for the splitting field of a 
polynomial $f\in \gk[t]$. 
\begin{proc}\label{simpl-ad}
Let $\gG$ be a hyper-simple Lie algebra, $0\neq x\in\gG$ and $A=ad\,x$. Then
\begin{enumerate}
\item the minimal polynomial $F(t)$ of $A$ has the form \emph{(\ref{s-factor})} with
$\epsilon=1$ and the decomposition \emph{(\ref{sum-dir})} is valid for $V=|\gG|$ with summands orthogonal with respect to the Killing form;
\item $C_x=\ker\,A$; 
\item the nonzero roots of $F(t)$ do not belong to $\gk$;
\item the lattice (in $\gk_F$) generated by the roots of $F(t)$ coincides with that
of $f_i(t)$ and with that of $\varphi_j(t), \,i=1,\ldots,k, 
\;j=1,\ldots,l$; 
\item $\gk_F=\gk_{f_i}=\gk_{\varphi_j}, \,i=1,\ldots,k, \;j=1,\ldots,l$;
\end{enumerate}
\end{proc}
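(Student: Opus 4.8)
The plan is to treat the five assertions in order, the first three being essentially immediate from the material already assembled and the last two requiring a Galois-descent argument that I expect to be the main difficulty. For assertion (1) I would simply collect what is already in place: by Corollary~\ref{sk-sym-sim} the operator $A=\ad x$ is skew-adjoint for the Killing form $(\cdot,\cdot)$ (non-degenerate since $\gG$ is simple), it is semi-simple by Corollary~\ref{semisimple}, and it has $r=1$. Feeding $r=1$ into the discussion preceding Lemma~\ref{sk-sym1} gives exactly the factorisation (\ref{s-factor}) with $\epsilon=1$ and the direct-sum decomposition (\ref{sum-dir}) for $V=|\gG|$, while the orthogonality of the summands is Lemma~\ref{sk-sym1}. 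Assertion (2) is a tautology once unwound: $C_x=\{y\in\gG\mid [x,y]=0\}=\ker(\ad x)=\ker A$, which is precisely the first summand of (\ref{sum-dir}). For assertion (3), suppose a non-zero root $\mu$ of $F$ lay in $\gk$; since $A$ is $\gk$-linear, $\ker(A-\mu\,\id)$ is then a non-zero $\gk$-subspace, so there is $0\neq v\in\gG$ with $[x,v]=\mu v$. Here $v\notin\gk x$ (otherwise $[x,v]=0\neq\mu v$), so $\mathrm{span}_\gk(x,v)$ is a $2$-dimensional subalgebra with $[\mu^{-1}x,v]=v$, i.e. a copy of $\between$. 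As $\gG$ is simple we have $\dim\gG\geq 3$, so this is a \emph{proper} non-abelian subalgebra, contradicting hyper-simplicity; hence every non-zero root of $F$ lies outside $\gk$.

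The substance is in (4) and (5), and here I would pass to the splitting field. Over $\gk_F$ the semi-simple operator $A$ diagonalises, so $\gG\otimes_\gk\gk_F=\bigoplus_\mu\gG_\mu$ with $\gG_\mu=\ker(A-\mu\,\id)$, the index $\mu$ running over the zeros of $F$; relation (\ref{comut}) gives $[\gG_\mu,\gG_\nu]\subset\gG_{\mu+\nu}$. Fix one irreducible factor of $F$ and a non-zero root $\mu$ of it. Because the Killing form pairs $\gG_\mu$ non-degenerately with $\gG_{-\mu}$ we have $[\gG_\mu,\gG_{-\mu}]\neq 0$, so $\mathfrak{s}=\gG_\mu\oplus\gG_{-\mu}\oplus[\gG_\mu,\gG_{-\mu}]$ is a non-abelian subalgebra of $\gG\otimes_\gk\gk_F$.

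Now let $\mathfrak{T}$ be the $\gk_F$-subalgebra generated by all Galois translates $\mathfrak{s}^\sigma$, $\sigma\in\mathrm{Gal}(\gk_F/\gk)$. By construction $\mathfrak{T}$ is Galois-stable and non-abelian, so by descent $\mathfrak{T}=\mathfrak{T}_0\otimes_\gk\gk_F$ for a non-abelian $\gk$-subalgebra $\mathfrak{T}_0\subseteq\gG$; hyper-simplicity forces $\mathfrak{T}_0=\gG$, whence $\mathfrak{T}=\gG\otimes_\gk\gk_F$. On the other hand $\mathfrak{s}^\sigma=\gG_{\sigma\mu}\oplus\gG_{-\sigma\mu}\oplus[\gG_{\sigma\mu},\gG_{-\sigma\mu}]$, and since $\mathfrak{T}$ is obtained from these root spaces by iterated brackets, relation (\ref{comut}) shows that any root space $\gG_\nu$ occurring in $\gG\otimes_\gk\gk_F=\mathfrak{T}$ has $\nu$ equal to a $\Z$-combination of the $\pm\sigma\mu$. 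As the conjugates $\sigma\mu$, together with their negatives, are exactly the roots of the chosen factor (of $F$, resp. of the corresponding $A^2$-factor), this means every root of $F$ lies both in the lattice and in the splitting field generated by the roots of that one factor. The reverse inclusions being trivial, the lattices $L_F, L_{f_i}, L_{\varphi_j}$ all coincide and the splitting fields $\gk_F, \gk_{f_i}, \gk_{\varphi_j}$ all coincide, which is (4) and (5).

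The hard part will be the bookkeeping in this last step: making precise that the subalgebra generated by the $\mathfrak{s}^\sigma$ meets a root space $\gG_\nu$ only when $\nu$ is an integral combination of conjugates of $\mu$, so that no root outside the factor's lattice can appear, and verifying that the descent of a Galois-stable subalgebra is genuinely non-abelian. One must also run the argument uniformly for the two kinds of factors in (\ref{s-factor}): the $f_i(t^2)$ that remain irreducible, whose roots form a single sign-symmetric Galois orbit, and the split factors $\psi_j(t)\psi_j(-t)$, where $\mu$ and $-\mu$ sit in different irreducible factors of $F$; in the latter case one uses that a lattice is closed under negation (and that the splitting field of $\psi_j$ is invariant under $t\mapsto -t$) to see that the roots of $\psi_j$ alone already generate the same lattice and field as the whole pair, keeping each $\mathfrak{s}^\sigma$ of the displayed form.
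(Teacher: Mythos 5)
Your argument is correct, and for assertions (1), (2), (3) and (5) it coincides with the paper's own treatment: (1) is assembled from Corollary~\ref{sk-sym-sim} and Lemma~\ref{sk-sym1}, (2) is definitional, (3) is proved by exhibiting a proper copy of $\between$ inside $\gG$, and (5) follows at once from (4). The genuine divergence is in (4). The paper never leaves the ground field: for an irreducible factor $p$ it forms the $\gk$-rational subspace $W=\ker p(A)\subset|\gG|$ and the subalgebra $\gH_p$ generated by $W$ \emph{and} $C_x$; this is non-abelian simply because $A$ is invertible on $W$, so $[x,W]=A(W)\neq 0$ with $x\in C_x$, whence $\gH_p=\gG$ by hyper-simplicity, and the lattice claim then falls out of (\ref{comut}) applied to the $\gk_F$-extension. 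You instead work over $\gk_F$ with the individual root spaces $\gG_{\pm\sigma\mu}$ and recover a $\gk$-subalgebra by Galois descent; this is valid (the splitting field is a finite Galois extension in characteristic zero, and a Galois-stable subspace, hence subalgebra, descends), but it costs two verifications the paper sidesteps: non-abelianness, which you must extract from the non-degeneracy of the Killing pairing of $\gG_\mu$ with $\gG_{-\mu}$ rather than from the presence of $x$ among the generators, and the descent bookkeeping itself. One small inaccuracy: $\mathfrak{s}=\gG_\mu\oplus\gG_{-\mu}\oplus[\gG_\mu,\gG_{-\mu}]$ need not be closed under the bracket, since $[\gG_\mu,\gG_\mu]\subset\gG_{2\mu}$ may be non-zero and lie outside $\mathfrak{s}$; you should speak only of the subalgebra generated by the sets $\mathfrak{s}^\sigma$, which is what your argument actually uses, so nothing breaks. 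The paper's route buys brevity and $\gk$-rationality for free; yours makes the root-space picture over the splitting field explicit, and your handling of the two kinds of factors of (\ref{s-factor}) (sign-symmetric Galois orbits versus the split pairs $\psi_j(t)\psi_j(-t)$) is the correct way to see that each factor generates the same lattice and the same splitting field.
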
 
\begin{proof}
(1) Directly from Corollary\,\ref{sk-sym-sim} and Lemma\,\ref{sk-sym1}.

(2) Directly from Proposition\,\ref{solid property}.

(3) Assume the contrary. Then $x$ and an eigenvector 
$y\in \gG$  corresponding to a non-zero eigenvalue of $A$ span a 
$2$-dimensional non-abelian subalgebra of 
$\gG$. Since $\gG$ is hyper-simple, it must coincide with this subalgebra 
in contradiction with the simplicity of $\gG$.

(4) Let $p(t)$ be one of the polynomials $f_i$'s, $\varphi_j$'s and let
$\lambda_1,\ldots,\lambda_m\in\gk_p$ be its roots. Consider the subalgebra 
$\gH=\gH_p$ of $\gG$ generated by $W=\ker\,p(A)$ and $C_x$. Obviously, $\gH$ is non-abelian 
and hence $\gG=\gH$. On the other hand, in view of (\ref{comut}) applied to 
the $\gk_F$-extension of $\gG$, the eigenvalues of $A\mid_{\mid\gH\mid}$ belong 
to the lattice $L_h$
in $\gk_F$ generated by $\lambda_1,\ldots,\lambda_m$. 
Lattices $L_{f_i}$'s and $L_{\varphi_j}$'s must coincide, since, otherwise, one
of the subalgebras $\gH$ would be proper in $\gG$.

(5) Directly from (4).
\end{proof}

\subsection{Complete disassembling of real Lie algebras} 

We are now ready to prove that any Lie algebra over $\R$ can be 
completely disassembled. The stripping procedure in this case is based 
on the following fact.
\begin{proc}\label{R-simp}
Hyper-simple Lie algebras over $\R$ are isomorphic to $\gs\go(3,\R)$.
\end{proc}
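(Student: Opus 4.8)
The plan is to show that a hyper-simple real Lie algebra $\gG$ is necessarily three-dimensional and of compact type, and then to recognise it among the three-dimensional simple real algebras reviewed in Subsection~\ref{d-3}. Throughout I fix a nonzero $x\in\gG$, set $A=\ad x$, and write $C_x$ for its centralizer; all the centralizer and regularity properties of Proposition~\ref{solid property} are at my disposal, as is the semi-simplicity of every element (Corollary~\ref{semisimple}) and the eigenvalue information of Proposition~\ref{simpl-ad}.

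First I would establish that $\gG$ is of \emph{compact type}, i.e. that its Killing form $(\cdot,\cdot)$ is negative definite. By Corollary~\ref{semisimple}, $\gG$ is a real semi-simple Lie algebra containing no nonzero element $y$ with $\ad y$ nilpotent. By the Cartan decomposition $\gG=\mathfrak k\oplus\mathfrak p$ of a real semi-simple algebra: if $\mathfrak p\neq\{0\}$, then a maximal abelian $\mathfrak a\subset\mathfrak p$ is nonzero and, by semi-simplicity, admits a nonzero restricted root $\lambda$; any $0\neq E\in\gG_\lambda$ raises restricted weights, so $\ad E$ is a nonzero nilpotent operator, a contradiction. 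Hence $\mathfrak p=\{0\}$, $\gG=\mathfrak k$ is compact, and $(\cdot,\cdot)$ is negative definite. Consequently each $\ad h$ is skew-symmetric with respect to the definite form $-(\cdot,\cdot)$, so its eigenvalues are purely imaginary, consistent with Proposition~\ref{simpl-ad}(3).

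Next I would pin down the rank $r=\dim C_x$; by Proposition~\ref{solid property}(4), $C_x$ is a Cartan (maximal abelian) subalgebra. Complexifying the adjoint action of $C_x$, the commuting semi-simple operators $\{\ad h\mid h\in C_x\}$ decompose $\gG\otimes_\R\C$ into root spaces indexed by functionals $\alpha$ on $C_x$, at least one of which is nonzero since $\gG$ is non-abelian. Because $\gG$ is compact, every such $\alpha$ takes purely imaginary values on $C_x$, so each nonzero root is a nonzero real-linear map $C_x\to i\R\cong\R$. Now I invoke Proposition~\ref{solid property}(5): every nonzero element of $\gG$ is regular. If $r\geq2$, a nonzero root has kernel of real dimension $r-1\geq1$ in $C_x$; any nonzero $h$ in this kernel satisfies $\alpha(h)=0$, whence $\gG_\alpha\subset\ker\ad h$ and $\dim C_h>r$, making $h$ a nonzero non-regular element — a contradiction. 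Therefore $r=1$.

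Finally, $\gG$ being compact and simple, its complexification $\gG\otimes_\R\C$ is complex simple; of rank one it is isomorphic to $\gs\gl(2,\C)$, so $\dim_\R\gG=3$. To recognise $\gG$ among the three-dimensional simple real algebras, I use the review of Subsection~\ref{d-3}: the split case $\kappa>0$ yields $\gs\gl(2,\R)$, which contains the two-dimensional non-abelian subalgebra $\between$ and is non-compact, so is excluded by hyper-simplicity (equivalently, by the compactness just established); the remaining non-split case gives $\gG\cong\gs\go(3,\R)$. The main obstacle is the compactness step, since everything downstream is clean linear algebra resting on the already-proven regularity and centralizer properties, whereas excluding a non-compact real form is exactly where hyper-simplicity — through the absence of ad-nilpotents (Corollary~\ref{semisimple}) — does the essential work. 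One should also record the easy converse, that $\gs\go(3,\R)$ is itself hyper-simple because any two independent elements generate it, so that the isomorphism class in the statement is indeed realised.
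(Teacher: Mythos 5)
Your proposal is correct, but it reaches the conclusion by a genuinely different route from the paper. The paper stays entirely inside its own machinery: it takes a Cartan subalgebra $C=C_x$, picks a \emph{primitive} element $A=\ad x$ of the commuting semi-simple family $\{\ad y\}_{y\in C}$, and uses an eigenplane $P=\mathrm{span}\langle y,Ay\rangle$ of an irreducible factor $t^2+\lambda^2$ to produce the non-abelian $3$-dimensional subalgebra $P\oplus[P,P]$; hyper-simplicity then forces $\gG=P\oplus[P,P]$, so $\dim\gG=3$, and Bianchi's list plus the non-hyper-simplicity of $\gs\gl(2,\R)$ finishes the proof. You instead import the structure theory of real semi-simple algebras: the absence of nonzero $\ad$-nilpotents (from Corollary to the semi-simplicity of elements) kills the $\mathfrak p$-part of a Cartan decomposition, so $\gG$ is compact; the regularity of every nonzero element (via the root-kernel argument, which correctly contradicts $C_h=C_x$ from the centralizer dichotomy) forces rank one; and a compact simple algebra of rank one is $\gs\go(3,\R)$. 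Both arguments are sound and both end with the same recognition step among $3$-dimensional simple real algebras. What the paper's proof buys is self-containedness — it uses only the lemmas on skew-adjoint semi-simple operators and primitive elements proved in the preceding subsections, with no appeal to Cartan decompositions or restricted roots — while yours buys a cleaner conceptual characterisation (hyper-simple over $\R$ $=$ compact of rank one) at the cost of heavier external input. Your closing remark that $\gs\go(3,\R)$ is indeed hyper-simple (no $2$-dimensional subalgebras, since $[X,Y]=X\times Y$ is never in $\mathrm{span}(X,Y)$ for independent $X,Y$) is a worthwhile addition that the paper leaves implicit.
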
 
\begin{proof}
Let $\gG$ be a hyper-simple real Lie algebra and let $A=ad\,x, \,0\neq x\in\gG$.
According to Proposition \ref{simpl-ad} (3), none of the nonzero roots of $A$ belongs to $\R$ and hence the minimal polynomial $F(t)$ of $A$ is of the form 
$F(t)=t(t^2+\lambda_1^2)\ldots(t^2+\lambda_k^2)$, 
$\lambda_1,\ldots,\lambda_k\in\R, \,k\geq 1$.
If $C$ is a Cartan subalgebra of $\gG$, then it follows from 
Proposition \ref{solid property}, that $\{ad\,y\}_{y\in C}$ is a family of 
commuting semi-simple operators. Recall that such a family possesses a 
\emph{primitive} element, i.e., an element such that any of its invariant subspaces
is also invariant with respect to all the operators of the family. Let $A=ad\,x, 
\,x\in C,$ be primitive for the family $\{ad\, y\}_{y\in C}$ and let
$f(t)=t^2+\lambda^2$ be one of the irreducible factors of its minimal 
polynomial $F(t)$. We consider the subspace 
$$
P=\mathrm{span}<y,Ay>\subset |\gG| \quad \mbox{with} \quad 0\neq y\in\ker\,f(A),
$$
and we list some of its properties.\\
1) $[C,P]\subset P$: $P$ is $A$-invariant, since $A^2y=-\lambda^2y$. But
$A$ is primitive and hence $P$ is $(ad\,z)$-invariant for any $z\in C$. \\
2) $\dim\,P=2$: This follows from $\ker\,A\cap\ker\,f(A)=\{0\}$ (see (\ref{sum-dir})).\\
3) $[P,P]\subset C$: $A([y,Ay])=[Ay,Ay]+[y,A^2y]=0$, since $A^2y=-\lambda^2y$. But, 
 according to Proposition\,\ref{solid property}, $C=C_x=\ker\,A$.\\
4) $[P,P]\neq 0$: Since $\dim\,P=2$, $[P,P]=\mathrm{span}\,([y,Ay])$. If $[y,Ay]=0$,
then $\mathrm{span}(x,y,Ay=[x,y])$ is a 3-dimensional subalgebra of $\gG$. It is
non-abelian, since $[x,y]=Ay\neq 0$, and solvable. On the other hand, $\gG$
is hyper-simple and this subalgebra must coincide with $\gG$. But this is impossible,
since $\gG$ is simple.

Thus $[P,P]=\mathrm{span}\,([y,Ay])$ is 1-dimensional and $P$ is $[P,P]$-invariant,
since $[P,P]\subset C$. So, $P\oplus [P,P]$ is a 3-dimensional subalgebra, which
is non-abelian, since $[y,Ay]\neq 0$. Being hyper-simple, $\gG$ coincides with
this subalgebra, so that $\dim\,\gG=3$. But any 3-dimensional simple Lie 
algebra over $\R$ is isomorphic either to $\gs\go(3,\R)$ or to  $\gs\gl(2,\R)$ (according to Bianchi's classification) and the second one is not hyper-simple.
\end{proof}
\begin{thm}\label{R-dis}
Any finite-dimensional Lie algebra over $\R$ can be completely 
disassembled. 
\end{thm}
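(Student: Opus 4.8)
The plan is to run the same inductive stripping machinery that proves the algebraically closed case (Theorem~\ref{C-dis}), the only genuinely new ingredient being a substitute for Morozov's lemma tailored to the two kinds of simple $3$-dimensional subalgebra available over $\R$. First I would carry out the standard reductions: by Corollary~\ref{dis-reduct}, together with Proposition~\ref{dis-solv} for the solvable part and Corollary~\ref{dec-ss} for the semi-simple part, the problem collapses to disassembling an abelian extension $\gG\oplus_{\rho}V$ with $\gG$ \emph{simple} over $\R$. As in the discussion preceding Theorem~\ref{C-dis}, it then suffices to exhibit in every such $\gG$ a \emph{simplifying} $d$-pair $(\gs,W)$, i.e.\ one with $l(\gs)<l(\gG)$, that admits a $\rho$-extension: the Stripping Lemma~\ref{str} removes the associated dressing algebra (completely disassembled by Proposition~\ref{dress}), while Proposition~\ref{s-tr} applied to the Levi--Malcev decomposition of the residual algebra lowers the complexity $l$. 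An induction on $l(\gG)$, with the solvable case $l=0$ as base, then closes the argument.

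To build the $d$-pair I would use that, by Proposition~\ref{simplest-in} combined with Proposition~\ref{R-simp}, every simple real $\gG$ contains a simple $3$-dimensional subalgebra $\gH$, isomorphic either to the split algebra $\gs\gl(2,\R)$ or, in the remaining case, to the hyper-simple (compact) algebra $\gs\go(3,\R)$. In either case one of the two $d$-pairs associated with $\gH$ in Subsection~\ref{d-3} is nontrivial ($W\neq\{0\}$), and it is automatically simplifying: $\gs$ is then a proper subalgebra of the simple algebra $\gG$, so $l(\gs)\le\dim\gs<\dim\gG=l(\gG)$. When $\gH\cong\gs\gl(2,\R)$ I would argue exactly as in Proposition~\ref{split-by-3d}, whose proof uses only that $\gH$ \emph{splits} over the ground field, not that the field is algebraically closed: the real eigenspace decompositions of $\rho(h)$ in (\ref{1st-ext}) or (\ref{fine-V}) then furnish the required $\rho$-extension verbatim. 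This settles every $\gG$ possessing an $\gs\gl(2,\R)$, hence in particular every non-compact simple $\gG$, since such an algebra carries a nonzero nilpotent and therefore a split triple.

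The genuinely new and hardest case is the non-split one, where $\gH\cong\gs\go(3,\R)$ and $\rho(h)$ has purely imaginary spectrum, so no naive real eigenspace splitting exists. Here I would pass (after decomposing $V$ into $\gG$-irreducible summands) to the graded algebra $\mathcal{S}(\gs,W,\rho)$: Corollary~\ref{nontrivS_1} guarantees $\mathcal{S}_1(\gs,W,\rho)\neq\{0\}$, and Proposition~\ref{AlmExt} then produces a $\rho$-extension of $(\gs,W)$ \emph{unless} $\mathcal{S}(\gs,W,\rho)\cong\C$. \textbf{The main obstacle is precisely this residual case} $\mathcal{S}(\gs,W,\rho)\cong\C$, in which $\mathcal{S}_1=\R J$ with $J^{2}=-\id_V$: the operator $J$ commutes with $\rho(\gs)$ and anticommutes with $\rho(W)$, its only eigenvalues are $\pm i$, and after complexification the splitting $V^{\C}=\ker(J-i\,\id)\oplus\ker(J+i\,\id)$ is interchanged by conjugation, so it does \emph{not} descend to a real $\rho$-extension of this $d$-pair. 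To overcome it I would mimic the complexification device used for the quaternionic alternative in the proof of Proposition~\ref{AlmExt} — using $J$ to view $V$ as a complex space and transport the extension problem to the simple complexification $\gG^{\C}$, where Corollary~\ref{cor-k-rootsInS_1} supplies an extension — keeping in mind that now $J\in\mathcal{S}_1$ anticommutes with $\rho(W)$, which makes the descent to $\R$ more delicate; should a clean descent fail, I would instead treat $\gs\go(3,\R)\oplus_{\rho}V$ by an explicit triadon disassembling, in the spirit of Proposition~\ref{dis-classics} and of the $\gu(2)=\gs\go(3)$ example. Once this exceptional configuration is disposed of, the induction on complexity terminates and every finite-dimensional real Lie algebra is completely disassembled.
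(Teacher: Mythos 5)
Your reductions, your use of Propositions~\ref{simplest-in} and \ref{R-simp} to produce a $3$-dimensional simple subalgebra, and your treatment of the split case $\gH\cong\gs\gl(2,\R)$ all coincide with the paper. But the non-split case contains a genuine gap, and it is exactly the point you yourself flag as ``the main obstacle'': when $\mathcal{S}(\gs,W,\rho)\cong\C$ you do not actually produce a $\rho$-extension, you only sketch two possible remedies, and neither works. The complexification device of Proposition~\ref{AlmExt} transports the problem to $\gG^{\C}$ only because there $J$ lies in $\mathcal{S}_0(\rho)$ and hence commutes with all of $\rho(\gG)$, making $\rho$ a $\C$-linear representation of $\gG^{\C}$ on $V_{\C}$; in the residual case $J\in\mathcal{S}_1$ anticommutes with $\rho(W)$, so $\rho$ does \emph{not} extend $\C$-linearly and Corollary~\ref{cor-k-rootsInS_1} is not available. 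Your fallback --- an explicit triadon disassembling of $\gs\go(3,\R)\oplus_{\rho}V$ --- addresses only the case $\gG=\gs\go(3,\R)$, whereas the configuration to be handled is an arbitrary simple real $\gG$ (e.g.\ any compact one) containing $\gs\go(3,\R)$ but no $\gs\gl(2,\R)$, with $V$ an irreducible $\gG$-module; and even for $\gG=\gs\go(3,\R)$ such a disassembling is not supplied.

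The paper closes this case by a different mechanism: it does not construct an extension but shows the configuration is \emph{vacuous}. One first distinguishes whether the first or the second $d$-pair associated with $\gH$ is in play, and, for the second, whether $V_{\mathrm{odd}}$ or $V_{\mathrm{even}}$ vanishes (irreducibility forces one of them to); the case $V_{\mathrm{odd}}=0$ yields an explicit extension $\boldsymbol{V}_0=\oplus V_{4k}$, $\boldsymbol{V}_1=\oplus V_{4k+2}$, and only the case $V_{\mathrm{even}}=0$ leads to $\mathcal{S}\cong\C$. There, using the complex structure $J$, one decomposes $V_{\C}$ into eigenspaces $\V_m$ of $B=\rho(h)$ with $m$ necessarily odd, groups them by residue of $m$ modulo $4$ into $\boldsymbol{W}_1\oplus\boldsymbol{W}_2$ of equal dimension, and verifies by the explicit commutator computations (\ref{happy1})--(\ref{happy2}) that both are $\rho$-invariant --- contradicting irreducibility and $V\neq 0$. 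Your proposal omits the $V_{\mathrm{odd}}/V_{\mathrm{even}}$ dichotomy that sets up this parity argument and contains no substitute for the contradiction itself, so the induction on complexity does not close as written.
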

\begin{proof}
As we have seen earlier, the problem reduces
to proving the existence of simplifying $d$-pairs for algebras of the form $\gG\oplus_{\rho}V$ where
$\gG$ is simple and $\rho:\gG\to\End V$ is irreducible. We shall construct such a pair by means of a simple 3-dimensional subalgebra $\gH$ of $\gG$. By Propositions\,\ref{simplest-in}
and \ref{R-simp}, $\gG$ contains either a subalgebra isomorphic to $\gs\gl(2,\R)$
or a subalgebra isomorphic to $\gs\go(3,\R)$. In the first  case, the "splitting" arguments
in the proof of Proposition\,\ref{split-by-3d} prove the existence of a  simplifying $d$-pair. Hence  we have to analyse the situation when $\gH$ is isomorphic to $\gs\go(3,\R)$. If,
in this case, the first  $d$-pair associated with $\gH$ is nontrivial, then the $d$-pair 
(\ref{1st-ext}) in the proof of Proposition\,\ref{split-by-3d} solves the problem.

So, we shall assume that the $d$-pair $(\gs, W)$ associated with $\gH$ is of the
second type. In particular, in the notation of Proposition\,\ref{split-by-3d}, we 
have $\gG=\oplus_{k\geq 0}\gG_{2k}$. Set also
$$
V_{even}=\bigoplus_{k\geq 0}V_{2k} \quad \mathrm{and} \quad
V_{odd}=\bigoplus_{k\geq 0}V_{2k+1}.
$$
Commutation relations (\ref{V-com}) show that the subspaces $V_{even}$ and 
$V_{odd}$ are $\rho$-invariant. Since $\rho$ is irreducible, one of these 
subspaces is trivial.

First, assume that $V_{odd}$ is trivial. Once again, relations (\ref{V-com})
show that 
$$
\boldsymbol{V}_0=\bigoplus_{k\geq 0}V_{4k}, \quad\quad  
\boldsymbol{V}_1=\bigoplus_{k\geq 0}V_{4k+2}
$$
is a $\rho$--extension of $(\gs, W)$.

Finally, assume that $V_{even}$ is trivial. First we 
note that, by Corollary\,\ref{nontrivS_1}, $\mathcal{S}_1(\gs, W, \rho)$ is nontrivial. 
Moreover, Proposition\,\ref{AlmExt} reduces the problem to the case where the 
$\dF_2$-graded algebra $\mathcal{S}(\gs, W, \rho)$ is isomorphic to $\C$. In this 
case, $\mathcal{S}_1(\gs, W, \rho)$ is 1-dimensional and contains an operator $J$
such that $J^2=-1$. So, $J$ supplies $V$ with a $\C$-vector space structure
which will be denoted by $V_{\C}$.

Below we shall maintain the notation used in the proof of  Proposition \ref{split-by-3d}. 
By definition, the operators 
$\rho(z), \,z\in\gs$, commute with $J$, i.e., they are $\C$-linear in $V_{\C}$. 
In particular, $h\in\gs$ and hence $B=\rho(h)$ is such an operator.
The eigenvalues of  $B$ are $\frac{m}{2}\sqrt{-1}, \,m\in\Z$, and
\begin{equation}
V_{\C}=\oplus_{m\in\Z}\V_m, \quad \V_m=\ker(B-\frac{m}{2}\sqrt{-1}\id_V)=
\ker(B-\frac{m}{2}J)
\end{equation}
We stress that the $\V_m$'s are \emph{complex} subspaces of  $V_{\C}$. 

In the case under  consideration, $\V_m$ may be nontrivial only for
odd $m$ and $V_{\C}=\boldsymbol{W}_1\oplus\boldsymbol{W}_2$ with
$$
\boldsymbol{W}_{\boldsymbol{1}}=\bigoplus_{m\in\Z}\V_{4m+1} \quad\mathrm{and}
\quad \boldsymbol{W}_{\boldsymbol{2}}=\bigoplus_{m\in\Z}\V_{4m-1}.
$$
Moreover, $\dim\boldsymbol{W}_1=\dim\boldsymbol{W}_2$ as a consequence of 
$V_m=\V_m\oplus\V_{-m}$.

We shall prove that $\boldsymbol{W}_1$ and $\boldsymbol{W}_2$
are $\rho$--invariant. 
Let $v\in\V_m$ and let $w\in \gG_{4k+2}\subset W$. Then 
$$
Bv=\frac{m}{2}\sqrt{-1}\,v= 
\frac{m}{2}Jv, \quad [h,w]\in W, \quad [h,[h,w]]=-(2k+1)^2w,
$$ 
and, therefore, $$
\rho(w)J+J\rho(w)=0=\rho([h,w])J+J\rho([h,w]).
$$ 
Now we have
\begin{eqnarray}\label{happy1}
B(\rho(w)v)=\rho(w)(Bv)+[B,\rho(w)]v=\frac{m}{2}\rho(w)(Jv)+[\rho(h),\rho(w)]v=
\nonumber\\-\frac{m}{2}J(\rho(w)v)+\rho([h,w])v=
 -\frac{m}{2}\sqrt{-1}\,\rho(w)v+\rho([h,w])v
 \end{eqnarray}
 and
 \begin{eqnarray}\label{happy2}
B(\rho([h,w])v)=\rho([h,w])(Bv)+[B,\rho([h,w])]v=\frac{m}{2}\rho([h,w])(Jv)+
\nonumber \\  +[\rho(h),\rho([h,w])]v=-\frac{m}{2}J(\rho([h,w])v)+\rho([h,[h,w]])v=
\nonumber \\ -\frac{m}{2}\sqrt{-1}\,\rho([h,w])v-(2k+1)^2\rho(w)v
\end{eqnarray}
It follows from (\ref{happy1}) and (\ref{happy2}) that vectors $\rho(w)v$ and 
$\rho([h,w])v$ span a 2-dimensional subspace $\Pi$ in $V_{\C}$ which is 
invariant with respect to $B$. It is easy to see, that the eigenvalues of $B\!\mid_{\Pi}$ are 
$\frac{-m\pm(4k+2)}{2}\sqrt{-1}$. This shows that $\Pi$ is spanned by the eigenvectors of 
$B\!\mid_{\Pi}$ corresponding to these eigenvalues and, as a consequence, that the vectors 
$\rho(w)v$ and $\rho([h,w])v$ belong to $\V_{-m+4k+2}\oplus\V_{-(m+4k+2)}$.
Since $m=4s\pm 1$, the residue of $m \;\mathrm{mod} \;4$ coincides with that 
of $-m\pm(4k+2)$. Therefore, vectors $\rho(w)v$ and $\rho([h,w])v$ belong to 
the same subspace $\boldsymbol{W}_{\boldsymbol{i}}$ as $v$.

If $z\in\gG_{4k}\subset\gs$, then $[h,z]\in\gs, \,[h,[h,z]]=-4k^2z$ and the operators
$\rho(z)$ and $\rho([h,z])$ commute with $J$. The same arguments as above
show that the vectors $\rho(z)v$ and $\rho([h,z])v$ belong to 
$\V_{m+4k}\oplus\V_{m-4k}$, i.e., to the same subspace 
$\boldsymbol{W}_{\boldsymbol{i}}$ as $v$. 

Thus, the subspaces $\boldsymbol{W}_{\boldsymbol{i}}$'s are $\rho$--invariant.
On the other hand, $\rho$ is irreducible and, therefore, one of these subspaces 
is trivial. But $\dim\boldsymbol{W}_1=\dim\boldsymbol{W}_2$ and hence the other 
subspace is trivial too, as well as $V_{\C}=\boldsymbol{W}_1\oplus\boldsymbol{W}_2$. 
But $V$ is nontrivial and this contradiction shows that the situation when $V_{even}$
is trivial is impossible.
\end{proof}

\medskip
\noindent\textit{On the disassembling problem for Lie algebras over arbitrary fields.} 


\noindent It is plausible that any finite-dimensional Lie algebra 
over a field of  characteristic zero can be completely disassembled. In view 
of Proposition\,\ref{simplest-in}, the disassembling problem is reduced to 
the case of hyper-simple Lie algebras and their finite-dimensional representations.  
This approach 
presumes a description of hyper-simple Lie algebras over arbitrary ground fields and 
of their involutions. This problem does not appear to be insolvable 
and one can find useful ideas in order to solve it in the last chapter of \cite{Jac}.
The derived algebras $[\mathcal{A}_{\mathrm{Lie}}, \mathcal{A}_{\mathrm{Lie}}]$  
of the Lie algebras $\mathcal{A}_{\mathrm{Lie}}$  associated with 
division algebras $\mathcal{A}$ over a field $\gk$ are examples of hyper-simple Lie
algebras. In this connection, we stress
that the hyper-simple Lie algebras and their representations amply merit 
study in their own right. For instance, they appear to be  
natural substitutes for $\gs\gl_2$-triples in the search for
analogues of root space decompositions for simple Lie algebras 
over arbitrary fields. 

An alternative approach to the disassembling problem could be a direct
description of suitable $d$-pairs in simple Lie algebras over a given field. The starting point could be the description of simple Lie algebras given in the last
chapter of \cite{Jac}.   However, the necessary extension of such a description
to the representations of these algebras seems particularly problematic.

The conjecture that all Lie algebras can be assembled from lieons and the 
fact that lieons are, in fact, Lie algebras over $\Z$ suggest that the
Lie algebras over a field $\gk$ of characteristic zero are  
specialisations  to $\gk$ of some universal assemblage procedures.
In the following two sections, the reader will find other indications
in favour of this idea.


\section{Compatibility of lieons and first-level Lie algebras}\label{1st-level}


A Lie algebra is of the \emph{first level} if it can be completely 
disassembled in one step. In other words, the first-level Lie algebras are those that 
can be simply assembled from a number of mutually compatible lieons. In this section we characterise compatible pairs of lieons geometrically and, on this basis, we construct 
examples of first-level Lie algebras.  

Throughout this section, $P_i, P_{\between}$ and $P_{\pitchfork}$ stand for the Poisson 
bi-vectors associated with the Lie algebras denoted by $\gG_i, \gG_{\between}$ and
$\gG_{\pitchfork}$, respectively.


\subsection{Compatibility of triadons} 
 
Let $\gG$ be an $n$-triadon and  $V=|\gG|$. 
Denote by $C=C_{\gG}$ the center of $\gG$ and put $l=l_{\gG}=[\gG,\gG]$. 
Then $\mathrm{dim}\,C=n-2, 
\,\mathrm{dim}\,l =1$ and $l\subset C$. A basis $e_1,\dots,e_n$ of $V$ such 
that $e_3\in l$ and $e_i\in C$, if $i>2$, will be called \emph{normal} for 
$\gG$. The only nontrivial product in this basis is $[e_1,e_2]=\alpha e_3, 
\,\alpha\neq 0$. The associated Poisson bi-vector on $V^*$ in the 
corresponding coordinates is $P=P_{\gG}=\alpha x_3\xi_1\xi_2$. This shows 
that, up to a factor of  proportionality, $\gG$ is uniquely defined by the pair $(C,l)$. 

Consider now two $n$-triadons $\gG_1$ and $\gG_2$ on  
$V=|\gG_1|=|\gG_2|$, and put $C_i=C_{\gG_i}, \,l_i=l_{\gG_i}, 
\,P_i=P_{\gG_i}\,i=1,2, \;C_{12}=C_1\cap C_2$. Obviously, 
$n-4\leq\mathrm{dim}\,C_{12}\leq n-2$. 

Below we use Formula (\ref{Schouten in coordinates}) for computations of the
occurring  Schouten brackets.
\begin{lem}\label{a1}
If $\mathrm{dim}\,C_{12}=n-4$, then $\gG_1$ and $\gG_2$ are compatible if and only if
\, $l_i\subset C_{12}, i=1,2$.
\end{lem}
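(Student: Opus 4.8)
The plan is to translate compatibility into the vanishing of a Schouten bracket and to compute that bracket in a basis adapted to $\gG_1$. By Proposition \ref{Poisson-lie dual}, $\gG_1$ and $\gG_2$ are compatible iff the linear Poisson bi-vectors $P_1,P_2$ on $V^*$ are compatible, i.e. iff $\ldb P_1,P_2\rdb=0$. First I would fix a normal basis $e_1,\dots,e_n$ for $\gG_1$, so that $l_1=\langle e_3\rangle$, $C_1=\langle e_3,\dots,e_n\rangle$ and $P_1=\alpha x_3\xi_1\xi_2$ with $\alpha\neq 0$. Writing $P_2=L_2\,\Xi_2$, where $L_2=\sum_k\gamma_k x_k$ is the linear function attached to a generator $c_2=\sum_k\gamma_k e_k$ of $l_2$ and $\Xi_2$ is the constant rank-two bi-vector which, read as an alternating form on $V$ via $\xi_a\leftrightarrow e_a^*$, has radical $C_2$, a direct application of Formula (\ref{Schouten in coordinates}) should yield the factored expression
\begin{equation}\label{bracket-triadons}
\ldb P_1,P_2\rdb=-\big(L_2\,\Xi_1\wedge(c_1\rfloor\Xi_2)+L_1\,(c_2\rfloor\Xi_1)\wedge\Xi_2\big),
\end{equation}
with $\Xi_1=\xi_1\xi_2$, $L_1=\alpha x_3$ and $c_1=\alpha e_3$; here I use $\frac{\partial}{\partial\xi_i}(\mu\wedge\nu)=\mu_i\nu-\nu_i\mu$ to recognise the interior products $c_1\rfloor\Xi_2$ and $c_2\rfloor\Xi_1$ (ordinary insertions of a vector into a $2$-form). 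Note that (\ref{bracket-triadons}) is manifestly symmetric under $1\leftrightarrow 2$, as the Schouten bracket of two bi-vectors must be, which is a useful consistency check.

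The crux is then a linear-algebraic reading of (\ref{bracket-triadons}). Since $L_1,L_2$ are linear in the $x_k$ while the two trivectors are constant, $\ldb P_1,P_2\rdb$ vanishes iff its coefficient at each coordinate $x_k$ vanishes. Because $c_1\in l_1=\langle e_3\rangle$, the second summand only contributes to the $x_3$-coefficient, so the conditions read $\gamma_k\,\Xi_1\wedge(c_1\rfloor\Xi_2)=0$ for $k\neq 3$, together with $\gamma_3\,\Xi_1\wedge(c_1\rfloor\Xi_2)+\alpha\,(c_2\rfloor\Xi_1)\wedge\Xi_2=0$. Here the hypothesis $\dim C_{12}=n-4$ enters decisively: it forces $C_1+C_2=V$, hence $\Ann(C_1)\cap\Ann(C_2)=\Ann(C_1+C_2)=0$. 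Since $c_1\rfloor\Xi_2$ is a covector lying in $\Ann(C_2)$ while $\langle\xi_1,\xi_2\rangle=\Ann(C_1)$, I obtain the clean equivalence
$$\Xi_1\wedge(c_1\rfloor\Xi_2)=0\iff c_1\rfloor\Xi_2=0\iff l_1\subset C_2\iff l_1\subset C_{12},$$
and symmetrically $(c_2\rfloor\Xi_1)\wedge\Xi_2=0\iff l_2\subset C_{12}$.

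With these equivalences the lemma follows quickly. For the ``if'' direction, $l_i\subset C_{12}$ for $i=1,2$ makes both trivectors in (\ref{bracket-triadons}) vanish, so $\ldb P_1,P_2\rdb=0$. For the ``only if'' direction, assume the algebras are compatible but $l_1\not\subset C_{12}$; then $\Xi_1\wedge(c_1\rfloor\Xi_2)\neq 0$, so the equations $\gamma_k\,\Xi_1\wedge(c_1\rfloor\Xi_2)=0$ force $\gamma_k=0$ for all $k\neq 3$, i.e. $c_2\in\langle e_3\rangle=l_1$; but then $l_2=\langle c_2\rangle=l_1\subset C_1\cap C_2=C_{12}$, contradicting $l_1\not\subset C_{12}$. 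Hence $l_1\subset C_{12}$, whereupon the surviving $x_3$-equation collapses to $\alpha\,(c_2\rfloor\Xi_1)\wedge\Xi_2=0$, giving $l_2\subset C_{12}$ as well. I expect the main obstacle to be purely computational, namely pinning down the signs in the Schouten-bracket computation that produces the factored form (\ref{bracket-triadons}); once that form is in hand, the geometric step is immediate and rests entirely on the identity $C_1+C_2=V$ coming from $\dim C_{12}=n-4$.
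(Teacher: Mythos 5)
Your proof is correct, and it takes a genuinely different route from the paper's. The paper proves Lemma \ref{a1} by an exhaustive case analysis (four cases A$_1$--A$_4$ according to whether each $l_i$ lies in $C_{12}$ and whether $l_1=l_2$), choosing in each case an adapted basis, writing $P_1,P_2$ as explicit monomial bi-vectors, and checking $\ls P_1,P_2\rs$ directly; a side benefit is that in the compatible cases it also identifies the sum, e.g.\ $\gG_1+\gG_2\cong\pitchfork\oplus\pitchfork\oplus\gamma_{n-6}$ in case A$_3$. You instead perform a single invariant computation: the factorisation $\ls P_1,P_2\rs=-\bigl(L_2\,\Xi_1\wedge(c_1\rfloor\Xi_2)+L_1\,(c_2\rfloor\Xi_1)\wedge\Xi_2\bigr)$ follows correctly from Formula (\ref{Schouten in coordinates}) (the sign $(-1)^P=1$ for bi-vectors, and no ordering subtleties arise since $\Xi_1$ is of even degree), and the whole lemma is then read off from the two equivalences $\Xi_1\wedge(c_1\rfloor\Xi_2)=0\Leftrightarrow l_1\subset C_{12}$ and $(c_2\rfloor\Xi_1)\wedge\Xi_2=0\Leftrightarrow l_2\subset C_{12}$, which correctly isolate where the genericity hypothesis enters: $\dim C_{12}=n-4$ gives $C_1+C_2=V$, hence $\Ann(C_1)\cap\Ann(C_2)=\{0\}$, so wedging with $\Xi_1$ (resp.\ $\Xi_2$) kills nothing. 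Your ``only if'' step is also sound: if $l_1\not\subset C_{12}$ the coefficient equations at $x_k$, $k\neq3$, force $c_2\in l_1$, whence $l_1=l_2\subset C_{12}$, a contradiction. What your approach buys is a basis-light argument that treats all configurations at once and makes the role of the codimension hypothesis transparent; what the paper's enumeration buys is explicit normal forms for the compatible sums, which it reuses later. Both are complete proofs of the stated equivalence.
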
 
\begin{proof}
We have to examine the following four cases.

A$_1$\,: \,$l_i$ \emph{does not belong to} $C_{12}, \,i=1,2$. In this case 
a basis $e_5,\dots,e_n$ in $C_{12}$  can be completed by some vectors 
$e_1\in l_1, e_2\in C_1, e_3\in l_2, e_4\in C_2$ up to a basis in $V$. The 
only nonzero product $[e_i,e_j]_1$ in $\gG_1$ is $[e_3,e_4]_1=\alpha_1e_1$, 
and $[e_1,e_2]_2=\alpha_2e_3$ in $\gG_2$ with some
$\alpha_1, \alpha_2\in \gk$. Hence $P_1=\alpha_1x_1\xi_3\xi_4$ and 
$P_2=\alpha_2x_3\xi_1\xi_2$ and a direct computation using Formula 
(\ref{Schouten in coordinates}) shows that $\ls P_1,P_2\rs\neq 0$, i.e., 
that $\gG_1$ and $\gG_2$ are not compatible. 

A$_2$\,: \,\emph{One of the subspaces} $l_i$\emph{'s, say,} $l_1$, 
\emph{belongs to} $C_{12}$ \emph{and the other,} $l_2$, \emph{does not}. In 
this case we complete a basis $e_5\in l_1,e_6,\dots,e_n$ in $C_{12}$ by 
some vectors $e_1,e_2\in C_1, \,e_3\in l_2,  \,e_4\in C_2$ up to a basis in 
$V$. By similar reasons, $P_1=\alpha_1x_5\xi_3\xi_4, 
\,P_2=\alpha_2x_3\xi_1\xi_2$ in the corresponding coordinates. Since $\ls 
P_1,P_2\rs\neq 0$, $\gG_1$ and $\gG_2$ are not compatible. 

A$_3$\,: \,$l_i\subset C_{12}, \,i=1,2,$ \emph{and} $l_1\neq l_2$. In this 
case we consider a basis $e_5\in l_1,e_6\in l_2,\dots,e_n$ in $C_{12}$ and 
complete it by vectors $e_1,e_2\in C_1, 
\,e_3,e_4\in C_2$  independent $\mathrm{mod}\,C_{12}$ up to a basis in $V$. In such a basis 
$P_1=\alpha_1x_5\xi_3\xi_4, \,P_2=\alpha_2x_6\xi_1\xi_2$ and 
$\ls P_1,P_2\rs=0$. So, $\gG_1$ and $\gG_2$ are compatible and it is 
easy to see that $\gG_1+\gG_2$ is isomorphic to 
$\pitchfork\oplus\pitchfork\oplus\gamma_{n-6}$. 

A$_4$\,: \;$l_1=l_2\subset C_{12}$. A basis $e_5\in l_1=l_2,e_6,\dots,e_n$ 
in $C_{12}$ can be completed by vectors 
$e_1,e_2\in C_1, \,e_3,e_4\in C_2$  independent $\mathrm{mod}\,C_{12}$ up to a basis in $V$. Then 
$P_1=\alpha_1x_5\xi_3\xi_4, \,P_2=\alpha_2x_5\xi_1\xi_2$ and $\ls 
P_1,P_2\rs=0$, i.e., $\gG_1$ and $\gG_2$ are compatible. The Poisson 
bi-vector corresponding to $\gG_1+\gG_2$ is equivalent to 
$x_5(\xi_1\xi_2+\xi_3\xi_4)$. 
\end{proof}
\begin{lem}\label{b1}
If $\mathrm{dim}\,C_{12}\geq n-3$, then $\gG_1$ and $\gG_2$ are compatible.
\end{lem}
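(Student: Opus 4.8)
The plan is to treat the two cases left open by the hypothesis, namely $\dim C_{12}=n-2$ and $\dim C_{12}=n-3$ (the case $\dim C_{12}=n-4$ being Lemma \ref{a1}), and in each to exhibit a single basis of $V$ in which both Poisson bi-vectors $P_1$ and $P_2$ take the normal monomial form $\alpha x_k\xi_i\xi_j$. Once such an adapted basis is in place, $\ldb P_1,P_2\rdb$ is evaluated directly from Formula (\ref{Schouten in coordinates}), exactly as in the proof of Lemma \ref{a1}; the only thing to isolate is the combinatorial reason why each surviving term drops out.

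First I would dispose of $\dim C_{12}=n-2$, i.e. $C_1=C_2=:C$. Here I would fix a complement $\langle e_1,e_2\rangle$ to $C$ and a basis $e_3,\dots,e_n$ of $C$ chosen to contain generators of $l_1$ and of $l_2$, so that $P_1=\alpha_1 x_{k_1}\xi_1\xi_2$ and $P_2=\alpha_2 x_{k_2}\xi_1\xi_2$ with $k_1,k_2\ge 3$. Since both $\xi$-factors are $\xi_1,\xi_2$ while the only nonzero $x$-derivatives sit at the central indices $k_1,k_2\ge 3$, every term of (\ref{Schouten in coordinates}) carries a factor $\partial P_2/\partial\xi_{k_1}$ or $\partial P_1/\partial\xi_{k_2}$, which vanishes; hence $\ldb P_1,P_2\rdb=0$.

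The substantive case is $\dim C_{12}=n-3$, where $\dim(C_1+C_2)=n-1$. I would pick $e_1\in C_1\setminus C_{12}$, $e_2\in C_2\setminus C_{12}$, a vector $e_3\notin C_1+C_2$, and a basis $e_4,\dots,e_n$ of $C_{12}$. Then $\langle e_2,e_3\rangle$ is a complement to $C_1$ and $\langle e_1,e_3\rangle$ a complement to $C_2$, so $P_1=\alpha_1 x_{k_1}\xi_2\xi_3$ and $P_2=\alpha_2 x_{k_2}\xi_1\xi_3$. The derived lines force $e_{k_1}\in l_1\subset C_1$ and $e_{k_2}\in l_2\subset C_2$; arranging the basis so that a generator of each $l_i$ is itself a basis vector, I obtain $k_1\in\{1,4,\dots,n\}$ and $k_2\in\{2,4,\dots,n\}$, the values $k_1=3$ and $k_2=3$ being excluded because $e_3$ lies in neither centre. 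The crucial feature is that $P_1$ and $P_2$ now share the factor $\xi_3$: in (\ref{Schouten in coordinates}) the only a priori nonzero contributions come from $k_1=1$ (first sum) and $k_2=2$ (second sum), and both produce a repeated $\xi_3$ (through $\xi_2\xi_3\xi_3$ and $\xi_3\xi_1\xi_3$, respectively), hence vanish; every other index value annihilates a derivative outright. Thus $\ldb P_1,P_2\rdb=0$ again.

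I expect the only real obstacle to be bookkeeping rather than substance. One must check that a generator of each $l_i$ can indeed be taken as a basis vector compatibly with the splitting $V=\langle e_1\rangle\oplus\langle e_2\rangle\oplus\langle e_3\rangle\oplus C_{12}$ (the two sub-cases being $l_i\subset C_{12}$ and $l_i\not\subset C_{12}$), and then confirm that the ``dangerous'' terms of the Schouten bracket are precisely those killed by the shared $\xi_3$, that is, by $\xi_3\xi_3=0$. With the adapted basis fixed, the remaining work is a short direct computation via Formula (\ref{Schouten in coordinates}).
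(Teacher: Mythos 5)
Your argument is correct, but it takes a genuinely different route from the paper's. You work by direct computation: in each of the two cases $\dim C_{12}=n-2$ and $\dim C_{12}=n-3$ you construct one basis adapted to both triadons simultaneously, so that $P_1$ and $P_2$ become monomials, and then kill every term of Formula (\ref{Schouten in coordinates}) either because a $\xi$-derivative hits an index absent from the other bi-vector or because of the repeated factor $\xi_3$; the bookkeeping you flag (placing a generator of each $l_i$ among the basis vectors, in the sub-cases $l_i\subset C_{12}$ and $l_i\not\subset C_{12}$) does go through in every configuration, including $l_1,l_2\subset C_{12}$ with $l_1\neq l_2$. The paper instead reduces to dimension $3$: it splits $V=L\oplus C$ with $C\subset C_{12}$, $\dim C=n-3$, and $L$ a $3$-dimensional subspace containing $l_1$ and $l_2$, notes that each $\gG_i$ is then the direct sum of a triadon supported on $L$ and the abelian algebra on $C$, and concludes by Corollary \ref{prod}, i.e., by the fact that two unimodular Poisson structures on a $3$-dimensional space are automatically compatible because $P_1\wedge P_2$ vanishes for degree reasons (Corollary \ref{PROD}). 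The paper's route is shorter and explains \emph{why} the lemma holds (both brackets essentially live on a common $3$-dimensional subspace), but the required splitting does not exist verbatim when $C_1=C_2$ and $l_1\neq l_2$ (then $l_1+l_2\subset L\cap C_{12}$ forces $\dim C\leq n-4$), so that sub-case really rests on the wedge criterion $P_1\wedge P_2=0$ applied directly; your computation is more pedestrian but uniform, self-contained, and consistent in style with the proof of Lemma \ref{a1}.
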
 
\begin{proof}
Let $V=L\oplus C$ with $C\subset C_{12}, \,\dim\,C=n-3$ and $L$ containing $l_1$ and 
$l_2$. Then $[L,L]_i\subset L, \,i=1,2$, and, so, $L$ supports a subalgebra $\gG_i^0$ 
of $\gG_i, \,i=1,2$. It is easy to see that the $\gG_i^0$'s are triadons supported by $L$.
By Corollary\,\ref{prod} they are compatible. On the other hand, 
$\gG_i=\gG_i^0\oplus\gamma_{n-3}$ where $\mathbf{\gamma}_{n-3}$ is the abelian subalgebra
supported by $C$. For this reason, the compatibility of $\gG_1$ and $\gG_2$ is equivalent to the
compatibility of $\gG_1^0$ and $\gG_2^0$.
\end{proof}

Lemmas \ref{a1} and \ref{b1} allow us to construct various families of mutually 
compatible triadons on a vector space $V$. Two of them are described below. 

Let $V$ be an $n$-dimensional vector space. A (finite) family 
$\{C_i\}$ of $(n-2)$-dimensional subspaces of $V$ will be called  \emph{tight} if 
$\mathrm{dim}\,C_i\cap C_j>n-4$. Tight families are easily described. 

\begin{lem}\label{tight}
A family $\{C_i\}$ of $(n-2)$-dimensional subspaces of $V$ is \emph{tight} 
if either all $C_i$'s are contained in a common 
$(n-1)$-dimensional subspace \emph{(``co-pencil")}, or  all $C_i$'s have a 
common $(n-3)$-dimensional subspace \emph{(``pencil")}. $\square$
\end{lem}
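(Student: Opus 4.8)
The plan is to separate the (immediate) sufficiency, which is all the lemma literally asserts, from the converse, which is what makes tight families ``easily described''. First I would unwind the definition: tightness of $\{C_i\}$ means $\dim(C_i\cap C_j)\geq n-3$ for all $i,j$, because two $(n-2)$-dimensional subspaces of an $n$-space always satisfy $\dim(C_i\cap C_j)\geq (n-2)+(n-2)-n=n-4$, so ``$>n-4$'' is the same as ``$\geq n-3$''. With this the two sufficient conditions are one-liners. In the pencil case a common $(n-3)$-dimensional $K$ with $K\subseteq C_i$ for all $i$ gives $C_i\cap C_j\supseteq K$, hence $\dim(C_i\cap C_j)\geq n-3$. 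In the co-pencil case all $C_i$ lie in a hyperplane $H$, $\dim H=n-1$; regarding $C_i,C_j$ as codimension-one subspaces of $H$, their intersection has codimension at most $2$ in $H$, i.e. dimension at least $(n-1)-2=n-3$. This settles the statement as written.

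For the converse I would dualize. Put $C_i^{\circ}=\Ann(C_i)\subset V^*$, so $\dim C_i^{\circ}=2$; using $\Ann(C_i\cap C_j)=C_i^{\circ}+C_j^{\circ}$ one gets $\dim(C_i^{\circ}\cap C_j^{\circ})=4-n+\dim(C_i\cap C_j)$, whence tightness is equivalent to $\dim(C_i^{\circ}\cap C_j^{\circ})\geq 1$ for all $i,j$. Thus the $C_i^{\circ}$ form a family of lines in $\mathbb{P}(V^*)$ that pairwise intersect, and I must show such a family is either concurrent or coplanar. Translating back, a common line inside all $C_i^{\circ}$ has annihilator a hyperplane containing every $C_i$ (a co-pencil), while a common $3$-dimensional space containing all $C_i^{\circ}$ has annihilator an $(n-3)$-space inside every $C_i$ (a pencil); so the projective dichotomy is exactly the desired one.

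The core step is then the classical incidence lemma for pairwise-meeting lines. I would fix two distinct members $\ell_1,\ell_2$ meeting at a point $p$ and spanning a plane $\pi$ (if all members coincide there is nothing to prove). For any further line $\ell$: it meets $\ell_1$ at $a$ and $\ell_2$ at $b$; if $a=b$ this point lies in $\ell_1\cap\ell_2=\{p\}$, so $\ell\ni p$, while if $a\neq b$ then $a,b\in\pi$ force $\ell=\overline{ab}\subset\pi$. Hence every line passes through $p$ or lies in $\pi$. To exclude the mixed configuration, suppose some $\ell$ passes through $p$ but is not contained in $\pi$, and some $\ell'$ lies in $\pi$ but avoids $p$; since $\ell\not\subset\pi$ we get $\ell\cap\pi=\{p\}$, so $\ell\cap\ell'\subseteq\ell\cap\pi=\{p\}$ and $p\notin\ell'$ gives $\ell\cap\ell'=\emptyset$, contradicting pairwise intersection. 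Therefore either no line of the first kind occurs (all lie in $\pi$: a pencil of the $C_i$) or no line of the second kind occurs (all pass through $p$: a co-pencil of the $C_i$).

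The main obstacle is not any single hard estimate but the careful handling of the degenerate and low-dimensional cases: families with at most two distinct $C_i$, the possibility that the chosen $\ell_1,\ell_2$ coincide, and small $n$ (for instance $n=4$, where $n-4=0$ and $n-3=1$). Each of these must be checked, but none presents genuine difficulty; the only substantive content is the incidence lemma above, everything else being routine annihilator and dimension bookkeeping.
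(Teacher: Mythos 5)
Your proposal is correct. Note that the paper offers no argument at all for this lemma (it is stated with an immediate $\square$ as evident), and the literal claim is only the sufficiency of the two configurations, for which your two dimension counts ($C_i\cap C_j\supseteq K$ in the pencil case, codimension at most $2$ inside the hyperplane $H$ in the co-pencil case) are exactly the intended one-line justifications. What you add beyond the paper is the converse, and this is a genuine and worthwhile supplement: the sentence preceding the lemma ("Tight families are easily described") only makes sense if the two configurations exhaust all tight families, and your dualization — annihilators $C_i^{\circ}$ are pairwise-meeting lines in $\mathbb{P}(V^*)$, and pairwise-meeting lines are concurrent or coplanar, with the mixed configuration excluded by the disjointness contradiction — is the clean way to see this; the common point dualizes to the co-pencil hyperplane and the common plane to the pencil's $(n-3)$-dimensional core. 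Your reduction of "$>n-4$" to "$\geq n-3$" and the identity $\dim(C_i^{\circ}\cap C_j^{\circ})=4-n+\dim(C_i\cap C_j)$ are both correct, and the degenerate cases you flag (coincident lines, small $n$) are indeed routine. In short: for the statement as written your proof matches what the author evidently considered obvious, and the extra converse upgrades the lemma to the characterization the surrounding text implicitly relies on.
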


A finite family $\{\gG_i\}$ of triadons on $V$ will be called 
\emph{tight} if the family $\{C_i\}$ of their centers is tight. It follows 
from Lemma \ref{b1} that triadons forming a tight family are mutually compatible. 
This observation and Lemma \ref{tight} prove 

\begin{proc}\label{pencil}
Let $\{C_i\}$ be a co-pencil (resp., pencil) of $(n-2)$-dimensional 
subspaces of $V$. Assign to each $C_i$ a 1-dimensional subspace 
$l_i\subset C_i$. Then the triadons characterised by pairs $(C_i, l_i)$ 
are mutually compatible and the linear combinations of these triadons are first-level Lie algebras. $\square$
\end{proc}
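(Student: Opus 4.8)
The plan is to derive Proposition~\ref{pencil} as a short synthesis of the preceding lemmas together with the definition of a first-level Lie algebra. I do not expect a genuine obstacle, since the real work has already been done in Lemmas~\ref{a1}, \ref{b1} and \ref{tight}; the only point deserving a moment's care is the passage from pairwise to mutual compatibility for a family of more than two triadons.

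First I would record that, by hypothesis, the centers $\{C_i\}$ form either a co-pencil or a pencil of $(n-2)$-dimensional subspaces of $V$. By Lemma~\ref{tight} this is exactly the statement that $\{C_i\}$ is a \emph{tight} family, so that $\dim(C_i\cap C_j)\geq n-3$ for every pair $i\neq j$. Applying Lemma~\ref{b1} to each such pair of triadons $\gG_i,\gG_j$---whose centers are precisely $C_i,C_j$---then yields their compatibility, i.e.\ $\ls P_i,P_j\rs=0$ for the associated linear Poisson bi-vectors $P_i,P_j$ on $V^*$. This establishes pairwise compatibility throughout the family.

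Next I would promote pairwise compatibility to mutual compatibility. For arbitrary scalars $\lambda_i\in\gk$ set $P=\sum_i\lambda_iP_i$. Using the bilinearity of the Schouten bracket, the fact that each $P_i$ is Poisson (so $\ls P_i,P_i\rs=0$), and the symmetry $\ls P_i,P_j\rs=\ls P_j,P_i\rs$ for bi-vectors coming from (\ref{Schouten-skew}), I would compute
$$
\ls P,P\rs=\sum_{i,j}\lambda_i\lambda_j\ls P_i,P_j\rs=2\sum_{i<j}\lambda_i\lambda_j\ls P_i,P_j\rs=0 .
$$
Hence every linear combination $P$ is again a Poisson bi-vector, and by Proposition~\ref{Poisson-lie dual} every linear combination of the $\gG_i$'s is a Lie algebra structure; this is precisely the asserted mutual compatibility.

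Finally I would note that a nonzero scalar multiple of a triadon is again a triadon: in normal coordinates its bi-vector retains the form $\lambda_i\alpha_i\,x_3\xi_1\xi_2$ with nonzero coefficient. Consequently any linear combination $\sum_i\lambda_i\gG_i$ is simply assembled, in a single step, from the lieons $\lambda_i\gG_i$, and by the definition of a first-level Lie algebra this yields the second assertion. Because every verification is pairwise and the family is finite, no additional difficulty arises for three or more triadons.
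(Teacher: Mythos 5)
Your proposal is correct and follows the same route as the paper: co-pencil/pencil $\Rightarrow$ tight (Lemma~\ref{tight}), hence $\dim(C_i\cap C_j)\geq n-3$ and Lemma~\ref{b1} gives pairwise vanishing of the Schouten brackets, from which every linear combination is Poisson. The only difference is that you spell out the (standard) passage from pairwise to mutual compatibility, which the paper leaves implicit via the remark following Proposition~\ref{Poisson-lie dual}.
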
 

We also obtain the following result.

\begin{proc}\label{pencil1}
Let $\{\gG_1,\dots,\gG_m\}$ be a family of triadons 
characterised by pairs $(C_1,l_1),\dots,(C_m,l_m)$. If 
$\mathrm{span}\,(l_1,\dots,l_m)\subset\bigcap_{i=1}^m C_i$, then the $\gG_i$'s 
are mutually compatible and the linear combinations of these triadons are first-level Lie 
algebras.  
\end{proc}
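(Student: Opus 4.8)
The plan is to reduce the mutual compatibility of the whole family to pairwise compatibility, and then to settle each pair by invoking Lemmas \ref{a1} and \ref{b1} according to the dimension of the intersection of the two centers. Since every $\gG_i$ is a triadon, once mutual compatibility is established the assertion about first-level algebras will be essentially free.

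First I would record the elementary computation that makes the reduction work. For bi-vectors $P_i=P_{\gG_i}$ one has $\bar P_i=1$, so by graded anti-commutativity \eqref{Schouten-skew} the brackets are symmetric, $\ldb P_i,P_j\rdb=\ldb P_j,P_i\rdb$, and since each $P_i$ is Poisson, $\ldb\sum_i\lambda_iP_i,\sum_j\lambda_jP_j\rdb=2\sum_{i<j}\lambda_i\lambda_j\ldb P_i,P_j\rdb$. Hence if every pair $\gG_i,\gG_j$ is compatible (i.e. $\ldb P_i,P_j\rdb=0$), then every linear combination $\sum_i\lambda_iP_i$ is again a Poisson bi-vector, so the $\gG_i$'s are mutually compatible and each $\sum_i\lambda_i\gG_i$ is a Lie algebra structure. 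This is exactly the statement recorded in the Remark following Proposition \ref{Poisson-lie dual}, and it shows that only pairwise compatibility must be checked.

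Next I would fix $i\neq j$ and set $C_{ij}=C_i\cap C_j$; as $\dim C_i=\dim C_j=n-2$, we have $n-4\le\dim C_{ij}\le n-2$. If $\dim C_{ij}\ge n-3$, Lemma \ref{b1} gives compatibility of $\gG_i$ and $\gG_j$ directly, using nothing about the derived algebras. If instead $\dim C_{ij}=n-4$, Lemma \ref{a1} tells us that $\gG_i$ and $\gG_j$ are compatible precisely when $l_i\subset C_{ij}$ and $l_j\subset C_{ij}$. This is where the hypothesis enters: since $\mathrm{span}(l_1,\dots,l_m)\subset\bigcap_{k=1}^m C_k$, in particular $l_i\subset C_j$ and $l_j\subset C_i$, so $l_i,l_j\subset C_i\cap C_j=C_{ij}$, and Lemma \ref{a1} yields compatibility in this case as well.

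Finally I would conclude: every pair is compatible, hence the family is mutually compatible, and by the computation above each linear combination $\sum_i\lambda_i\gG_i$ is a Lie algebra structure. Each summand $\lambda_i\gG_i$ is again a triadon when $\lambda_i\neq0$ (rescaling the one nonzero structure constant) and trivial otherwise, so $\sum_i\lambda_i\gG_i$ is simply assembled from lieons in a single step, i.e. it is a first-level Lie algebra. The argument has no genuine obstacle, since Lemmas \ref{a1} and \ref{b1} carry the structural weight; the one point that must be stated carefully is the off-diagonal containment $l_i\subset C_j$, which is precisely what the hypothesis supplies and what makes the critical $\dim C_{ij}=n-4$ case go through.
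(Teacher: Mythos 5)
Your proof is correct and follows essentially the same route as the paper's: the paper's own proof is precisely the two-line case split on $\dim(C_i\cap C_j)$, invoking Lemma \ref{b1} when the intersection is non-generic and Lemma \ref{a1} (with the hypothesis supplying $l_i,l_j\subset C_i\cap C_j$) in the generic case. The extra detail you supply on reducing mutual compatibility to pairwise compatibility and on why linear combinations remain assembled from lieons is implicit in the paper (via the Remark following Proposition \ref{Poisson-lie dual}) but is correctly worked out.
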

\begin{proof}
If $\mathrm{dim}\,C_i\cap C_j>n-4$, then $\gG_i$ and $\gG_j$ are compatible 
by Lemma \ref{b1}. Otherwise, they are compatible by Lemma 
\ref{a1}.
\end{proof}

These constructions illustrate the diversity of combinations 
of triadons that produce first-level Lie algebras. 

\subsection{Compatibility of dyons and triadons} 

A dyon  $\gG$ on $V$  is characterised up to a scalor factor by its center 
$C=C_{\gG}$ and the derived algebra $\Delta=\Delta_{\gG}=[\gG,\gG]$. Since both 
$\Delta$ and $C$ are abelian, we identify them with subspaces 
of $V$. Obviously, $\mathrm{dim}\,C=n-2, \,\mathrm{dim}\,\Delta=1, \,C\cap 
\Delta=\{0\}$. So, up to a scalar factor $\gG$ is completely determined 
by the pair $(C,\Delta)$ of subspaces of $V$ and vice-versa.

Consider now a triadon $\gG_{\pitchfork}$ and  a dyon $\gG_{\between}$ on $V$ and the pairs
$(C_{\between},\Delta)$ and $(C_{\pitchfork},l)$ that characterise them. Set $C_{12}=C_{\between}\cap C_{\pitchfork}$. Then 
$n-4\leq\mathrm{dim}\,C_{12}\leq n-2$.
 
\begin{lem}\label{a2} If $\mathrm{dim}\,C_{12}=n-4$, then $\gG_{\between}$ and 
$\gG_{\pitchfork}$ are incompatible.
\end{lem}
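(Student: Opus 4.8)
The plan is to reduce compatibility to the vanishing of the \emph{mixed Jacobian} of the two brackets and to show that, under $\dim C_{12}=n-4$, no adapted triple can kill it. By Proposition\,\ref{Poisson-lie dual} and the compatibility criterion, $\gG_{\between}$ and $\gG_{\pitchfork}$ are compatible iff $\ls P_{\between},P_{\pitchfork}\rs=0$, which for linear Poisson structures is equivalent to the identical vanishing of
$$
J(x,y,z)=\sum_{\mathrm{cyc}}\big([[x,y]_{\between},z]_{\pitchfork}+[[x,y]_{\pitchfork},z]_{\between}\big),\qquad x,y,z\in V.
$$
First I would record the intrinsic shape of the two brackets. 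Since $C_{\between}$ is central of codimension $2$ and $[\gG_{\between},\gG_{\between}]=\Delta$, the dyon bracket factors through the $1$-dimensional space $\Lambda^2(V/C_{\between})$ and lands in $\Delta$; hence $[x,y]_{\between}=\omega(x,y)\,d$ for a decomposable $\omega\in\Lambda^2V^*$ with $\mathrm{rad}\,\omega=C_{\between}$ and a generator $d$ of $\Delta$, the dyon condition $\Delta\cap C_{\between}=\{0\}$ reading $\iota_d\omega\neq0$. Likewise $[x,y]_{\pitchfork}=\sigma(x,y)\,g$ with $\sigma$ decomposable, $\mathrm{rad}\,\sigma=C_{\pitchfork}$, $g$ a generator of $l$, and the triadon condition $l\subset C_{\pitchfork}$ reading $\iota_g\sigma=0$.

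Substituting these into $J$ and using $\sum_{\mathrm{cyc}}\sigma(x,y)\,(\iota_g\omega)(z)=(\sigma\wedge\iota_g\omega)(x,y,z)$ gives the compact form
$$
J=(\sigma\wedge\iota_g\omega)\,d+(\omega\wedge\iota_d\sigma)\,g.
$$
The next step is to read off the hypothesis. Writing $\omega=\phi_1\wedge\phi_2$ and $\sigma=\psi_1\wedge\psi_2$, one has $\Ann(C_{\between})=\langle\phi_1,\phi_2\rangle$ and $\Ann(C_{\pitchfork})=\langle\psi_1,\psi_2\rangle$, so $\dim C_{12}=n-4$ says exactly that $\phi_1,\phi_2,\psi_1,\psi_2$ are linearly independent. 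Consequently $\sigma\wedge\iota_g\omega=\psi_1\wedge\psi_2\wedge\iota_g\omega\neq0$ iff $\iota_g\omega\neq0$, i.e. iff $l\not\subset C_{\between}$ (equivalently $l\not\subset C_{12}$), and $\omega\wedge\iota_d\sigma\neq0$ iff $\iota_d\sigma\neq0$, i.e. iff $\Delta\not\subset C_{\pitchfork}$.

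From here the argument branches according to the position of the two derived lines. If $\Delta=l$, then $l\subset C_{\pitchfork}$ forces $\Delta\subset C_{\pitchfork}$, so $\iota_d\sigma=0$, whereas $\Delta\cap C_{\between}=\{0\}$ gives $\iota_g\omega\neq0$; hence $J=(\sigma\wedge\iota_g\omega)\,d\neq0$ and the two are incompatible. If $d$ and $g$ are independent, the two summands of $J$ lie along independent vectors, so $J$ can vanish only if $l\subset C_{\between}$ \emph{and} $\Delta\subset C_{\pitchfork}$ hold simultaneously. The main obstacle is precisely this doubly-embedded configuration, where both derived lines sink into the opposite centre and the intrinsic expression for $J$ degenerates; there I would fall back on the explicit coordinate computation of $\ls P_{\between},P_{\pitchfork}\rs$ via (\ref{Schouten in coordinates}) in a basis adapted to the splitting $C_{12}\oplus\Delta\oplus l$ together with the two pairs of acting directions, exactly in the spirit of the proof of Lemma\,\ref{a1}. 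I expect this last case to carry the entire weight of the lemma: it is where the non-unimodularity of the dyon must be played off against the unimodularity of the triadon, and where the precise mechanism forcing incompatibility has to be pinned down.
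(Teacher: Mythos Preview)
Your intrinsic formula
\[
J=(\sigma\wedge\iota_g\omega)\,d+(\omega\wedge\iota_d\sigma)\,g
\]
is correct, and it is the \emph{complete} compatibility obstruction --- nothing is being lost. In the ``doubly-embedded'' configuration ($l\subset C_{\between}$ and $\Delta\subset C_{\pitchfork}$, with $d,g$ independent) both contractions vanish and the formula gives $J=0$ on the nose; there is no hidden term for a coordinate computation to uncover. So your proposed fallback will not produce incompatibility --- it will reproduce $J=0$.

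In fact this sub-case is a counterexample to the lemma as stated. For $n=5$ take $P_{\between}=x_4\xi_4\xi_5$ (so $C_{\between}=\langle e_1,e_2,e_3\rangle$, $\Delta=\langle e_4\rangle$) and $P_{\pitchfork}=x_1\xi_2\xi_3$ (so $C_{\pitchfork}=\langle e_1,e_4,e_5\rangle$, $l=\langle e_1\rangle$). Then $\dim C_{12}=1=n-4$, yet a direct application of (\ref{Schouten in coordinates}) gives $\ls P_{\between},P_{\pitchfork}\rs=0$. The configuration requires $l\subset C_{\pitchfork}\cap C_{\between}=C_{12}$, hence $n\ge5$; for $n=4$ it cannot occur, and there the paper's three cases $\mathrm{I}_1,\mathrm{I}_2,\mathrm{I}_3$ are exhaustive and correct.

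The paper's proof reduces to $n=4$ by quotienting out $C_{12}$ and asserting ``compatible $\Leftrightarrow$ compatible $\bmod\,C_{12}$''. That equivalence holds only in the forward direction: when $l\subset C_{12}$ the quotient of $\gG_{\pitchfork}$ is abelian, so the quotient structures are trivially compatible whatever happens upstairs. Your argument correctly handles every other case and isolates precisely the configuration the $n=4$ reduction misses; the right conclusion is not that you should work harder to force incompatibility there, but that the lemma needs the extra hypothesis $l\not\subset C_{12}$.
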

\begin{proof} 
First, note that  $\gG_{\between}$ and $\gG_{\pitchfork}$ are compatible 
if and only if the factorised lieons $\gG_{\between}/C_{12}$ and 
$\gG_{\pitchfork}/C_{12}$ are compatible. So, we can assume that $n=4$ and 
$C_{12}=0$. Then $V=C_{\between}\oplus C_{\pitchfork}$ and 
$\mathrm{dim}\,C_{\between}=\mathrm{dim}\,C_{\pitchfork}=2$. The projections 
defined by this splitting of $V$ send the line $\Delta$ to subspaces 
$\Delta^{\between}\subset C_{\between}$ and $\Delta^{\pitchfork}\subset 
C_{\pitchfork}$, respectively. Since $\Delta$ does not belong to 
$C_{\between}, \,\mathrm{dim}\,\Delta^{\pitchfork}=1$. There may occur one 
of the following three cases. 

I$_1$ : $\mathrm{dim}\,\Delta^{\between}=1$ and $C_{\pitchfork}=l\oplus 
\Delta^{\pitchfork}$. Let $e_1\in\Delta^{\between}, ,\ 
e_2\in\Delta^{\pitchfork}$ be such that $e_1+e_2$ generates $\Delta$. If 
$e_3$ generates $l$ and $\{e_1,e_4\}$ generate $C_{\between}$, then 
$e_1,\dots,e_4$ is a basis in $V$. In the corresponding coordinates, the  
Poisson bi-vectors  $P_{\between}$ and $P_{\pitchfork}$ 
are proportional to $(x_1+x_2)\xi_2\xi_3$ and to $x_3\xi_1\xi_4$, 
respectively, and a direct computation shows that 
$\ls P_{\between},P_{\pitchfork}\rs\neq 0$. 

I$_2$ : $\mathrm{dim}\,\Delta^{\between}=0$ and $C_{\pitchfork}=l\oplus 
\Delta^{\pitchfork}$. Consider a basis 
$e_1,\dots,e_4$ in $V$ with $e_1,e_2\in C_{\between}, \,e_3\in l, \,e_4\in 
\Delta^{\pitchfork}$. In this case, $P_{\between}$ and $P_{\pitchfork}$ 
are proportional to $x_4\xi_3\xi_4$ and to $x_3\xi_1\xi_2$, respectively, and 
$\ls P_{\between},P_{\pitchfork}\rs\neq 0$

I$_3$ : $\mathrm{dim}\,\Delta^{\between}=0$ and $l=\Delta^{\pitchfork}$. 
Similarly, in a basis $e_1,\dots,e_4$ in $V$ with $e_1,e_2\in C_{\between}, 
\,e_3\in l, \,e_4\in \C_{\pitchfork}$,  the Poisson bi-vectors $P_{\between}$ and
$P_{\pitchfork}$ are proportional to $x_3\xi_3\xi_4$ and to $x_3\xi_1\xi_2$, respectively, 
and  $\ls P_{\between},P_{\pitchfork}\rs\neq 0$
\end{proof}

Set $C=C_{\between}+C_{\pitchfork}$ and note that $\mathrm{dim}\,C=n-1$ if and only if 
$\mathrm{dim}\,C_{12}=n-3$. In this case there are two possibilities : $\Delta\cap 
C=\{0\}$ and $\Delta\subset C$. 

\begin{lem}\label{b2} If $\mathrm{dim}\,C_{12}=n-3$ and $\Delta\cap 
C=\{0\}$, then $\gG_{\between}$ and $\gG_{\pitchfork}$ are incompatible.
\end{lem}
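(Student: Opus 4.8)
The plan is to exhibit an explicit common basis of $V$ in which both Poisson bi-vectors become monomials, and then to read off $\ls P_\between, P_\pitchfork\rs$ from Formula~(\ref{Schouten in coordinates}), producing a non-zero term. The hypotheses first pin down the dimensions: from $\dim C_{12}=n-3$ one gets $\dim C=\dim C_\between+\dim C_\pitchfork-\dim C_{12}=n-1$, and since $\dim\Delta=1$ with $\Delta\cap C=\{0\}$ we obtain the direct decomposition $V=C\oplus\Delta$.

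First I would fix the basis. Take a basis $g_1,\dots,g_{n-3}$ of $C_{12}$, complete it to a basis of $C_\between$ by one vector $b$ and to a basis of $C_\pitchfork$ by one vector $b'$, and let $d$ span $\Delta$. Since $C=C_{12}\oplus\langle b\rangle\oplus\langle b'\rangle$ and $V=C\oplus\Delta$, the family $\{g_1,\dots,g_{n-3},b,b',d\}$ is a basis of $V$. Next I would put both structures into normal form. For the dyon, $C_\between\oplus\Delta=\mathrm{span}(g_i,b,d)$ is a hyperplane whose complement is $\langle b'\rangle$, so $b'$ may be taken as the scaling element; as $C_\between$ is central and $\Delta=\langle d\rangle$ is the derived line, the only non-trivial bracket is $[b',d]=d$, whence $P_\between=x_d\xi_{b'}\xi_d$ (up to a non-zero scalar, immaterial for compatibility). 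For the triadon the center is $C_\pitchfork=\mathrm{span}(g_i,b')$, so the two non-central directions are $b$ and $d$; their bracket is the generator $r$ of $l$, giving $[b,d]=r$ and $P_\pitchfork=x_r\xi_b\xi_d$. The decisive point is that $r\in l\subset C_\pitchfork$, so $x_r$ is a non-zero linear form in $x_{b'}$ and the $x_{g_i}$ alone, involving neither $x_b$ nor $x_d$.

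Finally I would compute the Schouten bracket from Formula~(\ref{Schouten in coordinates}) applied to $P_\between=x_d\xi_{b'}\xi_d$ and $P_\pitchfork=x_r\xi_b\xi_d$. All summands except the one indexed by $d$ drop out: those indexed by $b$ and by the $g_i$ vanish because $P_\between$ involves none of these variables, and the one indexed by $b'$ vanishes through a repeated factor $\xi_d$. The surviving term gives $\ls P_\between,P_\pitchfork\rs=x_r\,\xi_b\xi_{b'}\xi_d$. Since $r\neq0$ the coefficient $x_r$ is a non-zero linear form and the three odd variables $\xi_b,\xi_{b'},\xi_d$ are distinct, so this $3$-vector is non-zero; hence $\gG_\between$ and $\gG_\pitchfork$ are not compatible.

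I expect the only genuine obstacle to be the correct determination of the two normal forms, above all the verification that the dyon's scaling element may be taken along the complementary direction $b'$ (so that $P_\between$ is the clean monomial $x_d\xi_{b'}\xi_d$) and that $x_r$ contains neither $x_b$ nor $x_d$. This last feature is precisely what forces the $i=d$ term to survive, and it is where the hypothesis $\Delta\cap C=\{0\}$ enters: it places $d$ transverse to $C$, so both bi-vectors are monomials with the two derived directions cleanly separated. Once these normal forms are secured the rest is a two-line Grassmann calculation; note also that working directly in dimension $n$ avoids any case distinction on the position of $l$ relative to $C_{12}$, since the auxiliary coordinates $x_{g_i}$ never enter the bracket.
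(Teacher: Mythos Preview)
Your argument is correct. The crucial verifications --- that $b'\notin C_\between\oplus\Delta$ (so $b'$ can serve as the scaling element of the dyon), that $b,d$ span a complement of $C_\pitchfork$, and that $x_r$ involves only $x_{b'}$ and the $x_{g_i}$ --- all go through exactly as you indicate, and the Schouten bracket then reduces to the single surviving term $x_r\,\xi_b\xi_{b'}\xi_d\neq 0$.

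Your route differs from the paper's. The paper first factors by $C_{12}$ (or a hyperplane of it) to reduce to $n=3$ or $n=4$, and then distinguishes the two cases $l\not\subset C_{12}$ and $l\subset C_{12}$, writing out explicit monomial bi-vectors in each low-dimensional model. You instead keep the full dimension and choose one basis that works uniformly; the position of $l$ inside $C_\pitchfork$ is absorbed into the linear form $x_r$, whose precise decomposition never matters because the $b'$-indexed contribution is killed by a repeated $\xi_d$. Your approach is shorter and avoids the case split; the paper's has the minor advantage of displaying the concrete $3$- and $4$-dimensional models, which fits its running style of cataloguing normal forms.
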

\begin{proof} 
In this case, $V=\Delta\oplus C$. If $l$ does not belong to $C_{12}$, then, 
as in the preceding lemma, the factorisation by $C_{12}$ reduces the problem 
to $n=3$. If $n=3$, then 
$\mathrm{dim}\,C_{\between}=\mathrm{dim}\,C_{\pitchfork}=1, 
\,l=C_{\pitchfork}$ and $V=\Delta\oplus C_{\between}\oplus l$. In coordinates 
associated with a basis $\{e_1,e_2,e_3\}$ in $V$ such that
$e_1\in l, \,e_2\in C_{\pitchfork}, \,e_3\in\Delta$, the Poisson bi-vectors 
$P_{\between}$ and $P_{\pitchfork}$ are proportional
to $x_3\xi_1\xi_3$ and to $x_1\xi_2\xi_3$, respectively, 
and we see that $\ls P_{\between},P_{\pitchfork}\rs\neq 0$. 

If $l\subset C_{12}$, then $C_{12}=l\oplus C', \;\mathrm{dim}\,C'=n-4$, and 
the factorisation by $C'$ reduces the situation to $n=4$. In this 
particular case $\mathrm{dim}\,C_{\between}=\mathrm{dim}\,C_{\pitchfork}=2$ 
and $C_{\between}\cap C_{\pitchfork}=l$. In 
a basis $e_1,\dots,e_4$  such that $e_1\in C_{\pitchfork}, e_2\in 
C_{\between}, e_3\in\Delta, e_4\in l$ Poisson bi-vectors $P_{\between}$ and 
$P_{\pitchfork}$ are proportional to $x_3\xi_1\xi_3$ and to $x_4\xi_2\xi_3$,
respectively, and $\ls P_{\between},P_{\pitchfork}\rs\neq 0$.  
\end{proof}

\begin{lem}\label{c2} If $\mathrm{dim}\,C_{12}=n-3$ and $\Delta\subset C$, then 
$\gG_{\between}$ and $\gG_{\pitchfork}$ are compatible.
\end{lem}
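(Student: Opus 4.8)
The plan is to argue exactly as in Lemmas \ref{a2} and \ref{b2}: reduce to a low-dimensional model by factoring out a central subspace, fix a basis adapted to $C_{\between},C_{\pitchfork},\Delta$ and $l$, express $P_{\between}$ and $P_{\pitchfork}$ as coordinate monomials, and verify $\ls P_{\between},P_{\pitchfork}\rs=0$ via Formula (\ref{Schouten in coordinates}). Since $\dim C_{12}=n-3$, I would write $C_{\between}=C_{12}\oplus\langle a\rangle$ and $C_{\pitchfork}=C_{12}\oplus\langle b\rangle$, so that $C=C_{12}\oplus\langle a,b\rangle$ has dimension $n-1$ and $V=C\oplus\langle c\rangle$ for some $c\notin C$. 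As in the two preceding lemmas, factoring $V$ by any subspace $C'\subset C_{12}$ meeting neither $\Delta$ nor $l$ carries each lieon to a lieon of the same type and leaves their compatibility unchanged, so it suffices to treat a reduced model. Note that $\Delta\cap C_{\between}=\{0\}$ forces the generator of $\Delta$ to have nonzero $b$-component, whence $\Delta\cap C_{12}=\{0\}$ automatically; thus only the position of $l$ relative to $C_{12}$ has to be distinguished, giving two cases.

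In the case $l\not\subset C_{12}$ one has $l\cap C_{12}=\{0\}$, so I would factor out all of $C_{12}$ and reduce to $n=3$. Choosing a basis $f_1,f_2,f_3$ with $C_{\between}=\langle f_1\rangle$, $C_{\pitchfork}=\langle f_2\rangle=l$, and $f_3$ spanning a complement of $C$, the derived line $\Delta$ is spanned by a vector with nonzero $f_2$-component, and the two bi-vectors become (up to scalars) $P_{\pitchfork}\propto x_2\xi_1\xi_3$ and $P_{\between}\propto(\gamma x_1+x_2)\xi_2\xi_3$. Both monomials carry the factor $\xi_3$ dual to the unique direction $c$ outside $C$; applying (\ref{Schouten in coordinates}), each potentially surviving term of $\ls P_{\between},P_{\pitchfork}\rs$ forces a repeated $\xi_3$ and therefore vanishes, so $\gG_{\between}$ and $\gG_{\pitchfork}$ are compatible.

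In the case $l\subset C_{12}$ I would instead factor out a complement $C'$ of $l$ in $C_{12}$ (with $\dim C'=n-4$), reducing to $n=4$. Taking $f_1$ to span $l=C_{12}$, $f_2=\bar a$, $f_3=\bar b$ and $f_4=\bar c$, the bi-vectors become $P_{\pitchfork}\propto x_1\xi_2\xi_4$ and $P_{\between}\propto(\lambda x_1+\mu x_2+x_3)\xi_3\xi_4$. Again both share the factor $\xi_4$, and the same contraction argument through (\ref{Schouten in coordinates}) yields $\ls P_{\between},P_{\pitchfork}\rs=0$. Conceptually, the hypothesis $\Delta\subset C$ together with $\dim C=n-1$ is precisely what guarantees that the derived direction of the dyon never uses the complementary direction $c$, while both acting pairs necessarily involve $c$; this forces the shared $\xi$-factor that kills the bracket, in sharp contrast with the incompatible configuration $\Delta\cap C=\{0\}$ of Lemma \ref{b2}.

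The computations themselves are routine once the models are fixed, so I do not expect a genuine obstacle in the bracket evaluation. The main care-points are bureaucratic rather than deep: correctly parametrising the affine position of $\Delta$ through its $a$- and $b$-components (so that the reduction does not accidentally collapse $\Delta$ or $l$), and checking that the factorisation by $C'$ indeed preserves both the types of the two lieons and their compatibility, as already used implicitly in Lemmas \ref{a2} and \ref{b2}. With these verified, the two displayed bracket computations complete the proof.
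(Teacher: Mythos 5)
Your proof is correct and follows essentially the same route as the paper's: the same case split on whether $l\subset C_{12}$, the same reductions to $n=3$ and $n=4$ by factoring out a central subspace, and coordinate models that coincide with the paper's up to relabeling of the basis (the pairs $x_2\xi_1\xi_3$, $(\gamma x_1+x_2)\xi_2\xi_3$ and $x_1\xi_2\xi_4$, $(\lambda x_1+\mu x_2+x_3)\xi_3\xi_4$ match the paper's $x_1\xi_2\xi_3$, $(x_1+\lambda x_2)\xi_1\xi_3$ and $x_3\xi_2\xi_4$, $(x_1+\lambda x_2+\mu x_3)\xi_1\xi_4$). One micro-caveat: in Formula (\ref{Schouten in coordinates}) the terms pairing $\partial/\partial\xi_3$ (resp.\ $\partial/\partial\xi_4$) with $\partial/\partial x_3$ (resp.\ $\partial/\partial x_4$) vanish not because of a repeated $\xi$-factor but because the linear coefficients do not involve $x_3$ (resp.\ $x_4$); with that noted, the brackets are indeed zero in both cases.
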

\begin{proof} 
In this case $C=\Delta\oplus C_{\between}$ and $V=W\oplus C$ for a 
$1$-dimensional subspace $W$ of $V$. If $l$ does not belong to $C_{12}$, the 
factorisation of $V$ by $C_{12}$ reduces the situation to a 
$3$-dimensional space in which $C=C_{\pitchfork}\oplus C_{\between}$, \, 
$\mathrm{dim}\,C_{\between}=\mathrm{dim}\,C_{\pitchfork}=1, 
\,l=C_{\pitchfork}$ and $V=C_{\pitchfork}\oplus C_{\between}\oplus W$. 
Consider a basis $e_1,e_2,e_3$ in $V$ with $e_1\in C_{\pitchfork}, \,e_2\in 
C_{\between}, e_3\in W$. Since $\Delta\subset C_{\pitchfork}\oplus 
C_{\between}$ and $\Delta\cap C_{\between}=\{0\}, \,\Delta$ is generated by 
a vector of the form $e_1+\lambda e_2, \lambda\in \gk$. In the 
associated coordinates, the  Poisson bi-vectors 
$P_{\between}$ and $P_{\pitchfork}$ are proportional to $x_1\xi_2\xi_3$ 
and to $(x_1+\lambda x_2)\xi_1\xi_3$, respectively, 
and $\ls P_{\between},P_{\pitchfork}\rs=0$. 

If $l\subset C_{12}$, then $C_{12}=l\oplus C', \mathrm{dim}\,C'=n-4$, and 
the factorisation of $V$ by $C'$ reduces the situation to $n=4$. In this 
case $\mathrm{dim}\,C_{\between}= \mathrm{dim}\,C_{\pitchfork}=2$ and 
$C_{\between}\cap \C_{\pitchfork}=l$. Consider a 
basis $e_1\dots,e_4$ in $V$ such that $e_1\in C_{\pitchfork}, e_2\in 
C_{\between}, e_3\in l, e_4\in W$. Since $\Delta\cap C_{\between}=\{0\}$  
and $\Delta\subset C=C_{\between}+C_{\pitchfork}$, the line $\Delta$ is 
generated by a vector of the form  $e_1+\lambda e_2+\mu e_3$. It follows that
in the associated coordinates, the Poisson bi-vectors  $P_{\between}$ and $P_{\pitchfork}$
are proportional to $x_3\xi_2\xi_4$ and to  
$(x_1+\lambda x_2+\mu x_3)\xi_1\xi_4$, respectively, and
$\ls P_{\between},P_{\pitchfork}\rs=0$. 
\end{proof}
\begin{lem}\label{c2} 
If $\mathrm{dim}\,C_{12}=n-2$, then $\gG_{\between}$ and $\gG_{\pitchfork}$ 
are compatible. 
\end{lem}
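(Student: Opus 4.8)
The plan is to exploit the fact that the hypothesis $\dim C_{12}=n-2$ pins down the two centers completely. Since $\dim C_{\between}=\dim C_{\pitchfork}=n-2$ and $C_{12}=C_{\between}\cap C_{\pitchfork}$ has dimension $n-2$ as well, the inclusions $C_{12}\subseteq C_{\between}$ and $C_{12}\subseteq C_{\pitchfork}$ force $C_{\between}=C_{\pitchfork}=C_{12}=:C$. Thus the dyon and the triadon share one and the same $(n-2)$-dimensional center, and the whole question reduces to writing both associated Poisson bi-vectors in a single adapted basis. Because the assertion to be proven, $\ls P_{\between},P_{\pitchfork}\rs=0$, is basis-independent and insensitive to rescaling of either factor, I am free to work in whichever basis is most convenient.

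First I would fix a normal basis $e_1,\dots,e_n$ for the dyon, so that $\Delta=\mathrm{span}(e_2)$, $C=\mathrm{span}(e_3,\dots,e_n)$ and $[e_1,e_2]_{\between}=e_2$; in the dual coordinates this gives $P_{\between}=x_2\xi_1\xi_2$. The key observation is that this same basis is automatically normal for the triadon. Indeed, since $C$ is also the center of $\gG_{\pitchfork}$, every $e_i$ with $i\geq 3$ is central for $\gG_{\pitchfork}$, so its only nonzero bracket is $[e_1,e_2]_{\pitchfork}$, which must be a nonzero element $c$ of the derived algebra $l\subset C$. Writing $c=\sum_{k\geq 3}\gamma_k e_k$ then yields $P_{\pitchfork}=\big(\sum_{k\geq 3}\gamma_k x_k\big)\xi_1\xi_2$.

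It then remains to verify $\ls P_{\between},P_{\pitchfork}\rs=0$ by a direct application of Formula (\ref{Schouten in coordinates}). The bracket is a sum of terms $\partial_{x_i}P_{\between}\,\partial_{\xi_i}P_{\pitchfork}$ and $\partial_{\xi_i}P_{\between}\,\partial_{x_i}P_{\pitchfork}$. In the first family only $i=2$ can survive, since $\partial_{x_i}P_{\between}\neq 0$ only for $i=2$; but that term carries the factor $\partial_{x_2}P_{\between}\cdot\partial_{\xi_2}P_{\pitchfork}=\xi_1\xi_2\cdot(-\xi_1)\big(\sum_{k\geq 3}\gamma_k x_k\big)$, which vanishes because $\xi_1\xi_2\xi_1=0$. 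In the second family $\partial_{\xi_i}P_{\between}$ is nonzero only for $i\in\{1,2\}$, whereas $\partial_{x_i}P_{\pitchfork}$ is nonzero only for $i\geq 3$, so there is no common index and every such term vanishes identically. Hence $\ls P_{\between},P_{\pitchfork}\rs=0$, and by Proposition \ref{Poisson-lie dual} the structures $\gG_{\between}$ and $\gG_{\pitchfork}$ are compatible.

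The argument is entirely routine once the centers have been identified, and I do not expect a genuine obstacle. The only points deserving care are the sign bookkeeping in the odd-variable derivatives of (\ref{Schouten in coordinates}) and the conceptual heart of the matter: the linear coefficient $x_2$ of $P_{\between}$ involves only the complementary coordinate attached to $e_2\in\Delta$, while the linear coefficient of $P_{\pitchfork}$ involves only the central coordinates $x_k$, $k\geq 3$. These two index sets do not interact in the Schouten bracket, exactly as in case A$_4$ of Lemma \ref{a1}, which is precisely what forces the bracket to vanish.
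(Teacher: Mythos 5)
Your proof is correct and follows essentially the same route as the paper: both arguments observe that $\dim C_{12}=n-2$ forces $C_{\between}=C_{\pitchfork}$, choose a basis adapted to the common center together with $\Delta$ and a complementary line, write both Poisson bi-vectors as linear multiples of the same monomial $\xi_1\xi_2$, and check directly that the Schouten bracket vanishes. The only (immaterial) difference is that the paper additionally aligns a basis vector with $l$, whereas you keep the generator of $l$ as a general combination of central basis vectors.
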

\begin{proof} 
In this case, $C_{\between}=C_{\pitchfork}=C$ and $V=\Delta\oplus W\oplus C, 
\,\mathrm{dim}\,W=1$. In a basis $e_1\dots,e_n$ in $V$ such that $e_1\in 
\Delta, \,e_2\in W, \,e_3\in l\subset C, \,e_4,\dots,e_n\in C$, the 
Poisson bi-vectors $P_{\between}$ and $P_{\pitchfork}$ are proportional to 
$x_3\xi_1\xi_2$ and to $x_1\xi_1\xi_2$, respectively, and 
$\ls P_{\between},P_{\pitchfork}\rs=0$.   
\end{proof}

A summary of the above lemmas is
\begin{proc}\label{puni}
A dyon and  a triadon on $V$ are compatible if and only if the intersection 
of their centers is not generic, i.e., is of dimension greater than $n-4$. 
\end{proc}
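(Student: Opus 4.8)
The plan is to let the single invariant $d:=\dim C_{12}$, with $C_{12}=C_{\between}\cap C_{\pitchfork}$, govern everything. Since $C_{\between}$ and $C_{\pitchfork}$ are $(n-2)$-dimensional subspaces of the $n$-dimensional space $V$, their intersection satisfies $n-4\le d\le n-2$; the value $d=n-4$ is the one realised by a pair in general position, so the statement to be proved is exactly that compatibility holds on the two non-generic strata $d=n-3$ and $d=n-2$ and fails on the generic stratum $d=n-4$. I would therefore treat the three strata one at a time, since there is nothing else the invariant can do.

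First I would reduce to low dimension. Any subspace $C'\subseteq C_{12}$ is a common central ideal of both lieons, and since the linear bi-vectors $P_{\between}$ and $P_{\pitchfork}$ do not involve the coordinates dual to $C'$, the Schouten bracket $\ldb P_{\between},P_{\pitchfork}\rdb$ is unchanged under passage to the quotient; hence $\gG_{\between}$ and $\gG_{\pitchfork}$ are compatible if and only if their quotients by $C'$ are. Taking $C'$ of maximal dimension inside $C_{12}$ brings every case down to $n\le 4$, exactly as in the reductions carried out inside the preceding lemmas. From this point the whole argument is the evaluation of $\ldb P_{\between},P_{\pitchfork}\rdb$ through Formula (\ref{Schouten in coordinates}) in a basis adapted to the mutual positions of $C_{\between}$, $C_{\pitchfork}$, the derived line $\Delta$ and the line $l$.

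For the forward (``only if'') implication I would argue by contraposition. If $d$ attains its generic value $n-4$, the reduction leaves an honest dyon and triadon on a $4$-dimensional space with $C_{12}=\{0\}$; Lemma \ref{a2} records that, in each admissible position of $\Delta$ and $l$, the two bi-vectors acquire explicit monomial forms whose Schouten bracket, read off from (\ref{Schouten in coordinates}), is non-zero. Thus compatibility forces $d\ge n-3$, which is precisely the first half of the equivalence.

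For the reverse (``if'') implication I would dispose of the two degenerate strata by the same coordinate computation. When $d=n-2$ the centers coincide and, after reduction, $P_{\between}$ and $P_{\pitchfork}$ are proportional to $x_3\xi_1\xi_2$ and $x_1\xi_1\xi_2$, whose Schouten bracket vanishes identically, so the lieons are compatible. When $d=n-3$ the quotient leaves a $3$- or $4$-dimensional model; choosing a basis adapted to the positions of $C_{\between}$, $C_{\pitchfork}$, $\Delta$ and $l$ and applying (\ref{Schouten in coordinates}) gives $\ldb P_{\between},P_{\pitchfork}\rdb=0$, which is the content of Lemma \ref{c2}. Assembling the three strata yields the stated equivalence. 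The step I expect to be the genuine obstacle is this borderline stratum $d=n-3$: there the derived line $\Delta$ occupies several positions relative to the two centers, and the point to verify is that the reduction to $n\le 4$ can be arranged so as to lose none of these configurations and that in each resulting low-dimensional model the adapted coordinates do make the Schouten bracket collapse. Organising this finite case analysis is where the real work concentrates; the strata $d=n-2$ and $d=n-4$ are then immediate from the explicit monomial forms above.
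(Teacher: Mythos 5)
Your proposal breaks down at exactly the stratum you flagged as ``the genuine obstacle,'' $\dim C_{12}=n-3$: there compatibility is \emph{not} unconditional, and no choice of adapted basis can make the Schouten bracket collapse, because the vanishing of $\ls P_{\between},P_{\pitchfork}\rs$ is basis-independent. The paper's own proof (the proposition is stated as a summary of Lemmas \ref{a2}--\ref{c2}) splits this stratum according to the position of the dyon's derived line $\Delta$ relative to $C=C_{\between}+C_{\pitchfork}$: Lemma \ref{c2} gives compatibility when $\Delta\subset C$, but Lemma \ref{b2} shows \emph{incompatibility} when $\Delta\cap C=\{0\}$. Concretely, after reduction to $n=3$ one gets $P_{\between}$ proportional to $x_3\xi_1\xi_3$ and $P_{\pitchfork}$ proportional to $x_1\xi_2\xi_3$, and Formula (\ref{Schouten in coordinates}) yields $\ls P_{\between},P_{\pitchfork}\rs$ proportional to $x_1\xi_1\xi_2\xi_3\neq 0$. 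So the intermediate claim on which your ``if'' direction rests is false, and the correct criterion that a proof must record is: compatible iff $\dim C_{12}=n-2$, or $\dim C_{12}=n-3$ together with $\Delta\subset C_{\between}+C_{\pitchfork}$. (You have in effect been misled by the proposition's loose wording, which quantifies only over $\dim C_{12}$; the paper's lemmas, which constitute its proof, carry the extra condition on $\Delta$, and ``generic'' there must be read as referring to the whole configuration $(C_{\between},\Delta;\,C_{\pitchfork},l)$, not merely to the intersection of the centers.)

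There is also a secondary flaw in your reduction step. Your claim that the bi-vectors ``do not involve the coordinates dual to $C'$'' fails when $l\subset C'$: the triadon's bi-vector is $\alpha\,x_3\,\xi_1\xi_2$ with $x_3$ dual to a generator of $l$, so quotienting by a maximal $C'=C_{12}$ containing $l$ kills the triadon (its quotient is abelian, hence compatible with everything), and the equivalence ``compatible iff the quotients are compatible'' breaks in the direction you need. The paper is careful here: in the proofs of Lemmas \ref{b2} and \ref{c2} it factors by $C_{12}$ only when $l\not\subset C_{12}$, and otherwise writes $C_{12}=l\oplus C'$ and factors by the complement $C'$, which is why some cases reduce to $n=4$ rather than $n=3$. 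Your reduction is repairable by the same device, but the gap at the $\dim C_{12}=n-3$ stratum is not: as stated, your argument proves a false case division, and the honest conclusion requires the dichotomy of Lemmas \ref{b2} and \ref{c2}.
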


The following assertion immediately follows from Propositions \ref{pencil} 
and \ref{puni}.  
 
\begin{cor}
Let $\gG_1,\dots,\gG_m$ be triadons and $\gG$ a 
dyon. If the centers of these lieons are contained in a 
common hyperplane in $V$, then  
$$
\gG+\alpha_1\gG_1+\dots+\alpha_m\gG_m, \quad\alpha_1,\dots,\alpha_m\in \gk,
$$ 
is a non-unimodular first-level Lie algebra. 
\end{cor}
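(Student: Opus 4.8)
The plan is to reduce the statement to a single compatibility check plus a one-line modularity computation. Being a first-level algebra means being assembled in one step, i.e. being a linear combination of \emph{mutually} compatible lieons; so I would first establish that the whole family $\gG,\gG_1,\dots,\gG_m$ is pairwise compatible, and then read off non-unimodularity from the dyon summand. The entire argument rests on a single dimension count, which I would record at the outset: with $H\subset V$ the common hyperplane ($\dim H=n-1$) containing all the $(n-2)$-dimensional centers $C_{\gG},C_1,\dots,C_m$, any two such centers $C',C''$ satisfy
$$
\dim(C'\cap C'')\ge \dim C'+\dim C''-\dim H=(n-2)+(n-2)-(n-1)=n-3,
$$
so every pairwise intersection of centers strictly exceeds $n-4$.

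For the triadons among themselves, I would observe that $C_1,\dots,C_m$ all lie in the common $(n-1)$-dimensional subspace $H$, which is exactly the ``co-pencil'' situation of Lemma\,\ref{tight}; hence Proposition\,\ref{pencil} applies verbatim and yields mutual compatibility of $\gG_1,\dots,\gG_m$ (alternatively, pairwise compatibility follows from the estimate above together with Lemma\,\ref{b1}). For the dyon against each triadon, the same estimate gives $\dim(C_{\gG}\cap C_i)\ge n-3>n-4$, so the intersection of their centers is non-generic and Proposition\,\ref{puni} provides compatibility of $\gG$ with each $\gG_i$. Combining the two, the family $\gG,\gG_1,\dots,\gG_m$ is mutually compatible, so by definition $\gG+\alpha_1\gG_1+\dots+\alpha_m\gG_m$ is a linear combination of mutually compatible lieons, i.e. a first-level Lie algebra.

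It then remains to confirm non-unimodularity. Each triadon is nilpotent, hence all its adjoint operators are traceless, so by \eqref{tra} its modular vector vanishes. Using the additivity of the modular vector (Proposition\,\ref{PFA}, extended by the remark that modularity respects scalar multiples and finitely many compatible summands), the modular vector of the assembled algebra equals $\theta_{\gG}+\sum_i\alpha_i\theta_{\gG_i}=\theta_{\gG}$, which is non-zero because a dyon is non-unimodular; hence the assembled algebra is non-unimodular. I expect no genuine obstacle here: the only point meriting a sentence is that ``contained in a common hyperplane'' is precisely the co-pencil hypothesis of Lemma\,\ref{tight} (not the pencil alternative), so that Proposition\,\ref{pencil} is directly applicable; everything else is the bookkeeping above. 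The proof is therefore a short deduction from Propositions\,\ref{pencil}, \ref{puni} and \ref{PFA}.
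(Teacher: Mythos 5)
Your proof is correct and follows essentially the same route as the paper, which derives the corollary directly from Propositions\,\ref{pencil} and \ref{puni}; your dimension count $\dim(C'\cap C'')\ge n-3$ is exactly the reason the hypotheses of those propositions are met. The non-unimodularity argument via the additivity of the modular vector (Proposition\,\ref{PFA}) and the vanishing of the triadons' modular vectors is also the intended one, just spelled out more explicitly than in the paper.
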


\medskip
\subsection{Compatibility of dyons} 
 
Consider two dyons $\gG_i, \,i=1,2,$ on a vector space 
$V$ and the pairs $(\Delta_i,C_i), \,i=1,2$.  that characterise them. Recall 
that $\mathrm{dim}\,\Delta_i=1, \,\mathrm{dim}\,C_i=n-2$ and $\Delta_i\cap 
C_i=\{0\}$. Set $C_{12}=C_1\cap C_2$. Obviously, $n-4\leq 
\mathrm{dim}\,C_{12}\leq n-2$ and the compatibility of the $\gG_i$'s is 
equivalent to the compatibility of the factorised dyons $\gG_i/C_{12}$'s.
\begin{lem}\label{a3} 
If $\mathrm{dim}\,C_{12}=n-4$, then $\gG_1$ and $\gG_2$ are compatible if and only if
$\Delta_1\subset C_2$ and $\Delta_2\subset C_1$. 
\end{lem}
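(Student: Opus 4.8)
The plan is to invoke the reduction to dimension four recorded just before the statement and then to settle all cases at once with a single Schouten bracket computation. Since the compatibility of $\gG_1$ and $\gG_2$ is equivalent to that of the factorised dyons $\gG_i/C_{12}$, and here $\mathrm{dim}\,C_{12}=n-4$, I may assume from the outset that $n=4$ and $C_{12}=\{0\}$. Then $\mathrm{dim}\,C_1=\mathrm{dim}\,C_2=2$ and $C_1\cap C_2=\{0\}$, so $V=C_1\oplus C_2$; I choose a basis $e_1,e_2$ of $C_1$ and $e_3,e_4$ of $C_2$, with associated coordinates $x_i$ and anticommuting variables $\xi_i$ on $V^*$.

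First I would pin down the two Poisson bi-vectors in this common basis. Because $C_1$ is central in $\gG_1$ and $|\gG_1|/C_1$ is two-dimensional, the only possibly nonzero bracket of $\gG_1$ is $[e_3,e_4]$, and I may take its value to be a generator $d_1=\sum_i a_ie_i$ of the derived line $\Delta_1$; hence $P_1=\bigl(\sum_i a_ix_i\bigr)\xi_3\xi_4$, since $f_{d_1}=\sum_i a_ix_i$. Symmetrically, $\gG_2$ has $[e_1,e_2]=d_2=\sum_i b_ie_i$ spanning $\Delta_2$, so that $P_2=\bigl(\sum_i b_ix_i\bigr)\xi_1\xi_2$. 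The hypotheses that $\gG_1,\gG_2$ are genuine dyons ($\Delta_1\not\subset C_1$, $\Delta_2\not\subset C_2$) read $(a_3,a_4)\neq(0,0)$ and $(b_1,b_2)\neq(0,0)$, but these will not even be needed.

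Next I would compute $\ls P_1,P_2\rs$ straight from Formula (\ref{Schouten in coordinates}). Setting $\phi=\sum_i a_ix_i=f_{d_1}$ and $\psi=\sum_i b_ix_i=f_{d_2}$, the bracket receives contributions only in the four cubic monomials $\xi_2\xi_3\xi_4,\ \xi_1\xi_3\xi_4,\ \xi_1\xi_2\xi_4,\ \xi_1\xi_2\xi_3$, whose coefficients are, up to sign, $a_1\psi,\ a_2\psi,\ b_3\phi,\ b_4\phi$. As $\phi$ and $\psi$ are nonzero linear forms and the four monomials are distinct, $\ls P_1,P_2\rs=0$ if and only if $a_1=a_2=0$ and $b_3=b_4=0$. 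Translating back, $a_1=a_2=0$ says $d_1\in\mathrm{span}(e_3,e_4)=C_2$, i.e. $\Delta_1\subset C_2$, and $b_3=b_4=0$ says $\Delta_2\subset C_1$, which is exactly the asserted criterion.

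The one step demanding attention is the passage to the two bi-vectors $P_1,P_2$: the key observation is that a codimension-two central subspace leaves a single free structure constant — the value of the bracket on any complement — and that this value is precisely a generator of the derived line, so no normalisation of $\Delta_1,\Delta_2$ against the chosen axes is possible or needed. Once this is recognised the Schouten bracket is mechanical, and, in contrast to Lemmas \ref{a1}, \ref{a2} and \ref{b2}, this generic parametrisation disposes of every relative position of $\Delta_1$ and $\Delta_2$ simultaneously, the sign conventions in (\ref{Schouten in coordinates}) affecting only the irrelevant scalar factors of the four coefficients. If one preferred to match the case-by-case style of the earlier lemmas, one could instead split according to whether each $\Delta_i$ lies in the opposite centre and run the same computation in each branch, but the uniform version above is shorter.
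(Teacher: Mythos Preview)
Your proof is correct and follows the same overall strategy as the paper: reduce modulo $C_{12}$ to the four-dimensional situation $V=C_1\oplus C_2$, write down the two Poisson bi-vectors, and compute the Schouten bracket. The difference is in execution. The paper introduces $L=\mathrm{span}(\Delta_1,\Delta_2)$ and splits into four cases $K_1$--$K_4$ according to $\dim L$ and $\dim(L\cap C_i)$, choosing in each case an adapted basis before computing; compatibility emerges only in case $K_3$ with both parameters vanishing. Your uniform parametrisation $P_1=\phi\,\xi_3\xi_4$, $P_2=\psi\,\xi_1\xi_2$ with arbitrary linear forms $\phi,\psi$ absorbs all four cases at once, and the vanishing of the four independent coefficients of $\ls P_1,P_2\rs$ reads off the criterion directly. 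Your route is shorter and makes the necessity of both inclusions $\Delta_1\subset C_2$, $\Delta_2\subset C_1$ transparent; the paper's case analysis, on the other hand, matches the style of the surrounding lemmas and gives, as a byproduct in case $K_3$, the explicit identification $\gG_1+\gG_2\cong\between\oplus\between\oplus\gamma_{n-4}$.
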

\begin{proof} 
Passing to the factorised structures $\gG_i/C_{12}$'s we can assume that 
$\mathrm{dim}\,V=4$. In this particular case 
$\mathrm{dim}\,C_1=\mathrm{dim}\,C_2=2$ and $V=C_1\oplus C_2$. Denote by 
$p_i : V\rightarrow C_i$ the  projections associated with this splitting of $V$
and set $L=\mathrm{span}(\Delta_1,\Delta_2)$. Now we have to examine the following situations.

K$_1$ :  $\mathrm{dim}\,L=2$ and $L\cap C_i=\{0\}, \,i=1,2$. Then 
$p_i|_L:L\rightarrow C_i$ is an isomorphism, $i=1,2,$ and hence there is a 
basis  $e_1\dots,e_4$ in $V$ such that $e_1, e_2\in C_1, \,e_3, e_4\in C_2$ 
and $e_1+e_3\in \Delta_1, e_2+e_4\in \Delta_2$. In this case 
$P_1$ and $P_2$ are proportional to $(x_1+x_3)\xi_3\xi_4$ ant to 
$(x_2+x_4)\xi_1\xi_2$, respectively, and $\ls P_1,P_2\rs\neq 0$. 

K$_2$ :  $\mathrm{dim}\,L=2, \,\mathrm{dim}\,L\cap C_1=1$ and $L\cap 
C_2=\{0\}$. Then $p_1|_L$ is an isomorphism. So, if $0\neq\varepsilon_i\in 
\Delta_i, \,i=1,2$, then $e_1=p_1(\varepsilon_1), e_2=p_1(\varepsilon_2)$ 
is a basis in $C_1$. Also $e_3=p_2(\varepsilon_1)\neq 0$, since 
$\Delta_1\cap C_1=\{0\}$, and $p_2(\varepsilon_1)$ and $p_2(\varepsilon_2)$ 
are proportional. If $e_4\in C_2$ is not proportional to $e_3$, then 
$e_1,\dots,e_4$ is a basis in $V$. Then $\varepsilon_1=e_1+e_3$ 
and $\varepsilon_2=e_2+\lambda e_3$. So, in the corresponding coordinates, 
$P_1$ and $P_2$ are proportional to $(x_1+x_3)\xi_3\xi_4$ and to 
$(x_2+\lambda x_3)\xi_1\xi_2$, respectively, and $\ls P_1,P_2\rs\neq 0$. 

K$_3$ :  $\mathrm{dim}\,L=2, \,\mathrm{dim}\,L\cap C_i=1, \,i=1,2$. If 
$\varepsilon_1, \,\varepsilon_2$ are as above, then 
$e_3=p_2(\varepsilon_1)\neq 0, \,e_1=p_1(\varepsilon_2)\neq 0$ and 
$p_2(\varepsilon_2)=\lambda e_3, \,p_1(\varepsilon_1)=\mu e_1$ for some 
$\lambda, \mu\in\gk$. Then $\varepsilon_1=\mu e_1+e_3, 
\,\varepsilon_2=e_1+\lambda e_3$. Complete vectors $e_1, e_3$ to a basis in 
$V$ by vectors $e_2\in C_1, e_4\in C_2$. In the associated coordinates, the  Poisson
bi-vectors $P_1$ and $P_2$ are proportional to $(\mu x_1+x_3)\xi_3\xi_4$ and to 
$(x_1+\lambda x_3)\xi_1\xi_2$, respectively. Now one can see that
$\ls P_1,P_2\rs$ is proportional to $ -\lambda(\mu 
x_1+x_3)\xi_1\xi_2\xi_4-\mu(x_1+\lambda x_3)\xi_2\xi_3\xi_4$. It follows that
$\ls P_1,P_2\rs=0$ if and only if $\mu=\lambda=0$. Geometrically, 
this means that $\Delta_1\subset C_2, \,\Delta_2\subset C_1$, or, 
equivalently, that $\gG_1+\gG_2$ is isomorphic to $\between\oplus\between$ 
for $n=4$ and to  $\between\oplus\between\oplus\gamma_{n-4}$ in the general 
case. 

K$_4$ :  $\mathrm{dim}\,L=1 \Leftrightarrow \Delta_1=\Delta_2$. In this 
case one easily constructs a basis $e_1,\dots,e_4$ in $V$ with $e_1, e_2\in 
C_1$ and $e_3, e_4\in C_2$ and $e_1+e_3\in \Delta_1=\Delta_2$. As earlier we 
see that $P_1$ and $P_2$ are proportional to $(x_1+x_3)\xi_3\xi_4$ and to 
$(x_1+x_3)\xi_1\xi_2$, respectively, and that $\ls P_1,P_2\rs\neq 0$. 
\end{proof}

\begin{lem}\label{b3} 
If $\mathrm{dim}\,C_{12}=n-3$, then $\gG_1$ and $\gG_2$ are compatible 
either if $\Delta_1=\Delta_2 \,\mathrm{mod} \, C_{12}$, or  if $\Delta_i\subset C_1+C_2, \,i=1,2$. 
\end{lem}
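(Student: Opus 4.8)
The plan is to follow the reduction-then-compute pattern of the preceding lemmas. First I would pass to the quotient $\bar V=V/C_{12}$, which has dimension $3$ since $\dim C_{12}=n-3$. Because $\gG_1,\gG_2$ are \emph{dyons}, one has $\Delta_i\cap C_i=\{0\}$ and therefore $\Delta_i\cap C_{12}\subseteq\Delta_i\cap C_i=\{0\}$; so each derived line injects into $\bar V$, and the centres descend to lines $\bar C_i=C_i/C_{12}$ with $\bar C_1\cap\bar C_2=(C_1\cap C_2)/C_{12}=\{0\}$. Thus in $\bar V$ we are again in the case $\dim(\bar C_1\cap\bar C_2)=\bar n-3$ with $\bar n=3$, and both hypotheses descend verbatim: $\Delta_1\equiv\Delta_2\pmod{C_{12}}$ becomes $\bar\Delta_1=\bar\Delta_2$, while $\Delta_i\subseteq C_1+C_2$ becomes $\bar\Delta_i\subseteq\bar C_1+\bar C_2$. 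Since the compatibility of $\gG_1,\gG_2$ is equivalent to that of the factorised dyons $\gG_i/C_{12}$ (as recorded at the opening of this subsection), it suffices to treat $n=3$. I note that, unlike the triadon lemmas, no auxiliary subcase $\Delta_i\subseteq C_{12}$ occurs, precisely because a dyon never carries its derived line in its centre; factoring simply turns each $\between_n$ into a $\between_3$.

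In dimension $3$ I would fix a basis $e_1,e_2,e_3$ with $C_1=\langle e_1\rangle$, $C_2=\langle e_2\rangle$ and $e_3$ any completion. A dyon with one-dimensional centre $\langle e_1\rangle$ has bracket $[x,y]=\omega_1(x,y)\,d_1$, where $d_1$ spans $\Delta_1$ and $\omega_1$ is the $2$-form, unique up to a scalar, whose radical is $\langle e_1\rangle$, i.e. $\omega_1\propto e_2^{*}\wedge e_3^{*}$; the Jacobi identity is then automatic. Hence $P_1=L_1\,\xi_2\xi_3$ with $L_1=a_1x_1+b_1x_2+c_1x_3$ the linear form attached to $d_1=a_1e_1+b_1e_2+c_1e_3$, and symmetrically $P_2=L_2\,\xi_1\xi_3$ with $L_2=a_2x_1+b_2x_2+c_2x_3$. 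The dyon conditions $\Delta_i\cap C_i=\{0\}$ read $(b_1,c_1)\neq(0,0)$ and $(a_2,c_2)\neq(0,0)$.

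I would then evaluate $\ls P_1,P_2\rs$ with Formula (\ref{Schouten in coordinates}). Every contribution except the one indexed by $i=3$ carries a repeated $\xi_3$ and dies, leaving
\begin{equation*}
\ls P_1,P_2\rs=(c_1L_2-c_2L_1)\,\xi_1\xi_2\xi_3=\bigl[(c_1a_2-c_2a_1)x_1+(c_1b_2-c_2b_1)x_2\bigr]\xi_1\xi_2\xi_3,
\end{equation*}
the $x_3$-term cancelling on its own. If $\bar\Delta_1=\bar\Delta_2$ then $d_1\parallel d_2$, so all $2\times2$ minors of the matrix with rows $(a_1,b_1,c_1)$ and $(a_2,b_2,c_2)$ vanish and the bracket is zero; if $\bar\Delta_i\subseteq\bar C_1+\bar C_2=\langle e_1,e_2\rangle$ then $c_1=c_2=0$, which again annihilates both coefficients. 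In either case $\ls P_1,P_2\rs=0$, and by Proposition \ref{Poisson-lie dual} together with the criterion $\ls P_1,P_2\rs=0$ for compatibility of linear Poisson structures, $\gG_1$ and $\gG_2$ are compatible.

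The computation is routine, so the one place deserving attention is the reduction itself: checking that factoring by $C_{12}$ preserves the dyon type and that the two hypotheses survive in the quotient. I would also remark that the displayed bracket reveals the exact criterion $c_1a_2=c_2a_1$, $c_1b_2=c_2b_1$; a one-line case split on whether $c_1$ (hence $c_2$) vanishes shows this is equivalent to one of the two stated alternatives, so the conditions of the lemma are in fact not merely sufficient but exhaustive.
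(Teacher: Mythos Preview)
Your proof is correct and follows essentially the same approach as the paper's: reduce to $n=3$ by factoring out $C_{12}$, choose a basis adapted to $C_1$ and $C_2$, and compute the Schouten bracket directly. The paper splits into two cases J$_1$ (some $\Delta_i\not\subset C$, with $e_3$ chosen in $\Delta_1$) and J$_2$ ($\Delta_i\subset C$ for both), whereas you handle both at once by leaving $e_3$ arbitrary and writing $P_i=L_i\,\xi_{j}\xi_3$ in full generality; this yields the same bracket $(c_1L_2-c_2L_1)\xi_1\xi_2\xi_3$ and the same conclusion, with the added bonus that your closing remark on exhaustiveness matches what the paper extracts from case J$_1$.
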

\begin{proof} 
As above, the factorisation $\mathrm{mod}\,C_{12}$ reduces the problem  to 
$n=3$. In this case $\mathrm{dim}\,C_1=\mathrm{dim\,C_2}=1$ and $C_1\cap 
C_2=\{0\}$. Equivalently, if $C=C_1+C_2$, then $\mathrm{dim}\,C=2$. Here two 
possibilities occur: 

J$_1$ : One of $\Delta_i$'s, say, $\Delta_1$, does not belong to $C$. Let 
$\{e_1, e_2, e_3\}$ be a basis in $V$ such that $e_i\in C_i, \,i=1,2, \,e_3\in \Delta_1$.
If $\sum_{i=1}^3\alpha_ie_i$ is a base vector in $\Delta_2$, then a computation shows
that $P_1$ and $P_2$ are proportional to $x_3\xi_2\xi_3$ and to 
$\sum_{i=1}^{3}\alpha_ix_i\xi_1\xi_3$, respectively. It follows that
$\ls P_1,P_2\rs$ is proportional to $(\alpha_1x_1+\alpha_2x_2)\xi_1\xi_2\xi_3$ and 
we see that $\ls P_1,P_2\rs=0$ iff 
$\alpha_1=\alpha_2=0$, or, equivalently, iff $\Delta_1=\Delta_2$. 
For arbitrary $n$ the last condition means that 
$\Delta_1=\Delta_2 \,\mathrm{mod} \, C_{12}$. 

J$_2$ : $\Delta_i\subset C, \,i=1,2$. Let $0\neq e_i\in C_i, \,i=1,2$. Then 
$e_1+\lambda e_2\in \Delta_2$ and $\mu e_1+e_2\in \Delta_1$ for some 
$ \lambda, \mu\in\gk$. Complete $e_1, e_2$ to a basis in $V$ by a vector 
$e_3$. Then, in the associated coordinates, $P_1$ and $P_2$ are proportional to  
$(\mu x_1+x_2)\xi_2\xi_3$ and to $(x_1+\lambda x_2)\xi_1\xi_3$, respectively,
and we see that$\ls P_1,P_2\rs=0$. 
\end{proof}
\begin{rmk}
The results of this section show that the compatible configurations of lieons
can be described in a manner which does not refer explicitly to the
ground field. This description is in terms of the relative positions 
of pairs of subspaces that characterise the dyons and triadons under consideration. The 
specificity of the ground field $\gk$ is exclusively confined to the coefficients 
of the linear combinations of the ``abstract" lieons in a ``compatible"
position from which the first-level Lie algebras over  $\gk$ are constructed.
\end{rmk}
It is not difficult to deduce from the proofs of Theorems \ref{C-dis} and 
\ref{R-dis} that there exists a number $\nu(n)$ such 
that any $n$-dimensional Lie algebra can be assembled from  no more than 
$\nu(n)$ lieons. On the other hand, the results of this section show that 
even first-level Lie algebras can be assembled from an unbounded number of 
dyons and triadons, intertwined in a chaotic manner. This makes
the problem of recognising isomorphic Lie algebras on the basis of their 
construction from lieons  nontrivial.  For this reason, we need to introduce 
assembling procedures that are more regular and we describe such a procedure for the classical simple Lie
algebras in the next section.

\section{Canonical disassemblings of classical Lie algebras}\label{classical}

In this section we shall describe some complete
disassemblings of classical Lie algebras that are, in a sense, \emph{canonical}. 
The classical Lie algebras are symmetry algebras of either bilinear forms or volume forms, and this interpretation suggests how to disassemble them completely  
over arbitrary ground fields of characteristic zero. 
In particular, this method does not distinguish between real and complex Lie algebras. 
\medskip
\subsection{Disassemblings of orthogonal Lie algebras}
\label{SO-dis} 

\noindent Let $g=\sum_1^n a_ix_i^2, \,0\neq a_i\in\gk,$ be a nondegenerate 
quadratic form on a $\gk$-vector space $V$. The Lie algebra $\gs\go(g)$ of 
(infinitesimal) symmetries of $g$ is composed of the linear vector fields $X$ 
on $V$ such that $X(g)=0$. Obviously, 
$e_{ij}=a_ix_i\partial_j-a_jx_j\partial_i\in \gs\go(g)$ and 
$[e_{ij},e_{jk}]=a_je_{ik}$. Moreover, the fields $\{e_{ij}\}_{i<j}$  form a 
basis of $\gs\go(g)$. For instance, if $a_1=\cdots a_p=1, \,a_{p+1}=\cdots=a_n=-1$ and $\gk=\R$, this is the standard basis of $\gs\go(p,q), \,q=n-p$.  

Let $x_{ij}$ be the linear function on $|\gs\go(g)|^*$ corresponding to 
$e_{ij}$. Clearly, $x_{ij}=-x_{ji}$ and  $\{x_{ij}\}_{i<j}$ is a 
cartesian chart on $|\gs\go(g)|^*$. The only nontrivial commutation relations 
involving the functions $x_{ij}$'s are $\{x_{ij},x_{jk}\}=a_jx_{ik}$ and   
$$
P=\sum_{i<j,\alpha}a_{\alpha}x_{ij}\xi_{i\alpha}\wedge\xi_{\alpha j} \quad 
\mathrm{with} \quad \xi_{ij}=\frac{\partial}{\partial x_{ij}} 
$$ 
is the Poisson structure on 
$|\gs\go(g)|^*$  associated with the Lie algebra $\gs\go(g)$. Observe that
\begin{equation}\label{dec-ort}
P=\sum_{\alpha}a_{\alpha}P_{\alpha}\quad\mbox{with}\quad 
P_{\alpha}=\sum_{i<j}x_{ij}\xi_{i\alpha}\wedge\xi_{\alpha j}
\end{equation}
Since $P$ is a Poisson bi-vector for arbitrary $a_{\alpha}$'s, this shows
that $P_1,\dots, P_n$ are mutually compatible Poisson bi-vectors. 

The same result may be obtained by observing that
$$
2P_{\alpha}=\ls P,X_{\alpha}\rs=\ls P_{\alpha},X_{\alpha}\rs \;\mathrm{with} \; X_{\alpha}=
\sum_sx_{\alpha s}\xi_{\alpha s},
\quad\mathrm{and} \quad \ls P_{\alpha},X_{\beta}\rs=0, \;\forall \alpha\neq\beta .
$$
Indeed, $\ls P, P_{\alpha}\rs=\frac{1}{2}\partial_P^2(X_{\alpha})=0$ and
$$
\ls P_{\alpha}, P_{\beta}\rs=\frac{1}{2}\ls P_{\alpha}, \ls P, X_{\beta}\rs\rs=
\frac{1}{2}\ls\ls P_{\alpha}, P\rs, X_{\beta}\rs
+ \frac{1}{2}\ls P,\ls P_{\alpha}, X_{\beta}\rs\rs=0.
$$
Let us write
$$
P_{\alpha}=\sum_{i<j}P_{\alpha,ij}\quad\mbox{with}\quad P_{\alpha,ij}=
x_{ij}\xi_{i\alpha}\wedge\xi_{\alpha j} .
$$ 
For a fixed index $\alpha$, the Poisson bi-vectors $P_{\alpha,ij}$'s are mutually 
compatible and each is associated with an algebra isomorphic to $\pitchfork_m, 
\,m=n(n-1)/2$. This shows that the Lie algebra $\gs\go(g)$ can be assembled in two steps 
from $n(n-1)(n-2)/2$  triadons. 

Translating the above results in terms of Lie brackets, one easily finds 
that
$$
[\cdot,\cdot]=[\cdot,\cdot]_1+\cdots+[\cdot,\cdot]_n
$$ 
where $[\cdot,\cdot]$ stands for the Lie bracket in $\gs\go(g)$ and 
the structure $[\cdot,\cdot]_{\alpha}$ is defined by the relations 
$$
[e_{i\alpha},e_{\alpha j}]_{\alpha}=a_{\alpha}e_{ij} \quad \mathrm{and}
\quad [e_{ij},e_{kl}]_{\alpha}=0, \;\mathrm{if} \;\alpha\notin \{i,j\}\cap\{k,l\}.
$$  
In turn, $[\cdot,\cdot]_{\alpha}=\sum_{i<j}[\cdot,\cdot]_{\alpha,ij}$ 
where the only nontrivial product $[\cdot,\cdot]_{\alpha,ij}$ involving the basis vectors $e_{kl}$'s is $[e_{i\alpha},e_{\alpha 
j}]_{\alpha,ij}=a_{\alpha}e_{ij}$. 
\begin{rmk}\label{ort-any-fields}
The Poisson bi-vectors $P_{\alpha}=\sum_{i<j}x_{ij}\xi_{i\alpha}\wedge\xi_{\alpha j}$
may be interpreted as bi-vectors over the ring $\Z[x_{ij}]_{1\leq i < j\leq n}$. Any formal
linear combination of these bi-vectors with coefficients in a field $\gk$ is naturally 
interpreted as a linear bi-vector over the polynomial algebra $\gk[x_{ij}]_{1\leq i < j\leq n}$,
i.e., as a Lie algebra over $\gk$. In this sense, the $P_{\alpha}$'s are universal building blocks
for $g$--orthogonal algebras. For instance,  if $\gk=\R$, then
$$
P_1+\dots+P_s-P_{s+1}-\dots-P_n, \quad s=n-r.
$$
is the Poisson bi-vector associated with $\gs\go(r,s)$.
\end{rmk}

\medskip
\subsection{Disassembling of symplectic Lie algebras}
\label{SP-dis}  

\noindent Let $\beta(v,w)$ be a nondegenerate 
skew-symmetric form on a $\gk$--vector space $V$. Then the dimension of $V$  
is even, say, $2n$, and there exists  a (canonical) basis  
$\{e_1,\ldots,e_n,e_1^{\prime},\ldots,e_n^{\prime}\}$ in $V$ such that 
$$
\beta(e_i,e_j^{\prime})=\delta_{ij}, \quad\beta(e_i,e_j)= 
\beta(e_i^{\prime},e_j^{\prime})=0, \quad i,j=1,\ldots,n.
$$ 
The symplectic Lie algebra  
$\gs\gp(\beta)$ consists of the operators 
$A\in \mathrm{End}\,V$ such that 
$$
\beta(Av,w)+\beta(v,Aw)=0, \;v,w\in V .
$$ 
The algebra $\gs\gp(\beta)$ can be completely disassembled by essentially the 
same method as that we applied to orthogonal algebras. Below we describe
a method better adapted to the symplectic case. 

Let $(p_1,\ldots,p_n,q_1,\ldots,q_n)$ be coordinates in $V$ with respect to 
the above basis and $\omega=\sum_idp_i\wedge dq_i$. Then the algebra  
$\gs\gp(\beta)$ may be interpreted as the algebra of linear vector fields 
$X$ on $V$ such that $L_X(\omega)=0$. They are the
Hamiltonian (with respect to $\omega$) vector fields $X_f$ with Hamiltonians 
$f=f(p,q)$ which are quadratic in the $p_i$'s and $q_i$'s.
In this interpretation, the Hamiltonian vector fields corresponding to the monomials, 
$p_ip_j, q_iq_j, p_iq_j, \,i,j=1,\ldots,n$, form a basis of $\gs\gp(\beta)$ and the 
Lie product in $\gs\gp(\beta)$ is interpreted as the commutator of vector fields. 
Alternatively, the identification $f\leftrightarrow X_f, 
\;\{f,g\}\leftrightarrow[X_f,X_g]=X_{\{f,g\}}$ allows us to interpret 
$\gs\gp(\beta)$ as the Lie algebra of quadratic polynomials 
$\gk_2[p,q]=\gk_2[p_1,\ldots p_n,q_1,\ldots,q_n]$ with 
respect to the Poisson bracket $\{\cdot,\cdot\}$ determined by the Poisson 
bi-vector $\Pi=\sum_i\partial_{p_i}\wedge\partial_{q_i}$. In other words, we interpret the Lie algebra $\gs\gp(2n,\gk)$ as the vector space $\gk_2[p,q]$ 
equipped with the bracket $[\cdot,\cdot]=\{\cdot,\cdot\}\vert_{\gk_2[p,q]}$. 
Observe that $\Pi=\Pi_1+\ldots+\Pi_n$ with 
$\Pi_i=\partial_{p_i}\wedge\partial_{q_i}$ and denote by 
$\{\cdot,\cdot\}^i$ the bracket associated with the Poisson bi-vector 
$\Pi_i$. Then $[\cdot,\cdot]=[\cdot,\cdot]_1+\cdots+[\cdot,\cdot]_n$ with  
$[\cdot,\cdot]_i=\{\cdot,\cdot\}^i|_{\gk_2[p,q]}$. Obviously, the Poisson bi-vectors 
$\Pi_i$'s and hence the brackets $[\cdot,\cdot]_i$'s are 
mutually compatible and we obtain the disassembling 
$$
(\gk_2[p,q],\,[\cdot,\cdot])=(\gk_2[p,q],\,[\cdot,\cdot]_1)+
\ldots+(\gk_2[p,q],\,[\cdot,\cdot]_n).  
$$
The Lie algebras $\gs\gp_i(2n,\gk)=(\gk_2[p,q],\,[\cdot,\cdot]_i), 
\,i=1,\ldots,n$, are isomorphic. So, it suffices to 
completely disassemble one of them, say, $\gs\gp_1(2n,\gk)$. To this end, 
observe that the Levi--Malcev decomposition of $\gs\gp_1(2n,\gk)$ is
$$
\gs\gp_1(2n,\gk)=\langle p_1^2,p_1q_1,q_1^2\rangle\oplus
\langle p_1p_i,p_1q_i,q_1p_i,q_1q_i,p_ip_j,p_iq_j,q_iq_j\rangle_{1<i,j\leq n}
$$
where $\langle a,\ldots,b\rangle$ denotes the subalgebra of 
$\gs\gp_1(2n,\gk)$ spanned by $a,\ldots,b$.

The semi-simple part $\mathfrak{s}=\langle p_1^2,p_1q_1,q_1^2\rangle$ of 
$\gs\gp_1(2n,\gk)$ is isomorphic to $\gs\gl(2,\gk)$. The  radical  $\mathfrak{r}$ of $\gs\gp_1(2n,\gk)$  is 
$$
\mathfrak{r}=
\langle p_1p_i,p_1q_i,q_1p_i,q_1q_i,p_ip_j,p_iq_j,q_iq_j\rangle_{1<i,j\leq n}
$$
and
$$
\mathfrak{c}=\langle p_ip_j,p_iq_j,q_iq_j\rangle_{1<i,j\leq n}
$$ 
is the center of $\mathfrak{r}$. Note that 
$ [\mathfrak{r},\mathfrak{r}]\subset \mathfrak{c}$. 

According to 
Proposition \ref{str}, the algebra $\gs\gp_1(2n,\gk)$ is assembled from 
$\mathfrak{s}\oplus_{\rho}|\mathfrak{r}|$ and a Lie algebra of the form
$\gamma_m\oplus\,\mathfrak{r}$,  where $\rho$ stands for the action 
of $\gs$ on the ideal $\mathfrak{r}$. So, it remains to disassemble each of 
these two algebras. 

The algebra $\mathfrak{r}$ contains the following Heisenberg subalgebras: 
\begin{eqnarray}
\mathfrak{h}_{ij}^{pp}=\langle p_1p_i,q_1p_j,p_ip_j\rangle, \quad\quad
\mathfrak{h}_{ij}^{pq}=\langle p_1p_i,q_1q_j,p_iq_j\rangle \\
\qquad\mathfrak{h}_{ij}^{qp}=\langle p_1q_i,q_1p_j,p_jq_i\rangle, 
\quad\quad \mathfrak{h}_{ij}^{qq}=\langle p_1q_i,q_1q_j,q_iq_j\rangle 
\end{eqnarray}

\noindent Any Lie subalgebra $\mathfrak{h}_{ij}^{ab}$
from this list naturally extends to the unique $r$--triadon $\pitchfork_{ij}^{ab}$ 
on $|\mathfrak{r}|, \,r=\dim\,\mathfrak{r}$,  whose 
center contains all quadratic monomials, which do not appear in the
description of  $\mathfrak{h}_{ij}^{ab}$. It is easy to check (see Subsection\,6.1)
that triadons $\pitchfork_{ij}^{ab}$'s  are mutually
compatible and, therefore, completely disassemble $\mathfrak{r}$. 

In order to disassemble the Lie algebra 
$\mathfrak{s}\oplus_{\rho}|\mathfrak{r}|$ consider the following 
$d$-pair in it:
\begin{equation}\label{long-d-pair}
(\,\langle p_1q_1,V_p\rangle, \langle p_1^2,q_1^2,V_q,\mathfrak{c}\rangle\,)
\;\mathrm{with} \; V_p=\langle p_1p_i,p_1q_i\rangle_{1<i\leq n},
V_q=\langle q_1p_i,q_1q_i\rangle_{1<i\leq n}
\end{equation}
Easily verified commutation relations
$$
\begin{array}{rcc}
[V_p,V_p]_1=[V_q,V_q]_1=0,& [V_p,V_q]_1\subset\mathfrak{c}, &[\langle p_1^2\rangle,V_p]_1=
[\langle q_1^2\rangle,V_q ]_1=0, \\ 
\quad\quad [\langle p_1^2\rangle,V_q]_1\subset V_p,& [\langle q_1^2\rangle,V_p]_1\subset V_q,&
[\langle p_1^2,q_1^2\rangle,\langle p_1^2,q_1^2\rangle]_1\subset \langle p_1q_1\rangle, \\

&[\langle
p_1^2,q_1^2\rangle , 
V_p]_1\subset V_q,   & 
[\langle p_1^2,q_1^2\rangle,V_q]_1\subset V_p,
\end{array}
$$
prove that (\ref{long-d-pair}) is, in fact, a $d$-pair.

Nontrivial relations among quadratic monomials from the dressing algebra 
of this $d$-pair are $[p_1^2,q_1^2]_1=4p_1q_1, \,[p_1^2, q_1p_i]_1=2p_1p_i, 
\,[p_1^2, q_1q_i]_1=2p_1q_i, \,1<i\leq n,$. So, the triples 
$(p_1^2,q_1^2,p_1q_1), (p_1^2,q_1p_i,p_1p_i), \,(p_1^2,q_1q_i,p_1q_i), 
\,1<i\leq n,$ span subalgebras of the dressing algebra, which are isomorphic 
to $\pitchfork$. As in the case of subalgebras $\mathfrak{h}_{ij}^{ab}$, these
subalgebras naturally extend to some triadons. These extensions 
are mutually compatible and, therefore, disassemble the dressing algebra. 

Now it remains to disassemble the algebra 
$$
\langle p_1q_1,V_p\rangle \oplus_{\varrho}\langle p_1^2,q_1^2,V_q,\mathfrak{c}\rangle
$$
where $\varrho$ is the action of the subalgebra $\,\langle p_1q_1,V_p\rangle$ 
on the abelian ideal
$\langle p_1^2,q_1^2,V_q,\mathfrak{c}\rangle$. This algebra is, in fact, 
the semi-direct product
$$
\mathfrak{a}=\langle p_1q_1\rangle\oplus_{\varsigma}(V_p\oplus_{\rho}
\langle p_1^2,q_1^2,V_q,\mathfrak{c}\rangle)
$$
where the action $\varsigma$ of $\langle p_1q_1\rangle$ on $V_p$ and  
$\langle p_1^2,q_1^2,V_q,\mathfrak{c}\rangle$ is induced by the bracket 
$[\cdot,\cdot]_1$. It is easy to see that the nontrivial commutation relations 
of elements in the basis of the ideal 
$\mathfrak{i}=V_p\oplus_{\rho}\langle p_1^2,q_1^2,V_q,\mathfrak{c}\rangle
\subset\mathfrak{a}$ are 
$$
[q_1^2,p_1r]_1=2q_1r\in V_q, \;[p_1r,q_1s]_1=rs\in \mathfrak{c} \quad
\mathrm{with} \quad r,s=p_i,q_j, \,0<i,j\leq n.
$$
Now we see that the triples $(q_1^2,p_1r,2q_1r), \,(p_1r,q_1s,rs)$  
span Heisenberg subalgebras in $\mathfrak{i}$. By the same arguments 
as before, their natural extensions disassemble the algebra 
$\mathfrak{i}$  into a number of triadons.

The final step is to disassemble the algebra
$$
\langle p_1q_1\rangle\oplus_{\varrho}|\mathfrak{i}|=\langle p_1q_1\rangle
\oplus_{\varrho}\langle p_1^2,q_1^2,V_p,V_q,\mathfrak{c}\rangle.
$$
Observe that $|\mathfrak{i}|$ is the direct sum of 
$\varrho$-invariant subspaces 
$$
\langle p_1^2,q_1^2\rangle,\quad\langle p_1r,q_1r\rangle \;\mathrm{with} \;r=p_i,q_i, \;0<i\leq n,
\quad \mathrm{and} \quad|\mathfrak{c}|.
$$ 
The action $\varrho$ on $|\mathfrak{c}|$ is trivial and each of the  
subalgebras 
$$\langle 
p_1q_1\rangle\oplus_{\varrho}(\langle p_1^2,q_1^2\rangle\oplus|\mathfrak{c}|), 
\,\langle p_1q_1\rangle\oplus_{\varrho}(\langle p_1r,q_1r\rangle\oplus|\mathfrak{c}|)
$$ 
is isomorphic to $2\between_{m}=2\pitchfork_m$, for a suitable $m$ (see 
Formula (\ref{2h=2b}) and Subsection\,5.2). 
This disassembles the algebra $\langle 
p_1q_1\rangle\oplus_{\varrho}|\mathfrak{i}|$ into
$2n$ triadons.

\medskip
\subsection{Disassemblings  of $\gG\gl(n,\gk), \gs\gl(n,\gk),\gu(n,\gk)$ and $\gs\gu(n,\gk)$}\label{gl-sl-u-dis} 

First, we shall construct a 3-step disassembling of $\gG\gl(n,\gk)$. It is
convenient to interpret this Lie algebra as the  Lie algebra of linear vector fields
on an $n$--dimensional vector space $V$, on which the vector fields 
$e_{ij}=x_i\frac{\partial}{\partial x_j}, \,1\leq i,j\leq n$, form a natural basis. 
The nontrivial Lie products in this algebra are
$$
[e_{i\alpha},e_{\alpha j}]=e_{ij}, \;\mathrm{if} \;i\neq j,  \quad\mathrm{and}\quad 
[e_{i\alpha},e_{\alpha i}]=e_{ii}-e_{\alpha\alpha},  \;\mathrm{if}  \;i\neq \alpha.
$$
Let $z_{ij}$'s be the coordinates on $|\gG\gl(n,\gk)|^*$ corresponding to the $e_{ij}$'s. 
The associated  Poisson bi-vector on $\gG\gl(n,\gk)$ is
\begin{equation}\label{gl-pois}
P=\sum_{1\leq i,j,\alpha\leq n}z_{ij}\xi_{i\alpha}\xi_{\alpha j} \quad \mathrm{with} \quad
\xi_{ij}=\frac{\partial}{\partial z_{ij}}.
\end{equation}
In the basis 
$$
\{e_{ij}^t=t_i t_j e_{ij}\}, \,0\neq t_i\in\gk, \,i=1,\dots,n,
$$ 
we have 
$P=\sum t_{\alpha}^2z_{ij}^t \xi_{i\alpha}^t\xi_{\alpha j}^t$ where
$z_{ij}^t$ and $\xi_{ij}^t$ stand for coordinates and partial derivatives with respect
to this basis. The isomorphism identifying the second basis with the first transforms
$P$ into the Poisson bi-vector
\begin{equation}\label{1-gl-dis}
P_t=\sum_{1\leq\alpha\leq n} t_{\alpha}^2P_{\alpha} \quad\mathrm{with} \quad P_{\alpha}=
\sum_{(i,j)\neq (\alpha,\alpha)}z_{ij}\xi_{i\alpha}\xi_{\alpha j}, \quad t=(t_1,\dots,t_n).
\end{equation}
This implies that the $P_{\alpha}$'s are mutually compatible Poisson bi-vectors, and, in particular,
that $P=P_1+\dots+P_n$. In turn, $P_{\alpha}$ disassembles as
\begin{equation}\label{2-gl-dis}
P_{\alpha}=\sum_{i,i\neq\alpha}(z_{i\alpha}\xi_{i\alpha}\xi_{\alpha\alpha}+
z_{\alpha i}\xi_{\alpha\alpha}\xi_{\alpha i})+\sum_{i,j,i\neq\alpha, j\neq \alpha}
z_{ij}\xi_{i\alpha}\xi_{\alpha j}
\end{equation}
The first Poisson bi-vector in the sum in right-hand side of (\ref{2-gl-dis}) corresponds to
an algebra of the form $\Gamma_A$, while the second corresponds to a dressing algebra. Each can be simply disassembled into a number of $n^2$--triadons (see (\ref{g-dec})).

However, the Poisson bi-vectors $P_{\alpha}$'s in (\ref{1-gl-dis}) do not restrict to the 
subalgebra $\gs\gl(n,\gk)$ of $\gG\gl(n,\gk)$ and hence cannot be used to 
disassemble it. For this purpose, we consider another basis in $\gG\gl(n,\gk)$:
\begin{equation}\label{basis-gl}
e_{ij}^0=e_{ij}-e_{ji}, \,1\leq i < j\leq n, \;e_{ij}^1=e_{ij}+e_{ji}, \,1\leq i\leq j\leq n,
\end{equation}
which respects the matrix transposition and for $i\geq j$, we set
$e_{ij}^0=-e_{ji}^0, \,e_{ij}^1=e_{ji}^1$. 
The upper indices in the symbols $e_{ij}^{\epsilon}$ are understood as elements of $\dF_2$.
In this notation, all nontrivial Lie products of elements
of basis (\ref{basis-gl}) are as follows:
\begin{eqnarray}\label{com-for-gl}
[e_{ij}^{\sigma},e_{jk}^{\tau}]=e_{ik}^{\sigma+\tau}, \mathrm{if} \;i\neq k,
\;i\neq j, \;i\neq k;  & \nonumber \\  
\,[e_{ij}^{\sigma},e_{ji}^{\tau}]=2(e_{ii}^1-e_{jj}^1), 
\mathrm{if} \;i\neq j, \;{\sigma}\neq\tau ; 
&\, [e_{ij}^{\sigma},e_{jj}^1]=2e_{ij}^{\sigma+1}, \;\mathrm{if} \;i\neq j.
\end{eqnarray}
The corresponding $d$-pair in $\gG\gl(n,\gk)$ is $(\gs=\langle e_{ij}^0\rangle, 
W=\langle e_{ij}^1\rangle)$. Obviously, $\gs$ is isomorphic to $\gs\go(g)$ for
$g=\sum_{i=1}^n x_i^2$, $\gs\subset \gs\gl(n,\gk)\subset \gG\gl(n,\gk)$ and 
$$
W_0\df W\cap\gs\gl(n,\gk)=\langle e_{ij}^1, \,e_{ii}^1-e_{jj}^1\rangle_{1\leq i\neq j\leq n}.
$$
In particular, $(\gs,W_0)$ is a $d$-pair in $\gs\gl(n,\gk)$. Denote by $\gA_{\beta}$
(resp., by $\gA_{\beta_0}$) the dressing algebra (see Subsection\,5.3) corresponding 
to the $d$-pair $(\gs,W)$ (resp.,  to $(\gs,W_0)$), and by $\rho$ (resp., $\rho_0$) 
the corresponding representation of $\gs$ in $W$ (resp., in $W_0$). So, by construction, 
we have
\begin{equation}\label{dis-gl-sl}
 \gG\gl(n,\gk)=\gs\oplus_{\rho}W+\gA_{\beta},  \qquad
 \gs\gl(n,\gk)=\gs\oplus_{\rho_0}W_0+\gA_{\beta_0}.
\end{equation}
Moreover, we have
\begin{lem}\label{dis-u-su}
For $\gk=\R$, the algebras
$$
\gu(n,\gk)=\gs\oplus_{\rho}W+\gA_{-\beta},  \qquad
 \gs\gu(n,\gk)=\gs\oplus_{\rho_0}W_0+\gA_{-\beta_0}.
$$
are isomorphic to the unitary  and special unitary  Lie 
algebras, respectively.
\end{lem}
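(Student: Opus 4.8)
The plan is to show that the real Lie algebras defined by $\gu(n,\R)=\gs\oplus_\rho W+\gA_{-\beta}$ and $\gs\gu(n,\R)=\gs\oplus_{\rho_0}W_0+\gA_{-\beta_0}$ are genuinely the unitary and special unitary algebras, by producing an explicit isomorphism. The key observation is that the only difference between the assembled expression for $\gG\gl(n,\R)$ in \eqref{dis-gl-sl} and the one claimed for $\gu(n,\R)$ is the sign of the dressing-algebra piece: $\gA_\beta$ is replaced by $\gA_{-\beta}$. Since $\gs=\langle e_{ij}^0\rangle$ consists of the skew-symmetric matrices (the $1$-eigenspace of the involution $\mathfrak t=-T$ from Example \ref{d-gl-pair}) and $W=\langle e_{ij}^1\rangle$ consists of the symmetric matrices, the bracket in $\gs\oplus_{\rho}W+\gA_{-\beta}$ is the \emph{same} as the $\gG\gl$ bracket on the $\gs$-with-$\gs$ and $\gs$-with-$W$ parts but has its sign reversed on the $W$-with-$W$ part (where $\gA_\beta$ lives, by the Stripping Lemma \ref{str}, since $\beta(w_1,w_2)=[w_1,w_2]\in\gs$).

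First I would write down the standard model of $\gu(n,\R)$ as the real matrices $A$ with $A^*+A=0$, i.e. $\bar A^t=-A$, and split such $A$ into real and imaginary parts $A=S+iK$ with $S$ real skew-symmetric and $K$ real symmetric. The subspace of purely imaginary-times-symmetric matrices $iK$ together with the real skew-symmetric $S$ gives exactly the decomposition $\gu(n,\R)=\mathfrak{so}(n)\oplus i\cdot S(n)$, which matches $\gs\oplus W$ as vector spaces under the correspondence $e_{ij}^0\mapsto e_{ij}^0$ (skew part kept) and $e_{ij}^1\mapsto i\,e_{ij}^1$ (symmetric part multiplied by $\sqrt{-1}$). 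The map $\Phi\colon \gs\oplus W\to\gu(n,\R)$ defined by $\Phi|_\gs=\id$ and $\Phi|_W=\sqrt{-1}\cdot(\,\cdot\,)$ is the candidate isomorphism.

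The core step is then a direct bracket check using the commutation relations \eqref{com-for-gl}. The point is that multiplying the $W$-component by $i$ introduces a factor $i^2=-1$ precisely on brackets $[W,W]$, whose image lands in $\gs$, and a factor $i$ on each bracket of the mixed type $[\gs,W]$ and $[W,\gs]$, whose image lands in $W$; brackets $[\gs,\gs]$ are untouched. Concretely, for $w_1,w_2\in W$ one has $[\Phi w_1,\Phi w_2]=i^2[w_1,w_2]=-[w_1,w_2]=-\beta(w_1,w_2)$, which is exactly the bracket of $\gA_{-\beta}$; for $s\in\gs,\,w\in W$ one gets $[\Phi s,\Phi w]=i[s,w]=\Phi([s,w])$, reproducing $\rho$; and $[\Phi s_1,\Phi s_2]=[s_1,s_2]=\Phi([s_1,s_2])$. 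Thus $\Phi$ intertwines the assembled bracket on $\gs\oplus_\rho W+\gA_{-\beta}$ with the commutator on $\gu(n,\R)$, proving they are isomorphic. For $\gs\gu(n,\R)$ the same map restricts to $\gs\oplus W_0$, whose image is the trace-zero part of $\gu(n,\R)$, namely $\gs\gu(n,\R)$; here I would note that $\gs\subset\gs\gl(n,\R)$ already has trace zero and the elements $e_{ii}^1-e_{jj}^1$ spanning the diagonal part of $W_0$ map under $\Phi$ to $i(e_{ii}-e_{jj})$, which are traceless, so $\Phi(W_0)$ consists of traceless anti-Hermitian matrices.

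The main obstacle, though routine, is verifying that the sign bookkeeping is globally consistent — in particular that the factors $i$ on the mixed brackets genuinely reconstruct the representation $\rho$ (rather than $-\rho$ or a twist), and that no cross-terms are produced when both arguments have mixed $\gs$- and $W$-components. The cleanest way to sidestep any ambiguity is to phrase the whole argument as: the complexification-independent assembled bracket is, by construction, the commutator bracket \emph{twisted} by the grading automorphism that acts as $(-1)^{\deg}$ on the $\dF_2$-graded pieces, and $\Phi$ is exactly the operator implementing the Weyl-unitarian trick $\mathfrak g_{\mathbb R}\rightsquigarrow\mathfrak g_u$ at the level of Cartan decompositions. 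I would therefore present $\Phi$ explicitly, check the three bracket types against \eqref{com-for-gl}, and conclude that $\gs\oplus_\rho W+\gA_{-\beta}\cong\gu(n,\R)$ and its traceless analogue $\cong\gs\gu(n,\R)$.
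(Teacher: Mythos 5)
Your proof is correct and follows essentially the same route as the paper, whose own proof is just the one-line remark that the result follows from the commutation relations (\ref{com-for-gl}); your map $\Phi$ (identity on $\gs$, multiplication by $\sqrt{-1}$ on $W$) and the three bracket checks are exactly the verification the paper leaves implicit.
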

\begin{proof}
The result follows from Relations (\ref{com-for-gl}).
\end{proof}
\begin{rmk}\label{field-as-glue}
The isomorphism classes of $\gG\gl_{\lambda}(n,\gk)\df\gs\oplus_{\rho}W+
\gA_{\lambda\beta}$ and of $\gs\gl_{\lambda}(n,\gk)\df\gs\oplus_{\rho_0}W_0+
\gA_{\lambda\beta_0}$, where $\lambda\in\gk$, depend on the quadratic part of $\lambda$. 
In fact, $\gG\gl_{\lambda}$ and  $\gG\gl_{\lambda^{\prime}}$ (resp., $\gs\gl_{\lambda}$ 
and  $\gs\gl_{\lambda^{\prime}}$) are isomorphic if and only if 
$\lambda^{\prime}=\lambda\mu^2, \,\mu\in\gk$. 
\end{rmk}

Since a dressing algebra can be simply disassembled into a number
of triadons, we shall focus on the algebras $\gs\oplus_{\rho}W$ and
$\gs\oplus_{\rho_0}W_0$. By virtue of (\ref{dis-gl-sl}) and Lemma\,\ref{dis-u-su},
a complete disassembling of such algebras automatically gives a complete disassemblings
of the algebras $ \gG\gl(n,\gk), \,\gs\gl(n,\gk), \,\gu(n,\gk)$ and $\gs\gu(n,\gk)$. 

Denote by  $z_{ij}$ (resp., $w_{pq}$) the linear functions on $|\gs|$ (resp., $|W|$) 
corresponding to $e_{ij}^0$ (resp., $e_{pq}^1$). Together they form a cartesian 
chart on  $|\gs\oplus_{\rho}W|=|\gs|\oplus|W|$, in terms of which the Poisson bi-vector 
associated with the algebra $\gs\oplus_{\rho}W$ is written as follows:
\begin{equation}\label{gl-2dis}
Q=\sum_{\alpha,  i<j}z_{ij}\xi_{i\alpha}\xi_{\alpha j}+
\sum_{p\neq\alpha\neq q}w_{pq}\xi_{p\alpha}\eta_{\alpha q}+
2\sum_{p\neq q}w_{pq}\xi_{pq}\eta_{qq}
\end{equation}
with $\xi_{ij}=\frac{\partial}{\partial z_{ij}}, \eta_{pq}=\frac{\partial}{\partial w_{pq}}$.
The same arguments as in Subsection\,7.1 show that if
$$
 Q_{\alpha}=
\sum_{i<j}z_{ij}\xi_{i\alpha}\xi_{\alpha j}+
\sum_{p,q, q\neq\alpha}w_{pq}\xi_{p\alpha}\eta_{\alpha q}+
2\sum_{p}w_{p\alpha}\xi_{p\alpha}\eta_{\alpha\alpha},
$$
then $Q=Q_1+\dots+Q_n$ is a simple disassembling of $Q$. 

In order to disassemble $Q_{\alpha}$ note that the single summands in the
right-hand side of (\ref{gl-2dis}) are $n^2$--triadons, 
and that the only incompatible pairs are
$$
w_{pq}\xi_{p\alpha}\eta_{\alpha q}  \quad\mathrm{and} \quad 
w_{q\alpha}\xi_{q\alpha}\eta_{\alpha\alpha}, \quad p\neq q, \,p\neq \alpha, \,q\neq \alpha,
$$
so that
\begin{equation}\label{3gl-dis}
Q_{\alpha}^1=\sum_{i<j}z_{ij}\xi_{i\alpha}\xi_{\alpha j},\quad
Q_{\alpha}^2=\sum_{p,q, q\neq\alpha}w_{pq}\xi_{p\alpha}\eta_{\alpha q}, \quad
Q_{\alpha}^3=\sum_{p}w_{p\alpha}\xi_{p\alpha}\eta_{\alpha\alpha}
\end{equation}
are Poisson bi-vectors and $\ls Q_{\alpha}^1,Q_{\alpha}^2\rs=
\ls Q_{\alpha}^1,Q_{\alpha}^3\rs=0$. Moreover,  by using Formula (\ref{Schouten 
in coordinates}) one easily finds that $\ls Q_{\alpha}^2,Q_{\alpha}^3\rs=0$. Hence
$Q_{\alpha}=Q_{\alpha}^1+Q_{\alpha}^2+Q_{\alpha}^3$ is a simple disassembling
of $Q_{\alpha}$. Finally, it follows from (\ref{3gl-dis}) that the Poisson bi-vectors 
$Q_{\alpha}^i$ are assembled from mutually compatible triadons. We thus obtain a complete  common disassembling  of $\gG\gl(n,\gk)$ and 
$\gu(n,\gk)$ in 4 steps.

In order to adapt this approach to the algebra $\gs\oplus_{\rho_0}W_0$, 
a suitable basis in $W_0$ should be chosen. Such a basis is 
$$
e_{ij}^1, \,1\leq i< j\leq n, \qquad e_i=\frac{1}{2}(e_{ii}-e_{11})=
x_i\partial_i-x_1\partial_1, \,1<i\leq n.
$$
Denote by $w_{ij}$ and $w_i$ the corresponding linear functions on 
$W_0^*$.  Together with the functions $z_{ij}$'s they form a cartesian chart on 
$|\gs\oplus_{\rho_0}W_0|^*=|\gs|^*\oplus|W_0|^*$.  
Also, set $\eta_{ij}=\frac{\partial}{\partial_{w_{ij}}}, 
\,\eta_i=\frac{\partial}{\partial_{w_{i}}}$.
It follows from (\ref{com-for-gl}) that, in terms of this chart, the Poisson bi-vector of 
$\gs\oplus_{\rho_0}W_0$ is written
\begin{eqnarray}\label{pois-for-sl}
Q^0=\sum_{j,  i<k}z_{ik}\xi_{ij}\xi_{jk}+
\sum_{\i\neq j,j\neq k,k\neq i }w_{ik}\xi_{ij}\eta_{jk}+
2\sum_{1< i,1< j, i<j }(w_i-w_j)\xi_{ij}\eta_{ji} \nonumber \\
-2\sum_{1<i}w_i\xi_{1i}\eta_{i1}+
\sum_{1<j,1<j,\i\neq j}w_{ij}\xi_{ij}\eta_{j}+
2\sum_{1<i}w_{1i}\xi_{1i}\eta_i+
\sum_{1<i,1<j,i\neq j}w_{1i}\xi_{1i}\eta_j.
\end{eqnarray}
Now we shall apply to $Q^0$ the change of variables
that we applied earlier to $P$ 
(see (\ref{1-gl-dis})). The expression of $Q^0$ in the basis
$$
t_it_je_{ij}^0, \,1\leq i< j\leq n, \qquad t_it_je_{ij}^1, \,1\leq i< j\leq n, 
\qquad t_i^2e_i, \,1<i\leq n.
$$
of  $\gs\oplus_{\rho_0}W_0$ is of the form $Q^0=t_1^2Q^0_1+\dots+t_n^2Q^0_n$, where the $Q^0_j$'s do not depend on the $t_i$'s. This shows that the $Q_j^0$'s are mutually
compatible Poisson bi-vectors. In particular, $Q^0=Q^0_1+\dots+Q^0_n$ is a 
simple disassembling of $Q^0$. Expressions of the bi-vectors $Q^0_j$'s are derived directly 
from from (\ref{pois-for-sl}):
\begin{equation}\label{pois-for-dis1-sl}
Q^0_1=\sum_{i<k}z_{ik}\xi_{i1}\xi_{1k}+
\sum_{i\neq 1,k\neq 1}w_{ik}\xi_{i1}\eta_{1k}
-2\sum_{1<i}w_i\xi_{1i}\eta_{i1}
\end{equation}
and, for $j>1$,
\begin{eqnarray}\label{pois-for-dis2-sl}
Q^0_j=\sum_{i<k,  j\notin\{i,k\}}z_{ik}\xi_{ij}\xi_{jk}+
\sum_{\ i\neq k, j\notin\{i,k\} }w_{ik}\xi_{ij}\eta_{jk}+
2\sum_{i,1< i\neq j }w_i\xi_{ij}\eta_{ji}+ \nonumber \\
\sum_{i,1<i\neq j}w_{ij}\xi_{ij}\eta_{j}+
2w_{1j}\xi_{1j}\eta_j+
\sum_{i,1<i\neq j}w_{1i}\xi_{1i}\eta_j.
\end{eqnarray}
Each single term in the summations (\ref{pois-for-dis1-sl}) and  (\ref{pois-for-dis2-sl})
is an $(n^2-1)$-triadon. It is easily verified by a direct 
computation or by using Lemmas\,\ref{a1} and \ref{b1} that
\begin{itemize}
\item all triadons in  (\ref{pois-for-dis1-sl}) are mutually compatible;
\item the incompatible pairs of triadons in (\ref{pois-for-dis2-sl}) are 
\begin{eqnarray}\label{comp-fork-pairs}
z_{1k}\xi_{1j}\xi_{jk} \;\mathrm{and}  \;w_{1k}\xi_{1k}\eta_j, \;k>1; \quad
w_{1k}\xi_{1j}\eta_{jk} \;\mathrm{and}  \;w_{kj}\xi_{kj}\eta_j, \;k>1;\nonumber \\
w_{1k}\xi_{kj}\eta_{j1} \;\mathrm{and}  \;w_{1j}\xi_{1j}\eta_j, \;k>1;
\end{eqnarray}
\end{itemize}
This shows that 
\begin{itemize}
\item $Q_1^0$ is simply disassembled into a number of triadons;
\item $Q_j^0, \,j>1$, is simply disassembled into a number of triadons
and structures
\begin{eqnarray}
Q_j^{\prime}=\sum_{k>1, k\neq j}z_{1k}\xi_{1j}\xi_{jk}+\sum_{k>1, k\neq j}w_{1k}\xi_{1j}\eta_{jk}+
2w_{1j}\xi_{1j}\eta_j \label{Q'}\\ \label{Q''}
Q_j^{\prime\prime}=\sum_{k>1, k\neq j}w_{1k}\xi_{1k}\eta_j+
\sum_{k>1, k\neq j}w_{kj}\xi_{kj}\eta_j+\sum_{k>1, k\neq j}w_{1k}\xi_{kj}\eta_{j1} 
\end{eqnarray}
\end{itemize}
Indeed, by (\ref{comp-fork-pairs}), $Q_j^{\prime}$ and $Q_j^{\prime\prime}$
are composed of mutually compatible triadons and, therefore,
are Poisson bi-vectors. Moreover, a direct computation using Formula 
(\ref{Schouten in coordinates}) proves that $\ls Q_j^{\prime},Q_j^{\prime\prime}\rs=0$.

The final, fourth step is to disassemble $Q_j^{\prime}$ and $Q_j^{\prime\prime}$ into
a number of triadons, which are single terms in the summations (\ref{Q'}) and (\ref{Q''}). We have thus
described a complete disassembling  in 4 steps of the simple Lie algebras we  considered.
\begin{rmk}\label{can-dis-repr}
The disassembling procedure described in this subsection is, essentially, a complete
disassembling of the canonical representations of the Lie algebras under consideration in $S^2V^*$.
In fact, the method can be extended to the canonical representations of these Lie algebras 
in higher tensor powers, $S^k V^*$.
\end{rmk}
In conclusion of this section, we note that the approach we  developed can be applied to the
central simple Lie algebras over arbitrary fields of characteristic zero, using the description given in chapter X of \cite{Jac}.
\begin{rmk}\label{can-dis-repr}
We stress that the first step in the disassembling procedures
we described for the classical Lie algebras reflects the fact that they are symmetry algebras of
\emph{compound} objects. For instance, for an algebra $\gs\go(g)$, such a compound object is
the form $g$ which is the direct sum of 1-dimensional quadratic forms, while for an algebra 
$\gs\gp(\beta)$, it is the form $\beta$ which is the direct sum of 2-dimensional skew-symmetric forms.
\end{rmk}
 
\section{Some problems and perspectives}\label{problems}

This paper may be viewed as the first step toward what could be called the \emph{constructive
theory of Lie algebras}, while considerable systematic work remains to be done. Below we
mention some problems that remain to be solved and research directions that seem promising.

{\bf Assembling and disassembling of representations}.  Let $\gG$ be  a Lie algebra and 
$\rho\colon\gG\rightarrow\End V$ be a representation of $\gG$. Then $\gG\oplus_{\rho}V$ may be
viewed as a $\Z$-graded Lie algebra $\gG^{\rho}=\oplus_i\gG^{\rho}_i$, with 
$\gG^{\rho}_0=\gG, \,\gG^{\rho}_1=V$ and $\gG^{\rho}_i=0$ otherwise. Conversely, any
$\Z$-graded Lie algebra $\bar{\gG}=\oplus_i\bar{\gG}_i$ such that $\bar{\gG}_i=0$ if 
$i\neq0,1$ is, obviously, of the form $\gG^{\rho}$. Now a disassembling of $\rho$ may
be viewed as a disassembling of $\gG^{\rho}$ respecting this $\Z$-grading.
Thus, the methods described in this paper can be readily generalised in the framework of representation theory in order to
develop a \emph{constructive theory of representations}, an aim to be achieved.

{\bf Assembling and disassembling of graded Lie algebras.} All the basic definitions and 
techniques of this paper are valid for graded Lie algebras, thus they form a very natural framework for an extension of the constructive theory to the graded case. It naturally includes representations,
since a graded Lie algebra can be seen as a specific family of representations of its
zero component in all the others. An interesting question is to determine what may be the analogues 
of lieons for the graded algebras currently being studied, for instance, for super-algebras.

{\bf Disassembling problem for arbitrary ground fields}. It is very plausible 
that the complete disassembling theorem holds for finite-dimensional Lie algebras 
over arbitrary ground fields of characteristic zero (see the more detailed discussion above,
at the end of Section\,\ref{dis-probl}). Is it still true for non-zero characteristics?

{\bf Algebraic variety of Lie algebra structures.} The algebraic variety $\mathcal{L}ie(V)$ 
of all Lie algebra structures on a vector space $V$ is an intersection 
 of quadrics in the vector space $\mathcal{A}(V)=\mathrm{Hom}_{\gk}(V\otimes V,V)$. The 
subspace of $\mathcal{A}(V)$ spanned by a family of mutually compatible Lie algebra structures is contained in $\mathcal{L}ie(V)$. In this sense, $\mathcal{L}ie(V)$ is ``woven", like 
a one-sheet hyperboloid, from these subspaces. This suggests that we use this ``web 
structure" in order to find a description of $\mathcal{L}ie(V)$.
An instructive example of this kind is given in \cite{Mor}. Also, a deeper understanding
of the structure of $\mathcal{L}ie(V)$ for algebraically closed ground fields could
shed some light on the general disassembling problem.

{\bf Deformations.} On the basis of a disassembling of a Lie algebra $\gG$, one can 
construct deformations of its Lie algebra structure by substituting  $\lambda_v\gG_{v}, \,\lambda_v\in\gk,$ 
for $\gG_{v}$ in the corresponding a-scheme (see Subsection\,\ref{statemant}). 
The factors $\lambda_v$ are generally constrained by some relations, but these   
are trivial in the case of first-level algebras. In this connection the question whether
all (essential) deformations of a given Lie 
algebra are of this kind is on the table.

{\bf The length of complete disassemblings.} The procedure we have used in the proof 
of the complete disassembling theorem yields an estimate of  $n+\mathrm{const}$ for the minimal number of steps necessary to achieve it.
On the other hand, classical Lie algebras can be completely disassembled in at most four steps, independently of their dimension (see Section\,\ref{classical}). So, it is natural to ask whether there is a universal constant 
$N$ such that any Lie algebra can be completely disassembled in no more than $N$ steps. If the answer is positive, the problem of the description of the variety $\mathcal{L}ie(V)$ will rest on a more constructive basis.

{\bf Lie algebras of first and second levels.} 
The problem of an explicit description of the Lie algebras of the first two levels seems to
be attainable. Some useful suggestions on how to solve it can be found in \cite{V2} 
where a simplified version of this problem is solved in the case of 
first-level Lie algebras. A solution of this problem would significantly 
enrich the assembling techniques, since it presumes a systematic analysis of 
obstructions to compatibility.

{\bf Invariants of Lie algebras.} It is natural question to ask how can one deduce the invariants 
of a Lie algebra from a given disassembling? In this connection see \cite{V2}.

{\bf Cohomological aspects.} Poisson (resp., Lie algebra) structures, which are 
compatible with a given one, are closed 2-cochains in the associated Lichnerowicz--Poisson 
(resp.,  Chevalley--Eilenberg) complex. This fact was not explicitly exploited in 
this paper but a deeper understanding of assembing/disassembling procedures is
closely related to this cohomological aspect. As an example, we mention that the
first step in disassembling a classical Lie algebra $\gG$, as was done in
Section 6 is, in fact,  in the form $P_{\gG}=\sum_i\ls X_i, P_{\gG}\rs$ for some 
vector fields $X_i$'s on $|\gG|$. The terms $\ls X_i, P_{\gG}\rs$ are nothing but 
exact 2-cochains in the Lichnerowicz-Poisson complex associated with $P_{\gG}$.

Also, one may expect that the cohomology of a Lie algebra assembled from some other Lie
algebras is in its turn ``assembled" from the cohomologies of these algebras. However, 
an exact formalisation of this idea requires some new homological techniques, which 
will be discussed elsewhere. 

{\bf Generalisation to multiple Lie algebras.} 
The assemblage techniques and the results of this paper can be naturally  extended  
to multiple Lie algebras (see \cite{Fil, HW, MVV, VV1, VV2}). An interesting point is that the \emph{hereditary structures} associated with an 
$n$-ary Lie algebra are automatically compatible. This observation relates the disassembling 
problems for Lie algebras of different multiplicities. More generally, for any kind of Poisson structures, e.g., Lie algebroids and their $n$-ary
analogues (see \cite{Mac}), the compatibility 
problems of different multiplicities are closely related.

{\bf Applications to physics.} As far as we know, applications of compatible Poisson structures to various problems in physics and mechanics are to be found almost exclusively
in the context of integrable systems. We do not deal  here with these well-known aspects.
A quite different line of applications could come from a possible interpretation of Lie 
algebras as `` symmetry compounds". Various questions would arise, such as: can new chemical compounds that can be  synthesised from given ingredients
be described in terms of the compatibility conditions that relate the symmetry 
algebras of their ingredients? A similar question concerns elementary particles.
However, we stress that a certain maturity of this new theory is required
before it becomes useful to try to answer these questions. 

{\bf Acknowledgements.} The work, some results of which are presented in this
paper, is, in fact, a byproduct of author's study of natural structures in the theory 
of nonlinear partial differential equations.
This is why they were obtained in several approaches and at long intervals of time. It started with the author's visit to I.H.\'{E}.S. (Bures-sur-Yvette, France) in 2000 and
was completed under  programs PRIN (``Programmi di Ricerca di Rilevante Interesse Nazionale", Italia) of MIUR (``Ministero dell'Istruzione Pubblica, Universit\'{a} e Ricerca", Italia) from 2008 to 2012. The author is deeply grateful to these organisations
for their support. 

Special thanks go to the referee for his extraordinary work to
improve the original version of this paper, in particular for coining the terms ``dyon" 
and ``triadon" for the elementary constituents of Lie algebras, but also more generally 
for having transformed the ``totally unacceptable" English in the original into an 
acceptable form.

\end{document}